\definecolor{ForestGreen}{rgb}{0.13,.57,.13}
\newcommand{\cA}{\mathcal{A}}
\newcommand{\cB}{\mathcal{B}}
\newcommand{\cC}{\mathcal{C}}
\newcommand{\cD}{\mathcal{D}}
\newcommand{\cE}{\mathcal{E}}
\newcommand{\cF}{\mathcal{F}}
\newcommand{\cG}{\mathcal{G}}
\newcommand{\cI}{\mathcal{I}}
\newcommand{\cL}{\mathcal{L}}
\newcommand{\cR}{\mathcal{R}}
\newcommand{\cS}{\mathcal{S}}
\newcommand{\cT}{\mathcal{T}}
\newcommand{\Loc}{\mathsf{Loc}}
\newcommand{\KK}{\mathsf{KK}}
\newcommand{\FK}{{F\!K}} 
\newcommand{\incoming}{\mathrm{N}_{-}} 
\newcommand{\outcoming}{\mathrm{N}_{+}} 
\newcommand{\base}{\mathbb{K}} 
\newcommand{\Fun}{\mathsf{Fun}}
\newcommand{\Mod}{\mathop{\mathsf{Mod}}}
\newcommand{\modu}{\mathop{\mathsf{mod}}}
\newcommand{\add}{\mathop{\mathsf{add}}}
\newcommand{\GrMod}{\mathop{\mathsf{GrMod}}}
\newcommand{\Proj}{\mathop{\mathsf{Proj}}}
\newcommand{\proj}{\mathop{\mathsf{proj}}}
\newcommand{\GProj}{\mathop{\mathsf{GProj}}}
\newcommand{\Gproj}{\mathop{\mathsf{Gproj}}}
\newcommand{\stGProj}{\mathop{\underline{\mathsf{GProj}}}}
\newcommand{\stGproj}{\mathop{\underline{\mathsf{Gproj}}}}
\newcommand{\GInj}{\mathop{\mathsf{GInj}}}
\newcommand{\FD}{\mathop{\mathsf{FD}}}
\newcommand{\fd}{\mathop{\mathsf{fd}}}
\newcommand{\Ch}{\mathsf{Ch}}
\newcommand{\D}{\mathsf{D}}
\newcommand{\K}{\mathsf{K}}
\newcommand{\Spec}{\mathop{\mathrm{Spec}}}
\newcommand{\id}{\mathrm{id}}
\newcommand{\N}{\mathbb{N}}
\newcommand{\op}{\mathrm{op}}
\newcommand{\ev}{\mathrm{ev}}
\newcommand{\cone}{\mathop{\mathrm{cone}}}
\newcommand{\colim}{\mathop{\mathrm{colim}}}
\newcommand{\obj}{\mathop{\mathrm{ob}}}
\newcommand{\coker}{\mathop{\mathrm{coker}}}
\newcommand{\Img}{\mathop{\mathrm{im}}} 
\newcommand{\End}{\mathrm{End}}
\newcommand{\inv}{^{-1}}
\newcommand{\Hom}{\mathrm{Hom}}
\newcommand{\Ext}{\mathrm{Ext}}
\newcommand{\Tor}{\mathrm{Tor}}
\newcommand{\Aut}{\mathrm{Aut}}
\newcommand{\Db}{\mathrm{D^b}}
\newcommand{\Z}{\mathbb{Z}}
\newcommand{\Q}{\mathbb{Q}}
\newcommand{\C}{\mathbb{C}}
\renewcommand{\hom}{\mathrm{Hom}}
\newcommand{\Ho}{\mathop{\mathsf{Ho}}}
\DeclareMathOperator{\pdim}{pdim}
\DeclareMathOperator{\idim}{idim}
\DeclareMathOperator{\gldim}{gldim}
\DeclareMathOperator{\Gpdim}{Gpdim}
\renewcommand{\iff}{if and only if }
\numberwithin{equation}{section}
\theoremstyle{definition}
\newtheorem{defi}[equation]{Definition}
\newtheorem*{conv*}{Conventions}
\newtheorem{hyp}[equation]{Hypothesis}
\newtheorem*{hyp*}{Hypothesis}
\newtheorem{example}[equation]{Example}
\newtheorem{construction}[equation]{Construction}
\newtheorem*{conventions*}{Conventions}
\theoremstyle{plain}
\newtheorem{thm}[equation]{Theorem}
\newtheorem{lemma}[equation]{Lemma}
\newtheorem{thm-defi}[equation]{Theorem-Definition}
\newtheorem{prop}[equation]{Proposition}
\newtheorem{cor}[equation]{Corollary}
\newtheorem{question}[equation]{Question}
\newtheorem*{thm*}{Theorem}
\newtheorem*{lemma*}{Lemma}
\newtheorem*{cor*}{Corollary}
\theoremstyle{remark}
\newtheorem{remark}[equation]{Remark}
\newtheorem{notation}[equation]{Notation}
\newtheorem{terminology}[equation]{Terminology}
\newtheorem*{remark*}{Remark}
\begin{document}

\title[Gorenstein homological algebra and UCT's]{Gorenstein homological algebra and universal coefficient theorems}

\author{Ivo Dell'Ambrogio}
\author{Greg Stevenson}
\author{Jan \v S\v tov\'i\v cek}

\thanks{The first named author is partially supported by the Labex CEMPI (ANR-11-LABX-0007-01).
The second named author is grateful to the Alexander von Humboldt Stiftung for their support. The third named author was supported by grant GA\v{C}R P201/12/G028 from the Czech Science Foundation.}
\subjclass[2010]{
16E65  
(Primary)
18E30, 
19K35, 
46L80  
(Secondary)%
}
\keywords{Gorenstein homological algebra, triangulated category, universal coefficient theorem, Kasparov's KK-theory}

\address{Greg Stevenson, Universit\"at Bielefeld, Fakult\"at f\"ur Mathematik, BIREP Gruppe, Postfach 10 01 31, 33501 Bielefeld, Germany}
\email{gstevens@math.uni-bielefeld.de}

\address{Ivo Dell’Ambrogio, Laboratoire de Math\'ematiques Paul Painlev\'e, Universit\'e de Lille 1, Cit\'e Scientifique – B\^at. M2, 59665 Villeneuve-d'Ascq Cedex, France}
\email{ivo.dellambrogio@math.univ-lille1.fr}

\address{Jan \v S\v tov\'i\v cek, Department of Algebra, Charles University in Prague, Sokolovsk\'a~83, 186 75 Praha~8, Czech Republic}
\email{stovicek@karlin.mff.cuni.cz}

 \date{\today}

\begin{abstract}
We study criteria for a ring -- or more generally, for a small category -- to be Gorenstein and for a module over it to be of finite projective dimension. 
The goal is to unify the universal coefficient theorems found in the literature and to develop a machinery for proving new ones. 

Among the universal coefficient theorems covered by our methods we find, besides all the classic examples, several exotic examples arising from the KK-theory of C*-algebras and also Neeman's Brown-Adams representability theorem  for compactly generated categories.
\end{abstract}


\maketitle

\setcounter{tocdepth}{1} 
\tableofcontents

\section{Introduction}

Let $\cT$ be a triangulated category, let $\cA$ be an abelian category, and let $h\colon \cT\to \cA$ be a homological functor. 
Suppose we wish to use $h$ in order to approximate the triangulated category $\cT$ by the abelian category~$\cA$.
Generally speaking, it is possible to consider cellular or similar decompositions of the objects of~$\cT$ in order to set up Adams-type spectral sequences; see e.g.\ \cite{meyer_hom} for a very broad-ranging formulation of this idea. 
On the other hand, $h$ could already be an equivalence; but then $\cA$ is necessarily semi-simple, so this case is not so interesting.
The next simplest thing that could happen is what is often called a ``Universal Coefficient Theorem'', or ``UCT'' for short, and will be the focus of this article. In this case there are certain canonical short exact sequences
\begin{align} \label{eq:UCT_intro}
\xymatrix@1@C=16pt{
0 \ar[r] & 
 \Ext^1_\cA(h\Sigma X, h Y) \ar[r] & 
 \Hom_\cT(X,Y) \ar[r] &
  \Hom_\cA(hX, hY) \ar[r] &
   0 
}
\end{align}
determining the Hom groups of $\cT$ up to a one-step extension problem (see Def.\,\ref{defi:uct}).

Typically, this happens when the abelian category $\cA$ has global dimension one, in which case~\eqref{eq:UCT_intro} can be seen as a result of the early collapse of the aforementioned spectral sequence.
The prototypical example is when $\cT=\D(R)$ is the derived category of a hereditary ring and $h=H^*\colon \cT \to \GrMod R=:\cA$ is the functor taking a complex to its graded homology module.
The versions for graded or dg~algebras are also classical (see Example~\ref{ex:classical_uct}), and are the source of the more basic UCT and K\"unneth exact sequences in the topologist's arsenal.

There are also situations where the UCT exact sequence \eqref{eq:UCT_intro} only holds for specific pairs  $(X,Y)$ of objects of~$\cT$. 
One such result, widely used by operator algebraists and noncommutative geometers, is the UCT of Rosenberg and Schochet~\cite{rs} (see Example~\ref{ex:classical_uct_kk}). It applies to $\cT =\KK$, the Kasparov category of separable complex C*-algebras, where now $\cA $ is the category of $\Z/2$-graded abelian groups and $h=K_*$ is the topological K-theory functor. In this example $X$ and $Y$ are two C*-algebras, and for the UCT to hold $X$ must belong to the so-called Bootstrap class, that is, the localizing subcategory of $\KK$ generated by~$\C$.

Interestingly, there are also examples of UCTs which hold for all $X$ in some suitably nice subcategory, where however the abelian category $\cA$ is \emph{not} of global dimension one. What happens in these cases is that although there exist objects in $\cA$ of infinite projective dimension, the homological functor $h$ takes values only in objects of projective dimension one -- which suffices to establish the UCT. Such ``exotic'' examples have arisen in recent years in the work of Ralf Meyer and collaborators on certain variants of KK-theory (see the references in~\S\ref{sec:KK}). 
Our goal is to build on their insights in order to provide a unified conceptual framework for understanding and proving such UCTs.

We make throughout the following hypothesis, which seems to cover all naturally occurring situations:
\begin{hyp}
The homological functor $h$ is the restricted Yoneda functor induced by a small full suspension closed subcategory $\cC\subset \cT$. 
That is to say, $\mathcal A$ is the abelian category $\Mod \cC$ of right $\mathcal C$-modules (i.e.\ contravariant additive functors on~$\cC$) and $h$ is the functor $h_\cC \colon \cT \to \Mod \cC$ sending an object $X$ to the right $\cC$-module  $C\mapsto \cT(C,X)$.
\end{hyp}

For instance, for $\cT=\D(R)$ or $\cT = \KK$ we would choose $\cC$ to consists of all the suspensions of $R$ or $\C$, respectively. The ``exotic'' examples are more involved. 

It follows from standard homological algebra that, in order to establish a UCT exact sequence for a pair $(X,Y)$ of objects in~$\cT$, it suffices that $X$ belongs to the localizing subcategory generated by~$\cC$ and that the $\cC$-module $h_\cC X$ has projective dimension one (see Theorem~\ref{thm:uct} for details). 
This leads us to:

\begin{question} \label{question:general}
How can we recognize those full subcategories $\cC\subset \cT$ for which we have $\pdim_\cC X\leq 1$ for all $X \in \Loc(\cC)$, and therefore a UCT exact sequence \ref{eq:UCT_intro} for all pairs of objects $(X,Y)$ with $X\in \Loc(\cC)$? 
\end{question}

As it turns out, in all the examples where this works that we are aware of, the category $\cC$ is \emph{1-Gorenstein} (see \S\ref{sec:recoll_Gor}). This entails for instance the observed dichotomy that $\cC$-modules have either projective dimension at most one, or are of infinite projective dimension. This observation is quite useful, because Gorenstein rings and categories are rather well-behaved and Gorenstein homological algebra is already a well-developed subject.

With this in mind,  we can now break down Question~\ref{question:general} into the two following more manageable subquestions:

\begin{question} \label{question:1}
How can we tell when a given small category $\cC$ is 1-Gorenstein?
\end{question}

\begin{question} \label{question:2}
Given a 1-Gorenstein subcategory $\cC\subset \cT$, how do we know whether the homological functor $h_\cC\colon \cT\to \Mod \cC$ only takes values in $\cC$-modules of finite (hence $\leq1$) projective dimension? 
\end{question}

Note that Question~\ref{question:1} is independent of the ambient triangulated category. 
On the contrary, Question~\ref{question:2}, as we shall see, crucially depends both on the embedding $\cC\subset \cT$ and on the triangulation of~$\cT$. Moreover, our answers to both questions will turn out to work uniformly for $n$-Gorenstein categories for all $n\geq0$ ($n$ being the Gorenstein dimension of~$\cC$, which is equal to both the maximum finite projective and the maximum finite injective dimension that $\cC$-modules can have).

\begin{center} *** \end{center}

Let us outline the main results and sketch the organization of the paper.

The first part of the article (\S\ref{sec:GTP}-\ref{sec:triangulated_few}) is dedicated to answering Question~\ref{question:1}.
After some theoretical setup, we succeed in proving several criteria for recognizing Gorenstein small categories, the more concrete of which can be applied to descriptions of $\cC$ by generators and relations, i.e., by ``looking at~$\cC$''. Our general philosophy is that a Gorenstein category is one which is, in some sense, sufficiently symmetric over some base Gorenstein ring or category.

We prove for instance: 

\begin{thm}[See Theorem~\ref{thm:Serre_Gor}]
Let $\cC$ be a small category such that:
\begin{enumerate}
\item $\cC$ is bounded, that is, for any fixed object $C\in \cC$ there are only finitely many objects of $\cC$ mapping nontrivially into or out of~$C$.
\item $\cC$ is an $R$-category, with $R$ an $n$-Gorenstein ring (e.g.\ $R=\Z$ and $n=1$).
\item The Hom $R$-modules $\Hom_\cC(C,D)$ are all finitely generated projective.
\item Each unit map $R\to \End_\cC(C)$, $C\in \cC$, admits an $R$-linear retraction.
\item $\cC$ admits a Serre functor relative to~$R$, that is, a self-equivalence $S\colon \cC\stackrel{\sim}{\to}\cC$ equipped with a natural isomorphism 
\[
\Hom_\cC(C,D) \cong \Hom_R(\Hom_\cC(D, SC), R) \,.
\]
\end{enumerate}
Then $\cC$ is $n$-Gorenstein.
\end{thm}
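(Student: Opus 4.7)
The plan is to prove that every representable right $\cC$-module $P_C := \cC(-,C)$ has injective dimension at most $n$ in $\Mod\cC$, and symmetrically on the left. By a standard characterisation of Gorensteinness together with the Noetherian-like finiteness conditions guaranteed by (1)--(3), this suffices to conclude that $\cC$ is $n$-Gorenstein.

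Consider the $R$-dualisation $\mathbb{D} := \Hom_R(-,R)$, which interchanges left and right $\cC$-modules. By hypothesis (3), every representable left module $P^D := \cC(D,-)$ is pointwise a finitely generated projective $R$-module, hence $\mathbb{D}$ is exact on it and the biduality map $P^D \to \mathbb{D}\mathbb{D}\,P^D$ is an isomorphism. I also note the standard fact that, for any $D\in\cC$ and any injective $R$-module $W$, the right $\cC$-module $\Hom_R(\cC(D,-),W)$ is injective in $\Mod\cC$: by Yoneda,
\[
\Hom_\cC(M,\Hom_R(\cC(D,-),W)) \cong \Hom_R(M(D),W),
\]
which is exact in $M$ because evaluation at $D$ and $\Hom_R(-,W)$ are both exact.

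The heart of the argument is to rewrite the Serre formula (5) with $Y=C$ fixed and $X$ varying, producing an isomorphism of right $\cC$-modules
\[
P_C \;\cong\; \Hom_R\bigl(\cC(C,S(-)),\,R\bigr) \;\cong\; \mathbb{D}(P^{S^{-1}C}),
\]
where the second step uses that $S$ is a self-equivalence, so $\cC(C,S(-)) \cong \cC(S^{-1}C,-) = P^{S^{-1}C}$. Now pick an injective coresolution $0 \to R \to W^0 \to \cdots \to W^n \to 0$ of $R$ in $\Mod R$, of length $\leq n$ by the $n$-Gorenstein hypothesis on $R$. Applying the functor $\Hom_R(P^{S^{-1}C},-)$, which is exact by pointwise $R$-projectivity from (3), yields an exact sequence
\[
0 \to P_C \to \Hom_R(P^{S^{-1}C},W^0) \to \cdots \to \Hom_R(P^{S^{-1}C},W^n) \to 0
\]
of right $\cC$-modules whose middle terms are injective by the previous paragraph. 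Hence $\idim_\cC(P_C) \leq n$. The analogous computation applied to $\cC^{\op}$, which has Serre functor $S^{-1}$ and inherits the other hypotheses, gives the same bound for representable left modules.

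The main obstacle I anticipate is the bookkeeping required to promote the representable bound to a global bound on $\idim$ for all projective $\cC$-modules, which hinges on coproducts of injective $\cC$-modules remaining injective. The boundedness hypothesis (1), together with the finite generation of the hom-modules over the Noetherian ring $R$ provided by (3), is what ensures that $\Mod\cC$ is locally Noetherian and supplies this stability. The retraction hypothesis (4) does not enter the identifications above directly; it should come in when controlling the passage between $\cC$-module and $R$-module structures, in particular so that pointwise projective $R$-modules yield genuinely projective $\cC$-modules, for example when resolving the injective cogenerators $\Hom_R(\cC(D,-),W)$ by projectives of length $\leq n$ to obtain the dual bound $\pdim_\cC(I)\leq n$ for injectives.
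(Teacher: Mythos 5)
Your opening identity $P_C \cong \mathbb{D}(P^{S^{-1}C})$ and the resulting length-$\le n$ coresolution of each representable by injectives of the form $\Hom_R(\cC(S^{-1}C,-),W^j)$ are correct, and this is a genuinely more hands-on route than the paper's, which instead packages the same Serre-duality computation into an isomorphism $i^*|_\cF \cong (S_*\circ i^!)|_\cF$ for the boundary functor $i_*\colon \Mod(R\otimes\cC)\to\prod_d\Mod R$ and then invokes the abstract Gorenstein transfer machinery (Propositions~\ref{prop:Gor_Gor_general} and~\ref{prop:equal_Gd}). However, your proof has a real gap exactly where you flag it. Passing from representables to arbitrary projectives requires coproducts of your injective objects to stay injective, and you explicitly invoke local noetherian-ness of $\Mod\cC$, derived from ``the Noetherian ring $R$.'' But hypothesis~(2) only asks $R$ to be $n$-Gorenstein in the sense of Definition~\ref{defi:Gorenstein}, which is strictly weaker than Iwanaga-Gorenstein and does \emph{not} force $R$ noetherian (cf.\ Example~\ref{ex:iwanaga-gore}); correspondingly, $(W^j)^{(I)}$ need not be injective for infinite~$I$, so your componentwise coresolution of $\coprod_i P_{C_i}$ can fail to have injective terms. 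The paper sidesteps this entirely: any $P\in\Proj\cA$ is exhibited as a retract of a single $i^*Q$ with $Q$ projective in the \emph{product} $\prod_d \Mod R$; the injective resolution of $Q$ is componentwise (no coproducts), and $i^*|_\cF$ is shown to be exact and injective-preserving, so $i^*Q$ has $\idim\le n$ without any noetherian input. Your argument cannot be completed in the stated generality without importing this (or an equivalent) mechanism.

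A second, independent problem is your reading of hypothesis~(4). Your sketch at best yields Gorenstein dimension $\le n$; the retraction hypothesis is not used to pass ``between $\cC$-module and $R$-module structures'' or to make pointwise projectives genuinely projective, but precisely to get the reverse inequality. In the paper's proof (Proposition~\ref{prop:equal_Gd}), the $R$-linear retractions of the units $R\to{}_c\cC_c$ assemble into a natural splitting of the unit $\id\to i_*i^*$, which shows that the Gorenstein dimension of $\Mod R$ (namely $n$) is bounded above by that of $\Mod\cC$, giving equality. Your proposal does not address the lower bound at all, so even with the coproduct issue repaired it would only establish ``$n'$-Gorenstein for some $n'\le n$,'' which is the first half of Theorem~\ref{thm:Serre_Gor}, not the statement you were asked to prove.
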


This criterion can be easily applied to obtain new proofs of well-known examples of Gorenstein rings and categories, for instance group algebras and groupoid categories (Example~\ref{ex:groupalgebras}), or categories of chain complexes (Example~\ref{ex:complexes}).

We also prove a more complicated version of the above result where, as it were, the Gorenstein base $R$ is allowed to change from object to object; see Theorem~\ref{thm:Gorenstein}.
This will be used to show that a certain exotic UCT for the equivariant KK-theory of C*-algebras also arises from a 1-Gorenstein category; see Theorem~\ref{thm:eq-KK-Gore}.

We also prove the following criterion, which generalizes results of Neeman and Beligiannis, and is of a rather different spirit:

\begin{thm}[See Theorem~\ref{thm:triangulated_n_Gorenstein}] \label{thm:triangulated_n_Gorenstein_intro}
Let $\cC$ be a triangulated category which admits a skeleton with at most $\aleph_n$ morphisms.
Then $\cC$ is $m$-Gorenstein for some $m\leq n+1$.
\end{thm}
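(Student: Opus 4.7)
The plan is to prove the stronger statement that $\gldim(\Mod\cC) \leq n+1$; this immediately yields the Gorenstein conclusion, since both the injective dimension of any projective and the projective dimension of any injective $\cC$-module are bounded above by the global dimension of the ambient module category, so the Gorenstein dimension of $\cC$ is at most $n+1$.

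For each $\cC$-module $M$, the strategy is to construct a projective resolution
\begin{equation*}
\cdots \to P_2 \to P_1 \to P_0 \to M \to 0
\end{equation*}
by coproducts of representables in an \emph{Adams-style} fashion. The key feature of $\cC$ being triangulated is that at each stage, an epimorphism $P_i \twoheadrightarrow K_i$ from a coproduct of representables onto the current syzygy $K_i$ can be lifted, via Yoneda, to an honest morphism in $\cC$ (after taking set-indexed coproducts, which exist inside any triangulated category subject to cardinality constraints). One then completes this morphism to a distinguished triangle in $\cC$, which gives a structured description of the next syzygy $K_{i+1}$ through the associated long exact sequence of representables.

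The main obstacle is to show that, under the cardinality hypothesis $|\mor\cC| \leq \aleph_n$, the $(n{+}1)$-st syzygy $K_{n+1}$ is projective, whence $\pdim M \leq n+1$. This is a transfinite cardinality argument tracking, at each stage of the Adams tower, a set-theoretic invariant (typically measured by the cardinality of the values of the syzygy on the objects of $\cC$) which decreases by one cardinal rank. The hypothesis then guarantees that this invariant reaches zero after at most $n+1$ iterations, forcing $K_{n+1}$ to be a direct summand of a representable coproduct. This is in the same spirit as Neeman's proof of Brown representability from Ros\'ick\'y's theorem and as Beligiannis's bounds on the pure-global dimension of small triangulated categories, and can be adapted to the present setup essentially verbatim.

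Once the projective-dimension bound $\pdim M \leq n+1$ is established for every $\cC$-module $M$, the global-dimension statement $\gldim(\Mod\cC) \leq n+1$ follows, and the Gorenstein conclusion is immediate by the remark above. The technical heart of the proof is thus the transfinite bookkeeping that translates a bound on the number of morphisms in $\cC$ into a bound on the length of the Adams resolution; this can be packaged as a general lemma about phantom-type filtrations in $\Mod\cC$, independent of the particular module $M$.
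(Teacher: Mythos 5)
Your proposed ``stronger statement'' that $\gldim(\Mod\cC)\leq n+1$ is false, and this is a fundamental gap rather than a fixable detail. Take $\cC=\cT^c$ for $\cT$ the stable homotopy category of spectra (Example~\ref{ex:spectra} in the paper): $\cC$ has a countable skeleton, so $n=0$, yet $\Mod\cC$ is very far from having global dimension $\leq 1$. If it did, every $\cC$-module would have finite projective dimension, hence would be flat (by the argument in the paper's proof), hence a homological functor; by Lemma~\ref{lemma:equiv_modules_triangulated} and Remark~\ref{rem:pure_semisimple} this would force $\cT$ to be pure semi-simple, which spectra certainly is not. In general $\Mod\cC$ for a triangulated $\cC$ contains plenty of modules of infinite projective dimension; the whole point of the Gorenstein formulation is that it tolerates this, requiring bounded dimensions only for the objects that happen to have finite dimension.

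The transfinite/Adams-tower argument you sketch (Simson, Neeman, Beligiannis) does exist and does give a bound of $n+1$ --- but only on the projective dimension of \emph{flat} modules (equivalently: homological functors, equivalently: filtered colimits of representables). This is a bound on the \emph{pure} global dimension, not the global dimension. Your sketch glosses over the step where this matters: after lifting a chosen epimorphism from a coproduct of representables to a morphism in $\cC$ and completing it to a distinguished triangle, the long exact sequence on $\Hom(-,M)$ only controls the next syzygy if $M$ sends triangles to exact sequences, i.e.\ if $M$ is homological. For non-homological $M$ the Adams tower does not terminate and no such bound holds.

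The paper's actual route is to use Simson's bound precisely on flat modules, together with the crucial observation (Lemma~\ref{lemma:equiv_modules_triangulated}, which uses that $\cC$ is triangulated) that the flat modules contain all injectives and all projectives and are closed under cokernels of monomorphisms. From this one gets: injectives have projective dimension $\leq n+1$ (they are flat), and a dimension-shifting argument shows projectives have injective dimension $\leq n+1$ (their cosyzygies are flat). Then Lemma~\ref{lemma:Gore_criterion} gives the Gorenstein conclusion without ever bounding the global dimension. You should replace your ``$\gldim \leq n+1$'' reduction with this two-sided bound on projectives and injectives; the cardinality argument you describe is the right input, but it is applied only to flat modules.
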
 

The second part of our article (\S\ref{sec:exact_vs_fd}-\ref{sec:gore_closed}) addresses Question~\ref{question:2}.
This time our criteria are less concrete, and thus harder to apply in practice, but on the other hand they provide a fully satisfying conceptual answer. Namely, what the next theorem says is that the answer to Question~\ref{question:2} is positive precisely when the singularity category of $\cC$ (see Remark~\ref{rem:sing}) can be generated by syzygies of distinguished triangles contained in $\add \cC$, the additive hull of~$\cC$ inside~$\cT$. The definition of a Gorenstein projective module is recalled in~\S\ref{sec:recoll_Gor}.
(The local coherence hypothesis in the theorem is a very mild one and is satisfied by all the examples we have.)

\begin{thm}[See Theorem~\ref{thm:exact_detection} and Proposition~\ref{prop:equiv_Gore_closed}]
Let $\cC$ be a small suspension closed full subcategory of a triangulated category~$\cT$, and assume that  $\cC$ is Gorenstein and that $\Mod \cC$ is locally coherent. Then the following are equivalent:
\begin{enumerate}
\item The $\cC$-module $h_\cC X$ has finite projective dimension for every object $X\in \cT$.
\item \label{it:syzygy} For every finitely presented Gorenstein projective $\cC$-module~$M$, there exists a distinguished triangle $X\to Y\to Z\to \Sigma X$ in $\add \cC$ such that $M\cong \Img(h_\cC X\to h_\cC Y)$.
\item There exists a set $\mathcal S$ of finitely presented Gorenstein projective $\cC$-modules such that:
\begin{itemize}
\item Every $M\in \mathcal S$ occurs as a syzygy of a triangle in $\add \cC$, as in~\eqref{it:syzygy}.
\item The modules of $\mathcal S$, together with the finitely presented projectives, generate all finitely presented Gorenstein projectives by extensions and retracts.
\end{itemize}
\item If $X\to Y$ is a morphism in $\add \cC$ such that the image of $h_\cC X\to h_\cC Y$ is Gorenstein projective, then the cone of $X\to Y$ is also in $\add \cC$.
\end{enumerate}
\end{thm}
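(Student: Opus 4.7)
The plan is to establish the four-way equivalence by proving the cycle $(1) \Rightarrow (4) \Rightarrow (2) \Rightarrow (3) \Rightarrow (1)$, with the implication $(2) \Rightarrow (3)$ being tautological (take $\mathcal S$ to consist of \emph{all} finitely presented Gorenstein projective $\cC$-modules). The substantive content is concentrated in the equivalence between the module-theoretic condition (1) and the triangulated condition (4), and in the passage between realizing a particular Gorenstein projective as a syzygy (conditions (2), (3)) and the cone-closure statement (4).

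For $(4) \Rightarrow (2)$, given a finitely presented Gorenstein projective $M$, I use the Frobenius structure on $\Gproj \cC$ to pick a short exact sequence $0 \to M \to P_0 \to N \to 0$ with $P_0$ finitely presented projective and $N$ again finitely presented Gorenstein projective (a cosyzygy of $M$). Writing $P_0 = h_\cC Y$ for some $Y \in \add \cC$ and picking a finitely generated projective presentation $h_\cC X \xrightarrow{\varphi} h_\cC Y \to N \to 0$ with $X \in \add \cC$, the Yoneda lemma together with the full-faithfulness of $h_\cC$ on $\add \cC$ promotes $\varphi$ to a morphism $f \colon X \to Y$ in $\add \cC$, and its image under $h_\cC$ is exactly $\ker(h_\cC Y \to N) = M$. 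Completing $f$ to a triangle $X \to Y \to Z \to \Sigma X$ and invoking (4) on $f$ forces $Z \in \add \cC$. For $(3) \Rightarrow (1)$, given $X \in \cT$, I build an Adams-type phantom tower $X_{n+1} \to P_n \to X_n \to \Sigma X_{n+1}$ with $P_n \in \add \cC$ chosen so that $h_\cC P_n \to h_\cC X_n$ is surjective; the result is a projective resolution of $h_\cC X$ whose $n$-th syzygy equals $\Img(h_\cC P_n \to h_\cC P_{n-1})$. Since $\cC$ is Gorenstein of some dimension $d$, the $d$-th syzygy is finitely presented Gorenstein projective. Using the generating set $\mathcal S$ and iterated octahedron arguments to splice together the triangles in $\add \cC$ realizing the elements of $\mathcal S$ (and then their extensions and retracts), I refine the phantom tower so that some $X_n$ actually lies in $\add \cC$, bounding the projective dimension of $h_\cC X$.

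For $(1) \Rightarrow (4)$, given $f \colon X \to Y$ in $\add \cC$ with $M := \Img(h_\cC f)$ Gorenstein projective, complete to a triangle $X \to Y \to Z \to \Sigma X$ and analyse the induced long exact sequence in $\Mod \cC$. The cokernel $N := h_\cC Y / M$ is a cosyzygy of a Gorenstein projective and hence itself Gorenstein projective (by the Frobenius structure on $\Gproj \cC$); it embeds into $h_\cC Z$ with quotient $K \hookrightarrow h_\cC \Sigma X$. By (1), $h_\cC Z$ has finite projective dimension, and iterating the argument on the shifted triangle (whose cone $\Sigma Z$ likewise has finite projective dimension $h_\cC$-image) yields $\pdim K < \infty$ as well. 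The Gorenstein dichotomy — Gorenstein projective plus finite projective dimension implies projective — then forces $N$ to be projective, so the sequence $0 \to N \to h_\cC Z \to K \to 0$ splits and $h_\cC Z$ is finitely presented projective. By local coherence of $\Mod \cC$, $h_\cC Z \cong h_\cC Z'$ for some $Z' \in \add \cC$, and an argument using Yoneda and the ambient triangle identifies $Z$ with $Z'$ in $\cT$.

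The main obstacle I expect is the very last step of $(1) \Rightarrow (4)$: promoting the module-level statement ``$h_\cC Z$ is finitely presented projective'' to the triangulated statement ``$Z \cong Z' \in \add \cC$''. A priori, $Z$ and a candidate representing object could differ by an $h_\cC$-phantom, and ruling this out must use both the full triangle with $X, Y \in \add \cC$ and the local coherence of $\Mod \cC$ to coherently choose the representing object. A secondary difficulty is the octahedral bookkeeping in $(3) \Rightarrow (1)$, where one must splice triangle realizations of many Gorenstein projectives into a single bounded resolution of a given $X$.
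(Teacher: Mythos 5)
Your cycle $(1)\Rightarrow(4)\Rightarrow(2)\Rightarrow(3)\Rightarrow(1)$ is a reasonable logical decomposition, and the implications $(4)\Rightarrow(2)$ and $(2)\Rightarrow(3)$ are essentially sound (modulo a minor point: not every finitely presented projective $\cC$-module need be of the form $h_\cC Y$ with $Y\in\add\cC$, since $\cT$ is not assumed idempotent complete; one should work with finitely generated \emph{free} modules, as the paper does). However, the other two implications have genuine gaps.

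\textbf{The implication $(3)\Rightarrow(1)$ does not work as proposed.} The Adams tower argument requires building objects $P_n\in\cT$ (or at least in some coproduct-closure of $\cC$) with epimorphisms $h_\cC P_n\twoheadrightarrow h_\cC X_n$. But $\cT$ is an arbitrary triangulated category here — no coproducts are assumed — and $h_\cC X$ is in general not finitely generated, so no such $P_n$ exists. The ``splicing octahedra'' refinement is also not developed enough to assess, but cannot compensate for the absence of the tower. The paper instead proves $(3)\Rightarrow(1)$ by a very different route: it shows (Proposition~\ref{prop:equiv_Gore_closed}) that $(3)$ implies the existence of a \emph{sufficient family} of exact triangles in $\add\cC$, and then invokes Theorem~\ref{thm:enough_exact}: a $\cC$-module lies in $\FD\cC$ if and only if it is $F$-exact against every sequence in the sufficient family. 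Since $h_\cC X = \cT(-,X)|_\cC$ is cohomological and each $F$ is a distinguished triangle, $\Hom(F,h_\cC X)\cong\cT(F,X)$ is automatically acyclic, so $h_\cC X\in\FD\cC$. This is a two-line argument once the sufficiency of the family is in place, and needs no coproducts in $\cT$.

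\textbf{The implication $(1)\Rightarrow(4)$ has two gaps.} First, the claim that $N = h_\cC Y/M$ is a ``cosyzygy of a Gorenstein projective and hence Gorenstein projective'' is false: the short exact sequence $0\to M\to h_\cC Y\to N\to 0$ need not lie in the exact structure of $\Gproj\cC$ unless one already knows $N$ is Gorenstein projective. For a concrete counterexample take $\cC=\Z$, so $\GProj\Z=\Proj\Z$: the map $2\colon\Z\to\Z$ has Gorenstein projective image $\Z$ but cokernel $\Z/2\notin\GProj\Z$. Second, as you yourself flag, the step from ``$h_\cC Z$ is fp projective'' to ``$Z\in\add\cC$'' is genuinely problematic: $h_\cC$ is not faithful on all of $\cT$ (phantoms!), and without idempotent completeness of $\cT$ a projective module need not even be representable by an object of $\add\cC$. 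The paper circumvents both difficulties by proving the stronger implication $(1)\Rightarrow(2)$: given $M\in\Gproj\cC$ with projective resolution $x_2\to x_1\to x_0\to M\to 0$ in $\add\cC$, it completes $x_1\to x_0$ to a triangle $y\to x_1\to x_0\to\Sigma y$ in $\cT$, and then — using $\Ext^1_\cC(M,h_\cC X)=0$ for $X\in\cT$ (valid by $(1)$ and Gorenstein projectivity) applied at a well-chosen $X$ — explicitly constructs maps $u\colon y\to x_2$, $v\colon x_2\to y$, $w\colon y\to\Sigma^{-1}x_0$ with $1_y = (\Sigma^{-1}f)w + vu$, exhibiting $y$ as a retract of $x_2\oplus\Sigma^{-1}x_0\in\add\cC$. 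This never passes through the statement ``$h_\cC Z$ is projective'' at all, so the representability and phantom issues simply don't arise.

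In short: the $(4)\Rightarrow(2)\Rightarrow(3)$ portion is fine, but both $(3)\Rightarrow(1)$ and $(1)\Rightarrow(4)$ need to be replaced. The key tools you are missing are the $F$-exactness characterization of $\FD\cC$ (Theorem~\ref{thm:enough_exact}) for the former, and the explicit retraction argument (Theorem~\ref{thm:exact_detection}) for the latter.
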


If the equivalent conditions of the theorem are satisfied we say that $\cC$ is \emph{Gorenstein closed} in~$\cT$.
It is now easy to combine the above results with general homological algebra in order to obtain the following new abstract form of the UCT:

\begin{thm}[See Theorem~\ref{thm:gore_uct}] \label{thm:main_simplified}
Let $\cT$ be a triangulated category admitting arbitrary set-indexed coproducts. 
Let $\cC\subset \cT$ be a small suspension closed full subcategory of compact objects, and assume that $\cC$ is 1-Gorenstein and Gorenstein closed in~$\cT$, as above.
Then the UCT sequence \eqref{eq:UCT_intro} with respect to $\cC$ is exact for all $X\in \Loc(\cC)$. 

Moreover, we have the following dichotomy for an arbitrary $\cC$-module~$M$:
\begin{itemize}
\item
either $\pdim_\cC M \leq 1$ and $M\cong h_\cC X$ for some $X\in \Loc(\cC)\subseteq \cT$,
\item
or $\pdim_\cC M=\infty$ and $M$ is not of the form $h_\cC X$ for any $X\in\cT$.
\end{itemize}
\end{thm}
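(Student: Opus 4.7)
The plan is to combine the two main hypotheses---that $\cC$ is $1$-Gorenstein and Gorenstein closed in $\cT$---to produce a uniform bound on the projective dimensions of all the modules $h_\cC X$, and then feed this into the general UCT of Theorem~\ref{thm:uct}. By the characterisation of Gorenstein closedness, $\pdim_\cC h_\cC X < \infty$ for every $X \in \cT$, while $1$-Gorensteinness forces every $\cC$-module of finite projective dimension to have projective dimension at most one. Combining these yields $\pdim_\cC h_\cC X \leq 1$ for every $X \in \cT$, and Theorem~\ref{thm:uct} then immediately produces the UCT short exact sequence \eqref{eq:UCT_intro} for every pair $(X,Y)$ with $X \in \Loc(\cC)$. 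The same considerations settle one half of the dichotomy: if $M \cong h_\cC X$ for some $X \in \cT$, then $\pdim_\cC M < \infty$ and hence $\pdim_\cC M \leq 1$; equivalently, a $\cC$-module of infinite projective dimension is never of the form $h_\cC X$.

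For the converse direction of the dichotomy I would start from a projective presentation $0 \to P_1 \to P_0 \to M \to 0$ of a $\cC$-module $M$ with $\pdim_\cC M \leq 1$. Every projective $\cC$-module is a retract of a coproduct of representables $h_\cC C$ with $C \in \cC$. Since the objects of $\cC$ are compact in $\cT$, the functor $h_\cC$ sends arbitrary coproducts of objects of $\cC$ to the corresponding coproducts of representables; combined with the Yoneda lemma and the existence of splittings of idempotents in $\cT$, this shows that $h_\cC$ restricts to a fully faithful functor on the full subcategory of $\cT$ spanned by all coproducts of objects of $\cC$ and their retracts, with essential image exactly the class of projective $\cC$-modules. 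In particular I may realise $P_0 \cong h_\cC Y_0$ and $P_1 \cong h_\cC Y_1$ with $Y_0, Y_1 \in \Loc(\cC)$, lift $P_1 \to P_0$ uniquely to a morphism $\tilde f \colon Y_1 \to Y_0$ in $\cT$, and complete it to a distinguished triangle $Y_1 \xrightarrow{\tilde f} Y_0 \to Y \to \Sigma Y_1$ whose third term $Y$ again lies in $\Loc(\cC)$.

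The main obstacle is to verify that $h_\cC Y \cong M$, for only then is $M$ realised as the Yoneda module of an object of $\Loc(\cC)$. Applying $h_\cC$ to the above triangle yields the long exact sequence
\[
\cdots \to h_\cC Y_1 \to h_\cC Y_0 \to h_\cC Y \to h_\cC(\Sigma Y_1) \to h_\cC(\Sigma Y_0) \to \cdots
\]
in $\Mod \cC$, whose first displayed map is the injection $P_1 \hookrightarrow P_0$. A direct diagram chase splits this into a short exact sequence $0 \to M \to h_\cC Y \to K \to 0$, where $K = \ker\bigl(h_\cC(\Sigma Y_1) \to h_\cC(\Sigma Y_0)\bigr)$. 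Since $\cC$ is suspension closed, $\Sigma$ restricts to an autoequivalence of $\cC$ and hence induces an exact autoequivalence of $\Mod \cC$, so the shifted map $\Sigma P_1 \to \Sigma P_0$ remains injective. Therefore $K = 0$ and $h_\cC Y \cong M$, which completes the dichotomy and the proof.
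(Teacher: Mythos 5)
Your proof is correct and follows essentially the same path as the paper's. The paper treats this statement as the special case $\aleph=$ size of a proper class of Theorem~\ref{thm:gore_uct}, whose proof has exactly the structure you describe: Gorenstein closedness gives $h_\cC X\in\FD\cC$ via Theorem~\ref{thm:exact_detection}, $1$-Gorensteinness upgrades this to $\pdim_\cC h_\cC X\leq 1$, and then the local UCT (Theorem~\ref{thm:uct}) finishes the first part. For the dichotomy in the direction $\pdim_\cC M\leq 1\Rightarrow M\cong h_\cC X$, the paper refers back to the argument of Lemma~\ref{lemma:res_triangle} and Corollary~\ref{cor:uct}; you spell this out concretely (lift the projective presentation through extended Yoneda, take the cone, and run the long exact sequence using suspension-closedness of $\cC$ to kill the connecting term $K$), which is precisely what those references unpack to. This is not a genuinely different route, just a more self-contained writeout of the same argument.
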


To see how powerful this form of the UCT really is, choose $\cC$ to be the sucategory of all compact objects in~$\cT$, and combine it with Theorem~\ref{thm:triangulated_n_Gorenstein_intro}; the result is Neeman's general form of the Brown-Adams representability theorem~\cite{neeman:adams_brown}: 

\begin{cor}[See Theorem~\ref{thm:brown-adams} for details]
Let $\mathcal T$ be a compactly generated triangulated category whose subcategory of compact objects, $\cT^c$, admits a countable skeleton.
Then every cohomological functor on $\cT^c$ is representable by an object of~$\cT$ and every natural transformation between such functors is induced by a map in~$\cT$.
\end{cor}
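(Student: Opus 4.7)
The plan is to apply Theorem~\ref{thm:main_simplified} with $\cC := \cT^c$. First, $\cT^c$ is a thick triangulated subcategory of $\cT$ (it is idempotent complete because $\cT$ has all small coproducts), hence suspension closed with $\add \cC = \cT^c$; the cone of any morphism in $\add \cC$ lies in $\add \cC$, so condition~(4) of Proposition~\ref{prop:equiv_Gore_closed} holds trivially and $\cC$ is Gorenstein closed in $\cT$. Second, since $\cT^c$ has a skeleton with at most $\aleph_0$ morphisms, Theorem~\ref{thm:triangulated_n_Gorenstein_intro} applied with $n = 0$ yields that $\cT^c$ is $m$-Gorenstein for some $m \leq 1$. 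Third, compact generation of $\cT$ gives $\Loc(\cC) = \cT$. Theorem~\ref{thm:main_simplified} therefore applies and delivers both the UCT exact sequence~\eqref{eq:UCT_intro} for all $X, Y \in \cT$ and the dichotomy: a $\cC$-module $M$ is of the form $h_\cC X$ for some $X \in \cT$ if and only if $\pdim_\cC M \leq 1$.

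The surjectivity half of Brown--Adams is then immediate: for $X, Y \in \cT$ the UCT sequence surjects $\Hom_\cT(X, Y)$ onto $\Hom_{\Mod \cC}(h_\cC X, h_\cC Y)$, so every natural transformation between representable functors on $\cT^c$ is induced by a morphism in~$\cT$.

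For the representability half, let $F\colon (\cT^c)^\op \to \mathrm{Ab}$ be a cohomological functor; by the dichotomy it suffices to produce $X \in \cT$ with $h_\cC X \cong F$. The strategy is a classical phantom-tower construction. Choose a projective presentation $\bigoplus_j \cC(-, A_j) \to \bigoplus_i \cC(-, B_i) \to F \to 0$; since the $A_j, B_i$ are compact, $h_\cC$ commutes with these coproducts in $\cT$, so the presentation is realized by a morphism $f\colon \coprod A_j \to \coprod B_i$ in $\cT$. Completing $f$ to a distinguished triangle and using cohomologicality of $F$ together with the long exact sequence for $h_\cC$ embeds $F$ into a new representable $h_\cC D$; iterating yields a sequential tower in $\cT$. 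Setting $X$ to be its homotopy colimit, compactness of the objects of $\cC$ gives $h_\cC X = \colim_n h_\cC X_n$, and one verifies that this colimit coincides with~$F$. The main obstacle is verifying this convergence: the countable-skeleton hypothesis is needed for Mittag-Leffler and $\lim^1$-vanishing in the associated Milnor-type sequence, while the 1-Gorenstein property of $\cT^c$ keeps the syzygies appearing at each stage under control and ultimately forces $\pdim_\cC F \leq 1$, so that the dichotomy applies and identifies $F$ with $h_\cC X$.
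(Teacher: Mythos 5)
Your overall approach is the same as the paper's: set $\cC = \cT^c$, verify that $\cC$ is Gorenstein closed (trivially, since it is a triangulated subcategory), invoke Theorem~\ref{thm:triangulated_n_Gorenstein_intro} to get $1$-Gorensteinness, and apply the main UCT theorem. You should also record that $\Mod\cC$ is locally coherent (automatic since $\cC$ is triangulated, hence has weak kernels), as this is a standing hypothesis of Theorem~\ref{thm:gore_uct} of which Theorem~\ref{thm:main_simplified} is a special case. The UCT sequence gives the surjectivity half correctly, exactly as in the paper.

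The representability half, however, is muddled. You say ``by the dichotomy it suffices to produce $X \in \cT$ with $h_\cC X \cong F$,'' but that statement misreads the role of the dichotomy: producing such an $X$ directly \emph{is} representability, and needs no dichotomy; whereas the dichotomy's actual content is that $F$ is representable \emph{if and only if} $\pdim_\cC F \leq 1$. You then embark on a classical phantom-tower construction of $X$ and finish by saying that the $1$-Gorenstein property ``forces $\pdim_\cC F \leq 1$, so that the dichotomy applies.'' This conflates two finishes: either you build $X$ and verify $h_\cC X \cong F$ (in which case you never use the dichotomy), or you show $\pdim_\cC F \leq 1$ and let the dichotomy hand you the representing object (in which case the tower is superfluous). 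The phantom-tower is precisely the hard classical route that the paper's machinery is designed to replace.

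The clean finish, which is what the paper implicitly uses, is one line: a cohomological functor $F$ on $\cT^c$ is flat (Lemma~\ref{lemma:equiv_modules_triangulated}), and Theorem~\ref{thm:triangulated_n_Gorenstein} explicitly states that when $\cT^c$ has a countable skeleton the flat $\cC$-modules are precisely those of finite projective dimension, and that this dimension is at most $1$. Hence $\pdim_\cC F \leq 1$, and the dichotomy in Theorem~\ref{thm:gore_uct} directly yields an $X \in \Loc(\cT^c) = \cT$ with $h_\cC X \cong F$. The claim that the countable-skeleton hypothesis enters via ``Mittag-Leffler and $\lim^1$-vanishing'' does not reflect how the cardinality is actually used in this paper; it enters through Simson's bound (cited in the proof of Theorem~\ref{thm:triangulated_n_Gorenstein}) on the projective dimension of flat modules.
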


Note that $\cT^c$ is locally coherent but hardly ever locally noetherian. Thus in order to deduce Brown-Adams representability from our UCT, as above, it is crucial that we do not make any unnecessary noetherian hypotheses in our theory. 

We also point out that Theorem~\ref{thm:main_simplified} is actually only a special case of Theorem~\ref{thm:gore_uct}. Indeed, the latter applies also to triangulated categories that do not necessarily have all small coproducts; for instance, it has a variant that works for small idempotent complete triangulated categories, or for categories with only countable coproducts. This is necessary if we want to cover the examples from KK-theory, which typically yields categories of the latter kind. 
In order to uniformly treat all possibilities, throughout Section~\S\ref{sec:UCT} we consider triangulated categories admitting all coproducts indexed by sets whose size is bounded by a fixed cardinal~$\aleph$. 
As this complicates things a little, we carefully reprove in this setting all the homological algebra we need. 

As a by-product of our cardinal tracking, we also prove a countable variant of Brown-Adams representability which seems to have remained unrecorded so far; see Theorem~\ref{thm:countable_brown-adams}.

Finally, Section~\S\ref{sec:KK} applies the above results to the motivating exotic examples of UCTs discovered in the realm of KK-theory. More precisely we show how to obtain new proofs, as corollaries of (the countable version of) Theorem~\ref{thm:main_simplified}, for two of these theorems (see Theorems~\ref{thm:umct_gore} and~\ref{thm:filt-KK-Gore-closed}); moreover, in one case where we (still) don't have a complete new proof, we nonetheless prove that the UCT in question fits perfectly into our framework (see \ref{thm:eq-KK-Gore}). 

The last result of our paper, Theorem~\ref{thm:KK_brown_adams}, applies the above-mentioned countable version of Brown-Adams representability to the KK-theoretic setting.

\subsection{Conventions and notations} \label{subsec:conventions}
We will work over a fixed base commutative ring~$\base$, which in the examples will usually be a field or the ring of integers~$\Z$.
Thus, unless otherwise stated, all categories are $\base$-categories and all functors are assumed to be $\base$-linear; the symbol $\otimes$ will stand for the tensor product $\otimes_\base$ over~$\base$, $\Fun(\cC,\cD)$ for the $\base$-category of $\base$-linear functors $\cC\to \cD$, and so on. 

If $\mathcal C$ is a small $\base$-category and $c,d\in \mathcal C$ are two objects, we will often write ${}_d\mathcal C_c$ for the Hom set $\Hom_\mathcal C(c,d)$ considered as a bimodule over the rings ${}_d\mathcal C_d=\End_\mathcal C(d)$ and ${}_c\mathcal C_c=\End_\mathcal C(c)$.
We will write $\Mod \cC := \Fun (\mathcal C^\op, \Mod \base) $ for the $\base$-category of right $\cC$-modules, i.e., of $\base$-linear functors $\cC^\op\to \Mod \base$, and $\modu \cC$ for the full subcategory of finitely presentable $\cC$-modules. 
A $\cC$-module is \emph{free} if it is a coproduct of representables.
In our notation, the Yoneda embedding $\mathcal \cC \to \Mod \cC$ is the fully faithful $\base$-linear functor sending an object $c\in \cC$ to the right $\cC$-module ${}_c\cC\colon d\mapsto {}_c\cC_d$.

Recall that the Yoneda embedding $\cC\to \proj \cC$ induces an equivalence between $\Mod(\proj \cC)$ and $ \Mod \cC$ (see e.g.\ \cite{auslander:rep1}*{Prop. 2.5}). Using this, we will often evaluate $\cC$-modules on finitely generated projectives, by a slight abuse of notation. 

Some (rather straightforward) conventions involving cardinal numbers will be introduced in Terminology~\ref{ter:cardinals}.

\subsection*{Acknowledgements}
We are very grateful to Ralf Meyer for asking us the interesting questions that have prompted our collaboration. Thanks are also due to him for first noticing that the Gorenstein closed condition of Theorem~\ref{thm:exact_detection} is not only sufficient but also necessary.
Another source of inspiration was a talk given in Bielefeld by Claus Ringel where he presented the results of~\cite{ringel-zhang:dual}  (see Example~\ref{ex:ringel-zhang}).

\section{Preliminaries on Gorenstein categories}
\label{sec:recoll_Gor}

If $A$ is an object of an abelian category~$\cA$, denote by $\pdim A$ and $\idim A$ its projective and injective dimension, respectively (both can be defined in terms of vanishing of Ext groups even if $\cA$ does not have enough projective or injective objects).
The \emph{finitary projective dimension} of $\mathcal A$ is by definition $\sup\{\pdim A \mid A\in \cA \textrm{ with } \pdim A< \infty \}$. The \emph{finitary injective dimension} of $\cA$ is defined similarly.

\begin{defi} [\cite{enochs-etal:Gorenstein_cats}*{Def.\,2.18}] \label{defi:Gorenstein}
A \emph{Gorenstein category} is a Grothendieck category $\cA$ satisfying the following three additional conditions:
\begin{enumerate}
\item $\pdim A<\infty $ if and only if $\idim A<\infty$ for every object $A\in \cA$;
\item the finitary injective and projective dimensions of $\cA$ are finite, and
\item $\cA$ has a generator with finite projective dimension.
\end{enumerate}
If $\cC$ is a small $\base$-category, we will say $\cC$ is \emph{Gorenstein}  if its category of representations, $\cA=\Mod \cC$, is Gorenstein in the above sense. All our examples will be of this form. 
\end{defi}

The definition of a small Gorenstein category $\cC$ applies in particular to the case when $\cC$  has only one object, i.e.\ when $\cC$ is a unital, associative, (not necessarily commutative) $\base$-algebra~$R$. 

\begin{example} \label{ex:iwanaga-gore}
A ring $R$ is said to be \emph{Iwanaga-Gorenstein} (\cite{iwanaga:gorenstein2},~\cite{enochs-jenda:gorenstein_book}) if it is left and right noetherian and it has finite self-injective dimension both as left and right module. It follows that $\idim {}_RR = \idim R_R$.
If $R$ is Iwanaga-Gorenstein then it is Gorenstein in the sense of this article, i.e.\ the abelian category $\Mod R$ is Gorenstein (and moreover locally noetherian). The converse is not necessarily true, not even if the ring is right noetherian. Indeed, \cite{jategaonkar:counterex} constructs for every $1< n\leq \infty$ a left noetherian, left hereditary ring $R$ with right global dimension~$n$. It follows that $\Mod R^\op$ is a locally noetherian Gorenstein category, but the ring $R^\op$ is not left noetherian (if it were, its left and right global dimensions would have to be equal as both would coincide with its Tor-dimension). 

However, if $R$ is two-sided noetherian, then $\Mod R$ being Gorenstein in the sense of Def.\,\ref{defi:Gorenstein} is the same as $R$ being Iwanaga-Gorenstein. One only needs to check that $\idim {}_RR < \infty$, but this follows from the fact that the injective right $R$-module $\Hom_\Z({}_RR,\Q/\Z)$ has finite projective dimension. Indeed, we have that $0 = \Tor^R_i(\Hom_\Z({}_RR,\Q/\Z),R/I) \cong \Hom_\Z(\Ext^i_R(R/I,{}_RR),\Q/\Z)$ for some fixed $i \gg 0$ and an arbitrary left ideal $I \subseteq {}_RR$ by~\cite{enochs-jenda:gorenstein_book}*{Theorem 3.2.13}.
\end{example}

Let $\cA$ be a Gorenstein category.
An object $M\in \cA$ is \emph{Gorenstein projective} if it admits a complete projective resolution and is \emph{Gorenstein injective} if it admits a complete injective resolution (see \cite{enochs-etal:Gorenstein_cats}*{Def.\ 2.20} for details).
Define the \emph{Gorenstein projective dimension} of $M\in \cA$ to be~$n$, in symbols $\Gpdim M = n$, if the first syzygy of $M$ that is Gorenstein projective is the $n$-th one (and set $\Gpdim M=\infty$ if there is no such syzygy).  
Note that the Gorenstein projective dimension is also the minimal length of a resolution of $M$ by Gorenstein projectives (see \cite{auslander-bridger}*{Lemma~3.12}, and also \cite{stovicek:derived_big_cotilting}*{Proposition~2.3} for a simpler and more general proof).
The \emph{Gorenstein injective dimension} is defined dually, using cosyzygies and Gorenstein injectives, and one can define the global Gorenstein projective \textup(resp.\ injective\textup) dimension of~$\cA$ by taking the supremum over all $M\in \cA$.

\begin{center} ***\end{center}

For the remainder of this section, assume that the Grothendieck category $\cA$ has enough projectives, e.g.\ $\cA= \Mod \cC$.

\begin{prop} [{\cite{enochs-etal:Gorenstein_cats}*{Theorem 2.28}}] 
\label{prop:dimensions}
The category $\cA$ is Gorenstein \iff its global Gorenstein injective and Gorenstein projective dimensions are both finite.
Moreover, if this is the case then its finitary injective, finitary projective, global Gorenstein injective, and global Gorenstein projective dimensions all coincide.
\end{prop}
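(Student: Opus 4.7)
The argument is organized around the class $\mathcal W \subseteq \cA$ of objects of finite projective (equivalently, by~(1), finite injective) dimension. Denote by $m$ and $n$ the finitary projective and injective dimensions of $\cA$, both finite by~(2).

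For ($\Leftarrow$), assume that the global Gorenstein projective and injective dimensions of $\cA$ are both finite. Using the classical vanishing $\Ext_\cA^{\geq 1}(G, A) = 0$ whenever $G$ is Gorenstein projective and $\pdim A < \infty$, one computes $\Ext^\bullet(B, A)$ via a $\GProj$-resolution of $B$ of length at most the global $\Gpdim$ to obtain $\idim A \leq \text{global }\Gpdim$ whenever $\pdim A < \infty$. The dual argument via $\GInj$-resolutions yields $\pdim A \leq \text{global }\Gidim$ when $\idim A < \infty$. Together these give condition~(1) of Definition~\ref{defi:Gorenstein} and simultaneously bound the finitary dimensions, establishing~(2). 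Condition~(3) is automatic, as a Grothendieck category with enough projectives always admits a projective generator by lifting an arbitrary generator through a projective epimorphism.

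For ($\Rightarrow$), the key claim is $\Gpdim M \leq m$ for every $M \in \cA$, and dually $\Gidim M \leq n$. I would establish the former by invoking the complete hereditary cotorsion pair $(\GProj \cA,\, \mathcal W)$, which produces for each $M$ a short exact sequence $0 \to W \to G \to M \to 0$ with $G \in \GProj \cA$ and $\pdim W \leq m$. Applying the horseshoe lemma to this sequence, together with the fact that $\Omega^m W$ is projective, yields via a dimension-shift argument that $\Omega^m M$ and $\Omega^m G$ agree up to projective summands; since $\Omega^m G \in \GProj \cA$ as a syzygy of a Gorenstein projective, and $\GProj \cA$ is closed under summands, we conclude $\Omega^m M \in \GProj \cA$, so $\Gpdim M \leq m$. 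The ``moreover'' clause then follows by chaining bounds: the forward direction gives $\text{global }\Gpdim \leq m$ and $\text{global }\Gidim \leq n$, while the reverse direction (now applied in the Gorenstein setting just established) gives $m \leq \text{global }\Gidim$ and $n \leq \text{global }\Gpdim$; the resulting four-term inequality forces all four quantities to coincide.

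The principal obstacle is the existence of the complete hereditary cotorsion pair $(\GProj \cA,\, \mathcal W)$ in the generality of a Grothendieck category satisfying~(1)--(3). This is nontrivial and relies on small-object-argument techniques (in the spirit of Eklof--Trlifaj, Hovey, and \v Saroch--\v S\v tov\'\i\v cek); in the present sketch it would be imported as a black-box result rather than reproved from first principles.
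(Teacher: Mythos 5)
This proposition is not proved in the paper; it is cited verbatim from Enochs, Estrada and Garc\'\i a-Rozas (Theorem 2.28), so your proposal is compared against the source rather than against a proof in the text. Your overall architecture is the standard one and the $(\Leftarrow)$ direction is sound: Gorenstein projectives are $\Ext$-acyclic against objects of finite projective dimension, so a bounded $\GProj$-resolution computes the relevant $\Ext$ groups and yields $\idim A \leq \mathrm{global}\ \Gpdim$ whenever $\pdim A < \infty$, and dually; together with the observation that a Grothendieck category with enough projectives has a projective generator, conditions (1)--(3) follow. You are also right that the heavy lifting in $(\Rightarrow)$ is the existence of the complete cotorsion pair $(\GProj\cA,\,\FD\cA)$, which one does have to import.

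However, the syzygy step in $(\Rightarrow)$ has a genuine gap. From the horseshoe lemma applied to $0 \to W \to G \to M \to 0$ one obtains $0 \to \Omega^m W \to \Omega^m G \to \Omega^m M \to 0$ with $\Omega^m W$ projective, but this does \emph{not} imply that $\Omega^m M$ and $\Omega^m G$ agree up to projective summands: that would require the sequence to split, and Gorenstein projectives are not closed under cokernels of monomorphisms from projectives. (Over $\Z$, where $\GProj = \Proj$, the sequence $0 \to \Z \to \Z \to \Z/2 \to 0$ has projective kernel and Gorenstein projective middle term, yet $\Z/2 \notin \GProj$.) The splitting you need is $\Ext^1(\Omega^m M, P) \cong \Ext^{m+1}(M,P) = 0$ for projective $P$; this holds when $\idim P \leq m$, but a priori one only knows $\idim P \leq n$ for projective $P$ (where $n$ is the finitary injective dimension), and you have not established $n \leq m$. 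Replacing $m$ by $\max(m,n)$ throughout repairs the finiteness claim, but then your advertised inequality ``$\mathrm{global}\ \Gpdim \leq m$'' is not what the argument produces. A cleaner route is: first show $\Gpdim M < \infty$ for all $M$ (pasting a projective resolution of $W$ onto $0\to W\to G\to M\to 0$ gives a $\GProj$-resolution of length $\leq m+1$), then invoke the Auslander--Bridger/Holm characterization $\Gpdim M = \sup\{i : \Ext^i(M,P)\neq 0,\ P\ \text{projective}\}$, which yields $\mathrm{global}\ \Gpdim \leq \sup_P \idim P \leq n$ and dually $\mathrm{global}\ \Gidim \leq m$. Combined with $m \leq \mathrm{global}\ \Gpdim$ and $n \leq \mathrm{global}\ \Gidim$ (using $\Gpdim A = \pdim A$ when $\pdim A < \infty$), the chain $m \leq \mathrm{global}\ \Gpdim \leq n \leq \mathrm{global}\ \Gidim \leq m$ closes up and gives the ``moreover'' clause correctly.
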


\begin{defi} \label{defi:Gore_dim}
The integer $n\in \N$ in the above proposition is simply called the \emph{Gorenstein dimension} of~$\cA$, and we say for short that $\cA$ is \emph{$n$-Gorenstein}. 
In this case the Gorenstein projectives are precisely the objects in $\cA$ which are $n$-th syzygies (see \cite{enochs-etal:Gorenstein_cats}*{Theorem~2.26}). 
\end{defi}

We will use the following criterion for recognizing Gorenstein categories:

\begin{lemma} \label{lemma:Gore_criterion}
Let $\cA$ be a Grothendieck category with enough projectives. Then $\cA$ is Gorenstein of Gorenstein dimension at most~$n$, provided the following holds: 
every projective object of~$\cA$ has an injective resolution of length at most~$n$ and every injective object has a projective resolution of length at most~$n$.
\end{lemma}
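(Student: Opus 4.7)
The plan is to verify the three conditions of Definition~\ref{defi:Gorenstein} directly, producing a bound of $n$ on the finitary projective and injective dimensions of~$\cA$, and then invoke Proposition~\ref{prop:dimensions} to identify the Gorenstein dimension with these finitary dimensions.

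Condition~(3) is immediate. Being Grothendieck, $\cA$ has a generator $U$, and using enough projectives we fix an epimorphism $P \twoheadrightarrow U$ with $P$ projective; then $P$ is itself a generator because every object $X$ is covered by $\bigoplus_{f \colon U \to X} U$ and each summand factors through $P$. This $P$ has $\pdim P = 0$, as required. For conditions~(1) and~(2) I would prove simultaneously the stronger claim that, for every $M \in \cA$, the dimensions $\pdim M$ and $\idim M$ are both finite or both infinite, and when finite both are bounded by~$n$. For the implication $\pdim M < \infty \Rightarrow \idim M \leq n$, induct on $k := \pdim M$: the case $k = 0$ is the first hypothesis; for $k \geq 1$ choose any short exact sequence $0 \to M' \to P \to M \to 0$ with $P$ projective, which forces $\pdim M' \leq k-1$ via the isomorphisms $\Ext^i(M', -) \cong \Ext^{i+1}(M, -)$ for $i \geq 1$. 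The inductive hypothesis gives $\idim M' \leq n$ and the first hypothesis gives $\idim P \leq n$, so applying $\Hom(X, -)$ produces the exact sequence
\[
\Ext^{n+1}(X, P) \longrightarrow \Ext^{n+1}(X, M) \longrightarrow \Ext^{n+2}(X, M')
\]
with vanishing outer terms for every $X \in \cA$, hence $\idim M \leq n$. The reverse implication $\idim M < \infty \Rightarrow \pdim M \leq n$ proceeds by a dual induction: Grothendieck categories have enough injectives, so one can pick $0 \to M \to I \to M'' \to 0$ with $I$ injective and $\idim M'' \leq \idim M - 1$, and apply $\Hom(-, X)$ using the second hypothesis in place of the first.

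Combined with condition~(3), this establishes that $\cA$ is Gorenstein in the sense of Definition~\ref{defi:Gorenstein}, with finitary projective and injective dimensions bounded by~$n$. Proposition~\ref{prop:dimensions} then identifies these finitary dimensions with the global Gorenstein projective and injective dimensions of~$\cA$, yielding Gorenstein dimension at most~$n$. No substantial obstacle is foreseen; the main point to observe is the symmetric role of the two hypotheses — the bound on injective dimensions of projectives enters only in the forward direction, and the bound on projective dimensions of injectives only in the reverse — so that each induction consumes precisely one of the two assumptions.
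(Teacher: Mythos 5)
Your proof is correct and follows essentially the same route as the paper's: observe that $\cA$ has a projective generator (giving condition (3)), then use dimension shifting in both directions — with the hypothesis on projectives controlling the forward implication $\pdim < \infty \Rightarrow \idim \leq n$ and the hypothesis on injectives controlling the reverse — to establish conditions (1) and (2), and finish via Proposition~\ref{prop:dimensions}. You simply spell out the dimension-shifting induction in more detail than the paper, which leaves it at the level of ``it follows easily.''
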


\begin{proof}
Property (3) in Definition~\ref{defi:Gorenstein} holds because by hypothesis $\cA$ has a projective generator.
With enough projectives and injectives we can now reason by dimension shifting;
it follows easily from the hypothesis on projectives that every object \emph{of finite projective dimension} has injective dimension at most~$n$, and from the hypothesis on injectives that every object \emph{of finite injective dimension} has projective dimension at most~$n$. This proves that $\cA$ satisfies condition~(1). But then (2) must hold by symmetry:  if an object has finite projective (resp.\ injective) dimension, then by the above remarks it must have injective (resp.\ projective) dimension at most $n$ and therefore also projective (resp.\ injective) dimension at most~$n$.
\end{proof}

\begin{notation} 
We shall consider the following full subcategories: 
\begin{align*}
\Proj \cA &= \{ M\in \cA \mid M \textrm{ is projective }\} \\
\FD \cA &= \{M\in \cA \mid \pdim M < \infty \} \\
\GProj \cA &= \{G\in \cA \mid G \; \text{is Gorenstein projective}\} \\
\GInj \cA &= \{G\in \cA \mid G \; \text{is Gorenstein injective}\}
\end{align*}
If $\cA=\Mod \cC$, we will often abuse notation, as one usually does with rings, writing $\GProj \cC$ for $\GProj \cA$, mentioning the Gorenstein dimension \emph{of~$\cC$}, etc.
\end{notation}

A \emph{complete hereditary cotorsion pair} in $\cA$ is a pair $(\cL, \cR)$ of full subcategories with the following three properties:
\begin{enumerate}
\item $\cL= \{X\mid \Ext^1_\cC(X,R)=0 \, \forall R\in \cR\}$ and $\cR= \{Y\mid \Ext^1_\cC(L,Y)=0 \, \forall L\in \cL\}$;
\item $\Ext^n_\cC(L,R)=0$ for all $L\in \cL$, $R\in \cR$ and $n\geq1$;
\item for each $M\in \cA$ there exist exact sequences $0\to R\to L\to M \to 0$ and $0\to M\to R'\to L'\to 0$ with $L,L'\in \cL$ and $R,R'\in \cR$.
\end{enumerate}

\begin{remark} \label{rem:cotors}
Property (1) says that $(\cL,\cR)$ is a cotorsion pair, and (2) and (3) say that it is hereditary and complete, respectively.
If (2) and (3) hold, then (1) follows from the weaker assumption that both $\cL$ and $\cR$ are closed under retracts.
\end{remark}

\begin{prop} [{\cite{enochs-etal:Gorenstein_cats}*{Theorems 2.25 and 2.26}}]  \label{prop:pairs}
If $\cA$ is Gorenstein, we have in $\cA$ the complete hereditary cotorsion pairs $(\FD \cA, \GInj \cA)$ and $(\GProj \cA, \FD \cA)$.
\end{prop}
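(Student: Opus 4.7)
The plan is to verify, for each of the two pairs, the Ext-orthogonality condition (2) and the existence of approximation sequences (3); once these are in place, Remark~\ref{rem:cotors} will yield the cotorsion-pair axiom (1), since $\GProj \cA$, $\GInj \cA$ and $\FD \cA$ are all closed under retracts (for the two Gorenstein classes this is immediate from the retract-closure of modules admitting complete projective or injective resolutions; for $\FD \cA$ it is a standard dimension-shift). The two pairs are handled by parallel arguments in which projective resolutions are replaced by injective coresolutions, so I focus on $(\GProj \cA, \FD \cA)$.

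For condition (2), I start from the observation that Gorenstein projectives are by definition Ext-orthogonal to projectives in positive degrees: if $G$ admits a complete projective resolution, then $\Ext^i_\cA(G, P) = 0$ for every projective $P$ and every $i \geq 1$. Given $L \in \FD \cA$ with a finite projective resolution $0 \to P_k \to P_{k-1} \to \cdots \to P_0 \to L \to 0$, I break it into short exact sequences and propagate the vanishing by dimension shifting through the long exact sequences of $\Hom_\cA(G, -)$, obtaining $\Ext^i_\cA(G, L) = 0$ for all $i \geq 1$.

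For condition (3) I use that $\cA$ is $n$-Gorenstein (Proposition~\ref{prop:dimensions}) and that the Gorenstein projectives are exactly the $n$-th syzygies (Definition~\ref{defi:Gore_dim}). Given $M \in \cA$, I choose a projective resolution and set $G := \Omega^n M$, so that the truncation $0 \to G \to P_{n-1} \to \cdots \to P_0 \to M \to 0$ exhibits $G$ as Gorenstein projective. To turn this into a single short exact sequence, I splice with a complete projective resolution of $G$ in the appropriate direction, then collapse the resulting long exact sequence via iterated pushouts in Wakamatsu fashion, producing $0 \to F \to G' \to M \to 0$ with $G' \in \GProj \cA$ and $F \in \FD \cA$; the Ext-orthogonality established in step (2) is exactly what guarantees that the syzygies introduced by the splicing remain of finite projective dimension. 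The dual sequence $0 \to M \to F' \to G'' \to 0$ comes from the forward part of the complete resolution of $G$ by the same procedure. The main obstacle I anticipate is this last collapsing step: one must check carefully that the pushout construction does not inflate the projective dimension of $F$ beyond $n$, and this is precisely where the equality between finitary and Gorenstein dimensions provided by Proposition~\ref{prop:dimensions} becomes indispensable.
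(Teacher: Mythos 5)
The paper does not prove this proposition --- it simply cites \cite{enochs-etal:Gorenstein_cats}*{Theorems 2.25 and 2.26} --- so there is no internal argument to compare against. Your sketch reproduces in outline the standard Auslander--Buchweitz/Wakamatsu approximation argument underlying those theorems, and the global structure is sound: reducing axiom (1) to conditions (2), (3) and retract-closure via Remark~\ref{rem:cotors} is valid, and your dimension-shift proof of the Ext-orthogonality (2) is complete and correct.

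Where the argument is thin is condition (3), and I think you misidentify the difficulty. The concern that ``the pushout construction might inflate the projective dimension of $F$ beyond $n$'' is not really an issue: each inductive step produces the new finite-dimensional term as a cokernel of a monomorphism from the previous one into a projective, so its projective dimension grows by at most one and in particular stays finite, and in an $n$-Gorenstein category finite projective dimension is automatically $\le n$ by Proposition~\ref{prop:dimensions}. The genuine content is on the other side of the approximation: one must verify that the module mapping onto $M$ really is Gorenstein projective. Concretely, one descent step of the induction takes $0\to F'\to G'\to K\to 0$ and $0\to K\to G_0\to M\to 0$ (with $G',G_0\in\GProj\cA$ and $F'\in\FD\cA$), chooses a projective embedding $0\to G'\to Q\to G''\to 0$ with $G''\in\GProj\cA$, and forms $X:=\coker\bigl(G'\xrightarrow{(i,c)}Q\oplus G_0\bigr)$ where $c$ is the composite $G'\twoheadrightarrow K\hookrightarrow G_0$. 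Two snake-lemma chases then identify $X$ as an extension $0\to G_0\to X\to G''\to 0$ (so $X\in\GProj\cA$ by extension-closure of $\GProj\cA$, itself a Horseshoe-lemma fact) and identify $\ker(X\to M)$ as sitting in $0\to F'\to Q\to\ker(X\to M)\to 0$ (so $\ker(X\to M)\in\FD\cA$). Those chases are the real work and are not captured by ``collapse via iterated pushouts'' plus an appeal to condition (2); in fact (2) plays no essential role here beyond the defining vanishing $\Ext^{\ge1}_\cA(\GProj\cA,\Proj\cA)=0$. A further small point: the second sequence $0\to M\to F'\to G''\to 0$ is most easily obtained from the first one by pushing out along a projective embedding of the constructed $G'$, rather than directly from the complete resolution of the original $n$-th syzygy $G$.
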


Note that both $\GProj \cA$ and $\GInj \cA$ are extension closed in $\Mod \cA$ and therefore both inherit an exact structure (we refer to \cite{buehler:exact} for exact categories). In general we will make statements only for one of $\GProj \cA$ or $\GInj \cA$ and it is understood that the dual holds.

\begin{prop} \label{prop:triangulated}
Let $\cA$ be a Gorenstein category. Then $\GProj \cA$ is a Frobenius exact category, with $\Proj \cA$ as its subcategory of injective-projectives. In particular, the (additive) quotient category $\stGProj \cA:= \GProj \cA / \Proj \cA$ inherits a canonical structure of triangulated category.
\end{prop}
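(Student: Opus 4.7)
The plan is to verify that $\GProj \cA$ is a Frobenius exact category with $\Proj \cA$ as its class of injective-projectives, and then invoke Happel's theorem which endows the stable category of any Frobenius exact category with a canonical triangulated structure.

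First I would observe that $\Proj \cA \subseteq \GProj \cA$: any projective object $P$ is trivially its own complete projective resolution (concentrated in one degree), so it is Gorenstein projective. Next, the exact structure: by Proposition~\ref{prop:pairs}, $(\GProj \cA, \FD \cA)$ is a complete hereditary cotorsion pair, and in particular $\GProj \cA$ is closed under extensions in~$\cA$, so it inherits an exact structure in the sense of Quillen (cf.~\cite{buehler:exact}).

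Then I would check the four defining properties of a Frobenius category with $\Proj \cA$ as injective-projectives. (i) Objects of $\Proj \cA$ are projective in the exact structure since they are projective in the ambient abelian category~$\cA$. (ii) They are also injective in the exact structure: for any $G \in \GProj \cA$ and $P \in \Proj \cA \subseteq \FD \cA$, we have $\Ext^1_\cA(G,P) = 0$ by the cotorsion pair property of Proposition~\ref{prop:pairs}. (iii) There are enough projectives: for any $G \in \GProj \cA$, pick any surjection $P \twoheadrightarrow G$ with $P$ projective in~$\cA$; its kernel $G'$ is a first syzygy of~$G$ and is again Gorenstein projective, since by definition~$G$ sits inside a complete projective resolution whose ``negative half'' exhibits $G'$ as an infinite syzygy. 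This yields a conflation $G' \rightarrowtail P \twoheadrightarrow G$ with $P \in \Proj \cA$. (iv) There are enough injectives: the ``positive half'' of the complete projective resolution of~$G$ provides a short exact sequence $0 \to G \to P^0 \to G'' \to 0$ with $P^0 \in \Proj \cA$ and $G''$ again Gorenstein projective, as desired.

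It remains to see that $\Proj \cA$ is not merely \emph{contained in} but \emph{equal to} the class of injective-projective objects of the exact category $\GProj \cA$. If $G \in \GProj \cA$ is injective in the exact structure, the conflation from (iv) splits, exhibiting $G$ as a retract of a projective object of~$\cA$, hence $G \in \Proj \cA$; the dual argument with (iii) handles projective objects in the exact structure. Thus $\GProj \cA$ is Frobenius with the desired injective-projectives, and Happel's theorem gives the canonical triangulated structure on $\stGProj \cA = \GProj \cA / \Proj \cA$. I do not anticipate any serious obstacle; the only slightly delicate point is the interplay between Gorenstein-projectivity of syzygies and cosyzygies in (iii)--(iv), but this is immediate from the definition of a complete projective resolution.
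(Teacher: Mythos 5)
Your proof is correct and follows essentially the same route as the paper: the authors also deduce the Frobenius structure from the cotorsion pair of Proposition~\ref{prop:pairs} and then invoke Happel, only they leave the verification as ``standard''; you have simply written out those standard details. One cosmetic remark: a complete projective resolution of a projective $P$ is not ``concentrated in one degree'' (such a complex is not acyclic unless $P=0$) but rather something like $\cdots\to 0\to P\xrightarrow{\id}P\to 0\to\cdots$, and in step~(iii) it is cleaner either to use the surjection coming directly from the complete resolution, or to justify that the kernel of an \emph{arbitrary} projective cover is Gorenstein projective via the hereditary cotorsion pair (dimension shifting gives $\Ext^1(G',F)\cong\Ext^2(G,F)=0$ for $F\in\FD\cA$) rather than appealing to the ``negative half'' of a resolution that need not have your chosen $P$ in degree zero.
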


\begin{proof} 
It is a standard fact, proved by Happel in \cite{happel:triangulated}*{Ch.\,I}, that the stable category of a Frobenius category is triangulated. That $\GProj \cA$ is Frobenius is also standard, and an easy consequence of Proposition~\ref{prop:pairs}.
\end{proof}

In fact, Proposition~\ref{prop:pairs} allows for another point of view on the triangulated structure which will be useful below, using a more homotopy theoretic language. We refer to~\cites{hirschhorn,hovey:model} for the relevant terminology and basic facts.

\begin{prop} \label{prop:models}
Let $\cA$ be a Gorenstein category. Then there are two stable model structures on $\cA$ whose homotopy categories are triangle equivalent to $\stGProj \cA$. In both cases, the class of weak equivalences consists of all compositions $w_ew_m$, where $w_m$ is a monomorphism with cokernel in $\FD \cA$ and $w_e$ is an epimorphism with kernel in $\FD \cA$.

The first model structure (the \emph{Gorenstein projective model structure}) has mono\-mor\-phisms with Gorenstein projective cokernels as cofibrations and all epimorphisms as fibrations. The second model structure (the \emph{Gorenstein injective model structure}) has all monomorphisms as cofibrations and epimorphisms with Gorenstein injective kernels as fibrations.
\end{prop}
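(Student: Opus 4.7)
The plan is to apply Hovey's correspondence theorem \cite{hovey:model} (in the hereditary refinement later developed by Gillespie), which produces a stable abelian model structure on any abelian category equipped with two complete hereditary cotorsion pairs whose left- and right-hand classes agree after intersecting with a common thick subcategory of trivial objects. Both desired model structures will then fall out almost directly from Proposition~\ref{prop:pairs}. Two preliminary verifications are required. First, $\mathcal{W} := \FD \cA$ must be a thick subcategory of $\cA$, i.e., closed under retracts and satisfying the two-out-of-three property along short exact sequences; this is a routine dimension-shifting argument from the long $\Ext$ sequence. Second, one needs the compatibility identities $\GProj \cA \cap \FD \cA = \Proj \cA$ and $\GInj \cA \cap \FD \cA = \{I \in \cA \mid I \text{ injective}\}$: in each case a Gorenstein projective (resp.\ injective) object of finite projective (resp.\ injective) dimension has vanishing higher $\Ext$ into all projectives (resp.\ from all injectives), hence must itself be projective (resp.\ injective).

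For the Gorenstein projective structure I would apply Hovey's theorem to the complete hereditary cotorsion pairs $(\GProj \cA, \FD \cA)$ of Proposition~\ref{prop:pairs} and the trivial pair $(\Proj \cA, \cA)$, with $\mathcal{W}$ as the class of trivial objects. The compatibility checks above are exactly what is needed, and the output is an abelian model structure whose cofibrations are the monomorphisms with Gorenstein projective cokernel and whose fibrations are all epimorphisms, as desired. Stability, equivalently the homotopy category being triangulated in a canonical way, follows from the hereditary nature of both cotorsion pairs. The Gorenstein injective structure is then built symmetrically from the cotorsion pairs $(\FD \cA, \GInj \cA)$ and $(\cA, \{I \in \cA \mid I \text{ injective}\})$.

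Finally, the description of the weak equivalences and the identification of the homotopy category come from the general Hovey--Gillespie formalism: in such a stable abelian model structure, a morphism is a weak equivalence precisely when it factors as a monomorphism with cokernel in $\mathcal{W}$ followed by an epimorphism with kernel in $\mathcal{W}$, which is the composite $w_e w_m$ in the statement. Since $\mathcal{W} = \FD \cA$ is the same for both model structures, they share the same weak equivalences and therefore have canonically equivalent homotopy categories (as triangulated categories). To identify this common homotopy category with $\stGProj \cA$, I would observe that the bifibrant objects of the Gorenstein projective structure are $\GProj \cA \cap \cA = \GProj \cA$ while the trivially bifibrant ones are $\Proj \cA$; by the general theory the homotopy category is then the Frobenius stable category $\GProj \cA / \Proj \cA = \stGProj \cA$ supplied by Proposition~\ref{prop:triangulated}. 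The main obstacle is essentially bookkeeping: confirming that the Hovey--Gillespie machinery goes through verbatim in the Grothendieck (rather than module-category) setting and deducing the explicit $w_e w_m$ description of weak equivalences from the implicit model-categorical definition via trivial factorizations; no new ideas beyond Proposition~\ref{prop:pairs} and the thickness of $\FD \cA$ are required.
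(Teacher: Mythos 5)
Your proposal is correct and follows essentially the same route as the paper: both apply Hovey's theorem on cotorsion pairs and model structures, using the two hereditary complete cotorsion pairs of Proposition~\ref{prop:pairs} together with the compatibility identities $\GProj\cA \cap \FD\cA = \Proj\cA$ and $\GInj\cA \cap \FD\cA = \{\text{injectives}\}$, and deduce the description of weak equivalences and the identification of the homotopy category with $\stGProj\cA$ from the general Hovey--Gillespie formalism. The only minor slip is the citation: the relevant result is Hovey's abelian model structure theorem from the cotorsion-pair paper, not the model categories book.
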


\begin{proof}
The idea comes from \cite{hovey:abelian-models}*{\S8}, although there the theorem is proved only for $\cA = \Mod R$ with $R$ Iwanaga-Gorenstein. All that was needed, however, were the cotorsion pairs from Proposition~\ref{prop:pairs} along with the easily verifiable facts that $\GProj\cA \cap \FD\cA = \Proj\cA$ and dually $\GInj\cA \cap \FD\cA$ is precisely the class of injective objects in $\cA$. Then \cite{hovey:abelian-models}*{Theorem 2.2} applies. See also~\cite{stovicek:abelian-models} and references there for an extensive discussion of abelian and exact model structures, and in particular \cite{stovicek:abelian-models}*{\S6} for the above description of weak equivalences.
\end{proof}

\begin{center} ***\end{center}

Now assume that $\cA$ is a Gorenstein category of the form $\Mod \cC$ which moreover is locally coherent. That is, we require that the category $\modu \cC$ of finitely presented modules be abelian, or equivalently that the additive closure of $\cC$ have weak kernels; see~\cite{cb:loc-fin-pres}*{\S2.4}.
(This holds for instance if $\Mod \cC$ is locally noetherian, e.g.\ if $\cC$ is an Iwanaga-Gorenstein ring.)
Here we also want to  consider the following full subcategories:
\begin{align*}
\proj \cC &= \modu \cC \cap \Proj \cC \\
\fd \cC &= \modu \cC \cap \FD \cC  \\
\Gproj \cC &= \modu \cC \cap \GProj \cC
\end{align*}

\begin{thm} \label{thm:triangulated_small}
Assume that $\cA=\Mod \cC$ is Gorenstein locally coherent. Then:
\begin{enumerate}
\item[(i)]
$(\Gproj \cC, \fd \cC)$ is a complete hereditary cotorsion pair in $\modu \cC$.
\item[(ii)]
The exact category $\Gproj \cC$ is Frobenius with $\proj \cC$ as projective-injectives. In particular, the stable category $\underline{\Gproj} \,\cC = \Gproj \cC / \proj \cC$ is triangulated. 
\item[(iii)]
Moreover, we can identify $\underline{\Gproj} \,\cC$ with the full subcategory of compact objects in $\underline{\GProj}\, \cC$.
\end{enumerate}
\end{thm}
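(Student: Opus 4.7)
The plan is to prove the three parts in sequence, with the bulk of the work lying in part (i); parts (ii) and (iii) will then follow from (i) by standard arguments. For part (i), the task is to restrict the complete hereditary cotorsion pair $(\GProj \cC, \FD \cC)$ of Proposition~\ref{prop:pairs} to $\modu \cC$. The orthogonality $\Ext^{\geq 1}_\cC(L,R)=0$ and closure under extensions and retracts pass verbatim from $\Mod \cC$ to $\modu \cC$, since the latter is an abelian subcategory by the local coherence hypothesis. Only completeness inside $\modu \cC$ requires genuine work, and this is where the main obstacle lies.

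To produce the approximation $0 \to R \to L \to M \to 0$ for $M \in \modu \cC$, I would first exploit local coherence to build a resolution $0 \to K_n \to P_{n-1} \to \cdots \to P_0 \to M \to 0$ with each $P_i$ finitely generated projective and each syzygy finitely presented; the $n$-th syzygy $K_n$ is then automatically Gorenstein projective by Definition~\ref{defi:Gore_dim}, hence $K_n \in \Gproj \cC$. An Auslander--Buchweitz-type splicing then produces the short approximation: the module $K_n$, being finitely presented and Gorenstein projective, admits a \emph{finitely presented} cosyzygy resolution $0 \to K_n \to Q_0 \to \cdots \to Q_{n-1} \to G \to 0$ of length $n$, with the $Q_i$ finitely generated projective and $G$ finitely presented Gorenstein projective. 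Splicing the two resolutions along the shared $K_n$ yields the desired exact sequence $0 \to R \to L \to M \to 0$ with $L \in \Gproj \cC$ and $R \in \fd \cC$. The dual approximation $0 \to M \to R' \to L' \to 0$ follows by the symmetric construction, exploiting the second cotorsion pair $(\FD \cA, \GInj \cA)$ of Proposition~\ref{prop:pairs}. The hard part here is justifying that the cosyzygy resolution of $K_n$ can be built inside $\modu \cC$; this combines local coherence with the observation that any embedding of a finitely presented Gorenstein projective into a projective of $\Mod \cC$ can be replaced by an embedding into a finitely generated projective whose cokernel is again finitely presented and Gorenstein projective.

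Part (ii) is then immediate: since $\Gproj \cC \cap \fd \cC = \proj \cC$ (any f.p.\ module that is both Gorenstein projective and of finite projective dimension is necessarily projective), the cotorsion pair from (i) endows $\Gproj \cC$ with a Frobenius exact structure whose projective-injectives are exactly $\proj \cC$, and Happel's theorem yields the triangulated structure on $\stGproj \cC$. For part (iii), there is an induced triangle functor $F\colon \stGproj \cC \to \stGProj \cC$. I would show $F$ is fully faithful by verifying that a morphism $f\colon G \to G'$ in $\Gproj \cC$ factoring through some $Q \in \Proj \cC$ also factors through some $P \in \proj \cC$: using the Frobenius embedding $0 \to G \to P_G \to G^+ \to 0$ provided by part~(i), a pushout argument shows that the obstruction class of $f$ in $\Ext^1_\cC(G^+,G')$ vanishes, because $\Ext^1_\cC(G^+,Q)=0$ by Gorenstein projectivity of $G^+$. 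The image of $F$ consists of compact objects, since $\Hom_\cC(G,-)$ preserves coproducts (as $G$ is f.p.\ in $\Mod \cC$) and nullity of a map into a coproduct decomposes onto its finitely many nonzero components due to the finite support of $\Img(G \to \bigoplus X_i)$. Finally, every compact object of $\stGProj \cC$ lies in the image of $F$: using that every Gorenstein projective arises, up to projective summands, as a directed colimit of finitely presented Gorenstein projectives via local coherence, $\stGproj \cC$ compactly generates $\stGProj \cC$ in the triangulated sense, so every compact object is a retract of an object of the image, and retracts remain in $\stGproj \cC$ by the idempotent completeness of $\modu \cC$ and the closure of $\Gproj \cC$ under direct summands.
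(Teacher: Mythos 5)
Your treatment of parts (i) and (ii) follows the same broad strategy as the paper: restrict the big cotorsion pair $(\GProj\cA,\FD\cA)$ to $\modu\cC$ using the local coherence of $\Mod\cC$ to produce finitely presented syzygies, then run an Auslander--Buchweitz splicing to get completeness. (The paper simply cites \cite{auslander-buchweitz}*{Theorem~1.1} with $\boldsymbol\omega=\proj\cC$, $\boldsymbol X=\Gproj\cC$; you re-derive the splicing by hand.) You correctly flag the one delicate input --- that a finitely presented Gorenstein projective admits a cosyzygy sequence $0\to G\to P\to G'\to 0$ entirely inside $\modu\cC$ --- but neither you nor the paper spells this out, so I will not press the point. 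Part (ii) is identical.

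The gap is in part (iii). Your full-faithfulness and compactness arguments are fine (full faithfulness is in fact even easier than your pushout route: a map $G\to Q$ with $G$ finitely presented and $Q$ a possibly infinite projective already lands in a finitely generated direct summand of $Q$). The genuine issue is the third claim, that every compact object of $\stGProj\cC$ lies in $\stGproj\cC$. For the standard "retract" argument you invoke one must first prove that $\stGproj\cC$ \emph{generates} $\stGProj\cC$, i.e.\ that $\underline{\Hom}_\cC(G,X)=0$ for all $G\in\Gproj\cC$ forces $X\cong 0$ in $\stGProj\cC$. Your justification --- ``every Gorenstein projective arises, up to projective summands, as a directed colimit of finitely presented Gorenstein projectives via local coherence, hence $\stGproj\cC$ compactly generates'' --- does not hold up. First, it is not clear over a merely locally coherent Gorenstein category that big Gorenstein projectives are filtered colimits of finitely presented ones: being Gorenstein projective is the existence of a complete resolution, an inherently non-finitary condition, and Gorenstein projectives are in general not closed under directed colimits. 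Second, and decisively, even granting such a colimit presentation, a directed colimit taken in the abelian category $\Mod\cC$ is not a homotopy colimit in the triangulated category $\stGProj\cC$, so it gives no information about membership in the localizing subcategory generated by $\stGproj\cC$, nor about the vanishing criterion above. This is precisely why the paper takes a different route here: it passes to the Gorenstein injective model structure, represents a fibrant $X$ by a totally acyclic complex $I$ of injectives, observes that the vanishing of $\Ho\cA(G,Z^n(I))$ for all finitely presented $G$ makes $I$ a pure acyclic complex in the sense of~\cite{stovicek:pure-derived}, and then invokes the nontrivial contractibility theorem \cite{stovicek:pure-derived}*{Corollary 5.5} to conclude $X\cong 0$. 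That purity input cannot be replaced by a soft colimit argument, and your proposal is missing it.
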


\begin{proof}
(i) We have to prove conditions (1)--(3) of the definition. 
Property (2) holds because  by Proposition~\ref{prop:pairs} it holds for all (possibly big) modules.
Since both classes are closed under retracts, by Remark \ref{rem:cotors} it remains only to prove property~(3).
Now property~(3) follows from \cite{auslander-buchweitz}*{Theorem~1.1} by setting 
$\boldsymbol\omega := \proj \cC$ and $\boldsymbol{X}:= \Gproj \cC$, so that $\hat{\boldsymbol{\omega}}= \fd \cC$ and $\hat{\boldsymbol{X}}= \modu \cC$. (Indeed, for the last equality: for any $M\in \modu \cC$ we can construct a resolution by objects in $\proj \cC$; since we are working in the Gorenstein category $\Mod \cC$, at some point a Gorenstein projective syzygy will appear, which will necessarily be finitely presented.)

Part (ii) follows from (i) by the same argument as in the proof of Proposition~\ref{prop:triangulated}.

To prove~(iii), let $W$ be the class of weak equivalences from Proposition~\ref{prop:models} and denote as usual by $\Ho\cA := \cA[W\inv]$ the corresponding homotopy category. We already know that $\Ho\cA \simeq \stGProj \cA$ is triangulated.

Note that each suspension or desuspension of a finitely presented $\cC$-module is again isomorphic to a finitely presented $\cC$-module in $\Ho\cA$. To see this, we employ the Gorenstein projective model structure. Every $G \in \modu\cC$ is by (i) weakly equivalent to some $G' \in \Gproj\cC$. A desuspension of $G'$ is just a syzygy while to construct a suspension we again use the completeness of $(\Gproj \cC, \fd \cC)$. At any rate, both $\Sigma G'$ and $\Sigma\inv G'$ can be taken in $\Gproj \cC$ up to isomorphism.

Next we claim that if $X \in \Ho\cA$ is such that $\Ho\cA(G,X) = 0$ for each $G \in \modu\cC$, then $X \cong 0$ in $\Ho\cA$. Now we use the Gorenstein injective model structure on $\cA$ and assume that $X$ is fibrant there. In other words, $X\in \GInj \cA$ and as such, $X$ is isomorphic in $\cA$ to the zero cocycles $Z^0(I)$ of some acyclic complex of $\cC$-modules
\[ I\colon \quad \cdots \to I^{-2} \to I^{-1} \to I^0 \to I^1 \to I^2 \to \cdots \]
with injective components. In particular, all the other cocycles of $I$ are also Gorenstein injectives and are just (de)suspensions of $X$ in $\Ho\cA$ up to isomorphism. Hence we also have $\Ho\cA(G,Z^n(I)) \cong \K(\cA)(G,\Sigma^n I) = 0$ for each $G \in \modu\cC$. At this point we use results from~\cite{stovicek:pure-derived}.
The complex $I$ is, by the above, a pure acyclic complex in the sense of \cite{stovicek:pure-derived}*{Def. 4.12}, and hence contractible by \cite{stovicek:pure-derived}*{Corollary 5.5} (similar results to~\cite{stovicek:pure-derived} have been also obtained in~\cite{krause:auslander-formula} by different arguments). Thus, $X \cong Z^0(I)$ is injective and so weakly equivalent to $0$, finishing the proof of the claim.

Turning back to the Gorenstein projective model structure, we have proved that if $\underline{\Hom}_\cC(G,X) = 0$ in $\stGProj \cC$ for each $G \in \Gproj\cC$, then $X \cong 0$ in $\stGProj \cC$. As one also easily checks, the inclusion $\Gproj \cC \to \GProj \cC $ descends to a fully faithful embedding $\underline{\Gproj} \,\cC \to \underline{\GProj}\,\cC$ and $\underline{\Gproj} \,\cC$ consists of compact objects in $\stGProj\cC$. The conclusion follows from standard facts about compactly generated triangulated categories; see~\cite{neemanLoc}*{Lemma 2.2} or~\cite{krause:chicago}*{Lemma 6.5}.
\end{proof}

\begin{remark}
The proof of Theorem~\ref{thm:triangulated_small}(iii) is considerably easier if $\cA = \Mod\cC$ is assumed to be locally noetherian. In such a case it is easy to prove by simple dimension shifting that $\FD\cC = \ker \Ext^1_\cC(\Gproj\,\cC,-)$, and consequently
\[ \ker\underline{\Hom}_\cC(\Gproj\,\cC,-) = \FD\cC \cap \GProj\cC = \Proj\cC \textrm{ in } \GProj\cC. \]
\end{remark}

\begin{remark}
\label{rem:sing}
Analogously to the case of Iwanaga-Gorenstein rings treated in \cite{buchweitz}, one obtains a triangle equivalence $\underline{\Gproj} \,\cC \simeq \Db(\modu\cC)/\Db(\proj\cC)$ whenever $\Mod\cC$ is Gorenstein and locally coherent. The latter quotient is known under the names \emph{stable derived category}~\cite{krause:stable} or \emph{singularity category}~\cite{orlov:graded} of $\cC$ and denoted by $\D_{\mathrm{Sg}}(\cC)$. 
In the language of model categories, one first notes that there is a Quillen equivalence $S^0\colon \Mod\cC \rightleftarrows \Ch(\cC)\colon Z^0$, where $S^0$ reinterprets a module as a complex concentrated in degree zero, $Z^0$ is the degree zero cocycle functor, $\Mod\cC$ is equipped with the Gorenstein injective model structure, and $\Ch(\cC)$ carries the model structure from~\cite{becker:models}*{Proposition 2.2.1(2)} (see also \cite{stovicek:pure-derived}*{Proposition 7.6}). The rest quickly follows from \cite{becker:models}*{Corollary 2.2.2} and/or \cite{stovicek:pure-derived}*{Theorem 7.7}.
\end{remark}

\begin{remark}
\label{rem:ding-chen}
For the sake of completeness, we note that there exists an alternative generalization of Iwanaga-Gorenstein rings to the non-noetherian setting studied originally by Ding and Chen~\cites{ding-chen:flat-inj,ding-chen:self-fp-inj}. They considered two-sided coherent rings with finite self-fp-injective dimension on both sides, and modules over such rings. In all our examples of Gorenstein categories $\Mod\cC$, the category $\cC$ (viewed as a ring with several objects) will satisfy these conditions. Hence we probably could, also in view of recent results~\cite{gillespie:ding-chen} and~\cite{stovicek:pure-derived}*{Prop. 7.9}, use this approach too.
\end{remark}

\section{The Gorenstein transfer property} \label{sec:GTP}

Let $i_*\colon \cA\to \cB$ be a ($\base$-linear) functor between ($\base$-linear) abelian categories, and assume that $i_*$ has both a left adjoint $i^*\colon \cB\to \cA$ and a right adjoint $i^!\colon \cB\to \cA$.
 Recall that the functor $i_*\colon \cA\to \cB$ is \emph{Frobenius} if $i^*$ and $i^!$ are isomorphic functors. Let us consider some familiar special cases involving rings.

\begin{example} \label{ex:algebras}
Let $i\colon B\to A$ be a homomorphism of $\base$-algebras.
Then the restriction functor $i_*\colon \Mod A\to \Mod B$ is Frobenius if and only if $A$ is finitely generated projective as a right $B$-module and ${}_BA_A \cong ({}_AA_B)^\vee:=\Hom_B({}_AA_B,B)$ are isomorphic $B$-$A$-bimodules, that is, if and only if $i$ is a \emph{Frobenius ring extension} in the sense of Kasch~\cite{kasch}.
To see this, simply recall that $i^*= (-)\otimes_B{}_BA_A$ and $i^!=\Hom_B({}_AA_B,-)\cong (-)\otimes_B({}_AA_B)^\vee$, where the last isomorphism holds because $i^* \cong i_!$ implies that  $A$ is a finitely generated projective as a right $B$-module. (This was first observed by K.\,Morita~\cite{morita:adjoints}.)
\end{example}

\begin{example}
In the previous example, assume moreover that $B=\base$ and $\base$ is a field, so that $A$ is a finite dimensional $\base$-algebra.  
Then $i_*\colon \Mod A\to \Mod \base$ is Frobenius precisely if $A$ is a Frobenius algebra in the classical sense. 
\end{example}

\begin{remark}
\label{rem:QF}
The notion of Frobenius functor can be generalized to that of a quasi-Frobenius functor, or even to ``strongly adjoint'' pairs of functors (see~\cite{morita:adjoints}). We require a similar, but slightly different, generalization which is more adapted to the study of Gorenstein categories.
This leads us to the next definition.
\end{remark}

\begin{defi}
\label{defi:GTP}
Let $\cA$ and $\cB$ be two Grothendieck categories with enough projective objects,  
and let $i_*\colon \cA\to \cB$ be a faithful exact functor admitting both a left adjoint $i^*$ and a right adjoint~$i^!$.
We say $i_*$ has the \emph{Gorenstein transfer property}, or \emph{GT-property} for short, if the following holds: 
\begin{itemize}
\item[(GT)] When restricting to the full subcategory $\cF:= \{X\in \cB \mid \pdim X <\infty \} $ of~$\cB$, consisting of objects with finite projective dimension, there is an equivalence $\Phi\colon \cA\stackrel{\sim}{\to}\cA$ such that the functor $i^*|_\cF$ is a retract of a product of copies of~$(\Phi\circ i^!)|_\cF$, and similarly, $(\Phi\circ i^!)|_\cF$ is a retract of a coproduct of copies of~$i^*|_\cF$.
\end{itemize}
We say that $i_*$ has the \emph{strong GT-property}, or \emph{SGT-property}, if (GT) holds because of an isomorphism $i^*|_{\cF}\cong (\Phi\circ i^!)|_\cF$ of functors $\cF\to \cA$.
\begin{equation*}
\xymatrix{
\cA \ar@(dr,ur)[]_{\Phi}
 \ar[d]|{i_*} \\
\cB
 \ar@/^10pt/[u]^{i^*}
 \ar@/_10pt/[u]_{i^!}
}
\end{equation*}
\end{defi}

\begin{lemma} \label{lemma:i_*FD}
Assume that $i_*\colon \cA \to \cB$ has the GT-property and that $\cB$ is Gorenstein.
Then both adjoints $i^*,i^!$ preserve injectives and projectives and restrict to exact functors on $\FD \cB$.
\end{lemma}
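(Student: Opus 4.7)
The plan is as follows. The key enabling observation is that, because $\cB$ is Gorenstein, the subcategory $\cF$ contains every projective and every injective object of~$\cB$: by condition~(1) of Definition~\ref{defi:Gorenstein} one has $\pdim X <\infty$ iff $\idim X <\infty$, so injectives (with $\idim = 0$) lie in $\cF$ just as projectives (with $\pdim = 0$) do. Hence the GT-property is available on exactly the classes of objects whose images we need to control.

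Two of the four preservation statements are automatic: $i^*$ preserves projectives and $i^!$ preserves injectives, because they are adjoints of the exact functor~$i_*$. For the remaining two I would transfer using (GT). Given an injective $I\in\cF$, the functor value $i^*(I)$ is a retract of a product of copies of $\Phi(i^!(I))$; since $i^!$ preserves injectives, $\Phi$ is an equivalence, and products and retracts of injectives are injective, $i^*(I)$ is injective. Dually, for a projective $P\in\cF$, $\Phi(i^!(P))$ is a retract of a coproduct of copies of $i^*(P)$, which is projective; coproducts and retracts of projectives are projective, and applying $\Phi^{-1}$ shows $i^!(P)$ is projective.

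For exactness on~$\cF$ I would again use that half of it is automatic: $i^*$ is right exact and $i^!$ is left exact as adjoints, so in each case it suffices to establish the complementary half. The GT-property supplies it. On the one hand, $i^*|_\cF$ is a retract of a product of copies of $(\Phi\circ i^!)|_\cF$; a product of left-exact functors valued in the Grothendieck category $\cA$ is left exact, and left-exactness is inherited by retracts, so $i^*|_\cF$ is left exact, hence exact. On the other hand, $(\Phi\circ i^!)|_\cF$ is a retract of a coproduct of copies of $i^*|_\cF$; coproducts are exact in any Grothendieck category, so a coproduct of right-exact functors is right exact, whence $(\Phi\circ i^!)|_\cF$ is right exact, and translating across the equivalence $\Phi$ yields the right-exactness of $i^!|_\cF$.

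The only point needing any care, and the place where I expect the main obstacle, is the exactness bookkeeping for products and coproducts of functors: in a general Grothendieck category products are merely left exact, not right exact. It is crucial that the symmetry in the formulation of~(GT) places products on the side where we are upgrading to left-exactness (where products suffice) and coproducts on the side where we are upgrading to right-exactness (where coproducts are fully exact in any Grothendieck category). Once this is observed, the rest of the argument is mechanical.
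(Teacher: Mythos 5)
Your proof is correct and follows essentially the same approach as the paper's: both arguments observe that $\cF$ contains all projectives and injectives of $\cB$ because $\cB$ is Gorenstein, note that $i^*$ (resp.\ $i^!$) is right (resp.\ left) exact and preserves projectives (resp.\ injectives) automatically from the adjunctions with the exact functor $i_*$, and then transfer the complementary properties across (GT) by tracking that left-exactness and injective-preservation pass through products and retracts, while right-exactness and projective-preservation pass through coproducts and retracts, with both invariant under composition with the equivalence~$\Phi$. Your final observation about the asymmetric placement of products versus coproducts in Definition~\ref{defi:GTP} is exactly the point that makes the argument work.
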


\begin{proof}
As in the statement, we suppose $\cB$ is Gorenstein. 
Note that $\cF=\FD \cB$ contains the injectives (projectives) of~$\cB$, as well as their finite projective (injective) resolutions. 
Since $i^*$ has an exact right adjoint, it is right exact and preserves projectives. 
Dually, $i^!$ is left exact and preserves injectives. 
Since the properties of being left exact and preserving injectives are inherited by retracts and products of functors, and are invariant under composition with equivalences, it follows from one half of (GT) that $i^*|_\cF$ is exact and also preserves injectives. The dual argument, using the other half of~(GT), implies that $i^!|_\cF$ is exact and also preserves projectives. 
\end{proof}

\begin{prop}
\label{prop:Gor_Gor_general}
Assume that $i_*\colon \cA\to \cB$ has the GT-property and that $\cB$ is $n$-Gorenstein. Then $\cA$ is $n'$-Gorenstein for some $n'\leq n$.
\end{prop}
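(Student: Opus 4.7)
The strategy is to verify the criterion of Lemma~\ref{lemma:Gore_criterion} for $\cA$: I need to bound by $n$ both the injective dimension of every projective object of $\cA$ and the projective dimension of every injective object of $\cA$. These two tasks are dual, so I will outline the projective case and treat the injective case symmetrically.

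The first step is to realize projectives of $\cA$ as summands of objects pushed forward from $\cB$. Since $i_*$ is exact with adjoints on both sides, standard adjunction arguments show that both $i^*$ and $i^!$ preserve projectives and injectives. Faithfulness of $i_*$ moreover makes the counit $i^*i_*A\to A$ an epimorphism and the unit $A\to i^!i_*A$ a monomorphism, for every $A\in \cA$. Combining these with the existence of enough projectives (resp.\ injectives) in the Grothendieck category $\cB$, I deduce that every projective $P\in \cA$ is a direct summand of $i^*B$ for some $B\in \Proj\cB\subseteq \cF$, and dually every injective of $\cA$ is a summand of $i^!C$ for some injective $C$ of $\cB$, which also lies in $\cF$.

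Next I bound $\idim_\cA i^!B$. Since $\cB$ is $n$-Gorenstein, $\idim_\cB B\leq n$, so $B$ admits an injective resolution in $\cB$ of length at most $n$ with all terms in $\cF$. Applying $i^!|_\cF$, which by Lemma~\ref{lemma:i_*FD} is exact and preserves injectives, yields an injective resolution of $i^!B$ in $\cA$ of length at most $n$; hence $\idim_\cA i^!B\leq n$. Finally, the GT-property exhibits $i^*B$ as a retract of a product of copies of $\Phi i^!B$; since $\Phi$ is an equivalence, $\idim_\cA \Phi i^!B=\idim_\cA i^!B\leq n$, and since $\Ext^{n+1}_\cA(-,-)$ commutes with products in its second argument, this bound descends to the product and hence to its retract $i^*B$, giving $\idim_\cA P\leq n$.

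The dual bound for an injective $I\leq i^!C$ of $\cA$ runs symmetrically: I transfer a length-$n$ projective resolution of $C$ to one of $i^*C$ via the exact functor $i^*|_\cF$ (which preserves projectives), and then use that $\Phi i^!C$ is a retract of a coproduct of copies of $i^*C$, together with the fact that $\Ext^{n+1}_\cA(-,-)$ converts coproducts in the first argument into products, so the vanishing condition $\pdim\leq n$ survives. The only real subtlety is this bookkeeping with products and coproducts introduced by the GT-property; once it is handled the conclusion is immediate from Lemma~\ref{lemma:Gore_criterion}.
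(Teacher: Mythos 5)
Your proposal is correct and follows the same skeleton as the paper's argument: it invokes Lemma~\ref{lemma:Gore_criterion} and realizes every projective $P\in\cA$ as a retract of $i^*Q$ for some $Q\in\Proj\cB$ via a projective precover of $i_*P$ and the epimorphic counit (dually for injectives). The only genuine divergence is in bounding $\idim_\cA i^*Q$: the paper quotes the full conclusion of Lemma~\ref{lemma:i_*FD}, namely that $i^*|_\cF$ is exact and preserves injectives, so $i^*$ carries a length-$n$ injective resolution of $Q$ (which stays inside $\cF$ since $\cB$ is Gorenstein) to one of $i^*Q$. You instead apply Lemma~\ref{lemma:i_*FD} only to $i^!$ and then transfer the bound to $i^*Q$ via the product-and-retract clause of the GT-property, together with the observation that $\Ext^{n+1}_\cA(X,-)$ commutes with products -- valid because $\cA$ has enough projectives, so Ext may be computed from a projective resolution of $X$, and products of abelian groups are exact. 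In effect you have unwound, at the level of objects, the very argument that the paper's proof of Lemma~\ref{lemma:i_*FD} runs at the level of functors to transfer left-exactness and injective-preservation from $i^!|_\cF$ to $i^*|_\cF$; both routes are sound, the paper's being marginally cleaner since it reuses the lemma at full strength. One cosmetic slip in your opening: it is not a "standard adjunction argument" that both $i^*$ and $i^!$ preserve both projectives and injectives -- only the halves $i^*(\Proj)\subseteq\Proj$ and $i^!(\operatorname{Inj})\subseteq\operatorname{Inj}$ come for free from exactness of $i_*$, while the other two hold only on $\cF$ and are exactly the content of Lemma~\ref{lemma:i_*FD}. Since your subsequent steps only use the easy halves plus the $\cF$-restricted statements, nothing breaks, but the phrasing should be tightened.
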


\begin{proof}
As a consequence of Lemma \ref{lemma:i_*FD} the functors $i^*$ and $i^!$ send injective resolutions of objects of finite projective dimension to injective resolutions in $\cA$, and similarly for projective resolutions.
We use this to show that in $\cA$ every projective object has an injective resolution of length at most~$n$, and that every injective has a projective resolution of length at most~$n$. 
Since the latter has dual proof, we only show the former.

Let $P \in \Proj \cA$, and choose a projective precover $Q\to i_*P\to 0$ in~$\cB$. Since $i_*$ is faithful the counit of the adjunction $(i^*, i_*)$ is an epimorphism (\cite{maclane}*{Theorem IV.3.1}) and we obtain a composite epimorphism 
$i^*Q \to i^*i_* P \to P$ in~$\cA$, which must split because $P$ is projective in~$\cA$. This shows that $P$ is a retract of a projective of the form $i^*Q$ for some $Q\in \Proj \cB$. Since $Q$ has an injective resolution in $\cB$ of length (at most)~$n$, so does $i^*Q$ and therefore also~$P$. Thus every projective object in $\cA$ has an injective resolution of length at most~$n$.

Since $\cA$ is assumed to have enough projectives, we are done by Lemma~\ref{lemma:Gore_criterion}.
\end{proof}

\begin{prop} \label{prop:i_*GProj}
Assume that $i_*\colon \cA\to \cB$ has the GT-property and that $\cB$ is $n$-Gorenstein.
Then $i_*$ preserves Gorenstein projective objects and Gorenstein injective objects. 

If, moreover, $i_*$ has the SGT-property, then $i_*$ also reflects Gorenstein projectivity and injectivity. That is, $i_*(G) \in \GProj\cB$ if and only if $G \in \GProj\cA$ and similarly for Gorenstein injective objects.
\end{prop}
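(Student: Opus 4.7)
The plan is to use the cotorsion pairs $(\GProj \cA, \FD\cA)$ and $(\FD\cA, \GInj \cA)$ of Proposition~\ref{prop:pairs} as the workhorse: an object $G\in\cA$ lies in $\GProj \cA$ if and only if $\Ext^i_\cA(G,R) = 0$ for all $R\in \FD\cA$ and all $i\geq 1$ (the definition handles $i=1$ and heredity upgrades this to all $i$), and dually for $\GInj\cA$. The key technical input is Lemma~\ref{lemma:i_*FD}: since $i^!$ (resp.\ $i^*$) is exact on $\FD\cB$ and preserves injectives (resp.\ projectives), applying it to a finite injective (projective) resolution of any $Y\in \FD\cB$ produces a finite injective (projective) resolution in $\cA$, so in particular $i^!Y, i^*Y \in \FD\cA$. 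The derived versions of the two adjunctions then yield natural isomorphisms
\[
\Ext^i_\cB(i_*G, Y) \;\cong\; \Ext^i_\cA(G, i^!Y), \qquad \Ext^i_\cB(Y, i_*G) \;\cong\; \Ext^i_\cA(i^*Y, G)
\]
for every $G\in\cA$, $Y\in\FD\cB$ and $i\geq 0$.

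For \emph{preservation} of Gorenstein projectives, take $G\in \GProj\cA$. Since $i^!Y \in \FD\cA$ and $G$ is Gorenstein projective, the first isomorphism above kills $\Ext^i_\cB(i_*G, Y)$ for every $Y\in \FD\cB$ and $i\geq 1$, so $i_*G \in \GProj\cB$. The dual computation via the second isomorphism handles Gorenstein injectives.

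For \emph{reflection} under SGT, assume $i_*G\in \GProj\cB$. The first isomorphism now gives $\Ext^i_\cA(G, i^!Y)=0$ for all $Y\in\FD\cB$ and $i\geq 1$. The SGT hypothesis, rewritten as $i^!\cong \Phi^{-1}\circ i^*$ on $\FD\cB$, together with the fact that the abelian equivalence $\Phi$ preserves $\Ext$, transforms this into $\Ext^i_\cA(\Phi G, i^*Y) = 0$. Specializing $Y = Q\in \Proj\cB\subset \FD\cB$ and invoking the observation from the proof of Proposition~\ref{prop:Gor_Gor_general} that every projective of $\cA$ is a retract of some $i^*Q$, we conclude that $\Ext^i_\cA(\Phi G, P) = 0$ for every $P\in \Proj\cA$ and $i\geq 1$. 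A standard dimension shift in the Gorenstein category $\cA$ then extends this to $\Ext^1_\cA(\Phi G, R)=0$ for every $R\in \FD\cA$, whence $\Phi G$, and therefore $G$, lies in $\GProj\cA$. Reflection of Gorenstein injectives is dual, using that $i_*$ being faithful makes the unit $\id \to i^!i_*$ a monomorphism, so that every injective of $\cA$ is a retract of some $i^!I$ with $I$ injective in $\cB$.

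The main point where one must be careful is the first paragraph: the derived adjunctions genuinely compute $\Ext$ groups only because the relevant adjoint sends the chosen resolutions of $\FD\cB$-objects to honest resolutions of the correct shape in $\cA$. Once this is set up via Lemma~\ref{lemma:i_*FD}, the rest is a matter of combining the cotorsion pair characterizations with the symmetry between $i^*$ and $i^!$ dictated by SGT.
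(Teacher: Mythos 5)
Your proof is correct and follows essentially the same approach as the paper: cotorsion-pair characterizations of Gorenstein projectives/injectives combined with the Ext isomorphisms furnished by Lemma~\ref{lemma:i_*FD}, plus the retract-of-$i^*Q$ (resp.\ $i^!I$) observation from the proof of Proposition~\ref{prop:Gor_Gor_general} for the reflection direction. The only cosmetic divergence is in the reflection step: the paper transports $\Phi^{-1}$ onto the second variable (showing $P$ is a retract of $i^!Q\cong\Phi^{-1}i^*Q$ after first expressing $\Phi P$ as a retract of $i^*Q$), whereas you transport $\Phi$ onto $G$ and conclude $\Phi G\in\GProj\cA$ before using that $\Phi$ is an equivalence; these are trivially equivalent bookkeepings.
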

 
\begin{proof}
We only prove the result for Gorenstein projectives, the proof for Gorenstein injectives being dual. Suppose $G\in \cA$ is Gorenstein projective. By the cotorsion pair $(\GProj \cB, \FD \cB)$, to prove $i_*G$ is Gorenstein projective it is sufficient to check
\begin{displaymath}
\Ext_\cB^\ell (i_*G, Q) = 0 \quad \text{for all} \; Q\in \Proj \cB \;\textrm{and}\; \ell>0 \,.
\end{displaymath}
By Lemma \ref{lemma:i_*FD} the functor $i^!$ is exact on objects of finite projective dimension and preserves both injectives and projectives, hence we can rewrite 
\begin{equation} \label{eq:GP-adj}
\Ext_\cB^\ell (i_*G, Q) \cong \Ext_\cA^\ell (G,i^! Q)  \quad \text{for all} \; G \in \cA, \; Q\in \Proj \cB \;\textrm{and}\; \ell\le0.
\end{equation}
It follows that $\Ext_\cB^\ell (i_*G, Q) = 0$ for all $\ell>0$ by the Gorenstein projectivity of $G$ and the projectivity of $i^!Q$.

Suppose conversely that $i_*$ has the SGT-property, i.e.\ $i^*|_{\FD\cB}\cong (\Phi\circ i^!)|_{\FD\cB}$, and that $i_*G$ is Gorenstein projective. Given any $P \in \Proj \cA$, there exist $Q \in \Proj \cB$ and a retraction $i^*Q \to \Phi P$ by the argument in the proof of Proposition~\ref{prop:Gor_Gor_general}. Since now $P$ is a retract of $i^!Q \cong \Phi\inv i^*Q$, \eqref{eq:GP-adj} implies that $\Ext_\cA^\ell (G,P) = 0$ for all $\ell>0$. Hence $G \in \GProj \cA$.
\end{proof}

Recall from \S\ref{sec:recoll_Gor} that for a Grothendieck category $\cA$ with enough projectives, being $n$-Gorenstein is equivalent to its global Gorenstein injective and projective dimensions both being~$n$.

\begin{prop} \label{prop:equal_Gd}
In the situation of Proposition~\ref{prop:Gor_Gor_general}, assume moreover that each component of the unit of adjunction $M\to i_*i^*M$ is a split mono, or that each component of the counit $i_*i^!M\to M$ of the other adjunction is a split epi (in~$\cB$). Then the Gorenstein dimensions of $\cA$ and $\cB$ agree.
\end{prop}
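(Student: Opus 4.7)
The plan is to establish the reverse inequality $n\le n'$, where $n' := \Gpdim \cA$ and $n := \Gpdim \cB$, since Proposition~\ref{prop:Gor_Gor_general} already provides $n'\le n$. I will show that, under the first hypothesis, every object $M\in\cB$ satisfies $\Gpdim_\cB M \le n'$, and under the second hypothesis that $\Gidim_\cB M \le n'$; in either case Proposition~\ref{prop:dimensions} then forces $n\le n'$ and hence the equality of the two Gorenstein dimensions.

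The central observation is that, thanks to Proposition~\ref{prop:i_*GProj}, $i_*$ preserves both Gorenstein projective and Gorenstein injective objects; being exact, it carries length-$k$ Gorenstein projective (resp.\ injective) resolutions in $\cA$ to length-$k$ Gorenstein projective (resp.\ injective) resolutions in $\cB$. Consequently, for every $H\in \cA$,
\[
\Gpdim_\cB(i_*H) \;\le\; \Gpdim_\cA(H) \;\le\; n'
\qquad\textrm{and}\qquad
\Gidim_\cB(i_*H) \;\le\; \Gidim_\cA(H) \;\le\; n'.
\]

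Under the first hypothesis, every $M\in \cB$ is a direct summand of $i_*i^*M$ (as the unit is a split mono), while under the second, every $M\in \cB$ is a direct summand of $i_*i^!M$ (as the counit is a split epi). The remaining point is therefore to verify that Gorenstein projective, respectively injective, dimension is inherited by direct summands. For the projective case, suppose $M$ is a summand of $N$ with $\Gpdim_\cB N \le k$; then I truncate a projective resolution of $M$ at the $k$-th syzygy $K$ and observe that for any $X\in \FD\cB$, dimension shifting gives $\Ext^1_\cB(K,X) \cong \Ext^{k+1}_\cB(M,X)$, which is a direct summand of $\Ext^{k+1}_\cB(N,X)$; the latter vanishes because $\Gpdim_\cB N \le k$ and $(\GProj\cB,\FD\cB)$ is a hereditary cotorsion pair by Proposition~\ref{prop:pairs}. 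Hence $K$ lies in ${}^{\perp_1}\FD\cB = \GProj\cB$, and so $\Gpdim_\cB M \le k$. The symmetric argument using the cotorsion pair $(\FD\cB,\GInj\cB)$ handles the injective case.

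The only step requiring genuine care, rather than being any serious obstacle, is this inheritance of Gorenstein dimension by retracts; once the cotorsion pairs of Proposition~\ref{prop:pairs} are in hand it is routine. Everything else is an immediate consequence of Propositions~\ref{prop:Gor_Gor_general} and~\ref{prop:i_*GProj} together with the exactness of~$i_*$.
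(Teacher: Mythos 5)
Your proof is correct and follows the same strategy as the paper's: use the split mono (resp.\ split epi) to exhibit $M$ as a retract of $i_*i^*M$ (resp.\ $i_*i^!M$), push a length-at-most-$n'$ Gorenstein projective (resp.\ injective) resolution from $\cA$ into $\cB$ along the exact functor $i_*$ via Proposition~\ref{prop:i_*GProj}, and conclude by retract-closure of Gorenstein dimension together with Proposition~\ref{prop:dimensions}. The only difference is cosmetic: the paper takes the retract-inheritance step as routine, whereas you spell it out via dimension shifting against the cotorsion pair $(\GProj\cB,\FD\cB)$ from Proposition~\ref{prop:pairs}; that argument is correct and a welcome bit of extra rigor, but it is not a different route.
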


\begin{proof}
Let $n$ be the global Gorenstein dimension of $\cB$ and $n'$ that of $\cA$, so $n'\leq n$.
Let $M\in \cB$. Assume we are in the first case: $M\to i_*i^* M$ is a split mono (the proof in the other case is similar).
By assumption $i^*M$ has a Gorenstein projective resolution of length at most~$n'$. 
By Proposition~\ref{prop:i_*GProj}, restricting it along the exact functor $i_*$ gives a Gorenstein projective resolution of $i_*i^*M$. Since $M$ is a summand of $i_*i^*M$ by the additional splitting hypothesis, it follows that $\Gpdim M \leq n'$. Thus, by the recollection above and since $M$ is arbitrary, $\cB$ is $n'$-Gorenstein and so, since $n'\leq n \leq n'$, we must have $n=n'$.
\end{proof}

The next theorem is obtained by combining Propositions \ref{prop:Gor_Gor_general} and \ref{prop:equal_Gd}.

\begin{thm}
\label{thm:GorGor_general}
Let $i_*\colon \cA\to \cB$ be a faithful exact functor between Grothendieck categories having the GT-property (Definition~\ref{defi:GTP}), and such that either the unit $\id_\cB\to i_*i^*$ is a componentwise split mono or the counit $i_*i^!\to \id_\cB$ is a componentwise split epi.
Then if $\cB$ is $n$-Gorenstein so is~$\cA$.
\qed
\end{thm}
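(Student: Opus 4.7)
The plan is to invoke the two immediately preceding propositions in sequence, since the hypotheses of the theorem are precisely engineered to match their combined hypotheses. First, I would observe that the assumptions of Theorem \ref{thm:GorGor_general} include everything needed to apply Proposition \ref{prop:Gor_Gor_general}: $i_*$ is faithful exact, it possesses the GT-property, and $\cB$ is $n$-Gorenstein. That proposition already delivers the conclusion that $\cA$ is $n'$-Gorenstein for some $n' \leq n$; in particular, $\cA$ is a Gorenstein Grothendieck category with finite global Gorenstein projective and injective dimensions. All that remains is to upgrade the inequality $n' \leq n$ to an equality.

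Second, I would turn to the additional splitting hypothesis — either that the unit $\id_\cB \to i_*i^*$ is componentwise split mono, or that the counit $i_*i^! \to \id_\cB$ is componentwise split epi. This is the exact extra input needed to invoke Proposition \ref{prop:equal_Gd}, which concludes that the Gorenstein dimensions of $\cA$ and $\cB$ coincide. Combining the two gives $n' = n$, so $\cA$ is $n$-Gorenstein, as claimed.

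The genuinely substantive work has already been carried out in the preceding lemmas and propositions: Lemma \ref{lemma:i_*FD} ensures that the adjoints $i^*$ and $i^!$ behave exactly on objects of finite projective dimension and preserve both projectives and injectives; this is what underlies Proposition \ref{prop:i_*GProj}, which transfers Gorenstein (co)resolutions along $i_*$. Once that machinery is available, Proposition \ref{prop:Gor_Gor_general} follows by noting that any projective of $\cA$ is a retract of $i^*Q$ for some projective $Q \in \cB$ (using faithfulness of $i_*$, hence epi counit of the $(i^*,i_*)$-adjunction), and injective coresolutions transfer accordingly; the dual argument handles injectives. So the only nontrivial move in the theorem itself is the recognition that the splitting condition lets one run essentially the same retract argument \emph{in the opposite direction}, realising every $M \in \cB$ as a retract of something whose $\cA$-side Gorenstein resolution has length at most $n'$. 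I would therefore expect no genuine obstacle in writing out this proof: both halves of the combination are already in hand, and the theorem is really a clean packaging of the transfer principle built up in this section.
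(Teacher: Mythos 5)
Your proposal is correct and matches the paper exactly: the paper itself states that Theorem~\ref{thm:GorGor_general} ``is obtained by combining Propositions~\ref{prop:Gor_Gor_general} and~\ref{prop:equal_Gd},'' and marks the theorem with \qed with no further argument. You have reconstructed precisely that combination, including the roles of Lemma~\ref{lemma:i_*FD} and Proposition~\ref{prop:i_*GProj} behind the scenes.
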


 \begin{example}
Note that a faithful Frobenius functor between Grothendieck categories certainly has the (strong) Gorenstein transfer property. 
Thus if $i\colon B\to A$ is a Frobenius extension of an $n$-Gorenstein ring~$B$, it follows from Proposition \ref{prop:Gor_Gor_general} applied to $i_*\colon \Mod A\to \Mod B$ that $A$ must be Gorenstein with Gorenstein dimension at most~$n$. In particular, for $B=\base$ a field we recover the classical fact that Frobenius algebras are selfinjective.

More generally, assume there exists a $B$-linear retraction $\rho\colon A\to B$ of the Frobenius extension $i\colon B\to A$ (so that $\rho i =\id_B$). Then we may define a $B$-linear map $\rho_M\colon M\otimes_B A\to M$ for every $B$-module~$M$ by setting $\rho_M(m\otimes a)=m \rho(a)$. Since this is a retraction of the unit of adjunction $M\to i_*i^*M= M\otimes_BA_B$, $m\mapsto m\otimes 1$, in this case $A$ and $B$ must have the same Gorenstein dimension by Proposition~\ref{prop:equal_Gd}.

\end{example}

We now turn to a new class of examples.

\section{Serre functors and the GT-property}
\label{sec:Serre}

Let $\cC$ be a small $\base$-category.

\begin{defi}\label{defi:Hom-finite}
We will say $\cC$ is \emph{Hom-finite} if ${}_d\cC_c=\Hom_\cC(c,d)$ is a finite projective $\base$-module for all objects $c$ and~$d$. 
A \emph{Serre functor on $\cC$ \textup(relative to~$\base$\textup)} is an auto-equivalence $ S\colon \cC\stackrel{\sim}{\to} \cC$ together with an isomorphism 
\[
\sigma_{c,d}\colon \Hom_\cC(c,d)\stackrel{\sim}{\longrightarrow} \Hom_\cC(d, Sc)^*
\]
natural in $c,d\in \cC$. Here $(-)^*=\Hom_\base(-,\base)$ denotes the $\base$-dual.
\end{defi}

\begin{remark}
Beware that other notions of ``Hom-finite'' are in use for categories over a general commutative ring: e.g.\ some authors mean that the Hom-modules are of finite length, or just finitely generated.
\end{remark}

\begin{remark}
Note that if a Serre functor exists, it is unique up to a natural equivalence. This follows from the fact $Sc$ is, for each $c \in \cC$, determined by the isomorphism of $\cC$-modules $(\cC_c)^* \cong {_{Sc}\cC}$ and the Yoneda lemma. Hence the existence of a Serre functor can be viewed as a property of $\cC$.
\end{remark}

\begin{remark} \label{rem:trace-Serre}
It is often convenient to express the isomorphism $\sigma_{c,d}$ from Definition~\ref{defi:Hom-finite} in the form of a collection of $\base$-linear maps
\[
\lambda_c\colon \Hom_\cC(c,Sc) \longrightarrow \base,
\] 
one for each object of $\cC$. Starting with $(\sigma_{c,d})_{c,d \in \cC}$, one puts $\lambda_c(f) = \sigma_{c,c}(1_c)(f)$. Conversely, one can reconstruct $\sigma_{c,d}$ as $\sigma_{c,d}(f)(g) = \lambda_c(gf) = \lambda_d(S(f)g)$. The required naturality of $\sigma_{c,d}$ is equivalent to $(\lambda_c)_{c \in \cC}$ satisfying the trace-like condition
\[
\lambda_c(gf) = \lambda_d(S(f)g) \quad \textrm{for each $f\colon c \to d$ and $g\colon d \to Sc$ in $\cC$.}
\] 
\end{remark}

\begin{defi}
\label{defi:neighbourhood}
We say $\cC$ is \emph{locally bounded} if for every $c\in \cC$ the two neighbourhood categories $\incoming(c)$ and $\outcoming(c)$ have only finitely many objects, where, for every object $c\in \cC$, we define the \emph{incoming neighbourhood of~$c$} and the \emph{outgoing neighbourhood of~$c$} to be the full subcategories
\[
\incoming(c):=\{d\in\cC\mid {}_c\cC_d\neq0\} 
\quad\textrm{ and }\quad
\outcoming(c):=\{d\in\cC\mid {}_d\cC_c\neq0\} \,,
\]
respectively.
\end{defi}

If $R$ is an arbitrary $\base$-algebra, we may consider the extension of $\cC$ by~$R$, that is the $\base$-category $R\otimes \cC$ with the same objects as~$\cC$, Hom modules $(R\otimes \cC)(c,d)=R\otimes \cC(c,d)$ and composition given by $(s\otimes g)\circ (r\otimes f)= sr \otimes gf$.
Recall also that $R$ is an $n$-Gorenstein algebra if $\Mod R$ is an $n$-Gorenstein category in the sense of Definition~\ref{defi:Gorenstein}.

\begin{thm}\label{thm:Serre_Gor}
Let $\cC$ be a locally bounded  Hom-finite $\base$-category equipped with a Serre functor $( S, \sigma)$ relative to~$\base$. 
If $R$ is any $n$-Gorenstein $\base$-algebra, then $R\otimes \cC$ is $n'$-Gorenstein for some $n'\leq n$.
If moreover the unit $R \to  R\otimes {}_c\cC_c$, $1_R\mapsto 1_R\otimes 1_c$, of every object $c$ has a retraction commuting with the right $R$-actions then we must have $n'=n$.
\end{thm}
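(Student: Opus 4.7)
The plan is to apply Theorem~\ref{thm:GorGor_general} to a natural forgetful functor, transferring the Gorenstein property from a simple product category. I would set $\cA:=\Mod(R\otimes\cC)$ and $\cB:=\prod_{c\in\obj\cC}\Mod R$, and take $i_*\colon\cA\to\cB$ to be the restriction along the inclusion of the discrete subcategory $\obj\cC\hookrightarrow R\otimes\cC$, i.e.\ $i_*(M)=(M(c))_{c\in\obj\cC}$. This functor is manifestly exact and faithful, and a short Yoneda computation identifies its two adjoints as
\[
i^*(N)(d)=\bigoplus_{c}N_c\otimes_\base{}_c\cC_d \qquad\text{and}\qquad i^!(N)(d)=\prod_{c}\Hom_\base({}_d\cC_c,N_c).
\]
Since $R$ is $n$-Gorenstein and $\obj\cC$ is a set, $\cB$ is itself a Grothendieck $n$-Gorenstein category: the uniform bound~$n$ for finitary projective and injective dimensions holds objectwise, and $\bigoplus_c R_c$ is a projective generator (with $R_c$ the module concentrated at~$c$).

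The heart of the argument is to verify the strong GT-property of Definition~\ref{defi:GTP}. Three ingredients conspire here. Local boundedness turns the product defining $i^!(N)(d)$ into a finite coproduct over $\incoming(d)$. Hom-finiteness supplies the natural isomorphism $\Hom_\base(P,N)\cong N\otimes_\base P^*$ for $P$ finitely generated projective. Serre duality together with reflexivity of finite projectives gives $({}_d\cC_c)^*\cong{}_{Sc}\cC_d$. Combining them,
\[
i^!(N)(d)\;\cong\;\bigoplus_{c}N_c\otimes_\base{}_{Sc}\cC_d\;\cong\;\bigoplus_{c}N_c\otimes_\base{}_c\cC_{S\inv d},
\]
where the final step uses that $S$ is fully faithful. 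The right-hand side equals $i^*(N)(S\inv d)=\Phi\inv(i^*N)(d)$, with $\Phi\colon\cA\stackrel{\sim}{\to}\cA$ the autoequivalence given by precomposition with $S$ on $R\otimes\cC$ (so that $\Phi(M)(d)=M(Sd)$). Hence $\Phi\circ i^!\cong i^*$, giving the SGT-property on all of~$\cB$. Applying Proposition~\ref{prop:Gor_Gor_general} then gives that $R\otimes\cC$ is $n'$-Gorenstein for some $n'\leq n$.

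For the second assertion, suppose each unit $R\to R\otimes{}_c\cC_c$ admits a retraction $\rho_c$ commuting with the right $R$-actions. For any $N=(N_c)\in\cB$, the $c$-component of the unit $N\to i_*i^*N$ embeds $N_c$ via $n\mapsto n\otimes 1_c$ into the $c'=c$ summand of $\bigoplus_{c'}N_{c'}\otimes_\base{}_{c'}\cC_c$; I would produce a retraction by projecting onto this summand and applying the map $n\otimes f\mapsto n\cdot\rho_c(1_R\otimes f)$, which is a right $R$-linear retraction since $\rho_c(1\otimes 1_c)=1$. This realises $\eta_N$ as a componentwise split monomorphism in~$\cB$, so Theorem~\ref{thm:GorGor_general} (equivalently Proposition~\ref{prop:equal_Gd}) forces $n'=n$.

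The most delicate step is the Serre-duality computation identifying $\Phi\circ i^!$ with $i^*$. One has to track carefully the direction of the twist — precomposition with $S$, not $S\inv$, dictated by the identification $({}_d\cC_c)^*\cong{}_{Sc}\cC_d$ — and confirm that the reindexing passing from incoming to outgoing neighbourhoods is compatible with the $R\otimes\cC$-action on both sides of the isomorphism. Once this natural isomorphism is set up, the remaining appeal to Theorem~\ref{thm:GorGor_general} is formal.
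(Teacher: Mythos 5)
Your proof is correct and follows essentially the same route as the paper: the same choice of $\cB=\prod_{c}\Mod R$ with $i_*$ the collection of evaluation functors, the same identification of $i^*$ and $i^!$ as Kan extensions simplified by local boundedness and Hom-finiteness, the same use of Serre duality to produce the natural isomorphism $i^*\cong S_*\circ i^!$ establishing the SGT-property, and the same splitting of the unit via the retractions $\rho_c$ to pin down $n'=n$. The only cosmetic difference is that you traverse the key isomorphism from $i^!$ towards $i^*$ (dualising $\sigma$ and invoking full faithfulness of $S$ to reindex), whereas the paper goes the other way starting directly from $\sigma$ and the $\Hom$/$\otimes$ swap; these are the same natural isomorphism.
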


\begin{proof}
Consider the evaluation functors $\ev_d \colon \cA:=\Mod (R\otimes \cC )\to \Mod R$ for $d\in \cC$.
Setting $\cB:= \prod_{d} \Mod R =\Mod (\coprod_d R)$, all these functors assemble into a faithful functor $i_*\colon \cA\to \cB$ between Grothendieck categories. 
In this situation, the left and right adjoints $i^*$ and $i^!$ of $i_*$ have a simple description:
for a collection of $R$-modules $M=(M_d)_{d\in \obj \cC} \in \cB$ their values at $c\in \cC$ are given, respectively, by 
\[
(i^*M)_c 
= \coprod_d M_d \otimes_R (R\otimes {}_d\cC_c)
= \coprod_d M_d\otimes {}_d\cC_c 
\]
and
\[
(i^!M)_c 
= \prod_d \Hom_R ( R\otimes {}_c\cC_d , M_d) 
= \prod_d \Hom ( {}_c\cC_d , M_d) \,,
\]
and their functoriality in $c$ is the evident one. Note also that the displayed products and coproducts are just finite direct sums, by the local boundedness of~$\cC$.
We therefore have a composite isomorphism
\[
(i^*M)_c 
= \bigoplus_d M_d \otimes {}_d\cC_c
\stackrel{\sim}{\to}
 \bigoplus_d M_d \otimes ({}_{ Sc}\cC_d)^*
\stackrel{\sim}{\to}
  \bigoplus_d \Hom ({}_{ Sc}\cC_d, M_d)
= (i^!M)_{ Sc}
\]
natural in $c$ and~$d$,
where the first isomorphism is induced by $\sigma\colon {}_d\cC_c\stackrel{\sim}{\to} ({}_{ Sc}\cC_d)^*$ and the second one by the fact that the $\base$-module ${}_{ Sc}\cC_d$ is finitely generated projective.
If we let $ S_*\colon \Mod \cC\stackrel{\sim}{\to} \Mod \cC$ denote the self-equivalence given by precomposition with the Serre functor, we have thus obtained an isomorphism $i^*\cong  S_* \circ i^!$ of functors $\cB\to \cA$.
In particular the faithful functor $i_*$ has the SGT-property
and therefore $\cC$ is $n'$-Gorenstein for some $n'\leq n$ by Proposition \ref{prop:Gor_Gor_general}.

Now assume that  for each object $c$ of~$\cC$ the unit $1_R\otimes 1_c \colon R \to R\otimes {}_c\cC_c$ has a retraction $\rho_c\colon R\otimes {}_c\cC_c\to R$ commuting with the right $R$-actions.
Notice that the unit $\eta_M\colon M\to i_*i^*M$ of the adjunction $(i^*,i_*)$ is given  at each~$c$ by the $R$-homo\-morph\-ism $\eta_{c,M}$ defined by
 $\eta_{M,c}(m)= m\otimes 1_c$  (for $m\in M_c$, $M=(M_d)_{d\in \obj \cC}$). 
 Hence the formula
\[
M_d\otimes {}_d\cC_c \;\ni\;
m \otimes f \longmapsto \begin{cases}
m \rho_c(f) & \textrm{if } c=d \\
0  & \textrm{otherwise,}
\end{cases}
\]
provides a natural splitting $\rho \colon i_*i^*\to \id$ of the unit~$\eta$, and  by Proposition~\ref{prop:equal_Gd} we conclude that in this case $n'=n$.
\end{proof}

\begin{remark}
Note that a $\base$-linear retraction $\rho_c\colon {}_c\cC_c \to \base$ of the unit $1_c\colon \base\to {}_c\cC_c$ extends uniquely to an $R$-linear retraction $\rho_c\colon R\otimes {}_c\cC_c\to R$ for the unit $1=1_R\otimes 1_c\colon R\to R\otimes {}_c\cC_c$, as in the theorem, by the formula $\rho (r\otimes f):= \rho(f)r$ ($r\in R, f\in {}_c\cC_c$). Thus in order to  have the equality $n'=n$ it suffices that each unit map of $\cC$ splits $\base$-linearly. This is always satisfied, for instance, if $\base$ is a field.
\end{remark}

It will also be useful in \S\ref{sec:KK} to recognize Gorenstein projective modules over $R \otimes \cC$.

\begin{cor} \label{cor:Serre_GProj}
In the situation of Theorem~\ref{thm:Serre_Gor}, an $R\otimes\cC$-module $M$ is Gorenstein projective (resp.\ Gorenstein injective) if and only if the evaluation $\ev_d(M)$ is a Gorenstein projective (resp.\ Gorenstein injective) $R$-module for each $d \in \cC$.
\end{cor}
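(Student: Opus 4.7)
The plan is to leverage the strong Gorenstein transfer property that was already established, implicitly, during the proof of Theorem~\ref{thm:Serre_Gor}. Indeed, inspecting that proof, the isomorphism $i^* \cong S_* \circ i^!$ of functors was built pointwise from the Hom-finiteness of $\cC$ and the Serre duality $\sigma$, with no restriction on the input module; it therefore holds on all of $\cB = \prod_{d \in \cC} \Mod R$, not merely on $\FD\cB$. Consequently $i_*\colon \cA \to \cB$ satisfies the hypothesis of the second part of Proposition~\ref{prop:i_*GProj}, so $i_*$ both \emph{preserves} and \emph{reflects} Gorenstein projective objects, and likewise for Gorenstein injective ones.

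After this reduction the only remaining task is to identify Gorenstein projective (resp.\ injective) objects in the product category $\cB = \prod_{d \in \cC} \Mod R$. I would argue that an object $N = (N_d)_{d \in \cC}$ of $\cB$ is Gorenstein projective if and only if each component $N_d$ is Gorenstein projective in $\Mod R$. This follows directly from the definition via complete projective resolutions: projectives in $\cB$ are precisely families of projectives, the Hom bifunctor decomposes as $\Hom_\cB(N,N') = \prod_d \Hom_R(N_d,N'_d)$, and acyclicity is checked componentwise; hence a complete projective resolution in $\cB$ is exactly a family of complete projective resolutions in $\Mod R$. The same argument, dualized, handles the Gorenstein injective case.

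Combining the two steps, and observing that $i_*(M) = (\ev_d M)_{d \in \cC}$ by construction, I obtain the desired equivalence:
\[
M \in \GProj(R\otimes \cC) \;\Longleftrightarrow\; i_*M \in \GProj \cB \;\Longleftrightarrow\; \ev_d M \in \GProj R \text{ for each } d \in \cC,
\]
and the analogous chain for Gorenstein injectives. The main point to verify carefully is the first equivalence, i.e.\ the applicability of the reflecting half of Proposition~\ref{prop:i_*GProj}; no genuine obstacle is expected, since the isomorphism witnessing SGT is already on record from the proof of Theorem~\ref{thm:Serre_Gor}.
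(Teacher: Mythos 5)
Your proof is correct and follows the same route as the paper's: invoke the second (reflecting) half of Proposition~\ref{prop:i_*GProj} using the SGT-property established in the proof of Theorem~\ref{thm:Serre_Gor}, then observe that Gorenstein projectivity in $\cB = \prod_d \Mod R$ is detected componentwise. The paper's version is terser (it leaves the componentwise characterization of $\GProj\cB$ implicit, and simply cites the SGT-property without re-deriving it); your extra remark that $i^*\cong S_*\circ i^!$ holds on all of $\cB$ rather than just $\FD\cB$ is true but not needed, since the SGT-property only requires the isomorphism on $\FD\cB$ and was already noted in the theorem's proof.
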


\begin{proof}
This is just the second part of Proposition~\ref{prop:i_*GProj} since the functor $i_*$ in the proof of the theorem has been shown to have the SGT-property. 
\end{proof}

\begin{example}[Triangulated categories with Serre functor] \label{ex:tria-Serre}
Let $\base$ be an $n$-Goren\-stein commutative ring, and let $\cT$ be a Hom-finite $\base$-linear triangulated category equipped with a Serre functor.
Any small full subcategory $\cC$ of $\cT$ which is locally bounded, (essentially) closed under the Serre functor, and such that ${}_c\cC_c= \base\cdot 1_c$ for all $c\in \cC$, satisfies all the hypotheses of the theorem and is therefore $n$-Gorenstein.
\end{example}

Here is an amusing point of view on the usual example of group algebras.

\begin{example}[Groupoid categories] \label{ex:groupalgebras}
If $\cG$ is a finite groupoid and $\base$ an $n$-Gorenstein commutative ring, then the groupoid category $\base \cG$ (i.e.\ the free $\base$-category on~$\cG$) is also $n$-Gorenstein. (Thus for example, the integral group algebra $\Z G$ of a finite group $G$ is $1$-Gorenstein. This result goes back to~\cite{eilenberg-nakayama}; see also \cite{enochs-jenda:gorenstein-balance}*{Remark below Lemma 1.6}.) 
Indeed this follows from the theorem. First of all note the identity map $1_c\colon \base \to \base {}_c\cG_c$ of each object has the $\base$-linear retraction $\ev_{1_c}$ that picks the coefficient of the identity and plays the role of $\lambda_c$ from Remark~\ref{rem:trace-Serre}. Secondly, there is a Serre functor given by $S=\id$ and the natural isomorphism $\sigma\colon \base {}_d\cG_c \stackrel{\sim}{\to} (\base {}_c\cG_d)^*$ where $\sigma (g)(h):= \ev_{1_c}(hg)= \ev_{1_d}(gh)= \delta_{g^{-1},h}$ (for $g \in {}_d\cG_c$ and $h\in {}_c\cG_d$).
\end{example}

\begin{example}[Periodic complexes] \label{ex:complexes}
Let $\pi\geq0$ be a nonnegative integer. Let $\cC$ be the $\base$-category generated by the following quiver with vertex set $\{c_i \mid i\in \Z/\pi\Z \}$
\[
\xymatrix{
\cdots & \ar[l]_-{d_{i-1}} c_{i-1} & \ar[l]_-{d_i} c_{i} & \ar[l]_-{d_{i+1}} \cdots
}
\]
and with the relations $d_{i-1}d_i=0$ for all~$i$.
We can, and will, identify $\Mod \cC$ with the category $\Ch_\pi(\base)$ of $\pi$-periodic complexes over the base ring~$\base$, simply by rewriting a representation $M$ of $\cC$ as the cohomological complex $(M^i,d^i)$ with $M^i=M(c_i)$ and $d^i:= M(d_{i+1})$.
More generally if $R$ is any (not necessarily commutative) $\base$-algebra, we can identify $\Mod (R\otimes \cC)$ with $\Ch_\pi(R)$.

For $\pi=0$, this specializes to the usual category of complexes of right $R$-modules.  For $\pi=1$, we obtain the category of differential $R$-modules: pairs $(M,d)$ where $M$ is an $R$-module and $d\colon M\to M$ an $R$-linear map satisfying $d^2=0$.

\begin{prop}
\label{prop:Gorenstein_cplx}
If $R$ is an $n$-Gorenstein ring then the abelian category $\Ch_\pi(R)$ of $\pi$-periodic complexes is $n$-Gorenstein, for any integer $\pi\geq0$.
\end{prop}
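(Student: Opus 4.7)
The plan is to apply Theorem~\ref{thm:Serre_Gor} to the category $\cC$ defined above, over any convenient base ring $\base$ (for instance $\base = \Z$, so that every ring $R$ is automatically a $\base$-algebra). Once this is done, the identification $\Mod(R \otimes \cC) \simeq \Ch_\pi(R)$ will immediately yield the claim.

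First I would verify that $\cC$ satisfies the standing hypotheses of the theorem. The Hom-modules in $\cC$ are spanned by the identity morphisms and the arrows $d_i$, subject only to the relations $d_{i-1}d_i = 0$; a direct inspection shows that each ${}_{c_j}\cC_{c_i}$ is a free $\base$-module of rank at most~$2$, so that $\cC$ is Hom-finite. For local boundedness, one checks that $\incoming(c_i) \subseteq \{c_i, c_{i+1}\}$ and $\outcoming(c_i) \subseteq \{c_i, c_{i-1}\}$, where indices are taken modulo~$\pi$. In particular $\cC$ has either a single object (the case $\pi=1$) or finitely many or countably many objects, but each with only finitely many neighbours in either direction.

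The next step is to construct a Serre functor $S\colon \cC \stackrel{\sim}{\to} \cC$ relative to~$\base$. For $\pi \neq 1$ I would define $S$ on objects by $S(c_i) = c_{i-1}$ and on the generating arrows by $S(d_i) = d_{i-1}$, and specify the trace maps $\lambda_{c_i}\colon \Hom_\cC(c_i, c_{i-1}) \to \base$, $r\cdot d_i \mapsto r$, as in Remark~\ref{rem:trace-Serre}. The trace-like condition $\lambda_c(gf) = \lambda_d(S(f)g)$ then reduces to two small cases ($f = 1_{c_i}$ and $f = d_i$), both of which are immediate. For the special case $\pi = 1$, $\cC$ has a single object $c_0$ with $\End_\cC(c_0) = \base[d]/(d^2)$, and I would take $S = \id$ with $\lambda_{c_0}(r_0 + r_1 d) := r_1$; the trace condition then reduces to the symmetry of the standard Frobenius form on $\base[d]/(d^2)$, which is visibly satisfied on the $\base$-basis $\{1, d\}$.

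Finally, to deduce the sharp bound $n' = n$ rather than just $n' \leq n$, I would invoke the second half of Theorem~\ref{thm:Serre_Gor} by exhibiting a $\base$-linear retraction of each unit $\base \to {}_{c_i}\cC_{c_i}$ (which, by the remark following that theorem, automatically extends to a right $R$-linear retraction of $R \to R \otimes {}_{c_i}\cC_{c_i}$). For $\pi \neq 1$ the unit is already the identity of $\base$; for $\pi = 1$ the retraction is the projection $r_0 + r_1 d \mapsto r_0$ onto the coefficient of~$1$. Putting these pieces together, Theorem~\ref{thm:Serre_Gor} yields that $R \otimes \cC$ is $n$-Gorenstein, and hence so is $\Ch_\pi(R)$. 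The argument is essentially a routine verification; the only mild subtlety is the separate treatment of $\pi = 1$, where the endomorphism ring is no longer trivial and one must use the Frobenius structure of $\base[d]/(d^2)$ to produce both the Serre datum and the retraction.
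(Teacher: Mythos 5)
Your proof is correct and follows essentially the same route as the paper's: verify Hom-finiteness and local boundedness, define the Serre functor by degree shift with the trace map $\lambda_{c_i}(d_i)=1$, check that the units split, and invoke Theorem~\ref{thm:Serre_Gor}. The separate treatment of $\pi=1$ is a slight clarification rather than a genuine difference, since the formulas $S(c_i)=c_{i-1}$ and $\lambda_{c_i}(d_i)=1$, $\lambda_{c_i}(1_{c_i})=0$ specialize to exactly your ad hoc choices when $\pi=1$; you are simply making explicit what the paper leaves implicit in that degenerate case.
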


\begin{proof}
Note that degree shifting, namely $Sc_i:= c_{i-1}$ and $Sd_i:= d_{i-1}$, defines an automorphism $S\colon \cC\stackrel{\sim}{\to} \cC$.
Since ${}_{c_{i-1}}\cC_{c_i} = d_i \base$, we choose $\lambda_{c_i}\colon {}_{c_{i-1}}\cC_{c_i} \to \base$ as in Remark~\ref{rem:trace-Serre} to act by sending $d_i \mapsto 1$. For all $i,j$ we then obtain an isomorphism
\[
\sigma_{c_i,c_j}\colon {}_{c_j}\cC_{c_i}\stackrel{\sim}{\longrightarrow} ({}_{Sc_i}\cC_{c_j})^* 
= ({}_{c_{i-1}}\cC_{c_j})^*
\]
by $\sigma(1_{c_i}):= (d_{i}\mapsto 1)$ (for $j=i$) and $\sigma(d_{i}):=(1_{c_{i-1}} \mapsto 1)$ (for $j=i-1$).
Thus we have just defined a Serre functor $(S,\sigma)$ on~$\cC$. 
Since $\cC$ is evidently Hom finite and locally bounded with split units $\base \to {}_c \cC_c$ we conclude with Theorem~\ref{thm:Serre_Gor}.
\end{proof}
For $\pi=0$ and $R$ Iwanaga-Gorenstein this example can already be found in~\cite{enochs-garciarozas-gorenstein_cplx}.
\end{example}

A more involved example arising from the filtrated K-theory of C*-algebra fibered over certain spaces~\cite{meyer-nest:filtrated2} will be described in Section~\ref{subsec:filtratedKK}.

\section{The boundary and the GT-property}

As usual, let $\mathcal C$ denote a small $\base$-category.
In the previous section we have specialized the GT-property to a criterion for importing Gorensteinness to $\cC$ from the base ring. We now consider an alternative incarnation of the GT-property.

\begin{defi}
Define the \emph{boundary} $\partial \mathcal C:= \coprod_{c\in \cC}{}_c \cC_c$ to be the category consisting of the coproduct\footnote{What we need here is the coproduct of $\base$-categories, which must again be a $\base$-category. Concretely, it is given by the disjoint union with an additional zero map between any two objects belonging to different components.} of all the endomorphism rings of~$\cC$, and write $i\colon \partial \cC\to \cC$ for the inclusion functor. We have the following adjoint functors
\begin{equation*}
\xymatrix{
\Mod \cC
 \ar[d]|{i_*} \\
\Mod \partial\cC
 \ar@/^10pt/[u]^{i^*}
 \ar@/_10pt/[u]_{i^!}
}
\end{equation*}
where $i_*$ denotes the restriction along~$i$ and where $i^*$ and $i^!$ are its left, respectively right, adjoint functor obtained by taking left and right Kan extensions.
 We refer to \cite{maclane}*{X.4} for Kan extensions as coends and \cite{maclane}*{IX.10} for the coend integral notation, which is used below.
\end{defi}

\begin{remark}
\label{rem:FD}
Note that $\partial \cC$ is Gorenstein if and only if each endomorphism ring ${}_c\cC_c$ is Gorenstein and their injective dimension is bounded with respect to~$c\in \cC$.
Note also that, if $\partial \cC$ is Gorenstein, the identification $\Mod \partial \cC= \prod_{c\in \cC}\Mod {}_c\cC_c$ restricts to $\FD \partial \cC = \prod_{c\in \cC}\FD {}_c\cC_c$. 
This is because projective objects and exact sequences are determined componentwise in the product category, and because of the boundedness condition with respect to the components.
\end{remark}

We are  interested in finding structural conditions on~$\cC$ which would guarantee that $i_*\colon \Mod \cC\to \Mod \partial \cC$ has the Gorenstein transfer property (Def.\,\ref{defi:GTP}), and which are sufficiently concrete to be verifiable in practice. 
Here, as in the previous section, we will in fact prove the \emph{strong} GT-property.

The next lemma shows that in this situation the Gorenstein dimension cannot drop. 

\begin{lemma}\label{lemma:equal_Gor_dim}
The unit $\eta_M\colon M\to i_*i^*M$ is a split monomorphism for all $M\in \Mod \partial \cC$.
\end{lemma}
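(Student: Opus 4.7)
The plan is to compute $i^*M$ explicitly via the coend formula for the left Kan extension and exploit the fact that $\partial\cC$ has no morphisms between distinct objects; this will force $(i^*M)(c)$ to decompose canonically as a direct sum in which $M(c)$ appears as one of the summands, matching the unit $\eta_{M,c}$.

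The first step is to write down
\[
(i^*M)(c) \;=\; \int^{c'\in \partial\cC} M(c')\otimes {}_{c'}\cC_c.
\]
Since all morphisms of $\partial\cC$ lie in a single endomorphism ring, the coequalizer defining this coend splits into a direct sum of tensor products over those endomorphism rings:
\[
(i^*M)(c) \;=\; \bigoplus_{c'\in \obj \cC} M(c')\otimes_{{}_{c'}\cC_{c'}} {}_{c'}\cC_c.
\]
The right ${}_c\cC_c$-action on $(i^*M)(c)$ is given by precomposition on the factor ${}_{c'}\cC_c$ and therefore preserves each summand. The next step is to identify the unit $\eta_{M,c}\colon M(c) \to (i^*M)(c)$ with the inclusion $m \mapsto m\otimes 1_c$ of the $c'=c$ summand, under the canonical identification $M(c)\otimes_{{}_c\cC_c}{}_c\cC_c \cong M(c)$.

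The retraction $\rho_{M,c}\colon (i^*M)(c)\to M(c)$ is then simply the projection onto the $c'=c$ summand. It is right ${}_c\cC_c$-linear because the action preserves the decomposition, and letting $c$ vary assembles these maps into a morphism $\rho_M\colon i_*i^*M\to M$ in $\Mod\partial\cC$; no further compatibility need be verified, since $\partial\cC$ has no morphisms between distinct objects. By construction $\rho_M\circ\eta_M=\id_M$, so $\eta_M$ is a split monomorphism. The only subtlety is cosmetic: keeping the variances and sides of the module actions straight so that $(i^*M)(c)$ is genuinely a right ${}_c\cC_c$-module and the projection is ${}_c\cC_c$-linear; once this bookkeeping is in place the splitting is natural and immediate.
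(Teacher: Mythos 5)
Your proof is correct and is essentially identical to the paper's: both compute $(i^*M)_c$ via the coend formula, observe that the absence of morphisms between distinct objects of $\partial\cC$ collapses the coend to a direct sum of tensor products over the endomorphism rings, identify the unit at $c$ with the inclusion of the $M_c\otimes_{{}_c\cC_c}{}_c\cC_c\cong M_c$ summand, and split it by projection. You merely spell out the ${}_c\cC_c$-linearity of the retraction, which the paper leaves implicit.
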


\begin{proof}
Let us check the assertion directly, by computing the unit $\eta_M$ for an arbitrary representation $M$ of~$\partial \cC$.
Since in $\partial \cC$ there are no nonzero maps between distinct objects,
the value of $i_*i^*M$ at $c\in \partial \cC$ is simply the coproduct
\begin{align*}
(i^*M)_c 
&= \int^{d\in \partial \cC} M_d\otimes {}_d\cC_c \\
&= \coprod_{d\in \partial \cC} M_d \otimes_{{}_d\cC_d} {}_d \cC_c
\end{align*}
and $\partial\cC$ acts on it via ${}_c\cC_c$ from the right.
The unit $\eta_M=\eta_{M,c}\colon M_c\to (i_*i^*M)_c$, $m\mapsto m\otimes 1_c$,  is just the canonical inclusion 
of the component $M_c\cong M_c\otimes_{{}_c\cC_c} {}_c\cC_c$ at~$c$, which of course is a split monomorphism. 
\end{proof}

Now recall Definition~\ref{defi:neighbourhood}. Here is this section's criterion:

\begin{thm}
\label{thm:Gorenstein}
Let $\cC$ be a small $\base$-category satisfying the following hypotheses:
\begin{enumerate}
\item $\partial \cC$ is Gorenstein. 
\item $\cC$ is locally bounded.
\item There is an automorphism $S\colon \cC\stackrel{\sim}{\to} \cC$ restricting to an isomorphism
\[
S \colon 
\incoming(c)
\stackrel{\sim}{\longrightarrow}
\outcoming(c)
\]
for every object $c\in\cC$. 
\item For all $c,d\in \cC$, there is an isomorphism of left ${}_d\cC_d$-modules
\[  
\psi_{c,d} \colon {}_d\cC_c
 \stackrel{\sim}{\longrightarrow} 
 \Hom_{{}_{d}\cC_{d}}({}_{Sc}\cC_{d}, {}_{d}\cC_{d}) =: ({}_{Sc}\cC_d)^\vee
\]
natural in $c\in \cC$. 
The latter means that $\psi_d\colon {}_d\cC \stackrel{\sim}{\to} ({}_{S(-)}\cC_d)^\vee$ as
${}_d\cC_d$-$\cC$-bimodules. 
\item For all $c,d\in \cC$, the Hom bimodule ${}_d\cC_c$ is a finitely generated Gorenstein projective right ${}_c\cC_c$-module. 
\end{enumerate}
Then $\cC$ is Gorenstein, of the same Gorenstein dimension as~$\partial C$.
\end{thm}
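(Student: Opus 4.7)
The plan is to apply Theorem~\ref{thm:GorGor_general} to the restriction functor $i_*\colon \Mod \cC \to \Mod \partial\cC$, which is manifestly faithful and exact between Grothendieck categories with enough projectives. Its unit $\id \to i_*i^*$ is automatically a componentwise split monomorphism by Lemma~\ref{lemma:equal_Gor_dim}, so once the strong Gorenstein transfer property of $i_*$ has been established the conclusion will follow directly; that is, $\Mod \cC$ will be Gorenstein of the same Gorenstein dimension as $\Mod \partial \cC$.

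Because $\partial\cC$ has no nonzero morphisms between distinct objects and $\cC$ is locally bounded by hypothesis~(2), the coend/end formulas for the Kan extensions $i^*$ and $i^!$ collapse to finite direct sums and products:
\[
(i^*M)_c = \bigoplus_{d \in \outcoming(c)} M_d \otimes_{{}_d\cC_d} {}_d\cC_c
\qquad\text{and}\qquad
(i^!M)_c = \prod_{d \in \incoming(c)} \Hom_{{}_d\cC_d}({}_c\cC_d,M_d)
\]
for $M = (M_d)_{d\in\cC} \in \Mod \partial \cC$. As the required auto-equivalence I take $\Phi := S^*$, the precomposition with the automorphism $S$ of hypothesis~(3), so that $(\Phi N)_c = N_{Sc}$. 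Producing the SGT-isomorphism then amounts to giving a natural isomorphism $(i^*M)_c \cong (i^!M)_{Sc}$ for every $M$ whose components $M_d$ all have finite projective dimension over ${}_d\cC_d$ (cf.\ Remark~\ref{rem:FD}). I would obtain it term by term in~$d$: the Serre-type duality $\psi_{c,d}$ of hypothesis~(4), natural in $c$, rewrites each summand $M_d \otimes_{{}_d\cC_d} {}_d\cC_c$ as $M_d \otimes_{{}_d\cC_d} ({}_{Sc}\cC_d)^\vee$, after which one only needs the evaluation map
\[
M_d \otimes_R P^\vee \longrightarrow \Hom_R(P, M_d), \qquad R := {}_d\cC_d,\ P := {}_{Sc}\cC_d,
\]
to be an isomorphism (zero summands on the two sides match up automatically, so no explicit bijection between $\outcoming(c)$ and $\incoming(Sc)$ needs to be invoked).

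This last evaluation isomorphism is the main technical point and the expected principal obstacle. It is elementary when $P$ is finitely generated \emph{projective}, but hypothesis~(5) only gives that $P$ is finitely generated \emph{Gorenstein} projective. The strategy is to verify that, when restricted to $\FD(R)$, both sides are additive and exact functors of $M_d$: for $M_d \mapsto \Hom_R(P, M_d)$ this follows from the vanishing of $\Ext^i_R(P,-)$ for $i>0$ on $\FD(R)$, which is essentially the defining property of Gorenstein projectives; for $M_d \mapsto M_d \otimes_R P^\vee$ it follows from the analogous vanishing of $\Tor^R_i(-, P^\vee)$, which holds because the standard Gorenstein-projective duality makes $P^\vee$ a finitely generated Gorenstein projective left $R$-module. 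The two functors also agree for $M_d = R$ (both yield $P^\vee$), so a dimension shift along a finite projective resolution of $M_d$ extends the isomorphism to all of $\FD(R)$. This completes the verification of the SGT-property, and the theorem follows from Theorem~\ref{thm:GorGor_general}.
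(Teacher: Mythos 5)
Your proof is correct and follows essentially the same route as the paper's: you reduce to Theorem~\ref{thm:GorGor_general} via the strong Gorenstein transfer property of the boundary restriction $i_*\colon\Mod\cC\to\Mod\partial\cC$, compute the Kan extensions componentwise using local boundedness, rewrite via the Serre-type duality $\psi_{c,d}$, and establish that the evaluation map $\epsilon\colon M_d\otimes_{{}_d\cC_d}({}_{Sc}\cC_d)^\vee\to\Hom_{{}_d\cC_d}({}_{Sc}\cC_d,M_d)$ is an isomorphism on $\FD({}_d\cC_d)$ by dimension shifting from the projective case. Your invocation of $\Tor$-vanishing is a harmless over-claim: the paper only needs \emph{right} exactness of $-\otimes({}_{Sc}\cC_d)^\vee$ on short exact sequences in $\FD$, which is automatic, whereas your stronger assertion of full exactness on $\FD$ rests on the auxiliary (true, but unneeded) fact that $({}_{Sc}\cC_d)^\vee$ is Gorenstein projective as a left module.
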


\begin{remark} \label{rem:trace-boundary}
In the same vein as Remark~\ref{rem:trace-Serre}, in this case we can also replace $(\psi_{c,d})_{c,d\in \cC}$ by a family
\[
\mu_d\colon {}_{Sd}\cC_d \longrightarrow {}_d\cC_d
\]
indexed by the objects of $\cC$. We again put $\mu_d = \psi_{d,d}(1_d)$ and can reconstruct $\psi_{c,d}(f)(g) = \mu_d(S(f)g)$. All the required properties of the morphisms $\psi_{c,d}$, except the fact that they are isomorphisms, are then equivalent to requiring that each $\mu_d$ is a ${}_d\cC_d$-bimodule homomorphism.
\end{remark}

\begin{proof}[Proof of Theorem~\ref{thm:Gorenstein}]
The functor $i_*$ is faithful, since $\partial \cC$ contains all the objects of~$\cC$. 
We are going to show that $i^*|_{\FD \partial\cC}\cong (S_* \circ i^!)|_{\FD \partial\cC}$, so that in particular the functor $i_*$ has the GT-property. 
The conclusion of the theorem will then follow by combining hypothesis~(1), Lemma \ref{lemma:equal_Gor_dim} and Theorem~\ref{thm:GorGor_general}.

To prove that the two functors are isomorphic we compute them explicitly.
For every $M\in \Mod \partial \cC$ and every $c\in \cC$, we have:
\begin{align*}
(i^*M)_c & = \int^{d\in \partial\cC} M_d \otimes {}_d\cC_c & \\
& = \coprod_{d\in \partial\cC} M_d \otimes_{{}_d\cC_d} {}_d\cC_c
  & \textrm{because } \partial\cC(a,b)=0 \textrm{ for } a\neq b \\
& =  \bigoplus_{d\in \outcoming(c)} M_d \otimes_{{}_d\cC_d} {}_d\cC_c  
  & \textrm{ by hypothesis (2)\,.}  
\end{align*}
Dually, we obtain:
\begin{align*}
(S_*i^!M)_c 
 & = (i^!M)_{Sc} & \\
& = \int_{d\in \partial\cC} \Hom( {}_{Sc}\cC_{d}, M_{d}) & \\
& = \prod_{d\in \partial\cC} \Hom_{{}_{d}\cC_{d}}( {}_{Sc}\cC_{d}, M_{d})
  & \textrm{because } \partial\cC(a,b)=0 \textrm{ for } a\neq b \\
& =  \bigoplus_{d\in \incoming(Sc)} \Hom_{{}_{d}\cC_{d}}( {}_{Sc}\cC_{d}, M_{d})
  & \textrm{ by hypothesis (2) \,.}
\end{align*}
Now, consider the canonical evaluation morphism
\[
\epsilon_{c,d}\colon M_d\otimes_{{}_d\cC_d}({}_{Sc}\cC_d)^\vee
\longrightarrow
 \Hom_{{}_{d}\cC_{d}}({}_{Sc}\cC_{d}, M_{d})
\]
sending $m\otimes f$ to $(g\mapsto m\cdot f(g))$, which is natural in all the evident ways. 
In particular, it is natural in $M_d\in \Mod {}_d\cC_d$.
Note that for $M_d={}_d\cC_d$ it is an isomorphism; since ${}_{Sc}\cC_{d}$ is finitely generated, by~(5), this extends to all $M_d\in \Proj {}_d\cC_d$.
Moreover, we claim that both $(-) \otimes_{{}_d\cC_d} ({}_{Sc}\cC_d)^\vee$ and $\Hom_{{}_{d}\cC_{d}}({}_{Sc}\cC_{d}, -)$ send short exact sequences in $\FD {}_d\cC_d$ to right exact sequences.
For the first functor that's clear, and for $\Hom_{{}_{d}\cC_{d}}({}_{Sc}\cC_{d}, -)$ this holds because, by hypothesis~(5), the right ${}_d\cC_d$-module ${}_{Sc}\cC_d$ is Gorenstein projective. 
Since $\Proj {}_d\cC_d$ generates $\FD {}_d\cC_d$ by taking cokernels of monomorphisms, we conclude from the right exactness of the functors and the invertibility of $\epsilon_{c,d}$ on $\Proj {}_d\cC_d$ that $\epsilon_{c,d}$ is invertible for all $M_d\in \FD {}_d\cC_d$.

By composing this isomorphism with the the one in~(4), we obtain the following:
\[
\xymatrix{
\varphi_{c,d}\colon 
M_d \otimes_{{}_d\cC_d} {}_d\cC_c
\ar[r]^-{\id\otimes \psi_{c,d}}_-\sim &
 M_d\otimes_{{}_d\cC_d}({}_{Sc}\cC_d)^\vee
 \ar[r]^-{\epsilon_{c,d}}_-\sim &  \Hom_{{}_{d}\cC_{d}}({}_{Sc}\cC_{d}, M_{d}) \,.
 }
\]
These assemble into isomorphisms
\begin{equation*}
\xymatrix{
\bigoplus_{d} M_d \otimes_{ {}_d\cC_d } {}_d\cC_c \ar[r]^-{\varphi_{c}} &
\bigoplus_d \Hom_{{}_d\cC_d}({}_{Sc}\cC_d, M_d)
}
\end{equation*}
for $M=(M_d)_d\in \FD \partial\cC = \prod_d \FD {}_d\cC_d$.
From the naturality in~$c\in \cC$, hypothesis~(3), and the above computations we conclude that these maps define an isomorphism $\varphi \colon i^*M \stackrel{\sim}{\to} S_* i^! M$, natural in $M\in \FD\partial\cC$.
This completes the proof.
\end{proof}

Analogously to \S\ref{sec:Serre}, it is useful to be able to spot Gorenstein projective $\cC$-modules easily (see, however, Remark~\ref{rem:eq-KK-Gore}).

\begin{cor} \label{cor:boundary_GProj}
In the set up of Theorem~\ref{thm:Gorenstein}, a $\cC$-module $M$ is Gorenstein projective (resp.\ Gorenstein injective) if and only if $M_d \in \GProj {_d\cC_d}$ (resp.\ $M_d \in \GInj {_d\cC_d}$) for each $d \in \cC$.
\end{cor}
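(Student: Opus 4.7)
The plan is to deduce this corollary from Proposition~\ref{prop:i_*GProj} applied to the restriction functor $i_*\colon \Mod\cC \to \Mod\partial\cC$, in complete analogy with the proof of Corollary~\ref{cor:Serre_GProj}.

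First, I would observe that all the ingredients required by Proposition~\ref{prop:i_*GProj} are already in place. Indeed, hypothesis~(1) of Theorem~\ref{thm:Gorenstein} says $\partial\cC$ is Gorenstein, and the proof of Theorem~\ref{thm:Gorenstein} has already produced a natural isomorphism $\varphi\colon i^*|_{\FD\partial\cC} \xrightarrow{\sim} (S_*\circ i^!)|_{\FD\partial\cC}$, so that $i_*$ has the SGT-property with respect to the auto-equivalence $\Phi:=S_*$ of $\Mod\cC$. The functor $i_*$ is also faithful since $i\colon \partial\cC\hookrightarrow \cC$ is the identity on objects. Hence the second part of Proposition~\ref{prop:i_*GProj} applies and yields that $M \in \GProj\cC$ (respectively $M \in \GInj\cC$) if and only if $i_*M \in \GProj\partial\cC$ (respectively $i_*M \in \GInj\partial\cC$).

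It then remains to translate the condition on $i_*M$ into the componentwise condition in the statement. Since $\partial\cC = \coprod_{d\in\cC} {}_d\cC_d$ is a disjoint union of rings, there is a canonical identification $\Mod\partial\cC \simeq \prod_{d\in\cC} \Mod({}_d\cC_d)$ under which $i_*M$ corresponds to the family $(M_d)_{d\in\cC}$. In any such product of module categories, (co)products, kernels, cokernels and exact sequences are computed componentwise, and the projective (respectively injective) objects are precisely those families whose components are projective (respectively injective) in each factor. It follows that a family $(N_d)_d$ admits a complete projective resolution in $\Mod\partial\cC$ if and only if each $N_d$ admits a complete projective resolution in $\Mod({}_d\cC_d)$, and similarly for complete injective resolutions; equivalently, $\GProj\partial\cC \simeq \prod_{d\in\cC} \GProj({}_d\cC_d)$ and $\GInj\partial\cC \simeq \prod_{d\in\cC} \GInj({}_d\cC_d)$.

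Combining the two steps, $M \in \GProj\cC$ iff $i_*M = (M_d)_d$ lies in $\prod_d \GProj({}_d\cC_d)$, iff $M_d \in \GProj({}_d\cC_d)$ for every $d$, and the dual statement for Gorenstein injectives follows identically. There is no serious obstacle here: all the work has been done in the proof of Theorem~\ref{thm:Gorenstein}, and the only small point to check is the componentwise behaviour of Gorenstein projectives in a product of Grothendieck categories, which is routine.
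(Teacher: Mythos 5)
Your proposal is correct and takes essentially the same route as the paper: both invoke the second part of Proposition~\ref{prop:i_*GProj} after observing that the functor $i_*\colon\Mod\cC\to\Mod\partial\cC$ was shown to have the SGT-property in the proof of Theorem~\ref{thm:Gorenstein}. You merely make explicit the routine componentwise identification $\GProj\partial\cC\simeq\prod_{d}\GProj({}_d\cC_d)$ under $\Mod\partial\cC\simeq\prod_d\Mod({}_d\cC_d)$, which the paper's one-line proof leaves tacit.
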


\begin{proof}
This is simply the second part of Proposition~\ref{prop:i_*GProj} since the functor $i_*$ in the proof of the theorem has been shown to have the SGT-property. 
\end{proof}

\begin{remark}
Our main motivation for this criterion was a rather non-trivial example of K\"ohler~\cite{koehler} arising from the equivariant KK-theory of C*-algebras, which we defer to Section~\ref{subsec:equivariantKK}.
\end{remark}

\section{Triangulated categories with few morphisms are Gorenstein}
\label{sec:triangulated_few}

We now show that many essentially small triangulated categories are Gorenstein. 
The important case $n=0$ of this result was first proved by Neeman in \cite{neeman:adams_brown}*{Theorem~5.1}.
For compact objects in a compactly generated triangulated category and arbitrary $n\geq0$ it was observed by Beligiannis, see \cite{beligiannis:relative}*{Cor.\,11.3}. (Neither author couched the result in terms of Gorenstein algebra as we do.)

Here it suffices to work with $\base=\Z$.

\begin{thm} \label{thm:triangulated_n_Gorenstein}
Let $\cC$ be a triangulated category admitting a skeleton with at most $\aleph_n$ morphisms,
where $n\in \N$ is any nonnegative integer. 
Then $\Mod \cC$ is a Gorenstein category of Gorenstein dimension at most~$n+1$, and the $\cC$-modules of finite projective dimension are precisely the flat ones, i.e., the homological functors.
\end{thm}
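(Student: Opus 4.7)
I would establish the two assertions by proving three intermediate facts and then invoking Lemma~\ref{lemma:Gore_criterion}. \emph{Step 1 (flat $=$ homological):} Representables are homological by the very definition of a triangulated category, and filtered colimits of homological functors are homological, so every flat $\cC$-module is homological by Lazard's theorem. For the converse, for any homological $F$ the triangulated structure makes its category of elements $\int F$ filtered (biproducts supply common targets, and two parallel morphisms $f_1,f_2\colon (X,\xi)\to (Y,\eta)$ are coequalised by lifting $\eta$ to some $\zeta\in F(Z)$ via the exactness of $F$ on a triangle $Y\to Z\to \Sigma X$ extending $f_1-f_2$), so $F = \colim \cC(-,X)$ is flat.

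\emph{Step 2 (finite pd implies homological):} Induct on pd. Given $0\to K\to P\to M\to 0$ with $P$ projective and $K$ homological (by the inductive hypothesis), a diagram chase shows $M$ is homological: for any triangle $X\xrightarrow{f}Y\xrightarrow{g}Z\xrightarrow{h}\Sigma X$ and $\eta\in \ker M(f)$, lift to $\tilde\eta\in P(Y)$; then $P(f)(\tilde\eta)$ lies in $K(X)$ and its image in $K(\Sigma\inv Z)$ vanishes by the triangle identity $f\circ \Sigma\inv h = 0$, so homologicality of $K$ supplies $\lambda \in K(Y)$ with $K(f)(\lambda) = -P(f)(\tilde\eta)$; then $\tilde\eta + \lambda \in \ker P(f) = \Img P(g)$ lifts to some $\tilde\zeta \in P(Z)$, whose image in $M(Z)$ maps to $\eta$ under $M(g)$.

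\emph{Step 3 (flat has pd $\le n+1$):} An Osofsky-type bound, generalizing Neeman's base case $n=0$: there a flat $F$ over countable $\cC$ is a countable filtered colimit $F=\colim_i P_i$ of representables, and the telescope sequence $0\to \bigoplus_i P_i \xrightarrow{1-\textup{shift}} \bigoplus_i P_i \to F\to 0$ yields $\pdim F\le 1$. For $n\ge 1$, any flat $F$ admits a continuous well-ordered filtration by flat submodules with flat consecutive quotients, each generated by at most $\aleph_{n-1}$ elements (a Kaplansky-style decomposition); these quotients have pd $\le n$ by induction, and Auslander's lemma on projective dimensions of transfinite extensions then bounds $\pdim F$ by $n+1$.

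Steps 2--3 together yield the claimed characterisation and the bound $\le n+1$ on the finitary projective dimension. To upgrade to Gorenstein dimension $\le n+1$ I apply Lemma~\ref{lemma:Gore_criterion}: its second hypothesis (pd of injectives $\le n+1$) holds because any injective $I\in\Mod\cC$ is automatically homological --- for a triangle $A\to B\to C$ in $\cC$, the complex $\cC(-,A)\to \cC(-,B)\to \cC(-,C)$ is exact at $\cC(-,B)$ in $\Mod\cC$, and applying the exact functor $\Hom_\cC(-,I)$ yields exactness of $I(C)\to I(B)\to I(A)$ --- hence $I$ is flat with $\pdim \le n+1$ by Step~3. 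Its first hypothesis (id of projectives $\le n+1$) follows by the symmetric argument applied to the triangulated category $\cC^{\op}$, which has a skeleton of the same cardinality, translated back to $\Mod\cC$ through character-module duality. The principal obstacle is Step~3, where the continuous flat filtration with flat subquotients of controlled size must be constructed with care and the Auslander/Eklof extension step verified to produce exactly the bound $n+1$; the other steps are elementary once the triangulated structure of $\cC$ is available.
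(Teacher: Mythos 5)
Your overall strategy -- show that flat $=$ homological $=$ finite projective dimension, bound the projective dimension of flats by $n+1$, then feed this into Lemma~\ref{lemma:Gore_criterion} -- is the same as the paper's. Steps 1 and 2 are sound; Step 2 in particular (a direct diagram chase showing that the cokernel of a monomorphism between homological modules with projective target is homological) is a legitimate alternative to the paper's route, which instead uses the equivalence flat $=$ fp-injective (as in Lemma~\ref{lemma:equiv_modules_triangulated}) and the Ext long exact sequence to show that fp-injectives are closed under cokernels of monomorphisms. Both produce the same closure statement, and your chase is correct after accounting for the sign in the rotated triangle $\Sigma^{-1}Z \to X \xrightarrow{f} Y$. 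Step 3 is a reconstruction of Simson's Corollary 3.13, which the paper simply cites; as written your inductive bookkeeping is off by one (Eklof's/Auslander's lemma preserves the projective dimension bound of the filtration quotients, it does not add one -- the $+1$ in Jensen--Osofsky--Simson comes from the countable base case and a different induction, over the size of a generating directed system), but since this is a citable result I would not press the point.

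The genuine gap is in the final paragraph, where you claim that $\idim P \le n+1$ for projective $P$ ``follows by the symmetric argument applied to $\cC^\op$, translated back through character-module duality.'' This transfer does not work. Character duality $(-)^+ = \Hom_\Z(-,\Q/\Z)\colon \Mod\cC \to \Mod\cC^\op$ converts flat modules to injective ones and, more precisely, flat dimension on one side to injective dimension of the dual on the other (via $\Ext^i_{\cC^\op}(N, M^+)\cong \Tor^\cC_i(M,N)^+$). It does not convert projectives to injectives, nor does it relate $\pdim$ of injectives in $\Mod\cC^\op$ to $\idim$ of projectives in $\Mod\cC$. Concretely: from the $\cC^\op$ version of your argument you learn that $\pdim_{\cC^\op} P^+ \le n+1$ (since $P$ flat makes $P^+$ injective), which bounds the \emph{flat} dimension of $P$ -- already $0$ -- and says nothing about $\idim_\cC P$. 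Moreover $(-)^{+}$ is not an equivalence, so there is no ``translating back''; at best $P$ is a pure submodule of $P^{++}$, and purity does not control injective dimension. The paper closes this step by a different, and genuinely triangulated, argument: take an injective resolution $0\to P\to I^0\to\cdots\to I^{n+1}\to Q\to 0$; since $P$ and all $I^i$ are flat (projectives and injectives are flat by Lemma~\ref{lemma:equiv_modules_triangulated}) and the flats are closed under cokernels of monomorphisms, the cosyzygy $Q$ is flat, hence $\pdim_\cC Q\le n+1$ by Step~3, hence $\Ext^{n+2}_\cC(Q,P)=0$; the truncated resolution, viewed as a class in this vanishing $\Ext$ group, therefore splits, and $P$ has an injective resolution of length $\le n+1$. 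You should replace your $\cC^\op$/duality step by this argument (or something equivalent).
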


Let us record a consequence for phantom maps.
Recall that a map $X\to Y$ in a compactly generated triangulated category~$\cT$ is \emph{phantom} if all composites $C\to X\to Y$ are zero whenever $C$ is compact. 
In other words, the ideal of all phantom maps in~$\cT$ is precisely the kernel of the restricted Yoneda functor  $\cT \longrightarrow \Mod \cT^c$, 
 $X\mapsto \cT(-,X)|_{\cT^c}$.

\begin{cor} \label{cor:phantom_vanishing}
Let $\cT$ be a compactly generated triangulated category such that its subcategory $\cT^c$ of compact objects has at most $\aleph_n$ isomorphism classes of morphisms.
Then any composite of $n+2$ consecutive phantom maps  of~$\cT$ is zero.
\end{cor}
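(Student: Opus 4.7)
The plan is to reduce the corollary to the classical fact that in a compactly generated triangulated category, a uniform projective-dimension bound on the restricted Yoneda modules forces nilpotence of the phantom ideal. First, since $\cT^c$ is itself a triangulated category admitting a skeleton with at most $\aleph_n$ morphisms, Theorem~\ref{thm:triangulated_n_Gorenstein} applies to $\cC=\cT^c$ and yields that $\Mod \cT^c$ is Gorenstein of dimension at most $n+1$, with the additional feature that the $\cT^c$-modules of finite projective dimension are precisely the flat ones, i.e.\ the homological functors on $\cT^c$. For every $X\in \cT$, the restricted Yoneda module $hX := \cT(-,X)|_{\cT^c}$ is a homological functor and therefore satisfies $\pdim_{\cT^c}(hX) \leq n+1$.

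With this uniform bound in hand, the remaining content is the well-known implication: if $\pdim_{\cT^c}(hX)\leq d$ for all $X\in \cT$, then any $d+1$ consecutive phantom maps compose to zero. To indicate the mechanism, for a phantom $f\colon X\to Y$ completed to a distinguished triangle $X\to Y\to Z\to \Sigma X$, the vanishing $h(f)=0$ collapses the induced long exact sequence to a short exact sequence
\[
0 \longrightarrow hY \longrightarrow hZ \longrightarrow h\Sigma X \longrightarrow 0
\]
in $\Mod \cT^c$. Given a chain $X_0\to X_1\to\cdots\to X_k$ of $k$ phantom maps, forming the $k$ corresponding triangles and splicing the associated short exact sequences produces a Yoneda $k$-fold extension representing a class in $\Ext^k_{\cT^c}(h\Sigma^k X_0, h X_k)$. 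For $k=n+2$ this group is zero, because $\pdim_{\cT^c}(h\Sigma^{n+2}X_0)\leq n+1$ by the previous paragraph.

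The subtle step — and the one I expect to be the main obstacle — is to turn the vanishing of this Yoneda Ext class into the geometric vanishing of the composite $f_{n+2}\cdots f_1$ in~$\cT$. This is precisely the content of the phantom-tower technique of Christensen and Neeman: for any object $W\in \cT$ one builds a cellular tower $\cdots\to W_2\to W_1\to W_0=W$ via triangles $W_{i+1}\to P_i\to W_i$ with $P_i\in \add \cT^c$ and $hP_i\twoheadrightarrow hW_i$, whose connecting maps are phantom, so that any $(n+2)$-fold phantom composite ending at $W$ lifts along the tower; the projective-resolution length of $hW$ bounded by $n+1$ then forces the lift, and hence the original composite, to be zero. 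Invoking this standard bridge completes the argument.
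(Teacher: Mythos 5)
Your proof takes essentially the same route as the paper: apply Theorem~\ref{thm:triangulated_n_Gorenstein} to $\cC = \cT^c$ to get $\pdim_{\cT^c} h_\cC X \leq n+1$ for every $X$, then invoke Christensen's phantom projective class / cellular-tower machinery together with the Ghost Lemma (the paper cites \cite{christensen:ideals} and \cite{beligiannis:ghost} for exactly this). One small slip in your final paragraph: the cellular tower $\cdots \to W_1 \to W_0 = W$ should be built over the \emph{source} of the phantom composite, and the conclusion is that composites of $n+2$ phantoms \emph{starting} at $W$ vanish; your phrase ``ending at $W$'' has the arrows running the wrong way --- a phantom $Y \to W_0$ need not lift through the pure-projective cover $P_0 \to W_0$, since the obstruction $Y \to W_0 \to \Sigma W_1$ is itself only a composite of two phantoms. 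The intermediate Yoneda-Ext paragraph is a reasonable heuristic but, as you yourself note, does not by itself yield the vanishing of the composite; the tower argument is what actually closes the gap, so that paragraph could be dropped.
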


\begin{proof}
Let $X\in \cT$. By applying the theorem to $\cC= \cT^c$, it follows that the homological functor  $h_\cC(X):= \cT(-,X)|_{\cC}$ on $\cC$ has projective dimension at most $n+1$ in $\Mod \cC$.
The rest is a standard argument, which goes as follows.
By lifting to $\cT$ a projective resolution of~$h_\cC(X)$ of length~$n+1$, we see that $X$ has phantom length $\leq n+2$, i.e.,  that $X$ can be recursively constructed from compact objects in $n+2$ steps by taking coproducts, extensions and direct summands
(see \cite{christensen:ideals}*{Prop.\,4.7}). It follows by the Ghost Lemma that any composite of $n+2$ phantoms starting at $X$ must be zero (see~\cite{beligiannis:ghost}). Since $X$ was arbitrary, this proves the claim.
\end{proof}

Before proving the theorem we recall some well-known facts.

\begin{lemma} \label{lemma:equiv_modules_triangulated}
If $\cC$ is an essentially small triangulated category, the following conditions are equivalent for a module $M\in \Mod \cC$:
\begin{enumerate}
\item $M$ is flat.
\item $M$ is a homological functor.
\item $M$ is fp-injective (i.e.\ $\Ext^i_\cC(F,M)=0$ for all $i\geq 1$ and all finitely presented~$F$).
\item $M$ is a filtered colimit of representable functors.
\end{enumerate}
It follows in particular that the subcategory of flat modules contains all injectives and projectives and is closed under the formation of cokernels of monomorphisms.
\end{lemma}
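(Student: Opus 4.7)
My plan is to close the loop of implications in a way that leverages both the additive structure of $\cC$ and its triangulation. First, the equivalence (1) $\Leftrightarrow$ (4) requires no triangulated input: it is Lazard's theorem for modules over a small additive category, which says that flat equals filtered colimit of finitely generated free modules, and over an additive category the finitely generated free modules are precisely finite direct sums of representables, hence representables themselves. The implication (4) $\Rightarrow$ (2) is also immediate, since each representable $\cC(-, X)$ sends a distinguished triangle to a long exact sequence by a standard consequence of the triangulated axioms, and filtered colimits in $\Mod \base$ are exact and so preserve the homological property. What remains is to complete the equivalences between (2), (3), and (4) using the triangulation.

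For (2) $\Rightarrow$ (4), I will realize $M$ as the canonical colimit of representables over its category of elements $\cC / M$ — whose objects are pairs $(X, \xi)$ with $\xi \in M(X)$, equivalently maps $\cC(-, X) \to M$ — and check that $\cC / M$ is filtered. The only nontrivial axiom is coequalization: given parallel $f, g : (X, \xi) \rightrightarrows (Y, \eta)$ one has $M(f - g)(\eta) = 0$, and completing $f - g$ to a triangle $X \to Y \xrightarrow{h} Z \to \Sigma X$ the homological functor $M$ produces $\zeta \in M(Z)$ with $M(h)(\zeta) = \eta$, giving the required coequalizing arrow $h : (Y, \eta) \to (Z, \zeta)$. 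The nonempty and common-upper-bound axioms come from additivity via $X \oplus Y$ and $(\xi, \eta)$.

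For (2) $\Rightarrow$ (3), I use that every finitely presented $F \in \Mod \cC$ admits an infinite resolution by representables: given a presentation $\cC(-, X) \to \cC(-, Y) \to F \to 0$, completing $X \to Y$ to a triangle and iterating the long exact Yoneda sequence produces
\[ \ldots \to \cC(-, \Sigma^{-1} Y) \to \cC(-, \Sigma^{-1} Z) \to \cC(-, X) \to \cC(-, Y) \to F \to 0. \]
Applying $\Hom(-, M)$ recovers exactly the long exact $M$-sequence of the triangle, which is exact by hypothesis, yielding $\Ext^i_\cC(F, M) = 0$ for $i \geq 1$. For the converse (3) $\Rightarrow$ (2), given a triangle $X \to Y \to Z \to \Sigma X$, I need to show $M(Z) \to M(Y) \to M(X)$ is exact at $M(Y)$. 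Setting $F := \Img(\cC(-, Y) \to \cC(-, Z))$, the Yoneda long exact sequence shows that $F$ is simultaneously the cokernel of $\cC(-, X) \to \cC(-, Y)$ and a submodule of $\cC(-, Z)$ whose cokernel $Q$ fits in the presentation $\cC(-, Y) \to \cC(-, Z) \to Q \to 0$, hence $Q$ is finitely presented. An $\alpha \in M(Y)$ killed in $M(X)$ factors through $F$, and fp-injectivity of $M$ forces $\Ext^1(Q, M) = 0$, extending the factorization to $\cC(-, Z)$ and producing the desired preimage in $M(Z)$.

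The concluding statement of the lemma then follows routinely: projectives are flat, injectives are fp-injective, and the class of flat modules is closed under cokernels of monomorphisms by the Tor long exact sequence (or equivalently, by the snake lemma applied to homological functors). The step I anticipate as most delicate is the finite-presentation assertion in (3) $\Rightarrow$ (2) — verifying that the Yoneda image $Q$ is not merely finitely generated but actually finitely presented — since this is precisely what makes fp-injectivity of $M$ strong enough to force the homological property.
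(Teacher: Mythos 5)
Your proof is correct in substance and supplies explicit arguments where the paper simply cites \cite{krause:telescope}*{Lemma~2.7}, \cite{oberst-rohrl:flat_coherent}*{Theorem~3.2}, and \cite{neeman:brown_phantomless}*{Lemma~1}; in effect you have reconstructed those references' arguments, so the routes agree once one unwinds the citations. The cycle you build (with $(1)\Leftrightarrow(4)$ from Lazard, $(4)\Rightarrow(2)$ from exactness of filtered colimits, $(2)\Rightarrow(4)$ via filteredness of the category of elements, and the pair $(2)\Leftrightarrow(3)$ via the Yoneda long exact sequence) is exactly the right set of implications and all of the verifications you sketch go through.

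Two small comments. First, the step you flag as delicate is actually automatic: the module $Q = \coker\big(\cC(-,Y)\to\cC(-,Z)\big)$ is by definition a cokernel of a morphism between representables, hence finitely presented; nothing needs to be verified there. The point that actually carries the weight in $(3)\Rightarrow(2)$ is that $\ker\big(\cC(-,Y)\to\cC(-,Z)\big) = \Img\big(\cC(-,X)\to\cC(-,Y)\big)$, which is the triangulated input identifying the relevant image with the kernel, and you do have that. Second, your parenthetical justification of the final claim via the Tor long exact sequence does not work: in general a cokernel of a monomorphism between flat modules need not be flat (over $\base=\Z$ already, $0\to\Z\xrightarrow{2}\Z\to\Z/2\to0$). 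The long exact sequence only gives $\Tor_1(C,N)\hookrightarrow A\otimes N$ and does not force vanishing. The correct route is the one the paper takes (closure of fp-injectives under cokernels of monos via the $\Ext$ long exact sequence, then transport across $(1)\Leftrightarrow(3)$), or equivalently your alternative nine-lemma/snake argument showing the class of homological functors is closed under such cokernels — either of those is fine, but the Tor phrasing should be dropped.
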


\begin{proof}
The equivalences (1)$\Leftrightarrow$(2)$\Leftrightarrow$(3) can be found e.g.\ in \cite{krause:telescope}*{Lemma~2.7}, and rest on the equivalence (1)$\Leftrightarrow$(4) which is proved in \cite{oberst-rohrl:flat_coherent}*{Theorem~3.2}. (See also \cite{neeman:brown_phantomless}*{Lemma~1} for a direct proof of (2)$\Leftrightarrow$(4).)
Clearly, all injectives are \emph{a fortiori} fp-injective, and all projectives are flat. 
Moreover, it follows immediately from the Ext long exact sequence that the class of fp-injectives is closed under cokernels of monomorphisms.
\end{proof}

\begin{proof}[Proof of Theorem~\ref{thm:triangulated_n_Gorenstein}]
The main point is a consequence of \cite{simson:pgldim}*{Corollary 3.13}, which says that if the category~$\cC$ has a skeleton with at most $\aleph_n$ arrows then every flat $\cC$-module has $\cC$-projective dimension at most~$n+1$ (this fact does not use yet that $\cC$ is triangulated).
Since $\cC$ is triangulated, we can use Lemma~\ref{lemma:equiv_modules_triangulated}. 
We deduce easily that if a $\cC$-module admits a finite projective resolution than it must be flat. Hence in this case the flat modules are precisely the modules of finite projective dimension.
Also, by Lemma~\ref{lemma:equiv_modules_triangulated}, every injective $\cC$-module is flat and therefore has projective dimension at most $n+1$. 
Dually, let $P$ be a projective $\cC$-module. 
Choose an injective resolution $P \smash{\stackrel{\sim}{\to}} I^\bullet$  and consider its good truncation
$0\to P \to I^0 \to I^1 \to \ldots \to I^n \to I^{n+1}\to Q\to 0$. 
This exact sequence represents an element of 
$\Ext^{n+2}_\cC(Q,P)$.
The projective $P$ is flat, and by Lemma~\ref{lemma:equiv_modules_triangulated} so are all the $I^n$ and all cosyzygies of~$P$, including~$Q$. 
Hence $Q$ has projective dimension at most $n+1$; 
hence $\Ext^{n+2}_\cC(Q,P)=0$; hence the truncated resolution splits; hence~$P$ has an injective resolution of length~$n+1$. 

Thus every injective has projective dimension at most~$n+1$ and every projective has injective dimension at most~$n+1$, and we conclude with Lemma~\ref{lemma:Gore_criterion}.
\end{proof}

\begin{remark} \label{rem:pure_semisimple}
The upper bound $n+1$ in Theorem~\ref{thm:triangulated_n_Gorenstein} is not always reached. 
Consider for instance the case $n=0$, i.e.\ when $\cC$ has a countable skeleton. 
Assume moreover that $\cC=\cT^c$ is the category of compact objects in a compactly generated triangulated category.
Then the Gorenstein dimension of $\cC$ is at most one, by the theorem. It is equal to zero (even without the need for any set-theoretical conditions) \iff $\cT$ is \emph{pure semi-simple}, i.e.: \iff $\cT$ has no non-zero phantoms, \iff the restricted Yoneda functor $h_\cC$ is fully-faithful, \iff every object of $\cT$ is a retract of a direct sum of compacts, \iff $h_\cC X$ is injective for all $X\in \cT$ (see \cite{krause:telescope}*{\S2.4} and \cite{beligiannis:relative}*{\S9} for these and other characterizations of pure semi-simplicity).
Typical examples of pure semi-simple triangulated categories are the categories $\cT = \underline{\GProj}R \simeq \Mod R/\Proj R$, where $R$ is a commutative artinian principal ideal ring ring such as $\Z/(p^n)$ for a prime $p$ or $k[x]/(x^n)$ for a field $k$, and also $\cT = \D(kQ)$ where $k$ is a field and $Q$ a is Dynkin quiver. We refer to~\cite{krause:loc-fin-tria} for more details and examples.
\end{remark}

\begin{example} \label{ex:spectra}
Let $\cT$ be the homotopy category of spectra. It is well-known that the subcategory of finite spectra $\cC=\cT^c$ admits a countable skeleton and that $\cT$ is not pure semi-simple (see~\cite{margolis:spectra}). Hence $\Mod \cT^c$ is a 1-Gorenstein category.
\end{example}

\begin{example} \label{ex:countable_rings}
If $\cT=\D(R)$ is the derived category of a countable ring~$R$, then $\cT^c$ has a countable skeleton and is therefore either pure semisimple or $1$-Gorenstein. To see this, use that $\cT^c$ is the thick subcategory of $\D(R)$ generated by~$R$ and proceed by induction on the length of objects $X,Y\in \cT^c$ to show that $\cT(X,Y)$ is countable. The same induction also shows that there are only countably many isomorphism classes of objects in~$\cT^c$, hence $\cT^c$ has a countable skeleton.
This example works just as well when considering periodic complexes, as in Example~\ref{ex:complexes}.
\end{example}

\begin{example} \label{ex:pure_periodic}
Let $\cT= \D_\pi(\Z)$ denote the derived category of $\pi$-periodic complexes of abelian groups (as before, we allow the case $\pi=0$ of ordinary, i.e.\ not necessarily periodic, complexes). We know from Example~\ref{ex:countable_rings} that $\cT^c$ is $\leq 1$-Gorenstein. 
It is in fact 1-Gorenstein, since not every object is a retract of a direct sum of compacts, e.g.~$\Q$.
Consider now the full subcategory $\cC:=\{\Sigma^i \Z/n \mid i\in \Z/\pi , n\geq 0\} \subset \cT^c$.
Since $\Z$ is hereditary, every complex of abelian groups is quasi-isomorphic to its cohomology. It follows easily from this that every indecomposable object of $\cT^c$ is isomorphic to one in~$\cC$. Hence $\cT^c= \add \cC$, and the inclusion $\cC\hookrightarrow \cT^c$ induces an equivalence $\Mod \cT^c \cong \Mod \cC$.
In view of Section~\ref{subsec:umct}, we now explicitly describe the category $\cC$ in the case~$\pi=2$\,:
\begin{align*}
\Hom(\Z,\Z)&\cong \Z 
 &\quad  \Hom(\Z,\Sigma\Z)&\cong 0\\
\Hom(\Z,\Z/a)&\cong \Z/a 
 &\quad \Hom(\Z,\Sigma\Z/a)&\cong 0 \\
\Hom(\Z/a,\Z)&\cong 0 
 &\quad \Hom(\Z/a,\Sigma\Z)&\cong \Z/a \\
\Hom(\Z/a,\Z/b)&\cong \Z/ (a,b) 
 &\quad \Hom(\Z/a,\Sigma\Z/b)&\cong \Z/ (a,b)
\end{align*}
Here $(a,b)$ denotes the greatest common divisor of $a,b\geq1$. Composition in $\cC$ is computed by multiplication in the evident way. 
\end{example}

\section{Exactness and finite projective dimension}
\label{sec:exact_vs_fd}

Let $\cC$ be a small $\base$-category.
In this section, we assume that $\Mod \cC$ is $n$-Gorenstein and locally coherent.
Our goal is to find a criterion for a $\cC$-module to be in $\FD \cC$, i.e.\ to have finite projective dimension.

\begin{defi} \label{defi:exact_seq}
Let 
\[
F\colon \quad \quad
\xymatrix@1{
\cdots \ar[r]^-{d_{i+1}} &
 F_{i+1} \ar[r]^-{d_i} &
  F_i \ar[r]^-{d_{i-1}} &
   \cdots
}
\]
be an acyclic complex of finitely generated projective right $\cC$-modules i.e., $F_i=\Img(e_i)$ where $e_i$ is an idempotent endomorphism of some $ \oplus_{j\in J_i} ({}_{c_{i,j}}\cC)$ with $J_i$ finite. 
We call $F$ an \emph{exact sequence in $\cC$} (this is a slight abuse of language, but will be justified in many applications).
We say that a right $\cC$-module $M$ is \emph{$F$-exact} if the complex $\Hom(F,M)$ is acyclic. 
By Yoneda this is simply saying the complex
\[
\xymatrix{
M(F)\colon &  \cdots \ar[r]^-{M(d_{i-1})} & M(F_i) \ar[r]^-{M(d_i)} & M(F_{i+1}) \ar[r]^-{M(d_{i+1})} & \cdots
}
\]
is exact in $\Mod \base$ (recall our conventions \S\ref{subsec:conventions}). 
\end{defi}

\begin{lemma}
\label{lemma:exact_necessary}
Every $M\in \FD \cC$ is $F$-exact for all exact sequences $F$ in~$\cC$.
\end{lemma}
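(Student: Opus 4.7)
The plan is to reduce $F$-exactness of $M$ to the vanishing of certain $\Ext$ groups and then exploit $\pdim_\cC M < \infty$ via dimension shifting. Concretely, since $F$ is acyclic I will cut it into short exact sequences $0 \to Z_i \to F_i \to Z_{i-1} \to 0$ with $Z_i := \ker(d_{i-1}\colon F_i \to F_{i-1}) = \Img(d_i\colon F_{i+1} \to F_i)$. Applying $\Hom_\cC(-,M)$ to each such short exact sequence and piecing together the resulting long exact sequences, the cohomology of the complex $\Hom_\cC(F,M)$ at position $i$ is identified with $\Ext^1_\cC(Z_{i-1}, M)$.

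The main step is then to show that $\Ext^1_\cC(Z_{i-1}, M) = 0$ whenever $\pdim_\cC M \le k$ for some finite $k$. For this I will iterate dimension shifting along the short exact sequences $0 \to Z_j \to F_j \to Z_{j-1} \to 0$: since each $F_j$ is projective, $\Ext^{\ell}_\cC(F_j,M) = 0$ for $\ell \ge 1$, so the connecting homomorphism yields isomorphisms
\[
\Ext^1_\cC(Z_{i-1}, M) \;\cong\; \Ext^2_\cC(Z_{i-2}, M) \;\cong\; \cdots \;\cong\; \Ext^{k+1}_\cC(Z_{i-1-k}, M).
\]
Finally, $\pdim_\cC M \le k$ means $\Ext^{k+1}_\cC(-,M) = 0$ identically, so the right-hand side vanishes. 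Thus the complex $\Hom_\cC(F,M)$ has vanishing cohomology at every position, i.e.\ $M$ is $F$-exact.

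No genuine obstacle is expected: the argument uses only standard homological algebra and does not even invoke the Gorenstein hypothesis on $\cC$; the only nontrivial ingredient is that having finite projective dimension gives eventual vanishing of $\Ext^\bullet_\cC(-,M)$, which is precisely what dimension shifting converts into $F$-exactness.
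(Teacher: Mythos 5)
Your reduction of $F$-exactness to the vanishing of $\Ext^1_\cC(Z_{i-1},M)$ and the dimension-shifting chain
\[
\Ext^1_\cC(Z_{i-1}, M) \cong \Ext^2_\cC(Z_{i-2}, M) \cong \cdots \cong \Ext^{k+1}_\cC(Z_{i-1-k}, M)
\]
are exactly the right moves, and they agree with the paper's argument. However, the final step contains a genuine error: it is simply false that $\pdim_\cC M \le k$ implies $\Ext^{k+1}_\cC(-,M) = 0$. Finite projective dimension controls $\Ext^{k+1}_\cC(M,-)$, i.e.\ Ext with $M$ in the \emph{first} slot. What makes $\Ext^{k+1}_\cC(-,M)$ vanish identically is finite \emph{injective} dimension of $M$. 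A concrete counterexample to your claim: over $\Z$, the module $M=\Z$ has $\pdim_\Z M = 0$, yet $\Ext^1_\Z(\Z/2\Z,\Z) \neq 0$.

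This also means your closing remark that the argument ``does not even invoke the Gorenstein hypothesis on $\cC$'' is incorrect, and in fact that hypothesis is indispensable: it is the Gorenstein assumption (standing throughout Section~\ref{sec:exact_vs_fd}) which upgrades $\pdim_\cC M < \infty$ to $\idim_\cC M < \infty$. The paper's own proof makes this explicit by writing that $M$ ``has finite projective and hence injective dimension'', the word ``hence'' being precisely Definition~\ref{defi:Gorenstein}(1) in action. Once you replace $\pdim$ by $\idim$ in your final sentence and acknowledge the Gorenstein input, your proof is correct and essentially identical to the paper's.
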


\begin{proof}
Given $F$ an exact sequence in~$\cC$ and $M\in \FD\cC$ as in the statement it is standard that $M(F) = \Hom(F,M)$ is exact. Indeed, brutally truncating $F$ at any point gives a projective resolution of the corresponding syzygy and so the cohomology of $M(F)$ is just computing Ext groups. Since $M$ has finite projective and hence injective dimension and we can truncate arbitrarily far along~$F$, the complex $M(F)$ must therefore be acyclic.
\end{proof}

We now wish to prove a sort of converse to this assertion for sufficiently nice~$\cC$, i.e.\ we will give a criterion for $\cC$ to have `enough exact sequences' to detect representations of finite projective dimension in terms of exactness relative to these sequences. 

\begin{prop} \label{prop:enough_exact}
Let $\cS$ be a subset of $\modu \cC$. Then the following are equivalent:
\begin{enumerate}
\item $\FD \cC = \ker \Ext^1_\cC(\cS , -)$.
\item $\Gproj \cC$ is the closure of $\cS\cup \proj \cC$ under taking extensions and retracts.
\end{enumerate}
\end{prop}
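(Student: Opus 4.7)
Both directions rely on the two complete hereditary cotorsion pairs in play, namely $(\Gproj\cC, \fd\cC)$ in $\modu\cC$ (Theorem~\ref{thm:triangulated_small}(i)) and $(\GProj\cC, \FD\cC)$ in $\Mod\cC$ (Proposition~\ref{prop:pairs}), together with the elementary observation that $\Gproj\cC \cap \fd\cC = \proj\cC$ (a finitely presented Gorenstein projective of finite projective dimension must be projective).

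For $(2)\Rightarrow(1)$, the inclusion $\FD\cC \subseteq \ker\Ext^1_\cC(\cS,-)$ is immediate: hypothesis (2) forces $\cS \subseteq \Gproj\cC \subseteq \GProj\cC$, and the cotorsion pair then gives $\Ext^{\geq 1}_\cC(\GProj\cC, \FD\cC)=0$. For the reverse inclusion, I would fix $M \in \ker\Ext^1_\cC(\cS,-)$ and consider the class $\{N \in \modu\cC \mid \Ext^{1}_\cC(N,M)=0\}$, which is closed under extensions (long exact sequence) and retracts and contains $\cS\cup\proj\cC$. By (2) it therefore contains $\Gproj\cC$, and since $\Gproj\cC$ is in addition closed under syzygies, dimension shifting upgrades this to $\Ext^{\geq 1}_\cC(\Gproj\cC, M)=0$. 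The final step is to promote this vanishing from finitely presented Gorenstein projectives to arbitrary $G \in \GProj\cC$ (which, via the cotorsion pair in $\Mod\cC$, yields $M \in \FD\cC$): for this I would invoke Eklof's lemma together with the Eklof--Trlifaj theorem applied to the set $\cS\cup\proj\cC$, producing a complete cotorsion pair on the locally coherent Grothendieck category $\Mod\cC$ whose left class is deconstructible by $\cS\cup\proj\cC$; hypothesis (2) identifies the restriction of this class to $\modu\cC$ with $\Gproj\cC$, and the locally coherent assumption is what forces the whole cotorsion pair to coincide with $(\GProj\cC,\FD\cC)$.

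For $(1)\Rightarrow(2)$, write $\cG := \mathrm{ext}(\cS\cup\proj\cC) \subseteq \modu\cC$ for the closure under extensions and retracts. The inclusion $\cG\subseteq\Gproj\cC$ is routine: (1) together with the cotorsion pair in $\modu\cC$ gives $\cS\subseteq\Gproj\cC$, $\proj\cC\subseteq\Gproj\cC$ is trivial, and $\Gproj\cC$ is itself closed under extensions and retracts. For the reverse inclusion, fix $G\in\Gproj\cC$. Under (1), the cotorsion pair $(\Gproj\cC,\fd\cC)$ on $\modu\cC$ is generated by $\cS$, and its special $\Gproj\cC$-precover of $G$—constructed via the small object argument, which under the locally coherent hypothesis can be kept inside $\modu\cC$—takes the form of a short exact sequence
\[
0 \to L \to K \to G \to 0
\]
with $L\in\fd\cC$ and $K$ built from $\cS\cup\proj\cC$ by iterated extensions, i.e.\ $K\in\cG$. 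Since $K,G\in\Gproj\cC$ and $\Gproj\cC$ is closed under kernels of epimorphisms (a consequence of the cotorsion pair), $L\in\Gproj\cC\cap\fd\cC=\proj\cC$; the sequence therefore splits and $G$ is a retract of $K\in\cG$, so $G\in\cG$.

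The main obstacle in both implications is the passage between the finitely presented level $\modu\cC$ and the full abelian level $\Mod\cC$: the locally coherent hypothesis is precisely what allows one to identify the cotorsion pair generated by $\cS\cup\proj\cC$ in $\Mod\cC$ with $(\GProj\cC,\FD\cC)$ and to arrange that the small object argument applied to a fp target object $G$ produces an approximation built from only finitely iterated (rather than transfinite) extensions of objects in $\cS\cup\proj\cC$.
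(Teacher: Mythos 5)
Your proposal takes a genuinely different route from the paper's, but it has two gaps, of which the second is serious. The paper's proof works entirely inside the stable category $\stGProj\cC$: using Theorem~\ref{thm:triangulated_small}(iii) (which identifies $\underline{\Gproj}\,\cC$ with the compacts of $\stGProj\cC$) it deduces $(2)\Rightarrow(1)$ from the fact that the compacts weakly generate, and $(1)\Rightarrow(2)$ from a Thomason/Neeman-type generation theorem (\cite{stovicek:classification}*{Theorem 3.7(ii)}) which says that a set of compact objects with vanishing right orthogonal generates all compacts by cones and retracts; one then translates cone/shift/retract back to extension/syzygy/retract-plus-projective in $\modu\cC$. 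You instead try to stay at the level of cotorsion pairs and the small object argument.

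For $(2)\Rightarrow(1)$: after the dimension shift you correctly reduce to showing that $\Ext^1_\cC(\Gproj\cC,M)=0$ forces $M\in\FD\cC$, but then you appeal to Eklof--Trlifaj applied to $\cS\cup\proj\cC$ and claim the resulting left class must be $\GProj\cC$. That last identification requires that every (possibly infinitely generated) Gorenstein projective module be $\Gproj\cC$-filtered, i.e.\ that $\GProj\cC$ is deconstructible by its finitely presented part. This is a substantive result (known for Iwanaga--Gorenstein rings and provable in the locally coherent Gorenstein setting, but only via real work on pure-exact/pure-derived machinery), which you neither prove nor cite. The paper sidesteps it entirely by using compact generation, which is precisely what Theorem~\ref{thm:triangulated_small}(iii) was designed to supply.

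The gap in $(1)\Rightarrow(2)$ is worse. You assert that, because $G$ is finitely presented, the special $\cS\cup\proj\cC$-precover of $G$ produced by the small object argument can be arranged to lie in $\modu\cC$ and to be built by ``only finitely iterated (rather than transfinite) extensions.'' There is no justification for this, and I don't believe it is true: the small object argument runs in $\Mod\cC$ (which has the colimits it needs; $\modu\cC$ does not), and the size of the transfinite cell complex it builds is not controlled by the presentation size of the target. Local coherence does not rescue this. If instead you use the completeness of $(\Gproj\cC,\fd\cC)$ in $\modu\cC$ from Theorem~\ref{thm:triangulated_small}(i), you do get a finitely presented special precover $0\to L\to K\to G\to 0$, but then $K$ is an arbitrary element of $\Gproj\cC$ and there is no reason for it to lie in your $\cG=\mathrm{ext}(\cS\cup\proj\cC)$ — that is exactly what you are trying to prove, so the argument is circular. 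The paper's use of the triangulated generation theorem is what replaces this missing step; the translation from $\underline{\Gproj}\,\cC = \mathrm{thick}(\cS)$ back to $\Gproj\cC = \mathrm{ext}(\cS\cup\proj\cC)$ in $\modu\cC$ is the real content of $(1)\Rightarrow(2)$, and nothing in your sketch substitutes for it.
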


\begin{proof}
To see why (2) implies (1), note that $\ker \Ext^1_\cC(\cS , -)$ does not change if we add to $\cS$ extensions and retracts of modules in~$\cS$. Hence $\ker \Ext^1_\cC(\cS , -) = \ker \Ext^1_\cC(\Gproj \cC , -)$ and the latter implies $\FD \cC \supseteq \ker \Ext^1_\cC(\cS , -)$ by Theorem~\ref{thm:triangulated_small}. 
The reverse inclusion is always true by using the appropriate cotorsion pair from Proposition~\ref{prop:pairs}.
Now assume~(1). By the cotorsion pair again, $\cS\subseteq \Gproj \cC$. By Theorem \ref{thm:triangulated_small}~(iii), $\cS$ is a set of compact objects in the triangulated category $\underline{\GProj}\,\cC$ such that its right Hom-orthogonal is zero because of~(1).
By \cite{stovicek:classification}*{Theorem 3.7~(ii)}, all compacts, that is the objects of $\underline{\Gproj}\,\cC$, are obtained by extensions and retracts. Hence all objects of $\Gproj \cC$ are obtained from $\cS \cup \proj \cC$ by the corresponding operations performed in the abelian category $\modu \cC$.
\end{proof}

\begin{defi} \label{defi:enough}
A family $\cF$ of exact sequences in $\cC$ (Def.\,\ref{defi:exact_seq}) is said to be \emph{sufficient} if the following set of $\cC$-modules 
\[
\{ Z^i F \mid F \in \cF, i\in \Z\} \cup \proj \cC
\]
generates $ \Gproj \cC$ by taking extensions and retracts. 
If such a family exists, we also say that $\cC$ \emph{has enough exact sequences}. 
\end{defi}

\begin{remark} \label{rem:sufficient-sing}
Put another way, $\cF$ is a sufficient family if the syzygies of its sequences generate the singularity category $\D_{\mathrm{Sg}}(\cC)$ classically, i.e.\ by taking cones, retracts and (de)suspensions -- actually (de)\-sus\-pen\-sions are already included here, since we automatically have all syzygies and cosyzygies. This follows by inspecting the proof of Proposition~\ref{prop:enough_exact}.
\end{remark}

\begin{thm} \label{thm:enough_exact}
Let $\cC$ be a small category such that $\Mod \cC$ is Gorenstein and locally coherent and assume that $\cF$ is a sufficient family of exact sequences for~$\cC$. 
Then a $\cC$-module has finite projective dimension if and only if it is $F$-exact for all $F\in \cF$.
\end{thm}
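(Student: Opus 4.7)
The forward implication is already Lemma~\ref{lemma:exact_necessary}, so all the work is in the converse. The plan is to reduce, via Proposition~\ref{prop:enough_exact}, to showing that $F$-exactness of $M$ forces $\mathrm{Ext}^1_\cC(Z^iF, M)$ to vanish for every syzygy of every $F \in \cF$; this is then a one-step diagram chase using projectivity of the $F_i$.

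First, suppose $M$ is $F$-exact for every $F \in \cF$. By the hypothesis that $\cF$ is a sufficient family (Definition~\ref{defi:enough}), the set $\cS := \{Z^iF \mid F \in \cF,\, i \in \Z\}$ together with $\proj\cC$ generates $\Gproj\cC$ under extensions and retracts. Proposition~\ref{prop:enough_exact} then yields the equality $\FD\cC = \ker \Ext^1_\cC(\cS,-)$, so it suffices to prove
\[
\Ext^1_\cC(Z^iF, M) = 0 \quad \text{for all } F\in \cF,\ i\in\Z.
\]

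Fix such an $F$ and $i$. By exactness of $F$, the differential $d_i\colon F_{i+1} \to F_i$ factors through a short exact sequence
\[
0 \longrightarrow Z^{i+1}F \longrightarrow F_{i+1} \longrightarrow Z^i F \longrightarrow 0.
\]
Since $F_{i+1}$ is finitely generated projective, applying $\Hom_\cC(-,M)$ produces
\[
\Ext^1_\cC(Z^iF, M) \;\cong\; \coker\bigl(\Hom_\cC(F_{i+1}, M) \to \Hom_\cC(Z^{i+1}F, M)\bigr),
\]
so it remains to extend an arbitrary $\psi\colon Z^{i+1}F \to M$ to a map $\phi\colon F_{i+1} \to M$. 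Compose $\psi$ with the surjection $p\colon F_{i+2} \twoheadrightarrow Z^{i+1}F$ coming from the next step of $F$, obtaining $\tilde\psi := \psi \circ p \colon F_{i+2} \to M$. Because $Z^{i+2}F \hookrightarrow F_{i+2} \twoheadrightarrow Z^{i+1}F$ vanishes, we have $\tilde\psi \circ d_{i+2} = 0$; that is, $\tilde\psi$ is a cocycle of the complex $\Hom_\cC(F_\bullet, M)$ at position $i+2$. By $F$-exactness of $M$ this cocycle is a coboundary, so there exists $\phi \in \Hom_\cC(F_{i+1}, M)$ with $\phi \circ d_{i+1} = \tilde\psi$. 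Since $d_{i+1}\colon F_{i+2} \to F_{i+1}$ factors as $F_{i+2} \xrightarrow{p} Z^{i+1}F \hookrightarrow F_{i+1}$ and $p$ is surjective, this identity forces the restriction of $\phi$ to $Z^{i+1}F$ to equal $\psi$, completing the extension.

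There is no real obstacle here once Proposition~\ref{prop:enough_exact} is in place: the only delicate point is the bookkeeping of the doubly infinite indexing and of the fact that the $F_i$, while defined in Definition~\ref{defi:exact_seq} as images of idempotents on free modules, are genuinely finitely generated projective $\cC$-modules, so standard Ext-vanishing on them applies. I expect that reconciling the indexing conventions $d_i\colon F_{i+1}\to F_i$ with the resulting isomorphism $\Ext^1_\cC(Z^iF, M) \cong H^{i+2}\Hom_\cC(F_\bullet, M) = 0$ is the only spot where one must write things out carefully.
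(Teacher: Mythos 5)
Your proof is correct and follows essentially the same route as the paper's: reduce the converse direction to the vanishing of $\Ext^1_\cC(Z^iF, M)$ for all syzygies of sequences in $\cF$, and invoke implication (2)$\Rightarrow$(1) of Proposition~\ref{prop:enough_exact}. The paper compresses the step ``$F$-exactness implies $\Ext^1$-vanishing'' into a back-reference to Lemma~\ref{lemma:exact_necessary} (identifying the cohomology of $M(F)$ with Ext groups against syzygies), while you carry out the diagram chase explicitly; the mathematical content is identical.
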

\begin{proof}
If $M\in \FD \cC$ then $M$ is $F$-exact for all exact sequences $F$ by Lemma~\ref{lemma:exact_necessary}.
Conversely, let $M$ be $F$-exact for all $F\in \cF$. As in the proof of Lemma \ref{lemma:exact_necessary}, this means that $\Ext^1_\cC(Z^iF, M)=0$ for all $F\in \cF$. In view of the implication (2)$\Rightarrow$(1) in Proposition \ref{prop:enough_exact} we conclude from the sufficiency of $\cF$ that $M\in \FD \cC$.
\end{proof}

\section{Gorenstein closed subcategories of triangulated categories}
\label{sec:gore_closed}

In this section, let $\cT$ be any ($\base$-linear) triangulated category.
Assume we are given a small full subcategory $\cC$ which is closed under (de)suspension (i.e., $\Sigma \cC=\cC$) and such that $\Mod \cC$ is Gorenstein and locally coherent.

We write $\add \cC$ for the additive closure of $\cC$ in~$\cT$ (i.e.\ we close under all finite sums and those summands that exist in~$\cT$). The Yoneda embedding $\cC\to \proj \cC$ extends canonically to a  fully faithful embedding $\add \cC\to \proj \cC$, that we will treat as an inclusion by a slight abuse of notation.

\begin{lemma} \label{lemma:Im_Gore}
If $x\to y\to z\to \Sigma x$ is an exact triangle in $\cT$ with vertices belonging to $\add \cC$, then
$\Img(x \to y)$, where this image is taken in $\Mod \cC$, is a finitely generated Gorenstein projective $\cC$-module.
\end{lemma}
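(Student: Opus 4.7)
The strategy is to realise $M := \Img(h_\cC x \to h_\cC y)$ as a syzygy of a doubly infinite acyclic complex of finitely generated projective $\cC$-modules obtained by rotating the given triangle, and then verify that this complex is in fact a complete projective resolution.

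Concretely, since $\cC$ (and hence $\add \cC$) is closed under both $\Sigma$ and $\Sigma^{-1}$, rotating the triangle $x\to y\to z\to \Sigma x$ in both directions produces a doubly infinite sequence
\[
F\colon\quad \cdots \to h_\cC \Sigma^{-1}y \to h_\cC \Sigma^{-1}z \to h_\cC x \to h_\cC y \to h_\cC z \to h_\cC \Sigma x \to \cdots
\]
in $\Mod \cC$ whose terms all lie in $\proj \cC$. At every position, three consecutive terms come from a genuine triangle in $\cT$, and applying the cohomological functor $\cT(-,c)$ for $c\in \cC$ (equivalently, evaluating at $c$) yields an exact sequence; this shows $F$ is acyclic, with $M$ appearing as the syzygy between $h_\cC x$ and $h_\cC y$.

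To conclude that $M$ is Gorenstein projective, I would verify that $F$ is a complete projective resolution, i.e.\ that $\Hom_\cC(F,P)$ is acyclic for every $P\in \Proj \cC$. By the Yoneda lemma (and the fact that the Yoneda embedding is fully faithful on $\add \cC$), for any $c\in \cC$ one has $\Hom_\cC(h_\cC u, h_\cC c) \cong \cT(u,c)$ for $u\in \add \cC$. Hence $\Hom_\cC(F, h_\cC c)$ is, up to shift, the long exact sequence obtained from the rotated triangle by applying the cohomological functor $\cT(-,c)$, which is exact. Every $P\in \Proj \cC$ is a retract of a (possibly infinite) coproduct of representables, and taking products of complexes preserves acyclicity, so $\Hom_\cC(F,P)$ is acyclic as required.

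Finally, $M$ is a finitely generated $\cC$-module since it is a quotient of the finitely generated projective $h_\cC x$; moreover its kernel in $h_\cC x$ is the image of $h_\cC \Sigma^{-1}z \to h_\cC x$, hence also finitely generated. Under the standing local coherence hypothesis on $\Mod \cC$, finitely presented modules are closed under subobjects with finitely generated ambient modules (equivalently, fgp modules are coherent), so $M$ lies in $\modu\cC$, and therefore in $\Gproj \cC$. The only delicate point is the verification that $F$ is a complete resolution, but as outlined this is immediate from the Yoneda identification and the fact that triangles remain exact under every representable functor $\cT(-,c)$.
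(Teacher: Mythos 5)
Your argument is correct and is essentially a detailed expansion of the paper's extremely terse proof (``This is clear \ldots by Yoneda''): you unwind what it means for the doubly-infinite sequence of rotated triangles to be a complete projective resolution. Two minor infelicities worth flagging. First, where you say ``taking products of complexes preserves acyclicity'': a projective $P$ is a retract of a \emph{coproduct} $\coprod_i h_\cC c_i$ of representables, so one needs $\Hom_\cC(F_n,-)$ to commute with coproducts, which holds because each $F_n$ is finitely generated projective, hence compact in $\Mod\cC$; then $\Hom_\cC(F,\coprod_i h_\cC c_i)\cong\coprod_i \Hom_\cC(F,h_\cC c_i)$ is a coproduct of acyclic complexes and one passes to the retract $P$. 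Second, the appeal to local coherence at the end is unnecessary (and slightly misapplied, since it concerns submodules rather than quotients): the exact sequence $h_\cC\Sigma^{-1}z\to h_\cC x\to M\to 0$ obtained from your complex already exhibits $M$ as finitely presented outright.
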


\begin{proof}
This is clear. It would be true even without assuming $\cC$ to be Gorenstein, since the exact triangle gives rise in $\Mod \cC$ to a complete projective resolution of any of its syzygies, by Yoneda.
\end{proof}

We now wish to characterize those $\cC$ which have enough exact sequences, in the sense of the previous section, where the exact sequences arise from distinguished triangles of~$\cT$.

\begin{prop} \label{prop:equiv_Gore_closed}
Let $\cC$ be as above. The following are equivalent:
\begin{enumerate}
\item There exists a sufficient family of exact sequences in $\cC$, as in Definition~\ref{defi:enough}, consisting of exact triangles in $\add \cC\subseteq \cT$.
\item There is a subset $\mathcal S\subseteq \Gproj \cC$ such that $\cS\cup \proj \cC$ generates $\Gproj \cC$ by extensions and retracts, and consists of modules as in Lemma \ref{lemma:Im_Gore}; that is, for each $M\in \mathcal S$ there exists an exact triangle $x\to y\to z\to \Sigma x$ in $\add \cC$ such that $M\cong \Img(x\to y)$.
\item For each $M\in \Gproj \cC$ there exists an exact triangle $x\to y\to z\to \Sigma x$ in $\add \cC$ such that $M\cong \Img(x\to y)$.
\item If $x\to y$ is a morphism in $\add \cC$ such that $\Img(x\to y)\in \Gproj \cC$, then $\cone(x\to y)\in \add \cC$.
\end{enumerate}
\end{prop}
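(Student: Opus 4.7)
I would prove the equivalences via the cycle $(3)\Rightarrow(2)\Rightarrow(1)\Rightarrow(4)\Rightarrow(3)$. The implications $(3)\Rightarrow(2)$ and $(2)\Rightarrow(1)$ are routine: for the former, take $\mathcal S=\Gproj\cC$; for the latter, each witnessing triangle $x_N\to y_N\to z_N\to\Sigma x_N$ in $\add\cC$ rotates (using $\Sigma\cC=\cC$, so that $\Sigma(\add\cC)=\add\cC$) into a doubly infinite acyclic complex of finitely generated projective $\cC$-modules---an exact sequence in the sense of Definition~\ref{defi:exact_seq}---with $N$ among its syzygies, so the resulting family is sufficient in the sense of Definition~\ref{defi:enough}. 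For $(4)\Rightarrow(3)$, given $M\in\Gproj\cC$ I would use that $\Gproj\cC$ is Frobenius with $\proj\cC$ as injective-projectives (Theorem~\ref{thm:triangulated_small}(ii)) to produce an epimorphism $h_\cC x\twoheadrightarrow M$ and a monomorphism $M\hookrightarrow h_\cC y$ with $x,y\in\add\cC$; the composite lifts by Yoneda on $\add\cC$ to $f\colon x\to y$ with $\Img(h_\cC f)=M$, and (4) then yields $\cone(f)\in\add\cC$.

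The main step is $(1)\Rightarrow(4)$. Given $f\colon x\to y$ in $\add\cC$ with $M=\Img(h_\cC f)\in\Gproj\cC$, I form the triangle $x\xrightarrow{f}y\to z\to\Sigma x$. The long exact sequence of $\cC$-modules produced by $h_\cC$ splits off a short exact sequence $0\to C\to h_\cC z\to N\to 0$, where $C=\coker(h_\cC f)$ is a cosyzygy of the Gorenstein projective $M$ and $N=\ker(h_\cC\Sigma f)$ is a syzygy of a $\Sigma$-twist of $M$; both lie in $\Gproj\cC$, so $h_\cC z\in\Gproj\cC$ by closure under extensions. Now (1) enters: for each $F\in\cF$, coming from a triangle $a\to b\to c\to\Sigma a$ in $\add\cC$, the complex $\Hom_\cC(F,h_\cC z)$ identifies, via Yoneda extended to $\add\cC$, with the long exact sequence obtained by applying the cohomological functor $\cT(-,z)$ to the underlying triangle, which is automatically exact. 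Thus $h_\cC z$ is $F$-exact for every $F\in\cF$, and Theorem~\ref{thm:enough_exact} gives $h_\cC z\in\FD\cC$; combined with the cotorsion identity $\Gproj\cC\cap\FD\cC=\proj\cC$ arising from the cotorsion pair in Proposition~\ref{prop:pairs}, this forces $h_\cC z$ to be finitely generated projective.

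It remains to upgrade $h_\cC z\in\proj\cC$ to $z\in\add\cC$. I would pick $w\in\add\cC$ with $h_\cC z$ a direct summand of $h_\cC w$ via a retraction $\pi\colon h_\cC w\twoheadrightarrow h_\cC z$, and lift $\pi$ by Yoneda (applicable since $w\in\add\cC$) to $\tilde\pi\colon w\to z$ in $\cT$. The plan is to argue that $\tilde\pi$ splits in $\cT$, realizing $z$ as a summand of $w\in\add\cC$ and hence placing it in the additive closure by definition. I expect this final step to be the main technical obstacle, because the restricted Yoneda functor need not be fully faithful on all of $\cT$ and phantom-type obstructions could in principle interfere; however, the triangulated structure of $\cT$ together with the fact that $z=\cone(f)$ is itself assembled from $\add\cC$ should rule out such pathologies, for instance through an octahedral analysis of $\cone(\tilde\pi)$, whose image under $h_\cC$ is again projective.
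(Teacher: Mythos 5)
Your cycle $(3)\Rightarrow(2)\Rightarrow(1)\Rightarrow(4)\Rightarrow(3)$ is organized differently from the paper's $(1)\Rightarrow(2)\Rightarrow(3)\Rightarrow(4)\Rightarrow(1)$, and three of your four arrows are correct and, in the case of $(4)\Rightarrow(3)$, arguably cleaner than the paper's route: using the Frobenius structure of $\Gproj\cC$ to embed $M$ into a projective and then applying (4) is a nice shortcut. However, your $(1)\Rightarrow(4)$ has a genuine gap, and it is exactly the gap you flag at the end.

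The $F$-exactness argument is fine up to the point where you conclude $h_\cC z\in\proj\cC$. (Two small remarks on the way there: $\ker(h_\cC\Sigma f)$ is a syzygy of $\Sigma_*M$, not of $M$ itself, and one should check that cosyzygies of Gorenstein projectives along arbitrary monos into projectives are again Gorenstein projective, which is a Schanuel-type argument; both are fixable.) The problem is the final upgrade from $h_\cC z\in\proj\cC$ to $z\in\add\cC$. After choosing $w\in\add\cC$ with $h_\cC w\cong h_\cC z$ and lifting the isomorphism to $\tilde\pi\colon w\to z$, you only know $h_\cC(\cone\tilde\pi)=0$, not that $\cone\tilde\pi=0$; in other words, $\tilde\pi$ is an $h_\cC$-isomorphism but possibly not invertible in $\cT$. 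The restricted Yoneda functor is faithful on $\add\cC$ but not conservative on $\cT$, so a phantom obstruction really can survive, and the ``octahedral analysis of $\cone(\tilde\pi)$'' you suggest just relocates the same question: any natural candidate morphism in $\add\cC$ you produce from the octahedron still has Gorenstein projective image, so you are back in the situation you started in. No amount of homological information about $h_\cC z$ can rule out $\cone\tilde\pi\ne 0$ on its own; you must use hypothesis (1) (equivalently (2)) in a structural way, namely that $M$ is built by extensions and retracts from modules which are \emph{already known} to be syzygies of triangles lying in $\add\cC$.

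This is exactly what the paper's Lemma~\ref{lemma:cones} (together with Lemma~\ref{lemma:pseudo_inv}) is for: if two morphisms in $\add\cC$ have the same Gorenstein projective image, their cones differ by $\add\cC$-summands. The paper goes $(2)\Rightarrow(3)$ by propagating the property ``$M$ is a syzygy of a triangle in $\add\cC$'' through retracts (Lemma~\ref{lemma:cones}) and extensions (a Horseshoe-type octahedron), and then $(3)\Rightarrow(4)$ is a one-line application of Lemma~\ref{lemma:cones} again: by (3) there is \emph{some} triangle in $\add\cC$ with syzygy $M$, and the cone of your $f$ can only differ from the known one by $\add\cC$-summands. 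Your argument never invokes such a comparison of two presentations of the same syzygy, and without it the step from $h_\cC z\in\proj\cC$ to $z\in\add\cC$ cannot close. So the approach needs to be rerouted through a comparison lemma of the Lemma~\ref{lemma:cones} type rather than through $F$-exactness alone.
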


\begin{defi} \label{defi:Gore_closed}
We say that $\cC$ is a \emph{Gorenstein closed} subcategory of $\cT$ if the four equivalent conditions of the proposition hold.
\end{defi}

The proof of the proposition requires some preparation.

\begin{lemma} \label{lemma:cones}
Let $M\in \Gproj \cC$ and let $f_i\colon x_i\to y_i$ \textup($i=1,2$\textup) be two morphisms in $\add \cC$ whose images are isomorphic to~$M$. Then there exist $p_1, p_2\in \add \cC$ and an isomorphism $\cone (f_1)\oplus p_1\cong \cone (f_2)\oplus p_2$.
\end{lemma}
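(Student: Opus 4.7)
The plan is to compare the two triangles completing $f_1$ and $f_2$ after enlarging the $y_i$ by a common projective summand chosen to eliminate an $\Ext^1$-obstruction, and then use the octahedral axiom to descend the resulting comparison to the original cones. Writing the epi-mono factorizations $h_\cC(f_i)=e_ig_i$ through $M$ in $\Mod\cC$, I would first exploit the projectivity of $h_\cC x_1$ and the surjectivity of $g_2$ to lift $g_1$ along $g_2$, obtaining $\tilde\phi\in\Hom(h_\cC x_1,h_\cC x_2)$ with $g_2\tilde\phi=g_1$; by the full faithfulness of the Yoneda embedding on $\add\cC$ this descends to $\phi\colon x_1\to x_2$ in $\cT$, and a symmetric construction yields $\phi'\colon x_2\to x_1$.

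For a morphism of triangles we would also need $\psi\colon y_1\to y_2$ with $\psi f_1=f_2\phi$, which amounts to extending the mono $e_2\colon M\hookrightarrow h_\cC y_2$ along $e_1\colon M\hookrightarrow h_\cC y_1$ in $\Mod\cC$; the obstruction lies in $\Ext^1_\cC(\coker e_1,h_\cC y_2)$ and generally does not vanish. To kill it, I would enlarge: using the Frobenius structure on $\Gproj\cC$ from Theorem~\ref{thm:triangulated_small}, pick a short exact sequence $0\to M\to h_\cC q\to N\to 0$ with $q\in\add\cC$ and $N\in\Gproj\cC$, then replace $f_i$ by $\tilde f_i=(f_i,\alpha_i)\colon x_i\to y_i\oplus q$, where $\alpha_i$ is the Yoneda lift of the composite $h_\cC x_i\twoheadrightarrow M\hookrightarrow h_\cC q$. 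This preserves the image $M$, but now the cokernel $\tilde C_i:=h_\cC(y_i\oplus q)/M$ fits into $0\to h_\cC y_i\to\tilde C_i\to N\to 0$ and hence lies in $\Gproj\cC$ by extension-closure. The cotorsion pair $(\GProj\cC,\FD\cC)$ of Proposition~\ref{prop:pairs} then forces $\Ext^1(\tilde C_1,h_\cC(y_2\oplus q))=0$, so the extension problem becomes solvable and TR3 yields morphisms of triangles $(\phi,\tilde\psi,\chi)$ and $(\phi',\tilde\psi',\chi')$ with induced maps $\chi,\chi'$ between the modified cones $\cone(\tilde f_i)$.

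A Schanuel-type pushout argument in the Frobenius category $\Gproj\cC$, applied to the two copresentations $0\to M\to h_\cC(y_i\oplus q)\to\tilde C_i\to 0$, yields an isomorphism $h_\cC(y_1\oplus q)\oplus\tilde C_2\cong h_\cC(y_2\oplus q)\oplus\tilde C_1$ compatible with the $M$-inclusions. Combined with the lift data for $\tilde\psi,\tilde\psi'$ this forces $\chi,\chi'$ to be mutually inverse modulo maps factoring through $\add\cC$-objects, and absorbing the discrepancies as direct summands gives an honest isomorphism $\cone(\tilde f_1)\oplus a_1\cong\cone(\tilde f_2)\oplus a_2$ in $\cT$ for some $a_1,a_2\in\add\cC$. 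Finally, the octahedral axiom applied to the factorization $f_i=\mathrm{pr}_{y_i}\circ\tilde f_i$ produces triangles $q\to\cone(\tilde f_i)\to\cone(f_i)\to\Sigma q$; choosing the Schanuel comparison compatibly with the $q$-inclusions (after a further projective augmentation if necessary) and invoking TR3 one more time transports the isomorphism down to the desired $\cone(f_1)\oplus p_1\cong\cone(f_2)\oplus p_2$ with $p_i\in\add\cC$.

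The principal technical hurdle is the compatibility in the final step: the Schanuel isomorphism lifts cleanly for its $\add\cC$-part via Yoneda, but the Gorenstein projective cokernels $\tilde C_i$ are typically not representable, so the TR3-induced maps $\chi,\chi'$ are only unique up to morphisms factoring through the connecting maps, and the discrepancy $\tilde\psi'\tilde\psi-\id$ a priori only factors through $\tilde C_1$ in $\Mod\cC$ rather than through a representable in $\cT$. Ensuring that this indeterminacy can be absorbed into $\add\cC$-summands, rather than merely killed in a Verdier quotient, requires careful bookkeeping of the Schanuel isomorphism together with the octahedra, possibly including one or two additional enlargement steps.
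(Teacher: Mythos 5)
Your approach is genuinely different from the paper's, and it has a genuine gap at the final step, as you yourself suspect. The paper's proof sidesteps the cone-comparison issue entirely: it extends each $f_i$ to a complete projective resolution $P_i^\bullet$ of $M$ inside $\Ch(\add\cC)$, uses the triangle equivalence $\Ho_{\mathrm{ac}}(\add\cC)\simeq\underline{\Gproj}\,\cC$ to conclude $P_1^\bullet\cong P_2^\bullet$ in the homotopy category, and then invokes the elementary fact that a homotopy equivalence of complexes over an additive category becomes an honest isomorphism after adding contractible complexes: $P_1^\bullet\oplus C_1^\bullet\cong P_2^\bullet\oplus C_2^\bullet$ in $\Ch(\add\cC)$. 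Restricting to degrees $0$ and $1$ gives a commutative square of morphisms in $\add\cC$ exhibiting $f_1\oplus d^0_{C_1}\cong f_2\oplus d^0_{C_2}$, hence $\cone(f_1)\oplus\cone(d^0_{C_1})\cong\cone(f_2)\oplus\cone(d^0_{C_2})$, and Lemma~\ref{lemma:pseudo_inv} places the cones of the contractible differentials in $\add\cC$. The crucial point is that the isomorphism of cones is induced from an \emph{isomorphism of morphisms in $\add\cC$}, which transports faithfully to $\cT$ via Yoneda; no TR3-choices need to be compared.

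The gap in your version is real and, I believe, not easily closed without effectively re-running the paper's argument. You correctly observe that the discrepancy $\tilde\psi'\tilde\psi-\id$ factors through $\tilde C_1\in\Gproj\cC$, which is \emph{not} representable, so the resulting ambiguity in the TR3-completions $\chi,\chi'$ cannot be absorbed into $\add\cC$-summands by any straightforward Yoneda lift. Worse, the Schanuel isomorphism $h_\cC(y_1\oplus q)\oplus\tilde C_2\cong h_\cC(y_2\oplus q)\oplus\tilde C_1$ is a statement purely inside $\Mod\cC$ about cokernel modules, and these do not identify with $h_\cC\cone(\tilde f_i)$: from the long exact sequence one only gets a short exact sequence $0\to\tilde C_i\to h_\cC\cone(\tilde f_i)\to\Sigma\ker(h_\cC x_i\twoheadrightarrow M)\to 0$, so the Schanuel datum does not directly constrain the cones in $\cT$ at all. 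Iterating the enlargement step does not obviously converge either, since each new $q$ introduces a new non-representable cokernel. In short, your strategy is essentially building a homotopy equivalence between complete resolutions degree by degree and then trying to rigidify it in $\cT$; the paper builds the whole homotopy equivalence at once and rigidifies it in $\Ch(\add\cC)$ where the contractible-summand lemma handles rigidification cleanly. The degree-wise enlargement trick and the cotorsion-pair vanishing are nice observations, but I would recommend switching to the chain-complex formulation.
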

\begin{proof}
Since $M$ is Gorenstein projective, we may extend each $f_i$ to a complete projective resolution $P_i^\bullet$ of~$M$ having $f_i$ as differential in degree zero. 
Without loss of generality, we may assume that the new objects in these complexes are finitely generated free.
Let us denote by $\Ch_\mathrm{ac}(\add \cC)$ the full subcategory of $\Ch(\add \cC)$ consisting of the complexes which are acyclic in $\Mod\cC$. Recall that then the zero cocycles functor $Z^0\colon \Ch_\mathrm{ac}(\add \cC) \to \Gproj\cC$ between the Frobenius exact categories is well known to descend to a triangle equivalence $\Ho_\mathrm{ac}(\add \cC) \overset{\sim}\to \underline{\Gproj} \,\cC$ (this is in fact the Quillen equivalence from Remark~\ref{rem:sing}). Hence $P_1^\bullet \cong P_2^\bullet$ in $\Ho(\add \cC)$ and there exist in $\Ch (\add \cC)$ contractible complexes $C^\bullet_i$ ($i=1,2$) and an isomorphism $P^\bullet_1\oplus C^\bullet_1 \cong P^\bullet_2\oplus C^\bullet_2$.
Thus in particular we have an isomorphism of morphisms in $\add \cC$, as follows:
\[
\xymatrix{
x_1 \oplus C_1^0 \ar[r]^-{f_1 \oplus d_{C_1}^0} \ar[d]_{\sim} &
 y_1 \oplus C_1^1 \ar[d]^\sim \\
x_2 \oplus C_2^0 \ar[r]^-{f_2 \oplus d_{C_2}^0} & y_2 \oplus C_2^1
}
\]
By taking cones in $\cT$ we obtain an isomorphism 
$\cone(f_1\oplus d^0_{C_1}) \cong \cone(f_2\oplus d^0_{C_2})$.
Denoting $p_i:=\cone (d^0_{C_i})$, we deduce
$ \cone(f_1)\oplus p_1\cong \cone(f_2)\oplus p_2$.
It remains to show that $p_1,p_2\in \add\cC$.
This is an immediate consequence of Lemma~\ref{lemma:pseudo_inv} below (a general fact about triangulated categories) because the component $C^1_i\to C^0_i$ of any chain homotopy $\id_{C_i^\bullet}\sim 0$ is a pseudo-inverse for~$d^0_{C_i}$. 

(We now sketch another, less elementary proof of the latter fact. Embed $\cT$ into its idempotent completion~$\smash{\widetilde{\cT}}$; see~\cite{balmer-schlichting}. 
Let us use the notations $\add_\cT \cC$ and $\add_{\widetilde \cT} \cC$ to distinguish between the additive hulls of $\cC$ as taken in $\cT$ and in~$\smash{\widetilde{\cT}}$, respectively.
Since $\add_{\widetilde \cT}\cC$ is an idempotent complete additive category, every contractible complex over it splits. Thus $C^\bullet_i$ splits in $\Ch(\add_{\widetilde \cT}\cC)$, from which it follows that $p_i=\cone(d^0_{C_i}) \in \add_{\widetilde \cT}\cC$. But since $d^0_{C_i}$ was a morphism of $\cT$ we conclude that actually $p_i\in \cT \cap \add_{\widetilde \cT}\cC= \add_\cT \cC$, as required.)
\end{proof}

\begin{lemma} \label{lemma:pseudo_inv}
Assume that a morphism $f\colon X\to Y$ in a triangulated category admits a pseudo-inverse, i.e., a morphism $g\colon Y\to X$ such that $fgf=f$. Then $\cone(f)$ is a retract of $Y\oplus \Sigma X$.
\end{lemma}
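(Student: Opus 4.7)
The plan is to exhibit $\cone(f)$ as a retract of $Y\oplus \Sigma X$ by writing down explicit morphisms $\alpha\colon Y\oplus \Sigma X \to \cone(f)$ and $\beta\colon \cone(f)\to Y\oplus \Sigma X$ satisfying $\alpha\beta = \id_{\cone(f)}$. Fix a distinguished triangle $X \xrightarrow{f} Y \xrightarrow{p} \cone(f) \xrightarrow{\partial} \Sigma X$, so that in particular $pf=0$ and $\partial p=0$.

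First I would use the pseudo-inverse relation twice to produce the building blocks of $\alpha$ and $\beta$. From $fgf=f$ we obtain $(1_Y-fg)\circ f = 0$, and the triangle axiom (a morphism out of $Y$ annihilated by precomposition with $f$ must factor through $p$) yields some $r\colon \cone(f)\to Y$ with $rp = 1_Y - fg$. Postcomposing with $p$ and using $pf=0$ gives the idempotent-like identity $prp = p$, whence $(1_{\cone(f)} - pr)\circ p = 0$. Applying the same factorization principle to the rotated triangle $Y \xrightarrow{p} \cone(f) \xrightarrow{\partial} \Sigma X \xrightarrow{-\Sigma f} \Sigma Y$, we then obtain $w\colon \Sigma X \to \cone(f)$ with $w\partial = 1_{\cone(f)} - pr$.

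These two identities combine to the crucial equation
\[
pr + w\partial \;=\; \id_{\cone(f)}.
\]
Reading this as a matrix product, set $\alpha = (p,\, w)\colon Y\oplus \Sigma X \to \cone(f)$ and $\beta = \binom{r}{\partial}\colon \cone(f) \to Y\oplus \Sigma X$; then $\alpha\beta = pr + w\partial = \id_{\cone(f)}$, exhibiting $\cone(f)$ as a direct summand of $Y\oplus \Sigma X$.

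The argument is essentially a double application of the triangle's universal property and uses neither idempotent completion nor any extra structure on $\cT$. The only step that requires a moment of care is the transfer from the relation $rp = 1_Y - fg$ on $Y$ to $prp = p$ on $\cone(f)$, since it is this identity that unlocks the second factorization (of $1_{\cone(f)} - pr$) through $\partial$; once this is in hand the splitting is essentially forced.
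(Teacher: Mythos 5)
Your proof is correct, and it takes a genuinely different (and arguably cleaner) route than the paper's. You produce the retraction directly: having obtained $r\colon\cone(f)\to Y$ with $rp=1_Y-fg$ from the first factorization, you observe that $prp=p$ (using $pf=0$), so $(1_{\cone(f)}-pr)p=0$, which lets you factor $1_{\cone(f)}-pr$ as $w\partial$; the identity $pr+w\partial=1_{\cone(f)}$ then holds on the nose, giving the splitting $\bigl(\begin{smallmatrix}r\\\partial\end{smallmatrix}\bigr)$ followed by $(p,\,w)$. The paper instead constructs two factorizations symmetrically -- $q\colon Z\to Y$ with $qu=1_Y-fg$ and $r'\colon\Sigma X\to Z$ with $vr'=1_{\Sigma X}-\Sigma(gf)$ -- forms $uq+r'v\colon Z\to Z$, checks it fits into a morphism of triangles whose other two vertical components are identities, and invokes the standard ``two-out-of-three isomorphism'' lemma for triangles to conclude $uq+r'v$ is invertible (so $Z$ is a retract after composing with the inverse). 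Your version avoids that lemma entirely and needs no invertibility argument since the chosen $w$ forces the composite to equal the identity; the paper's version is slightly more symmetric in how $q$ and $r'$ are defined but pays for it with the extra step. Both are fine; yours is a bit more elementary.
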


\begin{proof}
Choose an exact triangle $(f,u,v)$ containing~$f$.
Since $(1_Y-fg)f=0$ and $\Sigma f(1_{\Sigma X}-\Sigma(gf))=0$,  we obtain factorizations $q$ and $r$ as follows:
\[
\xymatrix{
&&&\Sigma X \ar[d]|-{1- \Sigma(gf)} \ar@{..>}[dl]_r \ar[dr]^-0 & \\
X \ar[r]^-f \ar[dr]_0 &
 Y \ar[r]^-u  \ar[d]|-{1-fg} &
  Z \ar[r]^-v \ar@{..>}[dl]^{q} &
  \Sigma X \ar[r]_-{-\Sigma f} &
   \Sigma Y \\
& Y &&&
}
\]
In order to conclude it would now suffice to show that the composite
\[
\xymatrix{
Z \ar[r]^-{\left[ {}^q_v \right]} & Y \oplus \Sigma X \ar[r]^-{[u \, r]} & Z
} ,
\]
that is $uq+rv\colon Z\to Z$, is invertible.
But this holds because the diagram
\[
\xymatrix{
X \ar[d]_{1} \ar[r]^f & Y \ar[d]_{1} \ar[r]^-u & Z \ar[r]^-v \ar[d]_{uq+rv} & \Sigma X \ar[d]_{1} \\
X \ar[r]^f & Y \ar[r]^-u & Z \ar[r]^-v & \Sigma X
}
\]
is commutative, as a straightforward computation shows.
\end{proof}

\begin{proof}[Proof of Proposition \ref{prop:equiv_Gore_closed}]
The implication (1)$\Rightarrow$(2) is simply the definition of a sufficient family.
The implication (2)$\Rightarrow$(3) can be proved as follows.

Let $x_M\to y_M\to z_M\to \Sigma x_M$ be a triangle with vertices in $\add \cC$ and such that $\Img(x_M\to y_M)\cong M$, and assume that $M\cong M_1\oplus M_2$. Since $M_i\in \Gproj \cC$, we can find a complete projective resolution for  it, so in particular we can construct a triangle $x_i\to y_i\to z_i\to \Sigma x_i$ such that $x_i,y_i\in \add \cC$ and $\Img(x_i\to y_i)\cong M_i$ (for $i=1,2$). By summing the two we obtain the triangle 
$x_1\oplus x_2\to y_1\oplus y_2 \to z_1\oplus z_2 \to \Sigma(x_1\oplus x_2)$. 
Since $\Img(x_1\oplus x_2\to y_1\oplus y_2)\cong M$, it follows from Lemma~\ref{lemma:cones} that there exist $p,p'\in \add \cC$ and an isomorphism $z_M\oplus p \cong z_1\oplus z_2 \oplus p'$.
Hence we have $z_1,z_2\in \add \cC$. We conclude that retracts of $M$ are also syzygies of some triangles in $\add \cC$, provided $M$ is. It remains to prove the analogous fact for extensions.

Let $0\to K\to L\to M\to 0$ be an extension in $\Mod \cC$. Assume that  in $\add \cC$ there exist triangles $x_K\to y_K\to z_K\to \Sigma x_K$  and
$x_M\to y_M\to z_M\to \Sigma x_M$ such that  $\Img(x_K\to y_K)\cong K$ and $\Img(x_M\to y_M)\cong M$. We must show the existence in $\add \cC$ of a triangle
$x_L\to y_L\to z_L\to \Sigma x_L$ with $\Img(x_L\to y_L)\cong L$.
Reasoning as in the Horseshoe Lemma, we may construct the following commutative diagram, where the rows are split short exact sequences and the two outer columns are parts of the given triangles:
\[
\xymatrix{
x_K \ar[r] \ar[d]^d & x_K\oplus x_M \ar[r] \ar[d]^{\left[ {}^d_0 \, {}^f_d \right]} & x_M \ar[d]^d \\
y_K \ar[r] \ar[d]^d & y_K\oplus y_M \ar[r] \ar[d]^{\left[ {}^d_0 \, {}^g_d \right]} & y_M \ar[d]^d \\
z_K \ar[r]  & z_K \oplus z_M \ar[r] & z_M
}
\]
Since the middle vertical column composes to zero, we have the commutative square:
\[
\xymatrix{
x_M \ar[r]^-f \ar[d]_d & y_K \ar[d]^{-d} \\
y_M \ar[r]^-g & z_K
}
\]
By applying \cite{neeman:new}*{Theorem 1.8} (a form of the octahedron axiom) to this square, we can complete the above middle column to an exact triangle of the form $x_K\oplus x_M \to y_K\oplus y_M \to z_K\oplus z_M \to \Sigma(x_K\oplus x_M)$. Since  the image of the first map is~$L$, this proves the claim. 
This concludes the proof of (2)$\Rightarrow$(3).

The implication (3)$\Rightarrow$(4) is another easy application of Lemma \ref{lemma:cones}, just as above.

It remains only to show (4)$\Rightarrow$(1). 
Let $M\in \Gproj \cC$. Hence $M$ has a complete projective resolution, so in particular we can construct a triangle $x\to y\to z\to \Sigma x$ such that $x,y \in \add \cC$ and $\Img(x\to y)\cong M$. By~(4), we know that $z\in \add \cC$ as well. If we do this for a generating set of Gorenstein projectives, we will obtain a sufficient family of exact sequences in~$\cC$.
\end{proof}

\begin{thm} \label{thm:exact_detection}
Let $\cT$ be a triangulated category and let $\cC$ be an (essentially) small suspension closed full subcategory of $\cT$ such that $\Mod \cC$ is locally coherent and Gorenstein.
Then the following are equivalent: 
\begin{enumerate}
\item[(1)]  $\cC$ is Gorenstein closed in $\cT$ (Def.\,\ref{defi:Gore_closed});
\item[(2)] the restricted Yoneda functor
 $h_\cC \colon \cT \to \Mod \cC$, sending $X\in \cT $ to $\cT(-,X)|_\cC$, 
takes values in $\FD \cC$.
\end{enumerate}
\end{thm}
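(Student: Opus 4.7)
The plan is to exploit the characterisations of Gorenstein closedness in Proposition~\ref{prop:equiv_Gore_closed} together with Theorem~\ref{thm:enough_exact}. For (1) $\Rightarrow$ (2), given $\cC$ Gorenstein closed, I would pick a sufficient family $\cF$ of exact sequences arising, as in Proposition~\ref{prop:equiv_Gore_closed}(1), by splicing the suspensions of distinguished triangles $x \to y \to z \to \Sigma x$ in $\add \cC$. For any $X \in \cT$ and any such $F \in \cF$, Yoneda identifies $\Hom_\cC(F, h_\cC X)$ with the complex obtained by applying the cohomological functor $\cT(-, X)$ to the triangle and its shifts; this complex is exact, so $h_\cC X$ is $F$-exact for every $F \in \cF$. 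Theorem~\ref{thm:enough_exact} then yields $h_\cC X \in \FD \cC$.

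For the converse (2) $\Rightarrow$ (1), I would verify characterisation~(3) of Proposition~\ref{prop:equiv_Gore_closed}. Given $M \in \Gproj \cC$, use the Frobenius structure of $\Gproj \cC$ (Theorem~\ref{thm:triangulated_small}) to produce a complete projective resolution by finitely generated projectives, and lift it via the equivalence $\add \cC \simeq \proj \cC$ to a chain $\cdots \to x_{-1} \to x_0 \to x_1 \to x_2 \to \cdots$ in $\add \cC$ with vanishing consecutive compositions (the lifts of zero maps are zero by full faithfulness of Yoneda on $\add \cC$). Arranging $M \cong \Img(h_\cC x_0 \to h_\cC x_1)$, I form the triangle $x_0 \to x_1 \to z \to \Sigma x_0$ in $\cT$; the task reduces to showing $z \in \add \cC$.

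The crucial step, and the main obstacle, is to prove $h_\cC z \in \proj \cC$. The long exact sequence of $h_\cC$ applied to the triangle yields a short exact sequence $0 \to N \to h_\cC z \to K[1] \to 0$ in $\Mod \cC$, where $N = h_\cC(x_1)/M$, $K = \ker(h_\cC x_0 \to h_\cC x_1)$, and $[1]$ is the auto-equivalence of $\Mod \cC$ induced by $\Sigma$. Because the chain comes from a complete resolution, $N \cong \Img(h_\cC x_1 \to h_\cC x_2)$ and $K \cong \Img(h_\cC x_{-1} \to h_\cC x_0)$ are (co)syzygies in that resolution, hence lie in $\Gproj \cC$; so does $K[1]$. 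Extension-closure of $\GProj \cC$ places $h_\cC z$ in $\GProj \cC$, while hypothesis~(2) gives $h_\cC z \in \FD \cC$. The Gorenstein identity $\GProj \cC \cap \FD \cC = \Proj \cC$ then yields $h_\cC z \in \Proj \cC$, and local coherence of $\Mod \cC$ together with the finite presentation of $N$ and $K[1]$ upgrades this to $h_\cC z \in \proj \cC$.

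Finally, to promote $h_\cC z \in \proj \cC$ to $z \in \add \cC$, choose $c \in \add \cC$ with $h_\cC c \cong h_\cC z$ and lift the isomorphism to $\bar\phi \colon c \to z$ using $\Hom_\cT(c, z) \cong \Hom_\cC(h_\cC c, h_\cC z)$. Complete $\bar\phi$ to a triangle $c \to z \to w \to \Sigma c$ in $\cT$; since $h_\cC \bar\phi$ is an isomorphism, the long exact sequence forces $h_\cC w = 0$, whence $\Hom_\cT(x_1, w) = 0$ and $\Hom_\cT(\Sigma x_0, w) = 0$ because both $x_1$ and $\Sigma x_0$ lie in $\add \cC$. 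Therefore the composite $x_1 \to z \to w$ vanishes, so $z \to w$ factors through $\Sigma x_0$ and thus is itself zero. This makes $\bar\phi$ a split epimorphism, exhibiting $z$ as a direct summand of $c \in \add \cC$, and completing the verification of (3).
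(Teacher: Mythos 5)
Your implication (1)$\Rightarrow$(2) is the paper's own argument: combine Proposition~\ref{prop:equiv_Gore_closed}(1) with Theorem~\ref{thm:enough_exact}. For (2)$\Rightarrow$(1) you take a genuinely different route -- and it has a gap. Your crucial intermediate step, showing $h_\cC z$ is a finitely generated projective $\cC$-module via the extension $0\to N\to h_\cC z\to K[1]\to 0$ and the identity $\GProj\cC\cap\FD\cC=\Proj\cC$, is correct and is a nice ``module-theoretic shadow'' of what the paper proves. The problem is the final lifting step: you invoke ``the equivalence $\add\cC\simeq\proj\cC$'' both to lift the complete resolution and, more seriously, to \emph{choose $c\in\add\cC$ with $h_\cC c\cong h_\cC z$}. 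But the Yoneda embedding $\add\cC\to\proj\cC$ is only a fully faithful functor in general (as the paper is careful to say at the start of \S\ref{sec:gore_closed}); it is an equivalence only when $\cT$ splits idempotents, which Theorem~\ref{thm:exact_detection} does \emph{not} assume. If $\cT$ fails to be idempotent complete, the finitely generated projective $h_\cC z$ may not lie in the image of $h_\cC|_{\add\cC}$, so the object $c$ you need may simply not exist in $\cT$, and the concluding split-epi argument cannot even start. (Replacing $c$ by a free object with only a split epi $h_\cC c\twoheadrightarrow h_\cC z$ does not rescue the argument: then the cone $w$ satisfies $h_\cC w\cong\ker(\Sigma p)$, which need not vanish, so you cannot conclude $z\to w$ is zero.)

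The paper's proof sidesteps this entirely by working at the level of morphisms in $\cT$: from the $\Ext^1_\cC(M,h_\cC X)=0$ vanishing (valid for every $X\in\cT$ by hypothesis (2)) applied to the specific test object $X=\cone(d_2)$, it extracts maps $u\colon y\to x_2$ and $v\colon x_2\to y$ satisfying $d_2u=g$ and $gv=d_2$, and from $g(1_y-vu)=0$ manufactures a third map $w\colon y\to\Sigma^{-1}x_0$ with $1_y=(\Sigma^{-1}f)w+vu$. This exhibits $y$ as a retract, \emph{inside} $\cT$, of the concrete object $x_2\oplus\Sigma^{-1}x_0\in\add\cC$, so $y\in\add\cC$ without any idempotent-completeness assumption. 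Your argument is morally the same computation transported to $\Mod\cC$, but the transport forgets precisely the data (the explicit splitting maps living in $\cT$) needed to finish when idempotents of $\cT$ fail to split; to repair it you would either need to add the hypothesis that $\cT$ is idempotent complete, or pass to the idempotent completion $\widetilde\cT$ and then argue that $z\in\cT\cap\add_{\widetilde\cT}\cC=\add_\cT\cC$, an extra reduction your write-up does not make.
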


\begin{proof}
The implication (1)$\Rightarrow$(2) follows immediately by combining Theorem \ref{thm:enough_exact} with part (1) of Proposition \ref{prop:equiv_Gore_closed}. It remains to show the other implication.
Thus assume that (2) is true; we are going to verify condition (3) of Proposition \ref{prop:equiv_Gore_closed}, that is, that every finitely presented Gorenstein projective is a syzygy of some triangle in $\add \cC$.
 Let $M\in \Gproj \cC$. Then $M$ has a complete projective resolution, and in particular we can find in $\Mod \cC$ an exact sequence
\begin{align} \label{exact_seq_M}
\xymatrix{
 x_2 \ar[r]^-{d_2} &
  x_1 \ar[r]^-{d_1} &
   x_0 \ar[r] &
    M \ar[r] & 0
}
\end{align}
such that $x_i \in \add \cC$ for all~$i$ (again, we treat $h_\cC$ as an inclusion $\add \cC \subseteq \Mod \cC$).
Complete $d_1$ to a distinguished triangle in~$\cT$, as follows:
\begin{align} \label{triangle_d1}
\xymatrix{
 y \ar[r]^-g &
  x_1 \ar[r]^-{d_1} &
   x_0 \ar[r]^-f &
    \Sigma y.
}
\end{align}
Then $M\cong \Img(f)$, and it only remains to show that $y\in \add \cC$.
If $X\in \cT$ is any object, we must have $\Ext^1_\cC(M, h_\cC X)= 0$ because $M$ is Gorenstein projective and $h_\cC X \in \FD \cC$ by hypothesis. Using \eqref{exact_seq_M} to compute this Ext group, we observe that 
\begin{align*} 
\ker \big(\cT (x_1, X) \stackrel{d_2^*}{\longrightarrow} \cT (x_2, X) \big) / 
\Img \big(\cT (x_0, X) \stackrel{d_1^*}{\longrightarrow} \cT (x_1, X) \big) 
= 0.
\end{align*}
By applying $\cT(-,X)$ to the triangle \eqref{triangle_d1} we see that $\Img (d_1^*) = \ker(g^*)$, so that we may rewrite the above as follows:
\begin{align} \label{Ext1_computed_2}
\ker \big(\cT (x_1, X) \stackrel{d_2^*}{\longrightarrow} \cT (x_2, X) \big) / 
\ker \big(\cT (x_1, X) \stackrel{g^*}{\longrightarrow} \cT (y, X) \big) 
= 0.
\end{align}
Now complete $d_2$ to a distinguished triangle:
\begin{align}  \label{triangle_d2}
\xymatrix{
 x_2 \ar[r]^-{d_2} &
  x_1 \ar[r]^-{h} &
   z \ar[r] &
    \Sigma x_2.
}
\end{align}
Since $hd_2=0$, by choosing $X:=z$ in \eqref{Ext1_computed_2} we see that $hg=0$. Hence by the triangle \eqref{triangle_d2} there exists a $u: y\to x_2$ such that $d_2u=g$. Also, since $d_1d_2=0$, by the triangle \eqref{triangle_d1} there exists a $v: x_2\to y$ such that $gv=d_2$. 
As $gvu=d_2u=g$ by the definitions of $u$ and~$v$, we have $g(1_y-vu)=0$ and therefore we can find a map $w: y\to \Sigma^{-1}x_0$ such that $(\Sigma^{-1}f)w=1_y-vu$, that is: 
$1_y= (\Sigma^{-1}f)w + vu$. In other words, the composite
\[
\xymatrix{
y \ar[r]^-{\left[ {}^u_w \right]} & x_2 \oplus \Sigma^{-1} x_0 \ar[r]^-{[v \,\, \Sigma^{-1}\!f]} & y
}
\]
is the identity of~$y$, showing that $y$ belongs to $\add \cC$ as claimed.
\end{proof}

\begin{example} \label{ex:C_triangulated}
Let $\cC\subset \cT$ be a small \emph{triangulated} subcategory of~$\cT$. Then $\cC$ is obviously Gorenstein closed in $\cT$ by part~(4) of Proposition~\ref{prop:equiv_Gore_closed}. Since $\cC$ has weak kernels, $\Mod \cC$ is automatically locally coherent. Thus in this case, as soon as we know that $\cC$ is Gorenstein we can deduce from Theorem~\ref{thm:exact_detection} that the $\cC$-module $h_\cC(X)$ arising from any object $X\in \cT$ has a finite projective resolution. In fact $\cC$ could be Gorenstein, for instance, for purely set-theoretic reasons as in Theorem~\ref{thm:triangulated_n_Gorenstein}.
\end{example}

\section{Universal coefficient theorems}
\label{sec:UCT}

Let $\cT$ be a ($\base$-linear) triangulated category, and let $\cC$ be a small full subcategory of~$\cT$ closed under suspension, $\Sigma \cC=\cC$. 
Then the restricted Yoneda functor
 $h_\cC \colon \cT \longrightarrow \Mod \cC$, 
 $X\mapsto \cT(-,X)|_\cC$,
is a homological functor on~$\cT$. 
Let $\mathcal I := \{f\in \mathrm{Mor}\,\cT\mid h_\cC(f)=0 \}$ denote
the categorical ideal of morphisms killed by~$h_\cC$, and let $\cI(X,Y):=\mathcal I\cap \cT(X,Y)$.
Then by construction we have an exact sequence as follows, natural in $X,Y\in \cT$:
\begin{equation} \label{eq:pre-uct}
\xymatrix@R=15pt{
0\ar[r] & \mathcal I (X,Y) \ar[d]_{\xi} \ar[r] & \cT(X,Y) \ar[r]^-{h_\cC} & \Hom_\cC (h_\cC X, h_\cC Y) \\
& \Ext^1_\cC (h_\cC \Sigma X, h_\cC Y) &&
}
\end{equation}
The above map $\xi$ is defined by associating to every morphism $f\colon X\to Y$ in~$\mathcal I$ the class $\xi(f)$ of the extension $0\to h_\cC Y\to h_\cC Z\to h_\cC\Sigma X\to 0$, where $X\to Y\to Z\to \Sigma X$ is any distinguished triangle containing~$f$.  

\begin{defi} \label{defi:uct}
We say that \emph{the universal coefficient theorem \textup(UCT, for short\textup) with respect to $\cC$ holds for $X$ and $Y$} if, in the diagram \eqref{eq:pre-uct}, the map $h_\cC$ is surjective and $\xi$ is invertible, so that they provide us with an exact sequence
\[
\xymatrix{
0\ar[r] &
  \Ext^1_{\cC}(h_\cC\Sigma X, h_\cC Y) \ar[r] &
   \mathcal T (X,Y) \ar[r] &
    \Hom_{\cC} (h_\cC X,h_\cC Y) \ar[r] & 0
} .
\]
\end{defi}

Fix $\aleph$ an infinite regular cardinal, or let $\aleph$ be the `cardinality' of a proper class.

\begin{terminology} \label{ter:cardinals}
A set is \emph{$\aleph$-small} if its cardinality is strictly less than~$\aleph$. 
A coproduct is  \emph{$\aleph$-small} (or is an \emph{$\aleph$-coproduct}) if it is indexed by an $\aleph$-small set. 
In an abelian category $\cA$ with a set of finite projective generators (such as $\cA=\Mod \cC$), an object is \emph{$\aleph$-generated} if it admits an epimorphism from an $\aleph$-coproduct of such generators; similarly, an object is \emph{$\aleph$-presented} if it admits a presentation by $\aleph$-coproducts of such generators. 
If $\cS\subseteq \cT$ is a family of objects in a triangulated category with arbitrary $\aleph$-coproducts, we denote by $\Loc_\aleph(\cS)$ the \emph{$\aleph$-localizing subcategory} it generates, i.e., the smallest thick subcategory of $\cT$ containing $\cS$ and closed under $\aleph$-coproducts.
 Note that, in the case where $\aleph$ is the cardinality of a proper class, all these notions reduce to the usual ones, where $\aleph$ is dropped from the notation.
An object $C$ of an additive category is \emph{compact} if $\Hom(C,-)$ preserves all the coproducts that are present in that category. 
\end{terminology}

The next theorem is a general form of the UCT. The relevant notation and terminology can be found in Def.\ \ref{defi:uct} and Term.\,\ref{ter:cardinals}.

\begin{thm} [UCT, local version] \label{thm:uct}
Let $\cT$ be an idempotent complete triangulated category admitting arbitrary $\aleph$-small coproducts, and let $\cC$ be a suspension closed full subcategory of compact objects in~$\cT$. 
Suppose moreover that $h_\cC(X)$ is $\aleph$-generated for every object $X$ of $\Loc_\aleph(\cC)\subseteq \cT$.
Then the UCT holds with respect to $\cC$ for all $X\in \Loc_\aleph(\cC)$ such that  $\pdim_\cC h_\cC(X)\leq 1$ and for all $Y\in \cT$.
\end{thm}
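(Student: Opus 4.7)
My plan is to build, for $X$ as in the hypothesis, a ``projective resolution triangle'' in $\cT$: a distinguished triangle $x_1 \xrightarrow{d} x_0 \to X \to \Sigma x_1$ in which $x_0$ and $x_1$ are retracts of $\aleph$-small coproducts of objects of $\cC$, and for which $0 \to h_\cC x_1 \to h_\cC x_0 \to h_\cC X \to 0$ is a projective resolution of $h_\cC X$ in $\Mod \cC$. Once such a triangle is in hand, I will apply $\cT(-, Y)$ to it and to its suspension; using the Yoneda/compactness identification $\cT(x_i, Z) \cong \Hom_\cC(h_\cC x_i, h_\cC Z)$ (valid for any such retract $x_i$ since each $c\in \cC$ is compact in $\cT$ and $\cT$ is idempotent complete) together with the fact that the resolution has length one, the resulting six-term long exact sequence will collapse to
\[
0 \to \Ext^1_\cC(h_\cC \Sigma X, h_\cC Y) \to \cT(X, Y) \to \Hom_\cC(h_\cC X, h_\cC Y) \to 0,
\]
and a direct check on the connecting map will identify the left inclusion with the map $\xi$ of \eqref{eq:pre-uct}, yielding the UCT sequence of Definition~\ref{defi:uct}.

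To build the triangle I will proceed in two stages. First, since $h_\cC X$ is $\aleph$-generated by hypothesis, I pick an $\aleph$-small family $\{c_i\} \subset \cC$ and a morphism $\phi\colon x_0 := \coprod_i c_i \to X$ such that $h_\cC \phi$ is surjective, and complete it to a triangle $x_0' \xrightarrow{\alpha} x_0 \xrightarrow{\phi} X \to \Sigma x_0'$. Since $x_0' \in \Loc_\aleph(\cC)$, the $\cC$-module $h_\cC x_0'$ is again $\aleph$-generated, and a short diagram chase (using that $\Sigma$ is an autoequivalence of $\Mod \cC$) will show that $h_\cC \alpha$ is injective with image $\ker h_\cC \phi$; hence $h_\cC x_0' \cong \ker h_\cC \phi$ is both projective (by $\pdim_\cC h_\cC X \leq 1$) and $\aleph$-generated. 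Next, I express this projective as a retract of an $\aleph$-small coproduct $\coprod_j h_\cC d_j$ with $d_j \in \cC$; the corresponding idempotent on $\coprod_j h_\cC d_j$ comes (via Yoneda and compactness of the $d_j$) from an idempotent on $\coprod_j d_j$ in $\cT$, which splits by idempotent completeness of $\cT$ to give an object $x_1 \in \cT$ with $h_\cC x_1 \cong \ker h_\cC \phi$. Finally, the $\cC$-module inclusion $h_\cC x_1 \hookrightarrow h_\cC x_0$ lifts, by the same Yoneda identification, to a morphism $d\colon x_1 \to x_0$ in $\cT$.

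It remains to verify that the cone of $d$ is isomorphic to $X$. Since $h_\cC(\phi d)$ equals the composition $\ker h_\cC \phi \hookrightarrow h_\cC x_0 \twoheadrightarrow h_\cC X$, which is zero, the identification $\cT(x_1, X) \cong \Hom_\cC(h_\cC x_1, h_\cC X)$ forces $\phi d = 0$ on the nose in $\cT$; hence $\phi$ factors through $C := \cone(d)$ as $\phi = f q$ for some $f\colon C \to X$ and the canonical $q\colon x_0 \to C$. Applying $h_\cC$ to the triangle $x_1 \xrightarrow{d} x_0 \xrightarrow{q} C \to \Sigma x_1$ and using the injectivity of $h_\cC d$ and of $h_\cC \Sigma d$, I will see that $h_\cC f$ is an isomorphism. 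To promote this to an isomorphism in $\cT$, I set $Z := \cone(f) \in \Loc_\aleph(\cC)$ and note that $h_\cC Z = 0$; the full subcategory $\cL := \{W \in \Loc_\aleph(\cC) \mid \cT(W, Z) = 0\}$ contains $\cC$ (because $h_\cC Z = 0$) and is closed under $\aleph$-small coproducts, triangles, and retracts, so it equals $\Loc_\aleph(\cC)$; in particular $Z \in \cL$, which gives $\id_Z = 0$ and hence $Z = 0$.

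The main obstacle is this last step. Producing a projective resolution of $h_\cC X$ in $\Mod \cC$ is easy, but realising it by an honest distinguished triangle of $\cT$ requires both (i) the strict vanishing $\phi d = 0$ in $\cT$, not merely $h_\cC(\phi d) = 0$, which I obtain via Yoneda applied to the ``small'' source $x_1$, and (ii) the rigidity statement that objects of $\Loc_\aleph(\cC)$ annihilated by $h_\cC$ must vanish --- this is precisely where the hypothesis $X \in \Loc_\aleph(\cC)$ is essential. Once these two points are secured, the passage from the resolution triangle to the UCT sequence is the standard length-one Ext computation.
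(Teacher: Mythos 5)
Your proof is correct and rests on the same three pillars as the paper's: the ``extended Yoneda'' identification $\cT(C,Z)\cong\Hom_\cC(h_\cC C,h_\cC Z)$ for $C$ a retract of an $\aleph$-small coproduct of objects of $\cC$, the rigidity of $\Loc_\aleph(\cC)$ (i.e.\ that $h_\cC$ detects zero objects there), and the resulting length-one resolution triangle. The one organizational difference is in how the second term of the resolution triangle is produced. You first build an ``abstract'' $x_1$ realizing the projective $\ker h_\cC\phi$ by splitting an idempotent, lift the inclusion $h_\cC x_1\hookrightarrow h_\cC x_0$ to a morphism $d$ in $\cT$, and then must verify $\cone(d)\cong X$ via the localizing-subcategory argument; the paper instead applies that same argument one step earlier, showing directly that the cocone $K$ of the triangle $K\to C_0\to X\to\Sigma K$ is \emph{already} a retract of an $\aleph$-small coproduct (their Lemma~\ref{lemma:Add}), which avoids having to recheck that the cone reconstructs $X$. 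Both routes require the same ingredients, so the difference is one of bookkeeping rather than substance; the paper's version is a little shorter. Your final step also differs cosmetically: you read the UCT off the long exact sequence obtained by applying $\cT(-,Y)$, whereas the paper argues surjectivity of $h_\cC$ and invertibility of $\xi$ separately, but the content is the same. One small point to tighten: in defining $\cL=\{W\in\Loc_\aleph(\cC)\mid\cT(W,Z)=0\}$ you should include all suspensions in the vanishing condition (or equivalently use $\{W\mid\cT(\Sigma^n W,Z)=0\ \forall n\}$), since $\cL$ as written is not obviously closed under $\Sigma$; this is harmless here because $\cC$ is suspension closed, but it is worth stating so that the ``localizing subcategory'' closure argument is literally applicable.
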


\begin{cor}[UCT, global version] \label{cor:uct}
Let $\cT$ and $\cC$ be as in Theorem \ref{thm:uct} with the further assumption that $h_\cC X$ is $\aleph$-generated and $\pdim_\cC h_\cC X\leq 1$ for all $X\in\cT$.
Then the essential image of $h_\cC$ is a hereditary exact category with the following description:
\[
\cE:= \{ M\in \Mod \cC \mid \pdim\!{}_\cC M\leq 1 \textrm{ and } M \textrm{ is } \aleph\textrm{-presentable} \} \,.
\]
\item
Moreover, the functor $h_\cC\colon \Loc_\aleph(\cC)\to \cE$ is full, essentially surjective, and reflects isomorphisms. In particular it induces a bijection on isomorphism classes of objects.
\end{cor}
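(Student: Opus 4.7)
The plan is to extract everything from Theorem~\ref{thm:uct} together with a pair of short triangle-completion arguments. Since by hypothesis $h_\cC X$ is $\aleph$-generated with $\pdim h_\cC X\le 1$ for every $X\in\cT$, Theorem~\ref{thm:uct} applies to every pair $(X,Y)$ with $X\in\Loc_\aleph(\cC)$ and $Y\in\cT$ and produces the UCT exact sequence. Fullness of $h_\cC$ on $\Loc_\aleph(\cC)$ will be immediate from the surjectivity of the right-hand map there.

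Next I would handle reflection of isomorphisms: given $f\colon X\to Y$ in $\Loc_\aleph(\cC)$ with $h_\cC f$ invertible, complete it to a triangle $X\to Y\to Z\to\Sigma X$. Since $\Loc_\aleph(\cC)$ is thick and closed under $\aleph$-coproducts the cone $Z$ again lies in $\Loc_\aleph(\cC)$, and the homological long exact sequence forces $h_\cC Z=0$. The UCT applied to the pair $(Z,Z)$ then gives $\cT(Z,Z)=0$, so $\id_Z=0$ and $Z$ is a zero object, whence $f$ is an isomorphism.

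For essential surjectivity onto $\cE$, given $M\in\cE$ I would pick a presentation by $\aleph$-generated projectives and use $\pdim M\le 1$ to upgrade it to a short exact sequence $0\to P_1\to P_0\to M\to 0$ with both $P_i$ $\aleph$-generated projective. Writing $P_i=h_\cC(x_i)$ for $\aleph$-small coproducts $x_i$ of objects of $\cC$ inside $\cT$ (possible because $\cC\subseteq\cT^c$ makes $h_\cC$ commute with such coproducts), the map $P_1\to P_0$ lifts by Yoneda to $\tilde d\colon x_1\to x_0$; completing to a triangle $x_1\to x_0\to X\to\Sigma x_1$ places $X$ in $\Loc_\aleph(\cC)$ and identifies $h_\cC X$ with~$M$ from the long exact sequence. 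Conversely, the essential image lies in $\cE$: the only point that requires argument is $\aleph$-presentability of $h_\cC X$ for $X\in\Loc_\aleph(\cC)$ (projective dimension $\le 1$ is assumed globally), which I would prove by a two-step lifting: cover $h_\cC X$ by $h_\cC x_0$ with $x_0$ an $\aleph$-small coproduct from~$\cC$, complete a lift $x_0\to X$ of the surjection to a triangle $y\to x_0\to X\to\Sigma y$, note that $y\in\Loc_\aleph(\cC)$ so $h_\cC y$ is again $\aleph$-generated, and then cover $h_\cC y$ by $h_\cC x_1$ with $x_1$ another $\aleph$-small coproduct; the composite yields the desired presentation $h_\cC x_1\to h_\cC x_0\to h_\cC X\to 0$.

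It remains to check that the inherited exact structure makes $\cE$ hereditary: $\aleph$-presentable modules are extension-closed because $\aleph$ is regular, modules of $\pdim\le 1$ are extension-closed too, and hereditariness follows from $\Ext^2_{\Mod\cC}(M,-)=0$ for every $M\in\cE$. The bijection on isomorphism classes then falls out formally from fullness, essential surjectivity, and reflection of isomorphisms. I expect the only genuine obstacle to be the two-step $\aleph$-presentability argument above, since every other step is either a direct invocation of Theorem~\ref{thm:uct} or a routine triangle-completion.
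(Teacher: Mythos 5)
Your proof is correct and follows essentially the same route as the paper's: lift a length-one $\aleph$-generated projective resolution to a triangle via Lemma~\ref{lemma:extended_yoneda} for essential surjectivity, argue extension-closedness for the exact structure, and get fullness and reflection of isomorphisms from Theorem~\ref{thm:uct}. The two small variations are harmless: you deduce $\cT(\cone f,\cone f)=0$ by applying the UCT to the pair $(\cone f, \cone f)$ rather than invoking the standard fact that $h_\cC$ detects zero objects of $\Loc_\aleph(\cC)$, and you re-derive $\aleph$-presentability inline via a two-step triangle argument instead of citing Lemma~\ref{lemma:aleph_gen} (your argument is precisely that lemma's proof). One small omission worth fixing: the statement also asserts that the essential image of $h_\cC$ on \emph{all} of $\cT$ equals $\cE$, which in the paper is obtained by observing $h_\cC(\cT)\subseteq\cE$ and hence $h_\cC(\cT)=h_\cC(\Loc_\aleph(\cC))$; your presentability argument is phrased only for $X\in\Loc_\aleph(\cC)$, but it works verbatim for all $X\in\cT$ once you drop the unnecessary step ``note that $y\in\Loc_\aleph(\cC)$'' — the $\aleph$-generation hypothesis is assumed for all objects of~$\cT$.
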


Several versions of these results are already in the literature.
Nonetheless, we found no statement with the required generality that we could conveniently cite, so we will provide our own proof below; it ends up being completely self-contained.
Of course we don't claim any originality, as most of the arguments, in some form or another, are well-known and can be found e.g.\ in \cite{christensen:ideals}, \cite{beligiannis:relative} and~\cite{meyernest_hom}. 

For the next few lemmas, assume only that $\cT$ and $\cC$ are as in the first sentence of the theorem.
The first lemma we prove is an extension of Yoneda.

\begin{lemma} \label{lemma:extended_yoneda}
Let $P$ be an $\aleph$-generated projective $\cC$-module. 
Then there exists an (up to isomorphism, unique) $C\in \cT$ which is a retract of an $\aleph$-coproduct of objects of $\cC$ and such that $h_\cC C\cong P$. Moreover, $h_\cC$ induces an isomorphism $\cT(C,X)\cong \Hom_\cC (h_\cC C, h_\cC X)$ and thus $\cT(C,X)\cong \Hom_\cC (P, h_\cC X)$ for all $X\in \cT$.
\end{lemma}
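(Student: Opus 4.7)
The plan is to bootstrap from the ordinary Yoneda lemma on $\cC$, using compactness to handle $\aleph$-small coproducts and idempotent completeness to handle retracts.

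First, since $P$ is $\aleph$-generated projective, it is a direct summand of a free module, i.e., there exist an $\aleph$-small set $I$, objects $C_i\in\cC$ and an idempotent $e$ on $F:=\bigoplus_{i\in I} h_\cC(C_i)$ in $\Mod\cC$ with $P\cong \Img(e)$. Since $\cT$ has $\aleph$-small coproducts I form $C_0:=\bigoplus_{i\in I} C_i\in\cT$; compactness of the objects of $\cC$ then supplies a canonical isomorphism $h_\cC(C_0)\cong F$ in $\Mod\cC$, by evaluating both sides at an arbitrary $D\in\cC$ and using that $\cT(D,-)$ commutes with the coproduct $\bigoplus_i C_i$.

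Next I would establish the extended Yoneda identity $\cT(C_0,X)\cong \Hom_\cC(h_\cC C_0,h_\cC X)$ for all $X\in\cT$ by chaining three natural isomorphisms: the universal property of the coproduct in $\cT$ gives $\cT(C_0,X)\cong\prod_i \cT(C_i,X)$; the ordinary Yoneda lemma for each $C_i\in\cC$ gives $\cT(C_i,X)\cong\Hom_\cC(h_\cC C_i,h_\cC X)$; and the universal property of $\bigoplus_i h_\cC(C_i)=F$ in $\Mod\cC$ rewrites the product as $\Hom_\cC(F,h_\cC X)\cong\Hom_\cC(h_\cC C_0,h_\cC X)$. Tracking the constructions through one checks that the composite coincides with the map induced by $h_\cC$, which is the one delicate point of the proof.

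With extended Yoneda in hand for $C_0$, the resulting ring isomorphism $\End_\cT(C_0)\cong\End_\cC(h_\cC C_0)$ lets me lift the idempotent $e$ to a unique idempotent $\tilde e\colon C_0\to C_0$ in $\cT$. Since $\cT$ is idempotent complete, $\tilde e$ splits through some object $C$, which is by construction a retract of the $\aleph$-coproduct $C_0$ and satisfies $h_\cC(C)\cong\Img(e)\cong P$. The extended Yoneda statement for $C$ now follows from that of $C_0$ by restricting along the splitting of $\tilde e$; and uniqueness of $C$ up to isomorphism is then formal, because any isomorphism between $h_\cC$-images of two candidates lifts, by extended Yoneda itself, to an isomorphism in $\cT$, with inverse arising in the same way.
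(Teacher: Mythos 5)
Your proof is correct and follows essentially the same route as the paper's: start with ordinary Yoneda on $\cC$, extend to $\aleph$-coproducts via compactness, then use idempotent completeness of $\cT$ to transfer idempotent splittings across the resulting equivalence. You are somewhat more explicit about the chain of natural isomorphisms and the need to identify the composite with the map induced by $h_\cC$, but this is only a difference in level of detail, not in strategy.
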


\begin{proof}
If $P$ is representable, this is simply the Yoneda lemma.
Since $\cT$ admits $\aleph$-coproducts and $h_\cC$ preserves them (because the objects of $\cC$ are compact), we may extend the result to $\aleph$-coproducts of representables. In particular, we see that $h_\cC$ restricts to an equivalence between $\aleph$-coproducts of objects of $\cC$ and $\aleph$-generated free $\cC$-modules. Being an equivalence, it induces bijections of idempotent morphisms. Since $\cT$ has split idempotents, this allows us to further extend the result to arbitrary retracts.
\end{proof}

\begin{lemma} \label{lemma:aleph_gen}
The following are equivalent for any $X\in \cT$:
\begin{enumerate}
\item The $\cC$-module $h_\cC X$ is $\aleph$-generated.
\item There exists an exact triangle of the form
$K \to C \to X \to \Sigma K$, 
where $h_\cC(X\to \Sigma K)=0$ and $C=\coprod_{i\in I} C_i$ with $C_i\in \cC$ and $|I|<\aleph$.
\end{enumerate}
In particular, if $h_\cC X$ is $\aleph$-generated for all objects $X\in \Loc_\aleph(\cC)$ (or for all $X\in \cT$) then in fact all such $h_\cC X$ are also
 $\aleph$-presentable.  
\end{lemma}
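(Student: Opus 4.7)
The plan is to prove the equivalence by a direct construction in both directions, using Lemma~\ref{lemma:extended_yoneda} together with the fact that $h_\cC$ sends distinguished triangles to long exact sequences of $\cC$-modules. Both implications are more or less immediate once one sets up the correct lift, and the ``in particular'' clause is obtained by iterating the construction once.

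For (1)$\Rightarrow$(2): pick an $\aleph$-small family $(C_i)_{i\in I}$ of objects of $\cC$ together with an epimorphism $p\colon \coprod_i h_\cC(C_i) \twoheadrightarrow h_\cC(X)$ witnessing the $\aleph$-generation. Since each $C_i$ is compact, $h_\cC$ preserves $\aleph$-small coproducts, so $\coprod_i h_\cC(C_i) = h_\cC(C)$ with $C:=\coprod_i C_i\in\cT$. By Lemma~\ref{lemma:extended_yoneda}, the map $p$ comes from a morphism $f\colon C\to X$ with $h_\cC(f)=p$. Complete $f$ to a distinguished triangle $K\to C\xrightarrow{f} X\to \Sigma K$; applying $h_\cC$ and using the surjectivity of $h_\cC(f)$ in the associated long exact sequence forces the connecting map $h_\cC(X)\to h_\cC(\Sigma K)$ to vanish. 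Conversely, for (2)$\Rightarrow$(1), applying $h_\cC$ to the given triangle yields a long exact sequence in which the hypothesis $h_\cC(X\to\Sigma K)=0$ makes $h_\cC(C)\to h_\cC(X)$ surjective; since $h_\cC(C)=\coprod_i h_\cC(C_i)$ is an $\aleph$-generated free $\cC$-module, $h_\cC(X)$ is $\aleph$-generated.

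For the ``in particular'' statement, let $X$ belong to the class for which we are assuming $\aleph$-generation (either $\Loc_\aleph(\cC)$ or $\cT$). Apply (1)$\Rightarrow$(2) to obtain a triangle $K\to C\to X\to\Sigma K$ with $C$ an $\aleph$-small coproduct of objects of $\cC$. A key observation is that $K$ lies in the same class: this is clear if we are working in all of $\cT$, and in the $\Loc_\aleph(\cC)$-case it follows because $\Loc_\aleph(\cC)$ is thick and closed under $\aleph$-coproducts, so it contains $C$ and hence, using the triangle, $K$. Therefore $h_\cC(K)$ is also $\aleph$-generated by the blanket hypothesis. Applying (1)$\Rightarrow$(2) a second time to $K$ gives a morphism $C'\to K$ with $C'$ an $\aleph$-small coproduct of objects of $\cC$ and $h_\cC(C')\twoheadrightarrow h_\cC(K)$ surjective. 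Composing yields an exact sequence
\[
h_\cC(C') \longrightarrow h_\cC(C) \longrightarrow h_\cC(X) \longrightarrow 0
\]
of $\cC$-modules in which both leftmost terms are $\aleph$-generated free, exhibiting $h_\cC(X)$ as $\aleph$-presentable.

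I do not anticipate a real obstacle: both directions reduce to the extended Yoneda correspondence of Lemma~\ref{lemma:extended_yoneda} and the long exact sequence coming from a triangle. The only point requiring a moment's thought is the iteration step for the ``in particular'' statement, where one must verify that the first syzygy object $K$ remains in the designated class; this is immediate from the thickness and closure under $\aleph$-coproducts of $\Loc_\aleph(\cC)$.
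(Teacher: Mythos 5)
Your proof is correct and follows essentially the same route as the paper: lift the generating epimorphism to a map $C \to X$ via the extended Yoneda lemma, complete to a triangle, and for the $\aleph$-presentability statement observe that $K$ stays in the relevant class and splice two applications of the implication $(1)\Rightarrow(2)$. The paper's proof is slightly terser but identical in substance.
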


\begin{proof}
Statement (1) means that there exists an epimorphism $\coprod_{i\in I} h_\cC C_i\to h_\cC X$, with $I$ and the $C_i$ as in~(2). By (extended) Yoneda this corresponds to a map $\coprod_iC_i\to X$, which can be completed to a triangle as required. The other direction is similar: the map $C\to X$ in the triangle corresponds as above to a morphism $\coprod_{i\in I}h_\cC C_i \to X$ which is an epimorphism since $h_\cC$ is homological.

For the last claim, note first that if in (2) the object $X$ belongs to $\Loc_\aleph(\cC)$ then also $K$ does; and if in (2) the module $h_\cC K$ is also $\aleph$-generated we may use $(1)\Rightarrow (2)$ to obtain a second triangle and splice the exact sequences obtained after applying $h_\cC$ to exhibit an $\aleph$-presentation of~$X$.
\end{proof}

\begin{lemma} \label{lemma:Add}
If $X\in \Loc_\aleph(\cC)$ and $h_\cC X$ is $\aleph$-generated and projective, then $X$ must be a retract of an $\aleph$-coproduct of objects of~$\cC$.
\end{lemma}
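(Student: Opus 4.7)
The plan is to exhibit an explicit candidate $C \in \cT$ for which $X$ becomes a retract, produced by the extended Yoneda of Lemma~\ref{lemma:extended_yoneda}, and then to show that the cone on the comparison map vanishes using a standard right-orthogonal argument inside $\Loc_\aleph(\cC)$.

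First, I would apply Lemma~\ref{lemma:extended_yoneda} to the $\aleph$-generated projective $\cC$-module $P := h_\cC X$. This produces an object $C \in \cT$, which is a retract of an $\aleph$-coproduct of objects of $\cC$, together with an isomorphism $h_\cC C \cong P = h_\cC X$. The same lemma yields the bijection $\cT(C, X) \cong \Hom_\cC(h_\cC C, h_\cC X)$, so I can lift this isomorphism of $\cC$-modules to a morphism $f\colon C \to X$ in $\cT$ with $h_\cC(f)$ the given isomorphism.

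Next, I would embed $f$ into a distinguished triangle $C \xrightarrow{f} X \to Z \to \Sigma C$. The long exact sequence obtained by applying $\cT(C', -)$ for $C' \in \cC$, combined with the fact that $h_\cC(f)$ is an isomorphism and that $\Sigma \cC = \cC$, forces $\cT(C', \Sigma^i Z) = 0$ for every $C' \in \cC$ and every $i \in \Z$. Moreover, $C$ lies in $\Loc_\aleph(\cC)$ as a retract of an $\aleph$-coproduct of objects of $\cC$, and $X \in \Loc_\aleph(\cC)$ by hypothesis, so the triangle places $Z \in \Loc_\aleph(\cC)$ as well.

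The remaining step is the heart of the matter: to deduce that $Z = 0$. Consider the full subcategory
\[
\cN := \{ W \in \cT \mid \cT(W, \Sigma^i Z) = 0 \text{ for all } i \in \Z \}.
\]
This $\cN$ is thick, closed under shifts, and closed under $\aleph$-coproducts (because $\cT(\coprod_j W_j, \Sigma^i Z) \cong \prod_j \cT(W_j, \Sigma^i Z)$ for any coproduct). By the previous step, $\cC \subseteq \cN$, and hence $\Loc_\aleph(\cC) \subseteq \cN$. Since $Z$ itself lies in $\Loc_\aleph(\cC)$, we get $\cT(Z,Z) = 0$, so $\id_Z = 0$ and $Z \cong 0$. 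Therefore $f$ is an isomorphism, exhibiting $X$ as a retract of an $\aleph$-coproduct of objects of $\cC$, as desired. I do not expect a genuine obstacle here; the only delicate point is checking that $h_\cC Z = 0$ entails vanishing of $\cT(C', \Sigma^i Z)$ in all degrees, which is automatic given $\Sigma \cC = \cC$.
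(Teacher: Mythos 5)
Your proof is correct and follows essentially the same route as the paper: lift the isomorphism $h_\cC C \cong h_\cC X$ via the extended Yoneda lemma to a map $f\colon C \to X$, observe $h_\cC\cone(f)=0$ with $\cone(f)\in\Loc_\aleph(\cC)$, and conclude $\cone(f)=0$. The paper simply states the last implication without comment, while you spell out the standard right-orthogonal argument; this is the same reasoning, just made explicit.
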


\begin{proof}
If $h_\cC X$ is projective, then by the first part of Lemma \ref{lemma:extended_yoneda} we find a retract $C$ of an $\aleph$-coproduct of objects of~$\cC$ -- so that in particular $C\in \Loc_\aleph(\cC)$ -- and an isomorphism $\varphi\colon h_\cC C\smash{\stackrel{\sim}{\to}} h_\cC X$. By the second part of the lemma there is a morphism $f\colon C\to X$ such that $h_\cC(f)=\varphi$. 
Hence $h_\cC \cone(f)=0$, but if $X\in \Loc_\aleph(\cC)$ then also $\cone(f)\in \Loc_\aleph(\cC)$ and therefore  $\cone(f)=0$, i.e., $f\colon C\to X$ is invertible.
\end{proof}

\begin{lemma} \label{lemma:res_triangle}
Assume now that $h_\cC(\Loc_\aleph(\cC))$ consists of $\aleph$-generated modules. 
Then for any $X\in \Loc_\aleph (\cC)$ with $\pdim_\cC h_\cC X\leq 1$ there exists an exact triangle $C_1\to C_0\to X\to \Sigma C_1$ where 
$h_\cC(X\to \Sigma C_1)=0$, the object $C_0$ is an $\aleph$-coproduct of objects in~$\cC$, and $C_1$ is a retract of such a coproduct.
\end{lemma}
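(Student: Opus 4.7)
The plan is to build the desired triangle in one step by applying Lemma~\ref{lemma:aleph_gen} to obtain the map from a suitable free object onto $X$, and then to identify its ``kernel'' using projectivity.

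First I would invoke Lemma~\ref{lemma:aleph_gen} (which applies since, by hypothesis, $h_\cC X$ is $\aleph$-generated) to produce an exact triangle
\[
C_1 \xrightarrow{\;a\;} C_0 \xrightarrow{\;b\;} X \xrightarrow{\;c\;} \Sigma C_1
\]
with $C_0$ an $\aleph$-small coproduct of objects of $\cC$ and $h_\cC(c) = 0$. The vanishing of $h_\cC(c)$ together with the long exact sequence obtained by applying the homological functor $h_\cC$ forces the short exact sequence
\[
0 \longrightarrow h_\cC C_1 \xrightarrow{h_\cC(a)} h_\cC C_0 \xrightarrow{h_\cC(b)} h_\cC X \longrightarrow 0
\]
in $\Mod\cC$. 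The module $h_\cC C_0$ is a free $\aleph$-generated $\cC$-module (again by compactness of the objects of $\cC$ and extended Yoneda, Lemma~\ref{lemma:extended_yoneda}), hence in particular projective.

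Next I would exploit the hypothesis $\pdim_\cC h_\cC X \leq 1$: the standard dimension shifting argument applied to the above short exact sequence shows that $h_\cC C_1$ is projective. Moreover, since $X \in \Loc_\aleph(\cC)$ and $C_0 \in \Loc_\aleph(\cC)$ (being an $\aleph$-small coproduct of objects of $\cC$), the third vertex $C_1$ also lies in $\Loc_\aleph(\cC)$. By hypothesis $h_\cC C_1$ is therefore $\aleph$-generated.

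Finally I would apply Lemma~\ref{lemma:Add} to the object $C_1$: it lies in $\Loc_\aleph(\cC)$ and has $\aleph$-generated projective image under~$h_\cC$, so it must be a retract of an $\aleph$-coproduct of objects of $\cC$. This yields the triangle with all the required properties. The proof is essentially a bookkeeping exercise threading together the preceding three lemmas; the only point that requires care is to track that both $X$ and $C_0$, and hence $C_1$, remain in $\Loc_\aleph(\cC)$, so that Lemma~\ref{lemma:Add} is actually available.
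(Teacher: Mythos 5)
Your proposal is correct and follows essentially the same route as the paper: apply Lemma~\ref{lemma:aleph_gen} to get a triangle $C_1\to C_0\to X\to\Sigma C_1$ with $h_\cC$-image a short exact sequence, deduce from $\pdim_\cC h_\cC X\leq 1$ that $h_\cC C_1$ is projective, observe $C_1\in\Loc_\aleph(\cC)$ so $h_\cC C_1$ is $\aleph$-generated, and invoke Lemma~\ref{lemma:Add}. The bookkeeping you flag at the end (tracking membership in $\Loc_\aleph(\cC)$) is exactly the point the paper also makes explicit.
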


\begin{proof}
Let $K\to C\to X\to \Sigma K$ be a triangle as in Lemma \ref{lemma:aleph_gen}. 
By hypothesis $h_\cC X$ has projective dimension at most one, hence we know from the exact sequence $0\to h_\cC K\to h_\cC C\to h_\cC X \to 0$ that $h_\cC K$ is projective. 
Note that $X\in \Loc_\aleph(\cC)$ implies $K\in \Loc_\aleph(\cC)$, and so, since $h_\cC K$ is $\aleph$-generated by hypothesis, Lemma \ref{lemma:Add} applied to $h_\cC K$ tells us that $K$ is a retract of an $\aleph$-coproduct of objects of~$\cC$.
\end{proof}

\begin{lemma} \label{lemma:I-proj}
If $C$ is a retract of a coproduct of objects of $\cC$ and $X\to Y$ is a map with $h_\cC (X\to Y)=0$ then any composite of the form $C\to X\to Y$ is zero.
\end{lemma}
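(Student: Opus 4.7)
The plan is to unwind the definitions and reduce to the representable case via Yoneda, exploiting compactness of the objects of $\cC$.

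First I would handle the case $C\in\cC$. Here the Yoneda lemma identifies the component of $h_\cC(f)\colon h_\cC X\to h_\cC Y$ at $C$ with postcomposition by $f$, that is, the map $\cT(C,X)\to\cT(C,Y)$, $g\mapsto f\circ g$. The hypothesis $h_\cC(f)=0$ then says exactly that $f\circ g=0$ for every $g\colon C\to X$ with $C\in\cC$.

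Next I would pass to an arbitrary coproduct $\widetilde C=\coprod_{i\in I} C_i$ with $C_i\in\cC$. Any $\widetilde g\colon \widetilde C\to X$ is given by its components $g_i\colon C_i\to X$, and by the first step each composite $f\circ g_i$ vanishes. By the universal property of the coproduct, $f\circ\widetilde g$ is the unique map whose components are the $f\circ g_i$, hence $f\circ\widetilde g=0$. (Equivalently, apply $h_\cC$ — which preserves arbitrary coproducts since the $C_i$ are compact — and use that $h_\cC(f)=0$; but the direct argument is cleaner and doesn't require invoking compactness explicitly.)

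Finally, let $C$ be a retract of such a coproduct $\widetilde C$, via $s\colon C\to \widetilde C$ and $r\colon \widetilde C\to C$ with $r\circ s=\id_C$. Given $g\colon C\to X$, write $g=(g\circ r)\circ s$. Applying the previous step to the map $g\circ r\colon \widetilde C\to X$ gives $f\circ g\circ r=0$, and composing with $s$ yields $f\circ g=f\circ g\circ r\circ s=0$, as required.

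There is no real obstacle here; the lemma is a straightforward three-step reduction (representable $\to$ free $\to$ retract of free), and the only subtlety worth flagging is that we do not need $\aleph$-smallness of the coproduct in this statement, only the universal property of coproducts and the splitting of the retraction.
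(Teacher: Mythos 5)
Your proof is correct and is exactly the argument the paper has in mind: the paper's one-line proof observes that, for fixed $f$, the class of $C$ for which all composites $C\to X\xrightarrow{f} Y$ vanish contains $\cC$ (by definition of $h_\cC(f)=0$) and is closed under coproducts and retracts, which are precisely your three steps spelled out. Your closing remark that no $\aleph$-smallness is needed is also accurate, though harmless in context.
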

\begin{proof}
For any fixed such map $X\to Y$, the class of objects $C$ having this property contains $\cC$ by hypothesis and is clearly closed under coproducts and retracts.
\end{proof}

\begin{proof}[Proof of Theorem~\ref{thm:uct}]
Let $X\in \cT$ be such that $X\in \Loc_\aleph (\cC)$ and $\pdim_\cC h_\cC X\leq 1$, and let $Y$ be any object.
Choose a triangle as in Lemma~\ref{lemma:res_triangle} and consider also its image in $\Mod \cC$ (both pictured in the first row):
\[
\xymatrix@C=22pt{
C_1 \ar[r]^-u & C_0 \ar[r]^-v \ar[dr]_g & X \ar[r]^-i \ar@{..>}[d]^f &\Sigma C_1  & 0 \ar[r] & h_\cC C_1 \ar[r] & h_\cC C_0 \ar[dr]_\psi \ar[r]^-\pi & h_\cC X \ar[r] \ar[d]^{\varphi} &0 \\
&& Y &&&&& h_\cC Y &
}
\]
Now let $\varphi\colon h_\cC X\to h_\cC Y$ be any morphism in $\Mod \cC$; we must show that it lifts to~$\cT$.
Under Yoneda, the map $\psi:= \varphi \pi$ corresponds to a (unique) morphism $g\colon C_0\to Y$. 
We have $h_\cC(gu)=0$ and thus also $gu=0$ by Lemma \ref{lemma:I-proj}. Hence $g$ factors through $C_0\to X$ via a map $f\colon X\to Y$. 
Because  $\pi$ is epimorphic and $h_\cC(f) \circ \pi = \psi = \varphi \pi$, we have $h_\cC(f)=\varphi$. In conclusion, we have proved surjectivity of the rightmost map in the UCT exact sequence.

It remains to show that for such $X$ and $Y$ the map $\xi$ is invertible. 
First note that $\cI(X,Y)$ is the kernel of $v^*\colon \cT(X,Y) \to \cT( C_0,Y )$.
Indeed, let $f\in \cT(X,Y)$; if $f\in \cI$ then $fv=0$ by Lemma~\ref{lemma:I-proj}, and conversely if $fv=0$ then $f$ factors through $i\in \cI$ and so belongs to the ideal~$\cI$.
We thus obtain the following commutative diagram in $\Mod \base$, where the three-term middle row is exact and obtained by applying $\cT(-,Y)$ to the triangle $(u,v,i)$:
\begin{equation*} 
\xymatrix@R=6pt@C=16pt{
&& 0 \ar[dr] & f \ar@{}[d]|{\rotatebox{-90}{$\in$}} & 0 \\
&& {\tilde f} \ar@{}[d]|{\rotatebox{-90}{$\in$}} & \cI(X,Y) \ar[ur] \ar@{..>}[dd]^<<<\simeq \ar[dr] & \\
\cT (\Sigma C_0, Y )  \ar[dd]_\simeq \ar[rr]_-{\Sigma u^*} &&
 \cT (\Sigma C_1, Y)  \ar[ur] \ar[rr]_<<<<<{i^*} \ar[dd]_\simeq &&
   \cT(X,Y)   \\
&&&
 \Ext^1(h_\cC \Sigma X,h_\cC Y) &  \\
\Hom(h_\cC\Sigma C_0, h_\cC Y) \ar[rr] &&
 \Hom(h_\cC \Sigma C_1, h_\cC Y) \ar[ur]_\partial &&
}
\end{equation*}
Here the two (solid) vertical isomorphisms are by Yoneda. 
Because of the Ext long exact sequence in $\Mod \cC$ and the projectivity of~$h_\cC \Sigma C_0$ they induce the (dotted) isomorphism $\cI(X,Y)\to \Ext^1_\cC(h_\cC \Sigma X,h_\cC Y)$.
It only remains to verify that the latter coincides with the map~$\xi$. 

In order to apply $\xi$ to an $f\in \cI(X,Y)$ we must choose an exact triangle $X\to Y\to Z\to \Sigma X$ containing~$f$, apply $h_\cC$ to it, and take the Ext class of the resulting short exact sequence. 
But by Lemma \ref{lemma:I-proj} again, $f$ must factor through $i$ and we may then complete to a morphism of triangles:
\[
\xymatrix{
X  \ar[r]^-i \ar@{=}[d] & \Sigma C_1 \ar[d]^-{\tilde f} \ar[r] & \Sigma C_0 \ar[d] \ar[r] & \Sigma X \ar@{=}[d] \\
X \ar[r]^-f & Y \ar[r] & Z \ar[r] & \Sigma X
}
\]
Under $h_\cC$ this becomes the following morphism of short exact sequences, where the left square must then be a pushout:
\[
\xymatrix{
0  \ar[r]  & h_\cC \Sigma C_1 \ar[d]^{h_\cC(\tilde f)} \ar[r] & h_\cC \Sigma C_0 \ar[d] \ar[r] & h_\cC \Sigma X \ar@{=}[d] \\
0 \ar[r] & h_\cC Y \ar[r] & h_\cC Z \ar[r] & h_\cC \Sigma X
}
\]
In other words, we have chosen a map $\tilde f\colon \Sigma C_1\to Y$ such that  $i^*(\tilde f)=f$ and shown that applying $h_\cC$ and the boundary~$\partial$ to it give the same result. Thus the dotted map is indeed~$\xi$, as claimed.
\end{proof}

\begin{proof}[Proof of Corollary \ref{cor:uct}]
By hypothesis $h_\cC(\cT)$ consists of $\aleph$-generated modules, hence of $\aleph$-presentable modules by Lemma~\ref{lemma:aleph_gen}.
Note that both conditions for a module being in $\cE$ are inherited by extensions, so $\cE$ is an exact subcategory of~$\Mod \cC$. 
If $M\in \cE$, by definition we may choose a projective resolution of the form $0\to P_1\to P_0\to M\to 0$, where the $P_i$ are retracts of $\aleph$-coproducts of representables. 
By Lemma~\ref{lemma:extended_yoneda}, we may find a morphism $C_1\to C_0$ in $\cT$ such that $h_\cC(C_1\to C_0)=(P_1\to P_0)$ and where the $C_i$ are corresponding retracts of $\aleph$-coproducts of objects of~$\cC$. 
Let $C_1\to C_0 \to X\to \Sigma C_1$ be an exact triangle containing this map. Then $h_\cC(X\to \Sigma C_1)=0$,  $X\in \Loc_\aleph(\cC)$ and, by comparing cokernels, $h_\cC X\cong M$.
It follows that $\cE$ is contained in the image of $h_\cC\colon \Loc_\aleph(\cC)\to \Mod \cC$, and also that the latter coincides with the image of $h_\cC\colon \cT \to \Mod \cC$ (choose $M:=h_\cC Y$ for an arbitrary $Y\in \cT$).
Lemma~\ref{lemma:res_triangle} implies the reverse inclusion. 
Theorem \ref{thm:uct} shows that $h_\cC\colon \Loc_\aleph(\cC)\to \cE$ is full, hence it only remains to show that it reflects isomorphisms.
So let $f\colon X\to Y$ be a morphism in $\Loc_\aleph (\cC)$ with $h_\cC(f)$ invertible.
In other words, $h_\cC \cone(f) =0$. Since $\cone(f)\in \Loc_\aleph (\cC)$ we already have $\cone(f)=0$, and therefore $f$ is invertible.
\end{proof}

\begin{example}[The classical UCT] \label{ex:classical_uct}
Let $R$ be a dg~algebra whose cohomology ring $H^*R$ is hereditary and consider $\cT:=\D(R)$, its unbounded derived category  of dg~modules. 
(For instance, $R$ could be a hereditary ring concentrated in degree zero, so that $\D(R)$ is its usual derived category of complexes.)
Let $\cC:=\{\Sigma^n R\mid n\in \Z\}$. 
Then $\cC$ is a suspension closed set of compact generators and $h_\cC$ is the cohomology functor $H^*\colon \D(R)\to \Mod H^*R$. 
With $\aleph$ the cardinality of a proper class, Corollary \ref{cor:uct} yields the familiar UCT of classical homological algebra.
\end{example}

\begin{example}[The Rosenberg-Schochet UCT] \label{ex:classical_uct_kk}
Slightly less classical, but also immensely useful, is the following version of the UCT due to Rosenberg and Schochet~\cite{rs}.
Let $\cT:=\KK$ be the Kasparov category, whose objects are separable complex C*-algebras and whose Hom groups are Kasparov's bivariant K-theory groups (see \cite{kasparov_first}~\cite{meyer_cat}). It is a tensor triangulated category admitting arbitrary countable coproducts and such that $\KK(\mathbb C, \Sigma^* \mathbb C)\cong \mathbb Z[\beta, \beta^{-1}]$ with $\mathbb Z$ in degree zero and an invertible `Bott' element $\beta: \mathbb C\stackrel{\sim}{\to}\Sigma^2\mathbb C$ of degree two. Since $\mathbb C$ is the unit object for the tensor structure, $\beta$ induces a periodicity isomorphism $\Sigma^2\cong \id_\KK$. Set $\aleph:= \aleph_1$ and $\cC:=\{\mathbb C, \Sigma\mathbb C\}$. Then $\KK(\Sigma^*\mathbb C,-)=K_*$ is topological K-theory, which yields countable groups on all separable C*-algebras. Thus for $A\in \cB:=\Loc_{\aleph_1}(\mathbb C)\subset \KK$ we obtain the natural UCT short exact sequence
\begin{equation*} 
\xymatrix{ 0 \ar[r] & \Ext^1_\Z(K_{*-1}A, K_*B)\ar[r] & \KK(A,B)\ar[r] & \Hom(K_*A, K_*B) \ar[r]& 0}
\end{equation*}
familiar to operator algebraists (most often in its $\Z/2$-graded version), as well as all its standard corollaries, as special cases of the local UCT Theorem~\ref{thm:uct}. The subcategory $\cB$ is known as the Bootstrap class. Note that for many applications it is quite important to allow algebras~$B$ lying outside of the Bootstrap class.
\end{example}

\begin{remark}
When $\aleph$ is the cardinality of a proper class, as in Example~\ref{ex:classical_uct} or Theorem~\ref{thm:brown-adams} below, then of course every $\cC$-module is $\aleph$-generated and $\aleph$-presented. 
The next proposition shows that if $\Mod \cC$ is locally coherent we can equally well ignore these hypotheses in the local UCT, for any~$\aleph$.
Recall that $\Mod \cC$ is locally coherent if and only if the additive hull of $\cC$ in $\cT$ has weak kernels, which holds for instance if $\cC$ is itself a triangulated category.
\end{remark}

\begin{prop} \label{prop:ln_simplification}
Let $\cT$ be an idempotent complete triangulated category admitting arbitrary $\aleph$-small coproducts. 
Let $\cC$ be a suspension closed full subcategory of compact objects in~$\cT$, such that $\Mod \cC$ is locally coherent.
Then  the $\cC$-module $h_\cC X$ is $\aleph$-presentable for every $X$ in $\Loc_\aleph(\cC)$.
\end{prop}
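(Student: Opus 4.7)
The plan is to define
\[
\cX := \{X \in \cT \mid h_\cC X \text{ is } \aleph\text{-presentable}\}
\]
and verify that $\cX$ is a thick subcategory of $\cT$ containing $\cC$ and closed under $\aleph$-small coproducts. Since $\Loc_\aleph(\cC)$ is by definition the smallest such subcategory, this will imply the desired containment $\Loc_\aleph(\cC) \subseteq \cX$. The case where $\aleph$ is the cardinality of a proper class is trivial (every module is $\aleph$-presentable), so from here on I will assume $\aleph$ is a regular cardinal.

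The easy closure properties go first. The inclusion $\cC \subseteq \cX$ is immediate because every $h_\cC C$ is representable, hence finitely presented. Closure under $\Sigma^{\pm 1}$ is also formal: since $\Sigma \cC = \cC$, precomposition with $\Sigma\colon \cC \to \cC$ is an auto-equivalence of $\Mod \cC$ sending representables to representables, and one has $h_\cC \Sigma X \cong (\Sigma^{-1})^\ast h_\cC X$, so $\aleph$-presentability is preserved. For closure under $\aleph$-small coproducts I will use that $h_\cC$ preserves coproducts (because $\cC$ consists of compact objects) together with the regularity of $\aleph$, which ensures that an $\aleph$-small sum of $\aleph$-coproducts of representables is again an $\aleph$-coproduct of representables.

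The heart of the argument is closure under cones (together with retracts, which go similarly). Given a triangle $X \to Y \to Z \to \Sigma X$ with $X, Y \in \cX$, applying the homological functor $h_\cC$ produces an exact sequence
\[
h_\cC X \xrightarrow{f} h_\cC Y \longrightarrow h_\cC Z \longrightarrow h_\cC \Sigma X \xrightarrow{g} h_\cC \Sigma Y,
\]
which presents $h_\cC Z$ as an extension $0 \to \coker f \to h_\cC Z \to \ker g \to 0$ in $\Mod \cC$. This is where the local coherence hypothesis enters: I would invoke the standard fact that in a locally coherent Grothendieck category the full subcategory of $\aleph$-presentable objects is an abelian subcategory of $\Mod \cC$ closed under extensions and retracts (the case $\aleph = \aleph_0$ is built into the definition of locally coherent; the passage to larger regular $\aleph$ is routine). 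Applied to $f$ and $g$ between $\aleph$-presentable modules, this gives $\coker f$ and $\ker g$, and hence $h_\cC Z$, in $\aleph$-presentable modules.

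The main obstacle, as anticipated above, is this abelian/extension-closure fact for $\aleph$-presentable modules in the locally coherent setting. In the finitely presentable case it is the defining property; for larger regular $\aleph$ one argues by combining $\aleph$-small projective presentations of the outer terms via a horseshoe-type construction, using projectivity of free modules to lift quotients and using closure of $\aleph$-small coproducts of $\aleph$-presentables under kernels (which in turn reduces to the $\aleph_0$-case via local coherence and the regularity of $\aleph$). Once this is in hand, $\cX$ is verified to be thick and closed under $\aleph$-coproducts, so $\Loc_\aleph(\cC) \subseteq \cX$ as required.
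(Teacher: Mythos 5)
Your proposal follows the paper's argument almost verbatim: define the class of objects with $\aleph$-presentable restricted Yoneda image, observe it contains $\cC$ and is closed under suspension and $\aleph$-small coproducts, then for cones use the five-term exact sequence coming from $h_\cC$ to exhibit $h_\cC Z$ as an extension of the kernel by the cokernel of maps between $\aleph$-presentables, which lands back in the $\aleph$-presentables by the key lemma that these form an abelian subcategory when $\Mod\cC$ is locally coherent. The only divergence is in the sketch of that lemma, where you gesture at a horseshoe/reduction argument whereas the paper reduces cleanly to showing that an $\aleph$-generated submodule of a coproduct of representables is a $<\aleph$-indexed directed union of finitely presented modules; both are standard and lead to the same conclusion.
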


The proof of the proposition requires the following lemma.

\begin{lemma} \label{lemma:ln_simplification}
If $\Mod \cC$ is locally coherent, then its subcategory of $\aleph $-presentable modules is abelian.
\end{lemma}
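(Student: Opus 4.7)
The plan is to show that the $\aleph$-presentable modules are closed under kernels and cokernels; closure under these operations inside the abelian category $\Mod\cC$ automatically yields an abelian full subcategory. The heart of the argument is the following characterization, valid in any locally coherent Grothendieck category: a module $M$ is $\aleph$-presentable if and only if $M$ is a filtered colimit $M=\colim_{i\in I}M_i$ with $|I|<\aleph$ and each $M_i$ finitely presented.

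For the forward implication, I would take an $\aleph$-presentation $F_1\to F_0\to M\to 0$, with $F_0,F_1$ being $\aleph$-small coproducts of representables indexed by sets $J_0,J_1$, and exhibit $M$ as the filtered colimit over finite subdiagrams indexed by pairs of finite subsets $(S_0,S_1)$ with $S_k\subseteq J_k$ chosen so that the restricted relations land in the restricted generators; each such finite truncation gives a cokernel of a map in $\modu\cC$, which is finitely presented by local coherence. The indexing poset has cardinality $\max(|J_0|,|J_1|,\aleph_0)<\aleph$ by regularity. For the converse, the standard formula $M=\coker\bigl(\bigoplus_{i\to j}M_i\to\bigoplus_iM_i\bigr)$, combined with finite presentations of each $M_i$, assembles into an explicit $\aleph$-presentation of $M$, using again regularity to keep the indexing bounded.

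With this characterization in hand, the cokernel case is direct: given $f\colon M\to N$, amalgamate an $\aleph$-generated free cover of $M$ with an $\aleph$-presentation of $N$ to produce one for $\coker f$. For the kernel, write $M=\colim_{i\in I}M_i$ and $N=\colim_{j\in J}N_j$ as above. Then use the finite presentability of each $M_i$ together with filteredness of the colimit defining $N$ to factor each composite $M_i\to M\to N$ through some $N_{j(i)}$, and arrange these factorizations compatibly with the transition maps of $(M_i)$ by refining into a common indexing system of cardinality $<\aleph$ (this exploits closure of the class of $<\aleph$-sets under finite products). Exactness of filtered colimits in $\Mod\cC$ then gives $\ker f=\colim_i\ker(f_i\colon M_i\to N_{j(i)})$, and each $\ker f_i$ is finitely presented by local coherence. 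Applying the characterization again, $\ker f$ is $\aleph$-presentable. Images are then handled as $\Img f=\ker(N\to\coker f)$.

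The main obstacle is the middle step of producing a coherent diagram of level morphisms $f_i\colon M_i\to N_{j(i)}$ whose colimit recovers $f$, while keeping the indexing set of cardinality $<\aleph$. This requires some mildly fiddly bookkeeping that trades refinement of diagrams against cardinality control, and it is precisely the point where regularity of $\aleph$ is essential; everything else is a routine combination of local coherence with the exactness of filtered colimits.
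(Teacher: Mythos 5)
Your proposal is correct, but it follows a genuinely different path from the paper's. The paper's proof invokes a ``standard reduction'' (cokernel closure is immediate from splicing presentations; kernel closure is reduced, via a pullback along an $\aleph$-small free cover of the image and a splitting over that free module, to the single statement that an $\aleph$-generated submodule of a free module is $\aleph$-presentable), and then proves that one statement in a few lines: write such a submodule $K$ as an $\aleph$-small directed union of finitely generated submodules, observe each sits inside a finite subsum of representables and is therefore finitely presented by local coherence, and conclude. Your route instead elevates to a full characterization of $\aleph$-presentable modules as $\aleph$-small filtered colimits of finitely presented ones, and then treats kernels by expressing the given map $f\colon M\to N$ as a filtered colimit of level maps $f_\ell\colon M_{u(\ell)}\to N_{v(\ell)}$, taking levelwise kernels (finitely presented by local coherence since $\modu\cC$ is abelian), and passing to the colimit using exactness of filtered colimits.

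Both approaches ultimately rest on the same two inputs (local coherence and regularity of $\aleph$), and your characterization is indeed the well-known one; the direction ``\,$\aleph$-small filtered colimit of f.p.\ $\Rightarrow$ $\aleph$-presentable'' is even used implicitly in the paper's last sentence. What the paper's argument buys is brevity: the pullback-and-split reduction completely sidesteps the coherent re-indexing of two filtered diagrams. What your argument buys is transparency about what $\aleph$-presentable objects actually look like. The step you rightly flag as ``mildly fiddly'' is a genuine subtlety, and slightly more is needed than you state: after choosing factorizations $(i,j(i),g_i)$ one must build a filtered category $L$ of triples $(i,j,g)$ of cardinality $<\aleph$ such that \emph{both} projections $L\to I$ and $L\to J$ are cofinal; cofinality over $I$ gives $\colim M_{u(\ell)}\cong M$, but cofinality over $J$ is also needed so that $\colim N_{v(\ell)}\to N$ is an isomorphism, ensuring $\colim\ker f_\ell\cong\ker f$ rather than merely mapping to it. This can be done by seeding $L$ with one triple over each $i\in I$ \emph{and} one over each $j\in J$, and then closing under the filteredness conditions in $\aleph_0$ many stages; regularity of $\aleph$ keeps the cardinality under control. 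With that amendment spelled out, your proof is complete.
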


\begin{proof}
By a standard reduction it suffices to show that whenever $K$ is an $\aleph$-generated submodule of a coproduct of representables then $K$ is $\aleph$-presentable. 
Thus let $K\subseteq \coprod_{i\in I}C_i$ where the $C_i$ are representable modules. 
If $K$ is $\aleph$-generated it can be written as a directed union $K=\bigcup_{j\in J} K_j$ of finitely generated submodules $K_j$, where $|J|<\aleph$.
Consider for every $j\in J$ the composite inclusion $K_j\to K\to \coprod_iC_i$. 
Since $K_j$ is finitely generated, we can find a finite subset $I(j)\subseteq I$ such that $\coprod_{i\in I(j)}C_i$ still contains~$K_j$.
Hence $K_j$ is finitely presented by the local coherence of $\Mod \cC$.
We conclude that $K= \bigcup_{j\in J} K_j$ is an $\aleph$-indexed directed union of finitely presented modules and is therefore $\aleph$-presented.
\end{proof}

\begin{proof}[Proof of Proposition~\ref{prop:ln_simplification}]
The class of those $X\in \cT$ such that $h_\cC X$ is $\aleph$-presentable contains the objects of~$\cC$ and is closed under $\aleph$-small coproducts.
Since $\Sigma \cC =\cC$, it is also closed under suspension, as $\Sigma$ extends to an autoequivalence of $\Mod \cC$.
If we can verify that it is closed under taking cones, then it must contain all of $\Loc_\aleph(\cC)$ and we are done. 
Let $X\to Y\to Z \to \Sigma X$ be an exact triangle such that $h_\cC X$ and $h_\cC Y$ are $\aleph$-presentable. Then we have an exact sequence 
\[
\xymatrix@R=10pt@C=14pt{ 
h_\cC X \ar[rr] && h_\cC Y \ar[rr] \ar[dr] && h_\cC Z \ar[rr] \ar[dr] && h_\cC \Sigma X \ar[rr] && h_\cC \Sigma Y \\
&& & M \ar[ur] & & N \ar[ur] & &&  }
\]
where $M:= \coker (h_\cC X\to h_\cC Y)$ and $N:=\ker (h_\cC \Sigma X\to h_\cC \Sigma Y)$ are  $\aleph$-presentable by Lemma~\ref{lemma:ln_simplification}. 
The extension $0\to M\to h_\cC Z \to N\to 0$ then shows that $h_\cC Z$ is $\aleph$-presentable, as claimed.
\end{proof}

\begin{center} *** \end{center}

The next theorem connects the UCT with our previous work on Gorenstein homological algebra.
 It is one of the main results of this paper and can be viewed as a generalization of the Brown-Adams representability theorem, as explained below. 

\begin{thm} \label{thm:gore_uct}
Let $\cT$ be an idempotent complete triangulated category with $\aleph$-small coproducts, for some infinite cardinal~$\aleph$. Let $\cC$ be a Gorenstein closed (Def.\,\ref{defi:Gore_closed}) 
and suspension closed full subcategory of compact objects of $\cT$ such that $\Mod \cC$ is locally coherent and 1-Gorenstein.
Then the UCT with respect to $\cC$ holds for all pairs of objects $X,Y\in \cT$ with $X\in \Loc_\aleph(\cC)$. 

Moreover, we have the following dichotomy for an $\aleph$-presented $\cC$-module~$M$:
\begin{itemize}
\item
either $\pdim_\cC M \leq 1$ and $M\cong h_\cC X$ for some $X\in \Loc_\aleph(\cC)\subseteq \cT$,
\item
or $\pdim_\cC M=\infty$ and $M$ is not of the form $h_\cC X$ for any object $X\in\cT$.
\end{itemize}
\end{thm}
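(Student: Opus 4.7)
The strategy is to assemble the previously established results: Theorem~\ref{thm:uct} (local UCT), Theorem~\ref{thm:exact_detection} (Gorenstein closed iff $h_\cC$ lands in $\FD \cC$), Proposition~\ref{prop:ln_simplification} (local coherence guarantees $\aleph$-presentability), and the 1-Gorenstein hypothesis (which forces $\FD \cC$ to consist precisely of modules of projective dimension $\leq 1$).

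For the UCT itself, I will check the hypotheses of Theorem~\ref{thm:uct}. Since $\Mod \cC$ is locally coherent, Proposition~\ref{prop:ln_simplification} ensures that $h_\cC X$ is $\aleph$-presented (and in particular $\aleph$-generated) for every $X \in \Loc_\aleph(\cC)$. Since $\cC$ is Gorenstein closed, Theorem~\ref{thm:exact_detection} gives $h_\cC X \in \FD \cC$ for every $X \in \cT$; combined with the 1-Gorenstein hypothesis, $\pdim_\cC h_\cC X \leq 1$ for every such~$X$. Theorem~\ref{thm:uct} then produces the UCT exact sequence for every pair $(X,Y)$ with $X \in \Loc_\aleph(\cC)$.

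For the dichotomy, the ``either $\pdim \leq 1$ or $\pdim = \infty$'' is just the statement that $\cC$ is 1-Gorenstein (finitary projective dimension equals the Gorenstein dimension, which here is~$1$). If $M \cong h_\cC X$ for some $X \in \cT$, then Theorem~\ref{thm:exact_detection} forces $\pdim_\cC M < \infty$, hence $\pdim_\cC M \leq 1$; this handles the second bullet by contrapositive. Conversely, suppose $M$ is $\aleph$-presented with $\pdim_\cC M \leq 1$. An $\aleph$-presentation yields an exact sequence $Q_1 \to Q_0 \to M \to 0$ whose $Q_i$ are $\aleph$-coproducts of representables; the hypothesis $\pdim_\cC M \leq 1$ implies that $K := \ker(Q_0 \to M)$ is projective, and since $K$ is a quotient of $Q_1$ it is $\aleph$-generated. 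Thus $0 \to K \to Q_0 \to M \to 0$ is a projective resolution by $\aleph$-generated projectives, and Lemma~\ref{lemma:extended_yoneda} lifts it to a morphism $C_1 \to C_0$ in $\cT$, with each $C_i$ a retract of an $\aleph$-coproduct of objects of $\cC$ (so $C_i \in \Loc_\aleph(\cC)$). Completing to a distinguished triangle $C_1 \to C_0 \to X \to \Sigma C_1$ puts $X$ in $\Loc_\aleph(\cC)$; the associated long exact sequence gives $h_\cC X \cong M$ once one observes that the connecting map $h_\cC X \to h_\cC \Sigma C_1$ vanishes, since the next map is the suspension of the mono $K \hookrightarrow Q_0$ and is therefore still injective.

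None of the steps is genuinely difficult given the infrastructure already developed: the whole proof is essentially bookkeeping that the cardinality data ($\aleph$-presentedness of $M$, $\aleph$-generatedness of $K$, retracts-of-$\aleph$-coproducts for $C_0, C_1$) propagate correctly through the construction. The only ``subtle'' point — insignificant but worth naming as the likely pitfall — is confirming that $h_\cC X \to h_\cC \Sigma C_1$ is zero so that $h_\cC X$ coincides with $\coker(K \to Q_0) = M$ rather than a proper extension thereof; this is ensured precisely because $K \hookrightarrow Q_0$ is a monomorphism under the hypothesis $\pdim_\cC M \leq 1$.
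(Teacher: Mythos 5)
Your proposal is correct and follows the same route as the paper: combine Theorem~\ref{thm:exact_detection} with the 1-Gorenstein hypothesis to get $\pdim_\cC h_\cC X \le 1$, use Proposition~\ref{prop:ln_simplification} for $\aleph$-presentability, and feed these into Theorem~\ref{thm:uct}. For the dichotomy the paper merely cites the argument of Lemma~\ref{lemma:res_triangle} (really the construction in the proof of Corollary~\ref{cor:uct}), whereas you spell out the lifting of a length-one projective resolution along Lemma~\ref{lemma:extended_yoneda} and verify $h_\cC(X \to \Sigma C_1) = 0$ explicitly; the extra detail is accurate and welcome, but the substance is the same.
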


\begin{proof}
By Theorem \ref{thm:exact_detection} the functor $h_\cC$ takes values in $\FD \cC$. 
Since the Gorenstein dimension of $\cC$ is one, we thus have $\pdim_\cC h_\cC X\leq 1$ for all $X\in \cT$; since $\Mod \cC$ is locally coherent, Proposition~\ref{prop:ln_simplification} tells us that $h_\cC(\Loc_\aleph(\cC))$ consists of $\aleph$-presentable $\cC$-modules. Hence the first part of the theorem follows from the local UCT, Theorem~\ref{thm:uct}. 
For an arbitrary $\aleph$-presented module~$M$, the claimed dichotomy holds because either $\pdim_\cC M \le 1$ and then $M\cong h_\cC X$ for some $X\in \Loc_\aleph(\cC)$ by the same argument as in Lemma~\ref{lemma:res_triangle}, or $\pdim_\cC M=\infty$ and then we have no chance to represent $M$ by any $X \in \cT$ by Theorem~\ref{thm:exact_detection}.
\end{proof}

In Examples~\ref{ex:classical_uct} and~\ref{ex:classical_uct_kk}, the subcategory $\cC\subset \cT$ is generated under suspensions by a single compact object and is therefore as small as it gets.
At the other extreme, we can choose $\cC$ to consist of \emph{all} compact objects. With this choice we can formally deduce the classical Brown-Adams representability theorem, more precisely Neeman's general version~\cite{neeman:adams_brown}, as an example of a UCT as in Theorem~\ref{thm:gore_uct}. 
The original result for the stable homotopy category then follows by specializing to Example~\ref{ex:spectra}.

\begin{thm}[Brown-Adams representability] \label{thm:brown-adams}
Let $\cT$ be a triangulated category with arbitrary coproducts and such that its category of compact objects, $\cT^c$, admits a skeleton with only countably many morphisms.
Then all cohomological functors on $\cT^c$ are represented by objects in~$\cT$, and all natural transformations between them can be lifted to morphisms in~$\cT$. That is, every natural transformation $H\to H'$ between cohomological functors $H,H'\colon (\cT^c)^\op \to \Mod \base$ is isomorphic to one of the form $\cT(-,X)|_{\cT^c}\to \cT(-,X')|_{\cT^c}$ for some morphism $f\colon X\to X'$ of~$\cT$.
\end{thm}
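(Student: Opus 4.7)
The strategy is to apply Theorem~\ref{thm:gore_uct} to $\cC := \cT^c$, with $\aleph$ taken to be the cardinality of a proper class. I first verify its hypotheses. The subcategory $\cT^c$ is suspension closed and consists of compact objects by construction. Being triangulated it has weak kernels, so $\Mod\cT^c$ is locally coherent, and it is automatically Gorenstein closed in $\cT$ by Example~\ref{ex:C_triangulated}. Finally, the countable skeleton hypothesis together with Theorem~\ref{thm:triangulated_n_Gorenstein_intro} (applied with $n=0$) shows that $\Mod\cT^c$ is Gorenstein of dimension at most one. In the degenerate case where the dimension is $0$ we are in the pure semisimple setting of Remark~\ref{rem:pure_semisimple} and the conclusion is already tautological; otherwise the $1$-Gorenstein hypothesis of Theorem~\ref{thm:gore_uct} is met precisely.

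For representability, let $H \colon (\cT^c)^\op \to \Mod\base$ be cohomological and regard it as a $\cT^c$-module. By Lemma~\ref{lemma:equiv_modules_triangulated}, $H$ is flat, hence has projective dimension at most one by Theorem~\ref{thm:triangulated_n_Gorenstein}. Since $\aleph$ is a proper class, $H$ is trivially $\aleph$-presented, so the dichotomy in Theorem~\ref{thm:gore_uct} produces an object $X \in \Loc_\aleph(\cT^c) \subseteq \cT$ with $h_\cC X \cong H$. For the lifting of a natural transformation $\eta\colon H \to H'$, I fix such representing objects $X$ and $X'$ and apply the UCT short exact sequence of Theorem~\ref{thm:gore_uct} to the pair $(X,X')$: the resulting surjection $\cT(X,X') \twoheadrightarrow \Hom_{\cT^c}(h_\cC X, h_\cC X')$ lifts $\eta$ to a morphism $f\colon X \to X'$ in $\cT$, as required.

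The only nontrivial point, had the preceding sections not already cleared it, would be producing the representing object $X$ inside $\cT$ itself (rather than in some formal completion) and simultaneously ensuring that every natural transformation lifts. Here both issues are handled uniformly: $X$ is constructed inside $\Loc_\aleph(\cT^c) \subseteq \cT$ through the triangle of Lemma~\ref{lemma:res_triangle}, and the surjectivity needed for the lift is the content of the local UCT (Theorem~\ref{thm:uct}), which rests only on $\pdim_{\cT^c} h_\cC X \le 1$. This last inequality is guaranteed by combining Theorem~\ref{thm:exact_detection} with the Gorenstein-closed property of $\cT^c$. Thus the proof reduces to a careful bookkeeping exercise combining Theorems~\ref{thm:triangulated_n_Gorenstein_intro}, \ref{thm:exact_detection}, and~\ref{thm:gore_uct}.
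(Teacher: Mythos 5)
Your proof is correct and follows exactly the same strategy as the paper: apply Theorem~\ref{thm:gore_uct} with $\cC = \cT^c$ and $\aleph$ the cardinality of a proper class, using Example~\ref{ex:C_triangulated} for Gorenstein-closedness and local coherence, and Theorem~\ref{thm:triangulated_n_Gorenstein} for the Gorenstein dimension bound. You supply a bit more detail than the paper's one-line proof, in particular spelling out the Lemma~\ref{lemma:equiv_modules_triangulated} step (cohomological $\Rightarrow$ flat $\Rightarrow$ $\pdim \le 1$) needed for the dichotomy and flagging the degenerate $0$-Gorenstein case; the latter is a small over-caution, since the proof of Theorem~\ref{thm:gore_uct} visibly goes through whenever the Gorenstein dimension is $\le 1$, but it is a correct observation that the stated hypothesis reads as "exactly one."
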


\begin{proof}
Let $\cC= \cT^c$ and let $\aleph$ be the cardinality of a proper class. Then  $\cC$ is 1-Gorenstein by Theorem~\ref{thm:triangulated_n_Gorenstein}, and $\cC\subset \cT$ is Gorenstein closed and $\Mod \cC$ is locally coherent by Example~\ref{ex:C_triangulated}. We may therefore conclude with Theorem~\ref{thm:gore_uct}. 
\end{proof}

We note the following variant of Brown-Adams representability.

\begin{thm} \label{thm:countable_brown-adams}
Let $\cT$ be a triangulated category with all countable coproducts, such that $\cT^c$ is essentially small and $\cT = \Loc_{\aleph_1}(\cT^c)$. 
Then all $\aleph_1$-generated cohomological functors on $\cT^c$ and all natural transformations between them are representable in~$\cT$.
\end{thm}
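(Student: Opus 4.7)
My plan is to adapt the proof of the unbounded Brown--Adams statement (Theorem~\ref{thm:brown-adams}) by replacing the potentially large subcategory $\cT^c$ with a carefully chosen countable thick triangulated subcategory $\cC\subseteq\cT^c$, on which the $1$-Gorenstein machinery of Theorem~\ref{thm:gore_uct} applies at level $\aleph=\aleph_1$. Concretely, given an $\aleph_1$-generated cohomological functor $H$ on $\cT^c$ (and a natural transformation $\tau\colon H\to H'$ between two such), I would first fix countable generating data, and then build inside $\cT^c$ a countable thick triangulated subcategory $\cC$ containing the objects carrying this data. Such a $\cC$ is produced by standard bookkeeping: start from a countable set of objects and iteratively close under cones and (de)suspensions in $\cT^c$, noting that at each stage only countably many new morphisms appear, so the countable union remains countable and thick.

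With $\cC$ in hand, Theorem~\ref{thm:triangulated_n_Gorenstein} (applied with $n=0$) yields that $\Mod\cC$ is at most $1$-Gorenstein, while Example~\ref{ex:C_triangulated} ensures that $\cC$ is Gorenstein closed in $\cT$ and that $\Mod\cC$ is locally coherent. The restricted functor $H|_\cC$ is cohomological, hence flat by Lemma~\ref{lemma:equiv_modules_triangulated}, and so of projective dimension at most one; being $\aleph_1$-generated, it is also $\aleph_1$-presentable by Lemma~\ref{lemma:ln_simplification}. Then Theorem~\ref{thm:gore_uct} applies with $\aleph=\aleph_1$ (using that $\cT$ has countable coproducts by hypothesis) and produces an object $X\in\Loc_{\aleph_1}(\cC)\subseteq\cT$ with $h_\cC X\cong H|_\cC$. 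The natural transformation $\tau$ is handled analogously: the same construction, augmented to absorb generators of $H'$ and countably many witnesses for $\tau$, provides $X'\in\Loc_{\aleph_1}(\cC)$ representing $H'|_\cC$, and the fullness of $h_\cC$ on $\Loc_{\aleph_1}(\cC)$ (Corollary~\ref{cor:uct}) lifts $\tau|_\cC$ to a morphism $X\to X'$ in $\cT$.

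The main obstacle is then to extend the identification from $\cC$ up to all of $\cT^c$, that is, to upgrade $h_\cC X\cong H|_\cC$ to a natural isomorphism $\cT(-,X)|_{\cT^c}\cong H$. My strategy is to argue object-by-object: for each $Y\in\cT^c$, enlarge $\cC$ to a countable thick triangulated subcategory $\cC_Y\supseteq\cC\cup\{Y\}$, and repeat the construction to produce $X_Y\in\Loc_{\aleph_1}(\cC_Y)$ with $h_{\cC_Y}X_Y\cong H|_{\cC_Y}$. Since $X\in\Loc_{\aleph_1}(\cC)\subseteq\Loc_{\aleph_1}(\cC_Y)$ and both $h_{\cC_Y}X$ and $h_{\cC_Y}X_Y$ are $\cC_Y$-modules of projective dimension at most one, the local UCT (Theorem~\ref{thm:uct}) applied over $\cC_Y$ to the pair $(X,X_Y)$ together with the reflection of isomorphisms in Corollary~\ref{cor:uct} should produce a canonical isomorphism $X\cong X_Y$ extending the identity on $\cC$. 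Once this is established, $\cT(Y,X)\cong\cT(Y,X_Y)\cong h_{\cC_Y}(X_Y)(Y)\cong H(Y)$, naturally in $Y$. The delicate point, which I expect to be the technical heart of the proof, is verifying that the comparison morphism $X\to X_Y$ supplied by the UCT is genuinely an isomorphism even before we know that $h_{\cC_Y}X$ matches $H|_{\cC_Y}$ on the new objects of $\cC_Y\setminus\cC$; this step must rely on the uniqueness of the representing object inside $\Loc_{\aleph_1}(\cC_Y)$ together with the locally coherent structure of $\Mod\cC_Y$.
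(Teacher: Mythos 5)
Your proposal diverges fundamentally from the paper's proof, and the divergence introduces a gap that you correctly flag as ``the technical heart'' but that I do not think can be repaired along the lines you sketch.

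The problematic step is the extension from a countable $\cC\subseteq\cT^c$ to all of $\cT^c$. After constructing $X\in\Loc_{\aleph_1}(\cC)$ with $h_\cC X\cong H|_\cC$ and $X_Y\in\Loc_{\aleph_1}(\cC_Y)$ with $h_{\cC_Y}X_Y\cong H|_{\cC_Y}$, you want to conclude $X\cong X_Y$. You can indeed lift the isomorphism $h_\cC X\cong h_\cC X_Y$ to a map $f\colon X\to X_Y$ using the local UCT over $\cC$ (Theorem~\ref{thm:uct}), since $X\in\Loc_{\aleph_1}(\cC)$ and $X_Y$ is an arbitrary object of $\cT$. But this only gives you that $h_\cC(f)$ is an isomorphism, i.e.\ that $h_\cC(\cone f)=0$. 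To conclude $\cone(f)=0$ you would need $\cone(f)\in\Loc_{\aleph_1}(\cC)$, which you do not have: $\cone(f)\in\Loc_{\aleph_1}(\cC_Y)$ only, because $X_Y$ lives in the larger localizing subcategory. To get vanishing from $\cC_Y$-modules you would instead need $h_{\cC_Y}(f)$ to be an isomorphism, i.e.\ $h_{\cC_Y}X\cong H|_{\cC_Y}$; on the new objects $Y\in\cC_Y\setminus\cC$ this is precisely $\cT(Y,X)\cong H(Y)$, which is the statement you are trying to prove. The argument is circular, and there is no canonical compatibility among the different $X_{\cC'}$ as $\cC'$ varies that would let you assemble a single representing object -- the UCT pins $X$ down only up to non-canonical isomorphism.

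The underlying misconception is that you need $\cC$ to be a countable (hence $1$-Gorenstein) triangulated category to force $\pdim H\leq 1$. The paper works with $\cC=\cT^c$ directly, without any cardinality restriction on $\cT^c$, and obtains $\pdim_{\cT^c}H\leq 1$ from a much softer fact, Lemma~\ref{lemma:ctly_hml_flat}: a \emph{countably presentable} cohomological functor on an essentially small triangulated category is automatically of projective dimension at most one, because cohomological $=$ flat $=$ filtered colimit of representables (Lemma~\ref{lemma:equiv_modules_triangulated}), countable presentability gives a cofinal countable chain in that filtered system, and the colimit of a countable chain of projectives is resolved in one step by the telescope construction $0\to\coprod_n h_\cC C_n\to\coprod_n h_\cC C_n\to H\to 0$. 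No Gorensteinness of $\cT^c$ is invoked; Theorem~\ref{thm:triangulated_n_Gorenstein} plays no role in this proof. The paper then verifies via Proposition~\ref{prop:ln_simplification} that $h_{\cT^c}X$ is countably presentable for every $X\in\cT=\Loc_{\aleph_1}(\cT^c)$, so the hypotheses of the global UCT (Corollary~\ref{cor:uct}) with $\aleph=\aleph_1$ are met, and representability of objects and of natural transformations drops out in one stroke.
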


\begin{proof}
Since $\cT=\Loc_{\aleph_1}(\cT^c)$ by hypothesis, it follows from Proposition \ref{prop:ln_simplification} (with $\aleph=\aleph_1$ and $\cC=\cT^c$) that $h_{\cT^c} X$ is countably presentable for every object $X\in \cT$. Since each $H=h_{\cT^c} X$ is also a cohomological functor, it follows from the next lemma that it can have projective dimension at most one. Hence the result follows by the global UCT, Corollary \ref{cor:uct}, for $\aleph=\aleph_1$. 
\end{proof}

\begin{lemma} \label{lemma:ctly_hml_flat}
If $H\in \Mod \cT^c$ is a countably presentable cohomological functor then $\pdim_{\cT^c} H\leq 1$.
\end{lemma}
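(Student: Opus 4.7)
The plan is to combine the characterization of cohomological functors as flat $\cT^c$-modules (Lemma \ref{lemma:equiv_modules_triangulated}) with a Govorov--Lazard-type refinement: a countably presented flat module can always be written as a countable directed colimit of finitely generated projectives. From such a presentation one then extracts a projective resolution of length at most one via the mapping telescope.

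First I would invoke Lemma \ref{lemma:equiv_modules_triangulated}: since $\cT^c$ is essentially small and triangulated, every cohomological functor is flat, i.e.\ a filtered colimit of representable $\cT^c$-modules. In particular $H$ is a flat object of $\Mod\cT^c$. Next, using the hypothesis that $H$ is $\aleph_1$-presented, I would refine this to a countable $\N$-indexed directed system. Choose a presentation
\[ \textstyle\coprod_{i\in I_1} C_i \xrightarrow{\;\partial\;} \coprod_{j\in I_0} C_j \to H \to 0 \]
with $I_0, I_1$ countable and $C_i, C_j \in \cT^c$. Enumerating the generators and relations and inductively absorbing finitely many at a time, the standard Govorov--Lazard argument -- which uses that any morphism from a finitely presented module into a flat module factors through a finitely generated free module -- produces a chain
\[ P_0 \xrightarrow{f_0} P_1 \xrightarrow{f_1} P_2 \xrightarrow{f_2} \cdots \]
of finitely generated projective (indeed free) $\cT^c$-modules whose colimit is naturally isomorphic to $H$. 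The only thing to check is that nothing in the classical proof uses more than that the ambient category has a set of finitely generated projective generators, which is clear for $\Mod\cT^c$.

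Once such a countable directed system is in hand, the mapping telescope short exact sequence
\[ 0 \to \bigoplus_{n\in\N} P_n \xrightarrow{\,1-\sigma\,} \bigoplus_{n\in\N} P_n \to H \to 0 \]
(where $\sigma$ acts on the $n$-th summand by $f_n\colon P_n\to P_{n+1}$) exhibits $H$ as the cokernel of a monomorphism between projective $\cT^c$-modules. Hence $\pdim_{\cT^c} H \leq 1$, finishing the proof.

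The main obstacle is the refinement step: transforming an a priori arbitrary filtered colimit expression of $H$ as a flat module into an explicit $\N$-indexed sequence of finitely generated projectives. Everything else is formal. In practice this step is the usual diagonal/cofinality argument attributed to Govorov and Lazard, adapted from rings to modules over the small additive category $\cT^c$; no new ingredient is required beyond the countable presentability of $H$.
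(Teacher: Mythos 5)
Your proof is correct and follows essentially the same route as the paper: both first pass to a countable $\N$-indexed chain of finitely generated projectives (equivalently, representables, since $\cT^c$ is idempotent complete) with colimit $H$, and then read off the projective resolution from the mapping-telescope short exact sequence. The only cosmetic difference is that the paper obtains the countable chain by citing a cofinality result for countably presentable objects in filtered colimits (Kashiwara--Schapira), whereas you spell out the Govorov--Lazard absorption argument that underlies it.
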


\begin{proof}
Recall that the cohomological functors $H\colon (\cT^c)^\op\to \Mod \base$ are precisely the filtered colimits of representable $\cT^c$-modules (Lemma~\ref{lemma:equiv_modules_triangulated}).
If $H$ is moreover countably presentable we may find a cofinal subsystem which is countable and therefore 
contains a cofinal chain (\cite{kashiwara-schapira:cs}*{Ex.\,3.8}). 
Thus 
$H$ is the colimit of a countable sequence of representables, say 
$H = \colim_{n\in \N}(h_\cC C_0 \smash{\stackrel{a_0}{\longrightarrow}} h_\cC  C_1 \smash{\stackrel{a_1}{\longrightarrow}} \ldots)$. 
We can reassemble the colimit into a short exact sequence
$0\to \coprod_{n\in \N} h_\cC C_n\to \coprod_{n\in \N} h_\cC C_n \to H\to 0$
in $\Mod \cC$, where the components of the first map are $\id_{h_\cC C_{n}}- a_n$. This proves the claim.
\end{proof}

We conclude with an illustrative example. The next and final section of this article contains more substantial examples and applications.

\begin{example} \label{ex:ringel-zhang}
Let $R$ be an $n$-Gorenstein noetherian ring and fix $\pi\in \N$. 
Consider the category $\Ch_\pi(R)$ of $\pi$-periodic complexes of right $R$-modules (Example~\ref{ex:complexes}).
By Proposition \ref{prop:Gorenstein_cplx} we know that $\Ch_\pi(R)$ is an $n$-Gorenstein category. 
In particular $\GProj (\Ch_\pi(R))$ is Frobenius and its stable category $\cT:=\underline{\GProj} (\Ch_\pi(R)) $ is triangulated (Proposition~\ref{prop:triangulated}).
Since $R$ is noetherian $\Ch_\pi (R)$ is locally noetherian and so the triangulated subcategory $\cT^c$ of compact objects is $\underline{\Gproj}(\Ch_\pi (R))$  (Theorem~\ref{thm:triangulated_small}).

For instance, $R=\base Q$ could be the path algebra of a finite acyclic quiver~$Q$  over a field~$\base$, in which case $n=1$ (this is a hereditary finite dimensional algebra), and we may take $\pi=1$, in which case we are studying differential $R$-modules or, equivalently, modules over $R[d]/(d^2)$. 
The latter special case was studied in detail in~\cite{ringel-zhang:dual}. 
Since $R[d]/(d^2)\cong (\base[d]/(d^2))\otimes \base Q$, such modules were viewed in \emph{loc.\,cit. }as ``quiver representations over the dual numbers''.

Now choose the full subcategory $\cC:= \{ R[i] \mid i\in \Z/\pi \}\subseteq \cT$, where $R[i]$ denotes the complex with $R$ in degree~$i$ and zero elsewhere, which is easily seen to be Gorenstein projective. 
Then $\Mod \cC= \prod_{\Z/\pi} \Mod R$ is the category of $\Z/\pi$-graded $R$-modules, which is again $n$-Gorenstein, and $h_\cC\colon \cT \to \Mod \cC$ is simply the cohomology functor $H^*\colon X\mapsto (H^iX)_i$. 
The additive hull of $\cC$ in $\cT^c$ is
\[
\add \cC = 
\{ \oplus_{\alpha\in A} P_\alpha [i_\alpha] \mid P_\alpha\in \proj R, i_\alpha \in \Z/\pi\Z, |A|<\infty \} \,,
\]
from which it follows that every distinguished triangle in $\cT$ with vertices in $\add \cC$ must split. Hence the syzygies in $\Mod \cC$ of such triangles are precisely the finite projectives, so that $\cC$ is Gorenstein closed precisely when $\Gproj \cC = \proj \cC$, that is, when $\gldim\cC<\infty$. In this case the cohomology detects objects and we see that $\cC$ is also a suspension closed set of compact generators.

In particular, for $R$ a hereditary ring Theorem \ref{thm:gore_uct} can be applied (or just use the UCT directly) to get the natural short exact sequence
\[
\xymatrix{
0\ar[r] &
  \Ext^1_R(H^* \Sigma X, H^* Y) \ar[r] &
   \underline{\Hom} (X,Y) \ar[r]^-{H^*} &
    \Hom_R (H^*X,H^*Y) \ar[r] & 0
}
\]
for all $X,Y\in \underline{\GProj} (\Ch_\pi(R))$, and to obtain a bijection between the isomorphism classes of objects in $\underline{\GProj}(\Ch_\pi(R))$ and in $\prod_{\Z/\pi}\Mod R$, where the compact objects, i.e.\ $\stGproj(\Ch_\pi(R))$, correspond to the finitely generated objects of $\prod_{\Z/\pi}\Mod R$ 
(cf.\,\cite{ringel-zhang:dual}*{Theorems~1 and~2}). 
In this case, since $R$ is hereditary, and so in particular of finite global dimension, $\underline{\GProj} (\Ch_\pi(R))$ is the ``derived category of periodic complexes'' and the above result can be viewed as the periodic analogue of the fact that over a hereditary ring a complex is determined up to quasi-isomorphism by its cohomology.
\end{example}

\section{Examples from KK-theory} \label{sec:KK}

\addtocontents{toc}{\protect\setcounter{tocdepth}{2}}

We have already mentioned in Example~\ref{ex:classical_uct_kk} the Universal Coefficient Theorem of Rosenberg and Schochet, which is the main tool for computing KK-theory groups of C*-algebras. Because of its utility, it has been generalized by many authors and in several directions, often involving variants of KK-theory for C*-algebras with additional structure. These all give rise to triangulated categories satisfying Bott periodicity ($\Sigma^2\cong \id$) and admitting arbitrary countable coproducts.

\subsection{The Universal Multi-Coefficient Theorem} \label{subsec:umct}
Consider the Kasparov category $\KK$ (see Example~\ref{ex:classical_uct_kk}). 
In order to capture finer structure of the KK-theory groups, Dadarlat and Loring~\cite{dadarlat-loring:multi} have produced a variant of the Rosenberg-Schochet UCT called the Universal Multi-Coefficient Theorem, or UMCT. We will now show how to deduce it from our general machinery.

The invariant used in~\cite{dadarlat-loring:multi} is \emph{K-theory with coefficients}, denoted $\underline{K}$. It is obtained as the restricted Yoneda functor $\underline{K}:=h_{\cC}\colon \KK\to \Mod \cC$ for the subcategory
 $\cC := \{\C , \Sigma \C\} \cup \{ \Sigma^i \mathbb I_n \mid n\geq 1 , i\in \{0,1\} \} \subset \KK$, where the objects $\mathbb I_n$ are as in~\cite{dadarlat-loring:multi}. By definition $\mathbb I_n$ is the fiber -- i.e.\ 
 the C*-algebraist's `mapping cone' -- of $n\cdot\id \in \KK(\C,\C)$, so that $\KK(\Sigma^i \mathbb I_n, A)= K_i(A)$ yields K-theory with $\Z/n$-coefficients. 
 (To be precise, rather then $\cC$ the authors of loc.\,cit.\ consider the category $\Lambda$ of `generalized Bockstein operations' acting on the direct sum of all K-theories, with and without coefficients, and have $\underline{K}$ take values in the category of \emph{left} $\Lambda$-modules. But it is clear from the definitions that $\Lambda^\op\cong \cC$ so that we may identify $\underline{K}$ and~$h_{\cC}$.)

\begin{thm} \label{thm:umct_gore}
The category $\cC$ is 1-Gorenstein and is Gorenstein closed in the ambient triangulated category~$\KK$.
\end{thm}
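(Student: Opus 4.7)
The plan is to leverage the model provided by Example~\ref{ex:pure_periodic} for the 1-Gorenstein claim, and the Rosenberg--Schochet UCT (Example~\ref{ex:classical_uct_kk}) to show that $\add \cC$ is as large as possible inside~$\KK$, after which Gorenstein closedness becomes automatic.

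First, I would explicitly compute the $\Z$-linear structure of $\cC$. Applying $\KK(?,-)$ and $\KK(-,?)$ to the defining triangle $\Sigma\C \to \mathbb{I}_n \to \C \xrightarrow{n} \C$, combined with Bott periodicity $\Sigma^2\cong\id$ and $K_0(\C)=\Z$, $K_1(\C)=0$, a routine long-exact-sequence calculation yields $\KK(\mathbb{I}_n,\C)=0$, $\KK(\Sigma\mathbb{I}_n,\C)=\Z/n$, $\KK(\mathbb{I}_n,\mathbb{I}_m)=\Z/\gcd(n,m)$, and so on. Under the identification $\C \leftrightarrow \Z$, $\Sigma\mathbb{I}_n \leftrightarrow \Z/n$ (together with their suspensions), these Hom groups, and also the composition rules, match precisely those listed at the end of Example~\ref{ex:pure_periodic} for $\pi=2$. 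Therefore $\cC \subset \KK$ is equivalent, as a $\Z$-linear category, to the subcategory $\{\Sigma^i\Z/n\} \subset \D_2(\Z)$ of that example; since the 1-Gorenstein property of $\Mod \cC$ is intrinsic to the $\Z$-linear structure, we inherit it from Example~\ref{ex:pure_periodic}.

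For Gorenstein closedness in~$\KK$, I would prove the stronger statement that $\add \cC$ coincides with the thick subcategory $\cB^c := \mathrm{thick}_\KK(\C)$ of compact objects in the bootstrap class $\cB = \Loc_{\aleph_1}(\C)$. The inclusion $\add \cC \subseteq \cB^c$ is immediate since each $\mathbb{I}_n = \Sigma^{-1}\cone(n\cdot \id_\C)$ lies in $\mathrm{thick}_\KK(\C)$. For the reverse inclusion, fix $A \in \cB^c$. Its K-theory is a finitely generated $\Z/2$-graded abelian group, so by the structure theorem we can build an object $A_0 \in \add \cC$ as a finite direct sum of copies of $\C, \Sigma\C, \mathbb{I}_n, \Sigma\mathbb{I}_n$ with $K_*(A_0)\cong K_*(A)$. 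The Rosenberg--Schochet UCT applied to $(A_0, A)$ produces a morphism $\alpha\colon A_0 \to A$ realizing this isomorphism on $K_*$; its cone lies in $\cB$ and has vanishing $K_*$, so a second application of the UCT, this time to $(\cone\alpha, \cone\alpha)$, forces $\KK(\cone\alpha,\cone\alpha)=0$, whence $\cone\alpha=0$. Thus $\alpha$ is a $\KK$-isomorphism and $A \cong A_0 \in \add \cC$.

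Granting this identification, Gorenstein closedness follows immediately from condition~(4) of Proposition~\ref{prop:equiv_Gore_closed}: the subcategory $\cB^c$ is thick by construction, so the cone in $\KK$ of any morphism between objects of $\add \cC = \cB^c$ again lies in $\cB^c = \add \cC$, with no Gorenstein projective hypothesis required on the image. The main obstacle is the UCT-based equality $\add \cC = \cB^c$; it rests on the two classical consequences of Rosenberg--Schochet, namely that K-theory is sharp enough to detect both objects and isomorphisms inside $\cB$, and that every finitely generated $\Z/2$-graded abelian group is realized as the K-theory of an explicit object of $\add \cC$ via the building blocks $\C, \Sigma\C, \mathbb{I}_n, \Sigma\mathbb{I}_n$.
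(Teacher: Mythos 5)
Your proof is correct and closely parallels the paper's, though the logical ordering and some ingredients differ. The shared core is the identification $\add\cC = \cB^c$, which you establish by a hands-on UCT argument (build $A_0\in\add\cC$ with matching $K_*$, lift to a map, kill the cone) whereas the paper cites the \emph{global} form of Rosenberg--Schochet (Corollary~\ref{cor:uct}) plus the compactness criterion $K_*(A)$ finitely generated; both are fine. Given $\add\cC=\cB^c$, both proofs conclude Gorenstein closedness immediately from the thickness of $\cB^c$ via Example~\ref{ex:C_triangulated} / Proposition~\ref{prop:equiv_Gore_closed}(4), with no Gorenstein-projective hypothesis needed.

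The genuine divergence is in the 1-Gorenstein step. You deduce it by matching $\cC$ $\Z$-linearly with $\cC'=\{\Sigma^i\Z/n\}\subset\D_2(\Z)$ from Example~\ref{ex:pure_periodic} and transporting the Gorenstein property through $\Mod\cC\simeq\Mod\cC'$; this is exactly the content of Remark~\ref{rem:KK_equiv}, and it is a valid and pleasantly concrete argument. The paper instead applies Theorem~\ref{thm:triangulated_n_Gorenstein} directly to $\add\cC=\cB^c$, using that $\cC$ has only countably many morphisms, then rules out $0$-Gorenstein by exhibiting an object (with $K_*\cong\Q$) that is not a retract of a coproduct of compacts (Remark~\ref{rem:pure_semisimple}). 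Both ultimately run on the countability criterion of Theorem~\ref{thm:triangulated_n_Gorenstein}, but the paper's route is slightly more economical since it never needs the precise $\Z$-linear shape of $\cC$.

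One small point to be careful about in your version: you need to verify not only that the Hom groups of $\cC\subset\KK$ match those of $\cC'$ listed in Example~\ref{ex:pure_periodic} but also that the \emph{compositions} agree, in order to actually get the $\Z$-linear isomorphism $\cC\cong\cC'$. This is true and can be read off from the computations in Dadarlat--Loring, as the paper notes in Remark~\ref{rem:KK_equiv}, but it is worth stating explicitly since an isomorphism on Hom groups alone is not enough for the transfer of the Gorenstein property.
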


\begin{proof}
Let $\cB= \Loc_{\aleph_1}(\cC)\subset \KK$ denote the Bootstrap category. We claim that $\add \cC= \cB^c$, so that the inclusion $\cC\hookrightarrow \cB^c$ induces an equivalence $\Mod \cB^c \simeq \Mod \cC$.
Indeed this is a corollary of the global form of the Rosenberg-Schochet UCT (see Corollary~\ref{cor:uct}), according to which the K-theory functor $K_*$ induces a bijection between isomorphism classes of objects in $\cB$ and of countable $\Z/2$-graded abelian groups. As $K_*$ commutes with coproducts, the compact objects $A\in \cB^c$ must correspond to finitely generated groups $K_*(A)$ (cf.\,\cite{ivo:boot}*{Lemma~2.9}). 
The claim now follows from the classification of finitely generated abelian groups.

In particular $\add \cC$ is triangulated, hence $\cC$ is trivially Gorenstein closed in~$\KK$ (see Example~\ref{ex:C_triangulated}). Moreover $\cC$ only has countably many morphisms (by easy computations with the Rosenberg-Schochet UCT), hence $\add \cC$ admits a countable skeleton and therefore it is $\leq1$-Gorenstein by Theorem~\ref{thm:triangulated_n_Gorenstein}; it is in fact $1$-Gorenstein, since for instance any $A\in \cB$ with $K_*(A)\cong \mathbb Q$ is not a retract of a coproduct of compact objects (cf.\,Example~\ref{rem:pure_semisimple}). This proves the theorem.
\end{proof}

The Dadarlat-Loring UMCT now follows from Theorems~\ref{thm:umct_gore} and~\ref{thm:gore_uct}.

\begin{remark} \label{rem:KK_equiv}
There is a strong resemblance between the above proof and the Example~\ref{ex:pure_periodic} of 2-periodic complexes of abelian groups; this is not by chance.
Indeed, consider $\cC':=\{\Sigma^i \Z/n \mid n\geq 0, i\in \{0,1\}\}\subset \D_2(\mathbb Z)$ as in Example~\ref{ex:pure_periodic}. There is a suspension-preserving isomorphism of categories $F\colon \cC' \stackrel{\sim}{\to} \cC$ that sends $\Z$ to $\C$ and $\Z/n$ to $\Sigma \mathbb I_n$ ($n\geq1$).
This can be seen immediately by direct inspection from the computations of Example~\ref{ex:pure_periodic} and those performed in~\cite{dadarlat-loring:multi}.
Thus of course $\cC$ is 1-Gorenstein because $\cC'$ is. 
We have seen that $\cC'$ is Gorenstein closed in~$\D_2(\Z)$, and from this we could conclude that $\cC$ is Gorenstein closed in~$\KK$ if we knew that the induced equivalence $\D_2(\Z)^c =\add \cC' \stackrel{\sim}{\to} \add \cC = \cB^c$ preserves distinguished triangles, which is probably the case.

In fact, the above isomorphism $\cC'\cong \cC$ is just a fragment of a suspension-preserving equivalence $F:\D_2(\Z)_{\aleph_1} \stackrel{\sim}{\to} \cB$, where $\D_2(\Z)_{\aleph_1}\subset \D_2(\Z)$ denotes the full subcategory of complexes with countable homology groups. Since every object of $\D_2(\Z)_{\aleph_1}$ is isomorphic to its graded homology, it suffices to define $F$ on complexes concentrated in a single degree and on maps between such objects.
To define $F$ on objects, choose for each countable $M\in \Mod \Z$ a C*-algebra $FM\in \cB$ with $K_0(FM)\cong M$ and $K_1(FM)=0$; then set $F(\Sigma M):=\Sigma (FM)$.
To define $F$ on morphisms, do it separately for maps of degree zero and of degree one by comparing the two UCT exact sequences for $\D_2(\Z)$ and for~$\cB$, which reduce to isomorphisms in both cases.
Clearly $F$ defined this way is a functor (because two maps of degree one always compose to zero)  and an equivalence, but it is less clear how to verify that it preserves triangles (cf.\ \cite{patchkoria}*{Prop.\,5.2.3 and Rem.\,5.2.4}).
\end{remark}

\subsection{Filtrated KK-theory} \label{subsec:filtratedKK}

We now turn to filtrated K-theory, as introduced by Meyer and Nest \cite{meyer-nest:filtrated1}~\cite{meyer-nest:filtrated2}. 
The triangulated category we work with is the Kasparov category $\KK(X)$ of C*-algebras fibered over a finite topological space~$X$. Here we only consider the example where $X$ is the set $\{1,\ldots, n\}$ with $n$ points equipped with the topology where the non-empty open sets are the intervals $[a,n]=\{a,\ldots, n\}$ ($a\in X$). 
An object of $\KK(X)$ is a sequence $A\colon A_1 \supseteq \ldots \supseteq A_n \supseteq A_{n+1}=0$ of C*-algebras, one for each open subset, such that $A_a\supseteq A_{a+1}$ is the inclusion of a (two-sided) ideal. Filtrated K-theory $\FK$ assigns to such an object the collection of the ordinary K-theory groups of all subquotients in the sequence, together with the action of all natural transformations. This invariant is representable as the restricted Yoneda functor $h_\cC= \FK$ associated to a certain small category $\cC\subset \KK(X)$ that we now describe by generators and relations, following the computations of~\cite{meyer-nest:filtrated2}*{\S3}. 
(Similarly to Dadarlat and Loring with the UMCT of Section~\ref{subsec:umct}, Meyer and Nest also prefer to work with the $\Z/2$-graded category of natural transformations, denoted $\mathcal{NT_*}$, and left graded modules over it; our description of $\cC$ is the ungraded, contravariant translation of the explicit computation of~$\mathcal{NT_*}$ in~\cite{meyer-nest:filtrated2}.)

The category $\cC$ comprises the following suspension closed set of compact objects:
\[
\obj(\cC) =\{ \Sigma^i R_{[a,b]} \mid i\in \{0,1\} \textrm{ and } a,b\in X, a\leq b \}\,.
\]
(The intervals $[a,b]$ correspond to the non-empty connected locally closed subsets of~$X$. The Hom group $\KK(X)(\Sigma^iR_{[a,b]}, A)=\FK_{[a,b],i}(A)$ yields the $i$-th K-theory group of the subquotient $A_a/A_{b+1}$ of~$A_1$.) 

We know by \cite{meyer-nest:filtrated2}*{(3.1)} that the non-zero Hom groups of $\cC$ are all free abelian groups of rank one, as follows:
\begin{align*}
\cC (R_{[a,b]} , R_{[a',b']}) & = \Z \cdot \mu^{[a,b]}_{[a',b']}
 \quad \textrm{ if } a\leq a' \leq b \leq b',\\
\cC (R_{[a,b]} , \Sigma R_{[a',b']}) & = \Z \cdot \delta^{[a,b]}_{[a',b']} 
 \quad \textrm{ if } a-1\leq b', a'< a \textrm{ and } b'< b. 
\end{align*}
The notations $\mu,\delta$ for the two family of generators match those in~\cite{meyer-nest:filtrated2}*{\S3.2}, where the composites of these generators are also described.

\begin{construction} \label{constr:filtr-KK}
We now introduce a new notation for the objects and arrows of $\cC$ in order to better exploit its symmetries.
Set 
\[
 R_{[a,b]} =: A_{a,b}  \quad \textrm{ and } \quad \Sigma R_{[a,b]}  =: A_{b+1,a+n}  \quad (a,b\in X, a\leq b),
\]
and denote by 
\[\alpha^{a,b}_{a',b'}: A_{a,b}\to A_{a',b'} \] 
either the appropriate Meyer-Nest generator $\mu^{[a,b]}_{[a',b']}$ or $\delta^{[a,b]}_{[a',b']}$, or its suspension (generating the Hom group $\cC(\Sigma R_{[a,b]}, \Sigma R_{[a',b']})$ or $\cC(\Sigma R_{[a,b]}, R_{[a',b']})$), or the zero map (if the corresponding Hom group vanishes).

If we arrange the objects $A_{a,b}$ and all their suspensions at the vertices of a $\Z^2$-lattice, Bott periodicity tells us that we must have $A_{a,b}=\Sigma^2 A_{a,b} = A_{a+n+1, b+n+1}$.

We deduce the following presentation for~$\cC$. The generating quiver is the following doubly infinite grid, which we declare to be periodic with period $(a,b)\mapsto (a+n+1,b+n+1)$: 
\begin{equation} \label{eq:quiver:KK(X)}
\vcenter{
\xymatrix{
& \vdots \ar[d] & \vdots \ar[d] & \\
\cdots \ar[r] & A_{a,b} \ar[d]_{\alpha^{a,b}_{a+1,b}} \ar[r]^{\alpha^{a,b}_{a,b+1}} & A_{a,b+1} \ar[d]^{\alpha^{a,b}_{a+1,b}} \ar[r] & \cdots \\
\cdots \ar[r] & A_{a+1,b} \ar[d] \ar[r]^{\alpha^{a,b}_{a+1,b+1}} & A_{a+1,b} \ar[d]\ar[r] & \cdots \\
&\vdots & \vdots & 
}
}
\end{equation}
The relations are as follows:
\begin{itemize}
\item every square is commutative, and
\item $A_{a,b}=0$ outside of the ``fat diagonal'' $\{(a,b)\in \Z^2 \mid a\leq b\leq a+n-1\}$.
\end{itemize}
\end{construction}

\begin{remark}
We can describe the above presentation in terms of the notations of Meyer-Nest.
As generating quiver we have taken the following one, where instead of $R_{[a,b]}$ we simply write $ab$ for space, and where every depicted arrow is either one of the generators $\mu, \delta$ or its suspension. 
\begin{equation*} 
\vcenter{
\xymatrix@C=13pt@R=14pt{
\ar[d] & \ar[d] & \ar[d] &&&&&&& \\
11 \ar[r] & 12 \ar[r]\ar[d] & 13 \ar[r]\ar[d] & \cdots \ar[r] & 1n \ar[d] &&&&& \\
& 22 \ar[r] & 23 \ar[r]\ar[d] & \cdots \ar[r] & 2n \ar[r] \ar[d] & \Sigma 11 \ar[d] &&&& \\
&& 33 \ar[r] & \cdots \ar[r] & 3n \ar[r] \ar[d] & \Sigma 12 \ar[d]\ar[r] & \Sigma 22 \ar[d] &&& \\
&&& \ddots & \vdots \ar[d]  & \vdots \ar[d] & \vdots \ar[d] & \ddots && \\
&&&& nn  \ar[r] & \Sigma 1\,n\!\!-\!\!1 \ar[d]\ar[r] & \Sigma 2\,n\!\!-\!\!1 \ar[d]\ar[r] & \cdots \ar[r] & \Sigma n\!\!-\!\!1\,n\!\!-\!\!1\ar[d] & \\
&&&&& \Sigma 1n \ar[r] & \Sigma 2n \ar[r]\ar[d] & \cdots \ar[r] & \Sigma n\!\!-\!\!1\,n \ar[r]\ar[d] & \Sigma nn \ar[d]\\
&&&&&&&&&
}
}
\end{equation*}
This shape forms a closed band by Bott periodicity $\Sigma^2=\id$: the $n-1$ vertical maps sticking out of the bottom row are the same as the $n-1$ maps entering the top row. We thus have a basic triangular shape containing the objects $\{R_{[a,b]} \mid a,b \in X , a\leq b\}$ together with its suspended copy, glued together on two sides by a set of $n-1$ connecting maps and their suspensions.
The relations imposed on the quiver are those saying that every square commutes and any two consecutive maps at the border of the diagram compose to zero.
\end{remark}

\begin{remark} \label{rem:canonical_maps}
Let $A_{a,b}$ be a non-zero object of the quiver~\eqref{eq:quiver:KK(X)}, that is $a\leq b\leq a+n-1$. It follows from the commutativity relations that each of the canonical maps $\alpha^{a,b}_{a',b'}: A_{a,b}\to A_{a',b'}$ is the unique composite map $A_{a,b}\to A_{a',b'}$ that can be produced as a composite of the displayed generators. If it is non-zero, we must have $b \leq b' \leq a+n-1$ and $a\leq a' \leq b$, which means that $(a',b')$ is contained in the following box inside the ``fat diagonal'' which is delimited by the dashed lines:

\begin{center}
\begin{tikzpicture}
  \draw[gray,dashed] (-2.7,.7) -- (.7,-2.7);
  \draw[gray,dashed] (3.3,.7) -- (6.7,-2.7);

  \filldraw[fill=black!5!white, draw=black] (0,0) rectangle (4,-2);
  
  \node at (0,0)  {$\bullet$}; \node[anchor=south east] at (0,0)  {$A_{a,b}$};
  \node at (4,-2) {$\bullet$}; \node[anchor=north west] at (4,-2) {$A_{b,a+n-1}$};
  \node at (0,-2) {$\bullet$}; \node[anchor=north east] at (0,-2) {$A_{b,b}$};
  \node at (4,0)  {$\bullet$}; \node[anchor=south west] at (4,0)  {$A_{a,a+n-1}$};

  \node at (2.5,-1.2) {$\bullet$};
  \node[anchor=north west] at (2.5,-1.2) {$A_{a',b'}$};
  \draw[black,->] (0.125,-0.06) -- (2.375,-1.14);
  \node[anchor=north east] at (1.5,-.6) {$\alpha^{a,b}_{a',b'}$};
\end{tikzpicture}
\end{center}
In particular, $\Mod\cC$ is a locally noetherian category since $\cC_c$ is a free abelian group of finite total rank for each $c \in \cC$.
\end{remark}

\begin{remark} \label{rem:filt-KK-tria}
Consider a canonical non-zero map $\alpha^{a,b}_{a,b'}: A_{a,b}\to A_{a,b'}$ as in Remark~\ref{rem:canonical_maps} and such that $a \le b < b' \le a+n-1$ (it is horizontal and between objects of the basic domain $\{R_{[i,j]}\}_{i,j}$). 
This map is part of a distinguished triangle of the form
\begin{equation} \label{eq:filt-KK-tria}
\xymatrix{ A_{a,b} \ar[r]^-{\alpha^{a,b}_{a,b'}} &
 A_{a,b'} \ar[r]^-{\alpha^{a,b'}_{b+1,b'}} &
  A_{b+1,b'} \ar[r]^-{\alpha^{b+1,b'}_{b+1,a+n}} & A_{b+1,a+n}=\Sigma A_{a,b} }
\end{equation}
whose every map is canonical.
(This can be seen by considering, as in \cite{meyer-nest:filtrated2}*{\S3}, the canonical extensions involving the subquotient C*-algebras of a general object of~$\KK(X)$.)
In particular this triangle is contained in~$\cC$.
\end{remark}

Let us state the main result about $\cC \subset \KK(X)$, which again allows us to invoke Theorem~\ref{thm:gore_uct} with $\aleph = \aleph_1$ to obtain the corresponding version of UCT, originally coming from~\cite{meyer-nest:filtrated2}.

\begin{thm} \label{thm:filt-KK-Gore-closed}
Let $\cC$ be the full subcategory representing the filtrated K-theory inside Kasparov's category $\KK(X)$ for $X = \{1,\dots,n\}$, as described in \S\ref{subsec:filtratedKK}. Then $\cC$ is $1$-Gorenstein and Gorenstein closed in $\KK(X)$.
\end{thm}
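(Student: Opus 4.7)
My plan is to prove the two assertions separately: $1$-Gorensteinness via Theorem~\ref{thm:Gorenstein}, Gorenstein closedness via Proposition~\ref{prop:equiv_Gore_closed}.

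For $1$-Gorensteinness, I verify the five hypotheses of Theorem~\ref{thm:Gorenstein} with base ring~$\Z$. Hypothesis~(1) holds because $\End_\cC(A_{a,b})=\Z$ for every object, so $\partial\cC$ is a coproduct of copies of the $1$-Gorenstein ring~$\Z$. Hypothesis~(2) is Remark~\ref{rem:canonical_maps}. For~(3), I will take $S:=\Sigma\circ\tau$, where $\tau$ is the ``diagonal shift'' automorphism of the quiver~\eqref{eq:quiver:KK(X)} sending $A_{a,b}$ to $A_{a-1,b-1}$ (an evident self-equivalence, as it permutes the generating arrows bijectively and preserves both defining relations). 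Explicitly $S(A_{a,b})=A_{b,a+n-1}$, which respects the periodicity $A_{a,b}=A_{a+n+1,b+n+1}$; a direct inspection shows that it sends the incoming rectangle $\{A_{a',b'}: b-n+1\le a'\le a,\;a\le b'\le b\}$ bijectively onto the outgoing rectangle $\{A_{a',b'}: a\le a'\le b,\;b\le b'\le a+n-1\}$. For~(4), I invoke Remark~\ref{rem:trace-boundary} and choose each $\mu_d\colon\Hom_\cC(d,Sd)\to\End_\cC(d)=\Z$ to be the canonical isomorphism $\Z\stackrel{\sim}{\to}\Z$; the bimodule condition is automatic since $\End_\cC(d)$ is commutative, and the resulting $\psi_{c,d}(f)(g)=\mu_d(S(f)g)$ is an isomorphism because the Hom groups involved are free of rank at most one with supports matched by~(3). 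Finally, (5) is clear: each ${}_d\cC_c$ is a free abelian group of rank $\le 1$, hence a projective (and thus Gorenstein projective) right $\End_\cC(c)=\Z$-module.

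For the Gorenstein closed property, I plan to verify condition~(3) of Proposition~\ref{prop:equiv_Gore_closed}. By Corollary~\ref{cor:boundary_GProj} and the fact that $\Z$ is hereditary, a finitely presented $\cC$-module $M$ is Gorenstein projective if and only if each $M(A_{a,b})$ is a finitely generated free abelian group. The canonical triangles~\eqref{eq:filt-KK-tria}, together with all their $\Sigma$-shifts and rotations, live inside $\cC$ by Remark~\ref{rem:filt-KK-tria}, and by Lemma~\ref{lemma:Im_Gore} supply a family of finitely presented Gorenstein projective modules as syzygies. It remains to prove that these syzygies, together with the representable modules, generate $\Gproj\cC$ by extensions and retracts. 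I propose to argue by induction on the total $\Z$-rank $\sum_c\operatorname{rank}_\Z M(c)$: if $M$ is representable the claim is trivial; otherwise exhibit a nonsplit short exact sequence $0\to M'\to M\to M''\to 0$ in $\Gproj\cC$ with $M',M''$ nonzero of strictly smaller rank, in which $M'$ is realized as the image of a triangle~\eqref{eq:filt-KK-tria} (or of one of its rotations/shifts).

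The main obstacle is the inductive step above: producing, for every non-projective indecomposable finitely presented Gorenstein projective, the required nonsplit extension in terms of a canonical triangle. Concretely this reduces to classifying the indecomposable finitely presented Gorenstein projective $\cC$-modules as explicit ``interval modules'' indexed by sub-intervals of the fat diagonal, and matching each of them with a syzygy of~\eqref{eq:filt-KK-tria} or of one of its rotations. Once this generation statement is established, both assertions of the theorem follow from the general machinery of \S\ref{sec:GTP}--\S\ref{sec:gore_closed} with no further work.
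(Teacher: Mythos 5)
For the $1$-Gorensteinness, your route via Theorem~\ref{thm:Gorenstein} is correct and essentially equivalent to the paper's route via Theorem~\ref{thm:Serre_Gor} (Lemma~\ref{lemma:filt-KK-Serre}): since every endomorphism ring $\End_\cC(A_{a,b})$ is $\Z$, the boundary $\partial\cC$ is a disjoint union of copies of $\Z$ and conditions (1)--(5) of Theorem~\ref{thm:Gorenstein} reduce to exactly the Serre-functor-over-$\Z$ hypotheses of Theorem~\ref{thm:Serre_Gor}. Your formula $S(A_{a,b})=A_{b,a+n-1}$ agrees with the paper's and the neighbourhood-rectangle computation is correct. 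This half is fine.

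The Gorenstein closed part has a genuine gap, which you yourself flag as the ``main obstacle''. The trouble is with the inductive step: you propose to find, for every nonprojective $M\in\Gproj\cC$, a nonsplit exact sequence $0\to M'\to M\to M''\to 0$ with $M'$ realized as a syzygy of a triangle from Remark~\ref{rem:filt-KK-tria}. But the syzygies of those triangles are \emph{rectangle} modules (the $G^{a,b}_{a',b'}$ of Notation~\ref{notation:filt-KK-Gproj}, supported on whole rectangles in the fat diagonal), whereas the natural filtration of an arbitrary $M$ -- see Lemma~\ref{lemma:filt-KK-gens} -- has \emph{pointwise} subquotients $G^{a,b}_{a,b}$ supported at a single vertex; for $a<b<a+n-1$ such a pointwise module is \emph{not} itself a syzygy of any canonical triangle. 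The paper therefore argues in two stages: first it filters each Gorenstein projective by pointwise modules (Lemma~\ref{lemma:filt-KK-gens}, using the vanishing of a certain length-$n$ composite to force a kernel), and then it shows separately that the pointwise modules are generated from the canonical syzygies via the explicit nonsplit extension $0 \to G^{a,a}_{a,b} \to G^{a,b}_{a,b}\oplus G^{a,a}_{a,a+n-1}\to G^{a,b}_{a,a+n-1}\to 0$, invoking Proposition~\ref{prop:equiv_Gore_closed}(1). Your proposed alternative -- classifying all indecomposable finitely presented Gorenstein projectives as interval modules and matching them with triangle syzygies -- is not what the paper does and is likely much harder (the representation type of this algebra grows with $n$); and it is also not needed, because the generation closure allows extensions and retracts, not just submodule inclusions. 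Minor point: you announce condition~(3) of Proposition~\ref{prop:equiv_Gore_closed} but then set out to verify condition~(2); either works, but the paper actually uses condition~(1).
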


The first observation is that Remark~\ref{rem:canonical_maps} implies the existence of a Serre functor for $\cC$ (where the base ring is $\base = \Z$).

\begin{lemma} \label{lemma:filt-KK-Serre}
Let $\cC \subset \KK(X)$ for $X = \{1,\dots,n\}$ be as described above. Then $\cC$ admits a Serre functor (in the sense of Def.~\ref{defi:Hom-finite}).
\end{lemma}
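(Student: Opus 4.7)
The plan is to construct a Serre functor $S\colon \cC\to \cC$ explicitly, exploiting the translation symmetry of the quiver presentation~\eqref{eq:quiver:KK(X)}. First I would introduce the \emph{diagonal shift} functor $\tau\colon \cC\to \cC$ defined on objects by $\tau(A_{a,b}) := A_{a+1,b+1}$ and on generating arrows by $\tau(\alpha^{a,b}_{a',b'}) := \alpha^{a+1,b+1}_{a'+1,b'+1}$. Since both defining relations of $\cC$ (commutativity of squares and vanishing outside the fat diagonal) are invariant under $(a,b)\mapsto (a+1,b+1)$, this is a well-defined functor; by Bott periodicity $\Sigma^2=\id$ it satisfies $\tau^{n+1}=\id$, so $\tau$ is an auto-equivalence with explicit inverse $\tau^n$. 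I would then set
\[
S:=\Sigma\circ \tau^{-1},
\]
so that on objects $S(A_{a,b})=A_{b,\,a+n-1}$. This is an auto-equivalence of $\cC$ by construction.

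Next I would verify that $S$ has the Serre property by a direct comparison of the numerical conditions for Hom-nonvanishing. By Remark~\ref{rem:canonical_maps}, each $\Hom_\cC(A_{a,b},A_{a',b'})$ is free abelian of rank at most one, and is non-zero precisely when $a\le a'\le b\le b'\le a+n-1$. Substituting $SA_{a,b}=A_{b,a+n-1}$, the condition for $\Hom_\cC(A_{a',b'},SA_{a,b})$ to be non-zero becomes $a'\le b\le b'\le a+n-1\le a'+n-1$, which rearranges to exactly the same four inequalities. Hence for every pair $c,d\in\cC$, the groups $\Hom_\cC(c,d)$ and $\Hom_\cC(d,Sc)$ are simultaneously either zero or free abelian of rank one, so the required natural iso $\sigma_{c,d}\colon \Hom_\cC(c,d)\xrightarrow{\sim}\Hom_\cC(d,Sc)^*$ can at least be constructed pointwise.

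Finally I would promote these pointwise isomorphisms to a natural one using the trace-like formulation of Remark~\ref{rem:trace-Serre}. For $c=A_{a,b}$ the group $\Hom_\cC(c,Sc)=\Hom_\cC(A_{a,b},A_{b,a+n-1})$ is generated by the canonical morphism $\alpha^{a,b}_{b,a+n-1}$, and I would define $\lambda_c$ to send it to $1\in \Z$. It then remains to check the trace identity $\lambda_c(gf)=\lambda_d(S(f)g)$ for all composable pairs of generators $f\colon c\to d$ and $g\colon d\to Sc$. I expect this to be the main technical obstacle: both sides are the coefficient of the canonical generator in a composite of canonical generators, and the equality reduces to a finite case check, but it does require a coherent choice of signs for the $\delta$-generators involving $\Sigma$. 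These signs can be pinned down once and for all from the commutativity of the squares in~\eqref{eq:quiver:KK(X)} and the compatibility with the distinguished triangle~\eqref{eq:filt-KK-tria}; with such a choice, the trace identity follows by direct inspection.
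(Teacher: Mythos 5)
Your proposal is correct and follows the same route as the paper: both define $S$ by $S(A_{a,b})=A_{b,\,a+n-1}$ on objects, send canonical generators to canonical generators, and verify the Serre condition via the trace-like reformulation of Remark~\ref{rem:trace-Serre} by letting $\lambda_c$ pick out the coefficient of the generator $\alpha^c_{Sc}$. Your factorization $S=\Sigma\circ\tau^{-1}$ through the diagonal shift $\tau(A_{a,b})=A_{a+1,b+1}$ is a nice touch, since well-definedness of $\tau$ and $\Sigma$ as automorphisms is immediate from the presentation in Construction~\ref{constr:filtr-KK}; the paper instead defines $S$ directly on objects and generators and leaves the (equally routine) well-definedness check to the reader. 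One small correction: the concern you raise at the end about ``a coherent choice of signs for the $\delta$-generators'' is unfounded in this setup. The presentation in Construction~\ref{constr:filtr-KK} already imposes that \emph{every} square in the quiver~\eqref{eq:quiver:KK(X)} commutes on the nose (no signs), so any composite of displayed generators from $A_{a,b}$ to $A_{a',b'}$ equals the unique canonical map $\alpha^{a,b}_{a',b'}$, as recorded in Remark~\ref{rem:canonical_maps}. Consequently, with $f=m\,\alpha^{a,b}_{a',b'}$ and $g=k\,\alpha^{a',b'}_{b,a+n-1}$, both $gf$ and $S(f)g$ equal $mk$ times the respective canonical generator, and $\lambda_c(gf)=mk=\lambda_d(S(f)g)$ holds with no case analysis; this is exactly what the paper summarizes as ``straightforward.''
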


\begin{proof}
Let us define the automorphism $S\colon \cC \to \cC$ so that
$S(A_{a,b}) = A_{b,a+n-1}$ on objects, and
$S(\alpha^{a,b}_{a',b'}) = \alpha^{b,a+n-1}_{b',a'+n-1}$, i.e.\ $S$ sends canonical maps to canonical maps in the sense of Remark~\ref{rem:canonical_maps}.
In plain english, $S$ sends an object $c \in \cC$ to the unique ``farthest'' object $d \in \cC$ such that $\Hom_\cC(c,d) \ne 0$. We leave it to the reader to check that $S$ is well-defined.

In order to specify the isomorphism $\sigma_{c,d}$ from Def.~\ref{defi:Hom-finite}, we use Remark~\ref{rem:trace-Serre}. Since $\Hom_\cC(c,Sc)$ is freely generated by $\alpha^c_{Sc}\colon c \to Sc$ over $\Z$, we can simply define $\lambda_c\colon \Hom_\cC(c,Sc) \to \Z$ by putting $\lambda_c(n\cdot \alpha^c_{Sc}) := n$. It is straightforward to check that all the required properties are satisfied.
\end{proof}

The lemma we have just proved together with Theorem~\ref{thm:Serre_Gor} already implies that $\cC$ is $1$-Go\-ren\-stein. In order to prove that it is also Gorenstein closed, we shall now discuss Gorenstein projective $\cC$-modules in more detail.

\begin{lemma} \label{lemma:filt-KK-Gproj}
A right $\cC$-module $M$ belongs to $\Gproj\cC$ \iff $M_c$ is a free abelian group of finite rank for each $c \in \cC$.
\end{lemma}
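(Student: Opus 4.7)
The plan is to invoke Corollary~\ref{cor:Serre_GProj} together with the Serre functor established in Lemma~\ref{lemma:filt-KK-Serre}, and then reduce the question of Gorenstein projectivity objectwise to the base ring $\Z$. More precisely, since $\cC$ is locally bounded (only finitely many objects up to isomorphism by Bott periodicity), Hom-finite (the Hom groups are free abelian of rank at most one by Remark~\ref{rem:canonical_maps}), equipped with the Serre functor of Lemma~\ref{lemma:filt-KK-Serre} relative to the $1$-Gorenstein ring $\Z$, and since each unit $\Z \to \End_\cC(c) = \Z$ is the identity (hence split), Theorem~\ref{thm:Serre_Gor} applies and gives us Corollary~\ref{cor:Serre_GProj}: a $\cC$-module $M$ lies in $\GProj \cC$ if and only if every $M_c$ lies in $\GProj \Z$.

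Next I would record the elementary fact that over $\Z$, a \emph{finitely generated} Gorenstein projective module is nothing but a finitely generated free abelian group. Indeed, every finitely generated $\Z$-module has projective dimension at most~$1$, so if it is Gorenstein projective it must be genuinely projective (by the cotorsion pair $(\GProj\Z,\FD\Z)$ of Proposition~\ref{prop:pairs}, intersecting $\GProj\Z \cap \FD\Z = \Proj\Z$); and finitely generated projective $\Z$-modules are free of finite rank.

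It now remains to bridge the finiteness condition. Since $\cC$ has only finitely many objects up to isomorphism and each representable ${}_c\cC$ is degreewise a finitely generated free abelian group (Remark~\ref{rem:canonical_maps}), a $\cC$-module $M$ is finitely generated if and only if each $M_c$ is finitely generated over $\Z$; and by the local noetherianness of $\Mod\cC$ noted in Remark~\ref{rem:canonical_maps}, finitely generated is the same as finitely presented. Combining everything: if $M \in \Gproj\cC = \modu\cC \cap \GProj\cC$, then each $M_c$ is a finitely generated Gorenstein projective $\Z$-module, hence finitely generated free; conversely, if each $M_c$ is finitely generated free abelian, then $M$ is finitely presented over $\cC$ and every $M_c$ lies in $\GProj \Z$, so by Corollary~\ref{cor:Serre_GProj} we get $M \in \GProj\cC$, whence $M \in \Gproj\cC$.

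There is no real obstacle here: the serious work was already done in proving the Serre functor exists (Lemma~\ref{lemma:filt-KK-Serre}) and in establishing Corollary~\ref{cor:Serre_GProj}. The only mildly delicate point is keeping track of the two separate finiteness conditions (finitely presented as a $\cC$-module versus finitely generated over $\Z$ at each object), which is resolved by the combination of Bott periodicity (finitely many objects) and local noetherianness.
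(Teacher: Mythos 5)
Your proof is correct and follows the same route as the paper's: both invoke Corollary~\ref{cor:Serre_GProj} (via the Serre functor from Lemma~\ref{lemma:filt-KK-Serre} and Theorem~\ref{thm:Serre_Gor}) to reduce Gorenstein projectivity to the objectwise statement $M_c \in \GProj \Z = \Proj \Z$, and then use Remark~\ref{rem:canonical_maps} (finitely many objects, degreewise finite free representables, local noetherianness) to identify finite presentation of $M$ over $\cC$ with finite generation of each $M_c$ over $\Z$. You spell out a couple of steps the paper leaves implicit — that $\GProj\Z = \Proj\Z$ follows from $\Z$ being hereditary plus the cotorsion pair, and the bridging of the two finiteness conditions — but the substance is identical.
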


\begin{proof}
Clearly $\Mod \Z$ is $1$-Gorenstein and $\GProj \Z = \Proj \Z$. Thus a $\cC$-module is Gorenstein projective \iff the underlying abelian group of $M$ is free, by Corollary~\ref{cor:Serre_GProj}. On the other hand, it is easy to see using Remark~\ref{rem:canonical_maps} that $M$ is finitely presented \iff the underlying group of $M$ is finitely presented (equivalently, generated).
\end{proof}

To prove that $\cC$ is Gorenstein closed, we aim to use the criterion of Proposition~\ref{prop:equiv_Gore_closed}(1). To describe a suitable set $\cS \subseteq \Gproj\cC$, we introduce:

\begin{notation} \label{notation:filt-KK-Gproj}
Let $(a,b), (a',b') \in \Z^2$ inside the ``fat diagonal'' be such that $b \le b' \le a+n-1$ and $a \le a' \le b$ (as in Remark~\ref{rem:canonical_maps}). Then we define a $\cC$-module $G^{a,b}_{a',b'}$ so that its evaluation at $(a'',b'') \in \Z^2$ is $\Z$ if $a \le a'' \le a'$ and $b \le b'' < b'$, and it evaluates to zero otherwise. The horizontal and vertical arrows of the quiver~\eqref{eq:quiver:KK(X)} defining $\cC$ act by the identity maps whenever possible and by zero maps otherwise.

Thus, $G^{a,b}_{a',b'}$ can be viewed as a ``characteristic module'' with value $\Z$ of the rectangle

\begin{center}
\begin{tikzpicture}
  \draw[gray,dashed] (-2.7,.7) -- (-.1,-1.9);
  \draw[gray,dashed] (3.3, .7) -- (5.9,-1.9);

  \filldraw[fill=black!5!white, draw=black] (0,0) rectangle (2.5,-1.2);
  
  \node at (0,0)  {$\bullet$};
  \node[anchor=south east] at (0,0) {$A_{a,b}$};
  \node at (2.5,-1.2) {$\bullet$};
  \node[anchor=north west] at (2.5,-1.2) {$A_{a',b'}$};
  \node at (1.25,-.6) {Evaluate to $\Z$};
  
  \node at (-2,0)  {$\bullet$};
  \node[anchor=north east] at (-2,0) {$A_{a,a}$};
  \node at (4,0)  {$\bullet$};
  \node[anchor=south west] at (4,0) {$A_{a,a+n-1}$};
  \node at (-0.8,-1.2)  {$\bullet$};
  \node[anchor=north east] at (-0.8,-1.2) {$A_{a',a'}$};
  \node at (5.2,-1.2)  {$\bullet$};
  \node[anchor=south west] at (5.2,-1.2) {$A_{a',a'+n-1}$};
\end{tikzpicture}
\end{center}

\noindent
in the ``fat diagonal'' of $\Z^2$, and $G^{a,b}_{a',b'} \in \Gproj\cC$ by Lemma~\ref{lemma:filt-KK-Gproj}. 
\end{notation}

Now we can explicitly describe the images of the triangles from Remark~\ref{rem:filt-KK-tria}. These must be Gorenstein projective due to Lemma~\ref{lemma:Im_Gore}.

\begin{lemma} \label{lemma:Im-filt-KK}
Let $a \le b < b' \le a+n-1$ and consider the triangle~\eqref{eq:filt-KK-tria}. Then
\[
\Img \alpha^{a,b}_{a,b'} \cong G^{b'-n+1,a}_{a,b}
\quad \textrm{and} \quad
\Img \alpha^{a,b'}_{b+1,b'} \cong G^{b'-n+1,b+1}_{a,b'}.
\]
\end{lemma}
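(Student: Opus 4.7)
The plan is to compute each image pointwise via Yoneda and then match both its support and its $\cC$-action with the prescribed module $G^\bullet_\bullet$.

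For the first isomorphism, I would fix a test object $A_{a'',b''}\in\cC$ and identify the evaluation $\bigl(\Img h_\cC(\alpha^{a,b}_{a,b'})\bigr)(A_{a'',b''})$ with the subgroup of $\Hom_\cC(A_{a'',b''},A_{a,b'})$ generated by composites $\alpha^{a,b}_{a,b'}\circ g$ as $g$ ranges over $\Hom_\cC(A_{a'',b''},A_{a,b})$. By Remark~\ref{rem:canonical_maps}, each such Hom group is either zero or free of rank one on the appropriate canonical arrow, and since the only relations in $\cC$ are the commutativity relations in the quiver~\eqref{eq:quiver:KK(X)}, any defined composite of two canonical arrows is, up to sign, the unique canonical arrow between the outer endpoints. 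Hence the evaluation has rank one precisely when both $\alpha^{a'',b''}_{a,b}$ and $\alpha^{a'',b''}_{a,b'}$ are defined. Combining the two quadruples of inequalities from Remark~\ref{rem:canonical_maps} and eliminating the redundant ones using $b<b'$, this reduces to $b'-n+1\le a''\le a$ and $a\le b''\le b$, which is exactly the support of $G^{b'-n+1,a}_{a,b}$ as in Notation~\ref{notation:filt-KK-Gproj}.

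Next I would check that the induced $\cC$-action on the image coincides with the identity action on $G^{b'-n+1,a}_{a,b}$: the structure maps of the image are implemented by pre-composition with canonical generators $\alpha^{a''',b'''}_{a'',b''}$, and the same uniqueness principle shows that these maps send a generator to a generator up to sign whenever both endpoints lie in the support, so each transition map is an isomorphism $\Z\to\Z$. Any residual sign ambiguity is absorbed by rescaling the chosen basis in each rank-one evaluation, yielding the claimed isomorphism of $\cC$-modules. The second isomorphism follows from the identical argument applied to $\alpha^{a,b'}_{b+1,b'}$: the image is nonzero at $A_{a'',b''}$ iff both $\alpha^{a'',b''}_{a,b'}$ and $\alpha^{a'',b''}_{b+1,b'}$ exist, which simplifies to $b'-n+1\le a''\le a$ and $b+1\le b''\le b'$, exactly the support of $G^{b'-n+1,b+1}_{a,b'}$.

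The main delicate point will be controlling signs in the composition law for the Meyer-Nest generators $\mu$ and $\delta$ recalled from~\cite{meyer-nest:filtrated2}*{\S3.2}, so that the transition maps of the image are recognized as genuine isomorphisms rather than merely compatible up to a sign. Since every Hom group involved is at most one-dimensional and $G^{\bullet}_{\bullet}$ is defined as a free $\Z$-module of the appropriate shape, this is a bookkeeping exercise and no input beyond Remark~\ref{rem:canonical_maps} and the uniqueness of canonical composites is needed.
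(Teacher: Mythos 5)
Your proof is correct and follows essentially the same route as the paper's: both rely on Remark~\ref{rem:canonical_maps} and the fact that every Hom group is free of rank at most one, so computing the image of $h_\cC(\alpha^{a,b}_{a,b'})$ reduces to identifying the set of test objects $A_{a'',b''}$ for which both $\alpha^{a'',b''}_{a,b}$ and $\alpha^{a'',b''}_{a,b'}$ are nonzero. The paper packages this more compactly by first recording the identifications ${_{A_{a,b}}\cC} \cong G^{b-n+1,a}_{a,b}$ and ${_{A_{a,b'}}\cC} \cong G^{b'-n+1,a}_{a,b'}$ as $\cC$-modules, after which the image is literally the characteristic module of the intersection of the two rectangles; this also disposes of the sign-bookkeeping worry you raise, since the isomorphism is fixed once and for all at the level of modules rather than checked evaluation-by-evaluation.
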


\begin{proof}
By the description of $\cC$ and Remark~\ref{rem:canonical_maps}, we have ${_{A_{a,b}}\cC} \cong G^{b-n+1,a}_{a,b}$ and ${_{A_{a,b'}}\cC} \cong G^{b'-n+1,a}_{a,b'}$. Therefore
\[ \Img \alpha^{a,b}_{a,b'} \cong \Img(G^{b-n+1,a}_{a,b} \to G^{b'-n+1,a}_{a,b'}) \cong G^{b'-n+1,a}_{a,b}, \]
i.e.\ the ``characteristic module'' of the intersection of the two rectangles for ${_{A_{a,b}}\cC}$ and ${_{A_{a,b'}}\cC}$.
The other case is similar.
\end{proof}

Now we exhibit a particular set of modules which generates $\Gproj\cC$ by extensions. It will follow a posteriori that these modules have resolutions given by triangles in $\add\cC \subset \KK(X)$, but \emph{not} by those displayed in Remark~\ref{rem:filt-KK-tria}.

\begin{lemma} \label{lemma:filt-KK-gens}
Each $M \in \Gproj\cC$ possesses a finite filtration
\[ 0 = M_0 \subseteq M_1 \subseteq M_2 \subseteq \cdots \subseteq M_\ell = M \]
such that for each $i$, $M_{i+1}/M_i \cong G^{a_i,b_i}_{a_i,b_i}$ for some $(a_i,b_i) \in \Z^2$ in the ``fat diagonal''.
\end{lemma}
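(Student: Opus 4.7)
The plan is to argue by induction on the total rank $r(M) := \sum_{c} \mathrm{rank}_{\mathbb{Z}}(M_c)$, the sum running over a skeleton of~$\cC$. This sum is finite because $\cC$ has only finitely many isomorphism classes of objects and each $M_c$ is a finitely generated free abelian group by Lemma~\ref{lemma:filt-KK-Gproj}. The base case $r(M)=0$ is trivial, so suppose $r(M)>0$. It will suffice to locate a submodule $M_1 \subseteq M$ isomorphic to $G^{a_0,b_0}_{a_0,b_0}$ for some $(a_0,b_0)$ in the fat diagonal, and such that $M/M_1 \in \Gproj\cC$: for then $r(M/M_1) = r(M)-1$, the induction hypothesis applied to $M/M_1$ supplies a filtration, and pulling it back through $M \twoheadrightarrow M/M_1$ produces the desired filtration of~$M$.

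To locate $M_1$, equip the finite set $\supp M := \{c \in \cC \mid M_c \ne 0\}$ (taken in a skeleton) with the relation $c \le c' :\Leftrightarrow \cC(c,c') \ne 0$. Reflexivity and transitivity come from identities and composition of canonical maps, and antisymmetry from the explicit inequalities on indices in Remark~\ref{rem:canonical_maps}. Thus $(\supp M, \le)$ is a finite poset and admits a minimal element $c_0 = A_{a_0,b_0}$.

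Because $M_{c_0}$ is a nonzero finitely generated free abelian group, it contains a \emph{primitive} element $m$, i.e.\ a generator of a rank-one $\mathbb{Z}$-direct summand $\mathbb{Z}\cdot m \subseteq M_{c_0}$. By the Yoneda lemma, $m$ corresponds to a morphism $\varphi\colon {}_{c_0}\cC \to M$, whose evaluation at $c$ is $\alpha \mapsto M(\alpha)(m)$. Set $M_1 := \Img\varphi$. At $c_0$ the evaluation $(M_1)_{c_0}$ is $\mathbb{Z}\cdot m \cong \mathbb{Z}$; at any $c \ne c_0$ with $\cC(c,c_0) \ne 0$ one has $c < c_0$, and minimality of~$c_0$ forces $M_c = 0$ and hence $(M_1)_c = 0$; while for $c$ with $\cC(c,c_0)=0$ the evaluation vanishes trivially. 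Comparing with Notation~\ref{notation:filt-KK-Gproj} yields $M_1 \cong G^{a_0,b_0}_{a_0,b_0}$. Finally, since $\mathbb{Z}m$ is a direct summand of the free abelian group~$M_{c_0}$, the quotient $M_{c_0}/\mathbb{Z}m$ remains free abelian, while $(M/M_1)_c = M_c$ is unchanged for $c \ne c_0$. So $M/M_1 \in \Gproj\cC$ by Lemma~\ref{lemma:filt-KK-Gproj}.

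The most delicate point is the interplay between the two choices: minimality of $c_0$ ensures that the cyclic subfunctor generated by~$m$ does not leak to smaller support positions, so that $M_1$ is genuinely the point-supported module $G^{a_0,b_0}_{a_0,b_0}$ rather than some larger rectangular~$G^{a,b}_{a',b'}$; while primitivity of~$m$ ensures that passing to the quotient preserves Gorenstein projectivity. Once both conditions are secured, the induction closes and the filtration is obtained by pulling back.
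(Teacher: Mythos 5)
Your overall strategy (induct on the total $\mathbb Z$-rank, peel off a point-supported submodule $\cong G^{a_0,b_0}_{a_0,b_0}$ with Gorenstein projective quotient, pull back a filtration of the quotient) coincides with the paper's. The difference lies entirely in \emph{how} you find the submodule, and that step has a genuine gap.

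You claim that the relation $c \le c' :\Leftrightarrow \cC(c,c') \ne 0$ on a skeleton of $\cC$ is a partial order, deducing transitivity from ``composition of canonical maps''. This is false: the composition of two nonzero canonical maps in $\cC$ may be zero, because the intermediate path can leave the fat diagonal. Already for $n=2$ one has $\cC(A_{0,0},A_{0,1})\ne 0$ and $\cC(A_{0,1},A_{1,1})\ne 0$ while $\cC(A_{0,0},A_{1,1})=0$ (the composite factors through $A_{1,0}=0$). Worse, since the objects are only defined up to the period $(a,b)\mapsto(a+n+1,b+n+1)$, the relation admits genuine cycles once $n$ is large enough: for $n=5$ one checks directly from Remark~\ref{rem:canonical_maps} that $\cC(A_{0,2},A_{2,4})\ne 0$, $\cC(A_{2,4},A_{4,6})\ne 0$, and $\cC(A_{4,6},A_{6,8})\ne 0$ with $A_{6,8}=A_{0,2}$, giving a $3$-cycle on three distinct skeleton objects. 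Consequently a minimal element of $\supp M$ need not exist; for instance $M = {}_{A_{0,2}}\cC \oplus {}_{A_{2,4}}\cC \oplus {}_{A_{4,6}}\cC$ is a free (hence Gorenstein projective) $\cC$-module whose support contains that cycle, and one checks that no $c_0 \in \supp M$ has the property you need (that no other element of $\supp M$ admits a nonzero map into it). So the inductive step fails.

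The paper sidesteps this precisely because it does \emph{not} ask for a source of $\supp M$. Instead it fixes any vertex where $M$ is nonzero and composes the $n$ ``backward'' maps $f_{a',b'}$ along a diagonal; after $n$ steps the path necessarily leaves the fat diagonal, so the $n$-fold composite vanishes, forcing some $f_{a',b'}$ to be non-injective. The kernel of that $f_{a',b'}$ (a direct summand of a free $\mathbb Z$-module, since $\mathbb Z$ is hereditary) generates the required $(G^{a',b'}_{a',b'})^{\ell}$ with Gorenstein projective quotient. You would need an analogous global vanishing argument to replace your minimality claim; the poset language as stated does not survive the periodicity.
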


\begin{proof}
We will prove the claim by induction on the rank of the underlying abelian group of $M$, as this makes sense thanks to Lemma~\ref{lemma:filt-KK-Gproj}. The case $M=0$ being trivial, let us fix $0 \ne M \in \Gproj\cC$.

Let $a,b \in \Z$ with $a \le b \le a+n-1$ (i.e.\ $(a,b)$ is a point in the ``fat diagonal'') and consider the map
\[ f_{a,b} := \big(M(\alpha^{a,b-1}_{a,b}), M(\alpha^{a-1,b}_{a,b})\big)^t \colon M_{A_{a,b}} \to M_{A_{a,b-1}} \oplus M_{A_{a-1,b}}. \]
(Here we view $M$ as a functor $M\colon \cC^\op \to \Mod\Z$ and appeal to \eqref{eq:quiver:KK(X)} for the notation for objects and morphism of $\cC$.)
Since $\Z$ is a hereditary ring and both ends of $f_{a,b}$ are free groups, it follows that $\ker f_{a,b}$ is a direct summand of $M_{A_{a,b}}$. In particular, $\ker f_{a,b} \cong \Z^\ell$ for some $\ell\ge 0$ and, since both $\alpha^{a,b-1}_{a,b}$ and $\alpha^{a-1,b}_{a,b}$ act by zero on $\ker f_{a,b}$ by definition, we obtain a short exact sequence of $\cC$-modules with all three terms in $\Gproj\cC$:
\[ 0 \to (G^{a,b}_{a,b})^\ell \to M \to M/(G^{a,b}_{a,b})^\ell \to 0. \]

Observe that to finish the proof, it suffices to show that there exist $a,b \in \Z$ such that $\ell \ge 1$. Indeed, in this case the underlying group of $M/(G^{a,b}_{a,b})^\ell$ has a strictly smaller rank than $M$ and we are done by the inductive hypothesis. Note that we can extend the definition of $f_{a,b}$ to all $a,b \in \Z^2$ and the problem does not change, since $M_{A_{a,b}} = 0$ for all $a,b$ outside the ``fat diagonal'' by virtue of Construction~\ref{constr:filtr-KK}.

To finish the inductive step, pick any $a,b \in \Z^2$ such that $M_{A_{a,b}} \ne 0$. Let us consider the composition of $n$ maps
\begin{multline*}
M_{A_{a,b}} \to
M_{A_{a,b-1}} \oplus M_{A_{a-1,b}} \to
\\
\to (M_{A_{a,b-2}} \oplus M_{A_{a-1,b-1}}) \oplus (M_{A_{a-1,b-1}} \oplus M_{A_{a-2,b}}) \to 
\cdots \\
\cdots \to
\bigoplus_{I \subseteq \{1,\dots,n\}} M_{A_{a-n+\lvert I\rvert,b-\lvert I\rvert}},
\end{multline*}
where each component of each of the maps is given by $f_{a',b'}$ for the corresponding $a',b' \in \Z$. Observe that the composition vanishes. To see that, each of the $2^n$ components $M_{A_{a,b}} \to M_{A_{a-n+i,b-i}}$ of the composition is a path of length $n$ in the quiver~\eqref{eq:quiver:KK(X)}. However, each such path vanishes in $\cC$ because the commutativity relations make it equal to a path passing through a vertex outside the ``fat diagonal'' of $\Z^2$. As an upshot, there must be $f_{a',b'}$ involved which is not injective, and hence $\ker f_{a',b'} \cong \Z^\ell$ with $\ell\ge 1$ for such a pair $a',b'$.
\end{proof}

Finally, we finish the proof of the theorem.

\begin{proof}[Proof of Theorem~\ref{thm:filt-KK-Gore-closed}]
The category $\cC \subset \KK(X)$ is $1$-Gorenstein by Theorem~\ref{thm:Serre_Gor} and Lemma~\ref{lemma:filt-KK-Serre}.

To prove that $\cC$ is Gorenstein closed in $\KK(X)$, it suffices to show that the images of maps $\alpha^{a,b}_{a,b'}$ and $\alpha^{a,b'}_{b+1,b'}$ in the statement of Lemma~\ref{lemma:Im-filt-KK} jointly generate the modules $G^{a,b}_{a,b}$ from Lemma~\ref{lemma:filt-KK-gens} by taking extensions and retracts. The conclusion will then follow by Proposition~\ref{prop:equiv_Gore_closed}(1).

To this end, note that $\Img \alpha^{a,b}_{a,a+n-1} \cong G^{a,a}_{a,b}$ and $\Img \alpha^{a,a+n-1}_{b,a+n-1} \cong G^{a,b}_{a,a+n-1}$ for each $a \in \Z$ and $a \le b \le a+n-1$. Hence $\Img \alpha^{a,a}_{a,a+n-1} \cong G^{a,a}_{a,a}$, $\Img \alpha^{a,a+n-1}_{a+n-1,a+n-1} \cong G^{a+n-1,a+n-1}_{a+n-1,a+n-1}$, and for each $a < b < a+n-1$ we have a short (and in fact non-split) exact sequence of $\cC$-modules
\[ 0 \to G^{a,a}_{a,b} \to G^{a,b}_{a,b} \oplus G^{a,a}_{a,a+n-1} \to G^{a,b}_{a,a+n-1} \to 0. \]
It follows that Remark~\ref{rem:filt-KK-tria} exhibits a sufficient family of triangles.
\end{proof}

\subsection{Equivariant KK-theory} \label{subsec:equivariantKK}

Let $p \in \N$ be a prime number and denote by $C(p)$ the cyclic group with $p$ elements. Then one can define $C(p)$-equivariant KK-theory of C*-algebras: it is a triangulated category $\KK^{C(p)}$ which satisfies a version of Bott periodicity, $\Sigma^2 \cong \id_{\KK^{C(p)}}$, and admits countable coproducts.

In order to obtain a universal coefficient theorem of the form discussed in the previous sections, K\"ohler~\cite{koehler} constructed in his thesis a very specific full subcategory $\cC \subseteq \KK^{C(p)}$ consisting of 3 suspension orbits. Here it is only important that he gave in~\cite{koehler}*{\S11.9} an explicit presentation of this category as a $\Z$-linear category with an involutive automorphism $\Sigma$ (i.e.\ we take $\base = \Z$). The generating quiver of $\cC$ is given by
\begin{equation} \label{eq:eq-KK-quiver}
\vcenter{
\xymatrix{
& A_0 \ar@<.5ex>[r]^-{{_1\alpha_0}} \ar@<.5ex>[dl]^-{{_2\alpha_0}}  \ar@(l,u)[]^-{{_0t_0}} &
A_1 \ar@<.5ex>[l]^-{{_0\alpha_1}} \ar@<.5ex>[dr]^-{\Sigma{_2\alpha_1}} \ar@(r,u)[]_-{{_1s_1}}
\\
A_2 \ar@<.5ex>[ur]^-{{_0\alpha_2}} \ar@<.5ex>[dr]^-{{_1\alpha_2}} \ar@(l,u)[]^-{{_2s_2}} \ar@(l,d)[]_-{{_2t_2}} &&&
\Sigma A_2 \ar@<.5ex>[dl]^-{\Sigma{_0\alpha_2}} \ar@<.5ex>[ul]^-{\Sigma{_1\alpha_2}} \ar@(r,u)[]_-{\Sigma{_2s_2}} \ar@(r,d)[]^-{\Sigma{_2t_2}}
\\
& \Sigma A_1 \ar@<.5ex>[ul]^-{{_2\alpha_1}} \ar@<.5ex>[r]^-{\Sigma{_0\alpha_1}} \ar@(l,d)[]_-{\Sigma{_1s_1}} &
\Sigma A_0 \ar@<.5ex>[l]^-{\Sigma{_1\alpha_0}} \ar@<.5ex>[ur]^-{\Sigma{_2\alpha_0}} \ar@(r,d)[]^-{\Sigma{_0t_0}},
}
}
\end{equation}
and the list of relations is:
\begin{enumerate}
\item[($\rho1$)] the clockwise or counterclockwise compositions of two arrows labeled by $\alpha$ all vanish, i.e.

\begin{itemize}
 \item ${_2\alpha_1} \circ \Sigma{_1\alpha_0} = 0 = \Sigma{_0\alpha_1} \circ {_1\alpha_2}$,
 \item ${_0\alpha_2} \circ {_2\alpha_1} = 0 = {_1\alpha_2} \circ {_2\alpha_0}$,
 \item ${_1\alpha_0} \circ {_0\alpha_2} = 0 = {_2\alpha_0} \circ {_0\alpha_1}$;
\end{itemize}

\item[($\rho2$)] ${_0\alpha_1} \circ {_1\alpha_0} = N({_0t_0})$, ${_1\alpha_0} \circ {_0\alpha_1} = N({_1s_1})$;
\item[($\rho3$)] ${_0\alpha_2} \circ {_2\alpha_0} = \id_{A_0} - {_0t_0}$, ${_2\alpha_0} \circ {_0\alpha_2} = \id_{A_2} - {_2t_2}$;
\item[($\rho4$)] ${_1\alpha_2} \circ {_2\alpha_1} = \id_{A_1} - {_1s_1}$, ${_2\alpha_1} \circ {_1\alpha_2} = \id_{A_2} - {_2s_2}$;
\item[($\rho5$)] $N({_2t_2}) + N(_2s_2) = p$.
\end{enumerate}
Here $N(x)$ stands for the sum $\sum_{i=0}^{p-1} x^i$ and, unlike in~\cite{koehler}, we compose the generating homomorphisms from right to left as if they were usual maps. In particular, the endomorphism rings of objects are
\begin{align*}
\End_\cC(A_0) &= \Z[{_0t_0}]/(\id_{A_0} - {_0t_0}^p), \\
\End_\cC(A_1) &= \Z[{_1s_1}]/(\id_{A_1} - {_1s_1}^p), \\
\End_\cC(A_2) &= \Z[{_2s_2}, {_2t_2}]/\big(N(_2s_2) + N(_2t_2) - p, (\id_{A_2} - {_2s_2})\circ(\id_{A_2} - {_2t_2})\big).
\end{align*}
We shall use Theorem~\ref{thm:Gorenstein} and standard commutative algebra to prove that

\begin{thm} \label{thm:eq-KK-Gore}
The full subcategory $\cC$ of $\KK^{C(p)}$ which is described just above is $1$-Gorenstein.
\end{thm}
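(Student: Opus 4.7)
My strategy is to verify the hypotheses of Theorem~\ref{thm:Gorenstein} applied to $\cC$ with $S$ chosen to be $\id_\cC$. That this choice satisfies hypothesis~(3) can be read off directly from the quiver~\eqref{eq:eq-KK-quiver}: each generating $\alpha$-arrow into an object $c$ is paired with one leaving $c$ toward the same object (the pairs ${_1\alpha_0}$/${_0\alpha_1}$, ${_2\alpha_0}$/${_0\alpha_2}$, ${_1\alpha_2}$/${_2\alpha_1}$, together with their $\Sigma$-shifts), so $\incoming(c) = \outcoming(c)$ for every $c \in \cC$. Hypothesis~(2) is trivial as $\cC$ has only six objects.

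For hypothesis~(1), namely that $\partial\cC$ is Gorenstein, I would note that $\End_\cC(A_0)$ and $\End_\cC(A_1)$ are both copies of the group ring $\Z C(p)$ and hence 1-Gorenstein by Example~\ref{ex:groupalgebras}. The remaining endomorphism ring $R := \End_\cC(A_2) = \Z[s,t]/(N(s)+N(t)-p,\,(1-s)(1-t))$ requires a separate, purely commutative-algebra argument. A plausible route is to exhibit $R$ as a pullback of the form $\Z C(p) \times_{\Z/p} \Z C(p)$ along the two augmentation maps ${_2s_2}\mapsto 1$, ${_2t_2}\mapsto 1$, and then deduce Iwanaga-Gorenstein-ness with $\idim R = 1$ from the fact that both factors are $1$-Gorenstein and the pullback is along surjections onto a zero-dimensional Gorenstein ring.

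To verify hypotheses~(4) and~(5) I would extract from the relations ($\rho1$)–($\rho5$) explicit finite bases of each Hom bimodule ${}_d\cC_c$, check that each is free (in particular Gorenstein projective) of small rank over ${}_c\cC_c$, and construct the duality $\psi_{c,d}$ via the trace-like reformulation of Remark~\ref{rem:trace-boundary}. The natural candidate for $\mu_d\colon {}_d\cC_d \to {}_d\cC_d$ is: on $A_0$ and $A_1$, the standard Frobenius trace on the group algebra $\Z C(p)$ (picking off the coefficient of~$1$); on $A_2$, the analogous symmetric functional determined by the generators ${_2s_2}$ and ${_2t_2}$. The relations~($\rho2$), which identify ${_0\alpha_1}\circ {_1\alpha_0}$ and ${_1\alpha_0}\circ {_0\alpha_1}$ with the norm elements~$N({_0t_0})$ and~$N({_1s_1})$ (i.e.\ the Frobenius traces), together with the analogous roles of ($\rho3$)–($\rho5$) at~$A_2$, should make the resulting $\psi_{c,d}$ into isomorphisms.

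The principal obstacle is twofold: (a)~proving 1-Gorensteinness of $R = \End_\cC(A_2)$ is pure commutative algebra for a ring that is neither a group algebra nor visibly a classical complete intersection, and (b)~verifying hypothesis~(5), Gorenstein projectivity over~${_c\cC_c}$, for the ``crossing'' bimodules between $A_2$ and the $\Sigma$-shifted objects, where the right $R$-action is most intricate. I expect~(a) to be the single hardest step, whereas~(b) should reduce to bookkeeping once the bases of the Hom bimodules have been laid out.
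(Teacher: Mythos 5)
Your high-level plan — apply Theorem~\ref{thm:Gorenstein} with $S=\id_\cC$, using the pairing of incoming and outgoing arrows in~\eqref{eq:quiver:KK(X)} — is exactly what the paper does, and your handling of hypotheses~(2) and~(3) is correct. But the plans for hypotheses~(1), (4) and~(5) each contain a genuine error.

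For~(1): the pullback factors are wrong. Setting $s = 1$ in $R = \Z[s,t]/\big(N(s)+N(t)-p,\,(1-s)(1-t)\big)$ turns $N(s)+N(t)-p=0$ into $N(t)=0$, so $R/(1-s)\cong\Z[t]/(N(t))=\Z[\zeta_p]$, \emph{not} $\Z C(p)$; a rank count confirms this ($R$ has $\Z$-rank $2p-2$, whereas $\Z C(p)\times_{\Z/p}\Z C(p)$ has rank $2p$). Moreover, even with the correct factors, a pullback of Gorenstein rings along surjections to a zero-dimensional Gorenstein ring is \emph{not} automatically Gorenstein — that deduction requires a conductor-square criterion which you haven't invoked, plus a verification that $(1-s)\cap(1-t)=0$ in $R$. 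The paper sidesteps all of this: $N(s)+N(t)-p,\,(1-s)(1-t)$ is a regular sequence in the regular ring $\Z[s,t]$ (global dimension~$3$), so $R$ is $1$-Gorenstein by Proposition~\ref{prop:comm-Gore} (Corollary~\ref{cor:eq-KK-Gore-boundary}).

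For~(5): the Hom bimodules are \emph{not} free over the endomorphism rings, so the step ``check that each is free'' would fail. For instance ${}_{A_1}\cC_{A_0}$ has $\Z$-rank one and is annihilated by $\id_{A_0}-{}_0t_0$ (from ($\rho1$) and ($\rho3$)), so as a right $\Z C(p)$-module it is $\Z C(p)/(\id-{}_0t_0)\cong\Z$, which is not projective over $\Z C(p)$. The correct (and non-trivial) statement is that these modules are Gorenstein projective but not projective; the paper proves this by exhibiting them as cokernels coming from explicit $1\times1$ matrix factorizations of the defining non-zero-divisors (Proposition~\ref{prop:MF}).

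For~(4): you have conflated Remark~\ref{rem:trace-Serre} with Remark~\ref{rem:trace-boundary}. In Theorem~\ref{thm:Gorenstein}, the trace-like datum is a ${}_d\cC_d$-bimodule map $\mu_d\colon{}_{Sd}\cC_d\to{}_d\cC_d$, not a $\base$-linear functional ${}_d\cC_d\to\Z$. A ``Frobenius trace to $\Z$'' is of the wrong type. The paper simply takes $\mu_d=\id_{{}_d\cC_d}$, so $\psi_{c,d}(f)(g)=fg$, and then verifies the isomorphism property by matching this pairing with the matrix-factorization duality pairing~\eqref{eq:MF-pairing} from Remark~\ref{rem:MF-duality} — which is also why (5) had to be proved via matrix factorizations rather than by a freeness check.
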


\begin{remark} \label{rem:eq-KK-Gore}
It is also possible to prove that $\cC$ is Gorenstein closed in $\KK^{C(p)}$; 
together with the above theorem, this shows that K\"ohler's UCT falls nicely into our general Gorenstein framework.
Indeed, the Gorenstein closedness of $\cC$ must hold by Theorem~\ref{thm:exact_detection} together with K\"ohler's own proof of his UCT. 
Unfortunately, unlike for Theorem~\ref{thm:eq-KK-Gore}, we have not been able to provide a new argument for Gorenstein closedness that would be significantly different from K\"ohler's original one (\cite{koehler}*{\S12}). This means that, unlike for the other examples of this section, we do not have a complete \emph{new} proof of the UCT for $C(p)$-equivariant KK-theory.
\end{remark}

Let us start with criterion (1) of Theorem~\ref{thm:Gorenstein}. In view of Remark~\ref{rem:FD} we need to prove that the explicitly computed (and commutative noetherian) endomorphism rings of the objects of $\cC$ are all $1$-Gorenstein. In particular, we need to relate our homological definition of Gorenstein rings to results in commutative algebra, which go back to Bass~\cite{bass:gorenstein}. Let us first recall some terminology for the convenience of the reader.

A commutative noetherian local ring $R$ with maximal ideal $\mathfrak{m}$ is called \emph{regular} if the Krull dimension of $R$ equals the vector space dimension of $\mathfrak{m}/\mathfrak{m}^2$ over the residue field $R/\mathfrak{m}$ (see~\cite{bruns-herzog}*{\S2.2}). A general commutative noetherian ring $R$ is \emph{regular} if the localization of $R$ at every prime ideal is a regular local ring. If, moreover, the Krull dimension of $R$ is finite (which is true for all finitely generated algebras over a field or over $\Z$, and by Krull's principal ideal theorem~\cite{bruns-herzog}*{A.1--A.3} also whenever $R$ is local), then there is a very convenient homological characterization of regularity.

\begin{prop} \label{prop:regular}
A commutative noetherian ring $R$ of finite Krull dimension is regular if and only if the global dimension of $\Mod R$ is finite. In this case, $\gldim R$ equals the Krull dimension of $R$.
\end{prop}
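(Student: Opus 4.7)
The plan is to reduce to the local case and invoke Serre's classical theorem on regular local rings. Recall that for a Noetherian local ring $(R,\mathfrak{m})$ of finite Krull dimension, Serre proved that $R$ is regular if and only if $\gldim R < \infty$, and that in this case $\gldim R = \dim R = \dim_{R/\mathfrak{m}}(\mathfrak{m}/\mathfrak{m}^2)$; see for instance \cite{bruns-herzog}*{Theorem 2.2.7}. So the only work to do is to pass from this local statement to the global one.

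The main tool is the standard fact that for a commutative Noetherian ring, global dimension is detected by localization at maximal ideals:
\[
\gldim R \;=\; \sup_{\mathfrak{m} \in \mathrm{Max}(R)} \gldim R_\mathfrak{m}.
\]
This follows because $\gldim R$ equals the supremum of the projective dimensions of the cyclic modules $R/I$ (Auslander's theorem), and because $\Ext^n_R(R/I, M)_\mathfrak{m} \cong \Ext^n_{R_\mathfrak{m}}((R/I)_\mathfrak{m}, M_\mathfrak{m})$ for any finitely generated $R/I$ and any $\mathfrak{m}$. Similarly, $\dim R = \sup_\mathfrak{m} \dim R_\mathfrak{m}$ by the very definition of Krull dimension via chains of primes.

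With these ingredients the proof is then immediate. For the ``if'' direction, $\gldim R < \infty$ forces $\gldim R_\mathfrak{m} < \infty$ for every maximal ideal, so each $R_\mathfrak{m}$ is a regular local ring by Serre, which by definition means $R$ is regular. For the ``only if'' direction, assume $R$ is regular of finite Krull dimension $d$. Then each localization $R_\mathfrak{m}$ is regular local of finite Krull dimension $\dim R_\mathfrak{m} \le d$, hence by Serre $\gldim R_\mathfrak{m} = \dim R_\mathfrak{m}$. Taking suprema over maximal ideals gives
\[
\gldim R \;=\; \sup_\mathfrak{m} \gldim R_\mathfrak{m} \;=\; \sup_\mathfrak{m} \dim R_\mathfrak{m} \;=\; \dim R,
\]
which is finite and yields the claimed equality.

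The only potential obstacle is justifying the localization formula for $\gldim$, but this is entirely standard commutative algebra, and in practice one would simply cite it (e.g.\ \cite{bruns-herzog}). Since the proposition is a textbook result, I would present the argument as a brief recollection rather than a derivation, and focus the reader's attention on the two inputs: Serre's local theorem and the locality of $\gldim$ and $\dim$.
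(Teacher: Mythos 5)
Your proposal is correct and follows essentially the same route as the paper's proof: reduce to the local case via the localization of $\Ext$ (equivalently, of global dimension) over maximal ideals, and invoke the Auslander--Buchsbaum--Serre theorem for the regular local case. The paper additionally spells out the Koszul-complex computation showing $\gldim R = n$ for a regular local ring of dimension $n$, whereas you fold that equality into the citation of Serre's theorem, but this is a cosmetic difference only.
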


\begin{proof}
Let us first assume that $R$ is local of Krull dimension $n$ with maximal ideal $\mathfrak{m}$. Then the homological characterization of regularity is exactly the Auslander-Buchsbaum-Serre theorem, \cite{bruns-herzog}*{Theorem 2.2.7}. It is a standard fact that $\gldim R = \pdim_R R/\mathfrak{m}$; see~\cite{bruns-herzog}*{Corollary 1.3.2}. But for $R$ regular, $R/\mathfrak{m}$ has a very explicit free resolution of length $n$, the Koszul complex~\cite{bruns-herzog}*{Corollary 1.6.14(b)}, from which one easily computes that $\Ext^n_R(R/\mathfrak{m},R) \cong R/\mathfrak{m}$. Hence $\gldim R = n$.

If $R$ is general, then one just applies the standard natural isomorphism
\[ \Ext_R(M,N)_\mathfrak{p} \cong \Ext_{R_\mathfrak{p}}(M_\mathfrak{p},N_\mathfrak{p}), \]
which holds for each $\mathfrak{p} \in \Spec R$, $M \in \modu R$ and $N \in \Mod R$.
\end{proof}

Recall further that a finite sequence $r_1, \dots, r_m$ of elements of a commutative ring $R$ is a \emph{regular sequence} if the coset of $r_i$ is not a zero divisor in $R/(r_1, \dots, r_{i-1})$ for each $i \in \{1, \dots, m\}$, and if $R/(r_1, \dots, r_m) \ne 0$. It is another standard result that a quotient of a regular ring by a regular sequence is Gorenstein.

\begin{prop} \label{prop:comm-Gore}
Let $R$ be a commutative noetherian ring such that $\gldim R = n < \infty$. If $r_1, \dots, r_m$ is a regular sequence, then $m \le n$ and $U := R/(r_1,\dots, r_m)$ is $(n-m)$-Gorenstein in the sense of Definition~\ref{defi:Gorenstein}.
\end{prop}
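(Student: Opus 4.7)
The plan is to obtain the statement in two stages: first bound $m$ by computing $\pdim_R U$ via the Koszul-type filtration of $U$, and then bound $\idim_U U$ by iterating Rees' change-of-rings isomorphism. Write $S_i := R/(r_1,\dots,r_i)$, so that $S_0=R$, $S_m=U$, and each $r_i$ is a non-zerodivisor on $S_{i-1}$. The defining property of a regular sequence then gives short exact sequences of $R$-modules
\[
0 \to S_{i-1} \xrightarrow{\,r_i\,} S_{i-1} \to S_i \to 0,
\]
from which dimension shifting shows $\pdim_R S_i \le \pdim_R S_{i-1}+1$ (and in fact equality holds, as one checks by detecting non-vanishing of the top $\Ext$ group inductively). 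Consequently $\pdim_R U = m$, and because $\gldim R = n$ we deduce $m \le n$.

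Next I would invoke the Rees isomorphism: for any ring $T$, a non-zerodivisor $r \in T$, the quotient $T' := T/(r)$, and any $T'$-module $N$, there is a natural isomorphism $\Ext^{i+1}_T(N,T) \cong \Ext^i_{T'}(N,T')$ for all $i \ge 0$. Applying this successively along the tower $R = S_0 \supset S_1 \supset \cdots \supset S_m = U$ (each step being legitimate by the regular sequence hypothesis), I obtain, for every $U$-module $N$ and every $i \ge 0$, the natural isomorphism
\[
\Ext^i_U(N,U) \;\cong\; \Ext^{i+m}_R(N,R).
\]
Since $\gldim R = n$, the right hand side vanishes whenever $i > n-m$, and so $\idim_U U \le n-m$.

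Finally, I would translate this into the Gorenstein conclusion. Being a quotient of $R$, the ring $U$ is commutative and noetherian, and its self-injective dimension is finite by the previous step; hence $U$ is Iwanaga-Gorenstein, and Example~\ref{ex:iwanaga-gore} certifies that $\Mod U$ is Gorenstein in the sense of Definition~\ref{defi:Gorenstein}, with Gorenstein dimension at most $n-m$. To pin down the exact value $n-m$, I would invoke Proposition~\ref{prop:regular} to conclude that $R$ is regular of Krull dimension $n$; cutting by a length-$m$ regular sequence then drops the Krull dimension by exactly $m$, so $U$ is Cohen-Macaulay of Krull dimension $n-m$, and Bass' theorem (for rings of finite self-injective dimension) gives $\idim_U U = \dim U = n-m$. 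Since $U$ is Iwanaga-Gorenstein, the finitary projective and injective dimensions of $\Mod U$ both equal $\idim_U U$, so the Gorenstein dimension of $U$ is exactly $n-m$.

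The clean part of this argument is the upper bound, which is a direct and well-known application of Rees' lemma and is all that is strictly needed for the paper's applications (e.g.\ to Theorem~\ref{thm:Gorenstein}, which only requires the endomorphism rings to be Gorenstein with bounded dimension). The main obstacle is the equality in the last paragraph: asserting that the Gorenstein dimension of $U$ realizes its maximal value $n-m$ and not a smaller one requires an external input from commutative algebra, namely Bass' formula for Cohen-Macaulay rings of finite self-injective dimension. This is the only step of the proof that is not essentially formal homological algebra.
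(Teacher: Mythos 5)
Your argument is essentially the paper's proof in a slightly different packaging. Both compute $\idim_U U$ by stepping down from $\idim_R R = n$ one regular element at a time: you make the step explicit as the Rees isomorphism $\Ext^{i+1}_T(N,T) \cong \Ext^i_{T/(r)}(N,T/(r))$ for a $T/(r)$-module $N$, whereas the paper reduces to the local case and cites the derived consequence $\idim_{T/(r)}T/(r) = \idim_T T - 1$ for a regular element in a noetherian local ring (Bruns--Herzog, Cor.~3.1.15). The real difference is the order of reductions: the paper localizes first, so the one-step equality delivers both inequalities for $\idim_U U$ at once; you apply Rees globally, obtain only the upper bound, and then supply the lower bound by a separate detour through Cohen--Macaulayness, Krull dimension drop, and Bass's formula. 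That detour is legitimate but longer than needed -- once you have the Rees isomorphism $\Ext^i_U(N,U)\cong \Ext^{i+m}_R(N,R)$, localizing at a prime of maximal height containing $(r_1,\dots,r_m)$ and taking $N$ to be the residue field already exhibits a non-zero $\Ext^{n-m}_U$, which is precisely what the paper's local argument does. A caveat worth noting (shared by the paper's sketch): the equality $\dim U = n - m$, and with it the exact value of the Gorenstein dimension, is automatic in the local case but for a general noetherian $R$ requires that some prime of height $n$ contain the regular sequence; in the paper's applications ($R = \Z[t]$, $\Z[s,t]$) this is harmless because every maximal ideal has full height.
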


\begin{proof}
We refer to the proof of~\cite{bruns-herzog}*{Proposition 3.1.20} and Example~\ref{ex:iwanaga-gore}, but let us sketch the argument for the convenience of the reader. First one reduces the situation as above to the case when $R$ and $U$ are local. Second, we must have $\idim_R R = n$ by the proof of Proposition~\ref{prop:regular}. Finally, we use $n$ times \cite{bruns-herzog}*{Corollary 3.1.15}, which implies that $\idim_U U = \idim_R R - m$.
\end{proof}

As a direct consequence, we obtain the desired result on Gorensteinness of $\partial\cC$.

\begin{cor} \label{cor:eq-KK-Gore-boundary}
Given an arbitrary prime number $p \in \N$, the rings $\Z[t]/(1-t^p)$ and $\Z[s,t]/\big(N(s)+N(t)-p,(1-s)(1-t)\big)$ are $1$-Gorenstein.
\end{cor}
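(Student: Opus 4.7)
The plan is to present each of the two rings as a quotient of a polynomial ring over $\Z$ by an explicit regular sequence, and then apply Propositions~\ref{prop:regular} and~\ref{prop:comm-Gore}. Since $\Z$ is a PID of Krull dimension~$1$, the polynomial rings $\Z[t]$ and $\Z[s,t]$ are regular of Krull dimensions $2$ and $3$ respectively, and in particular Cohen-Macaulay.

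For $\Z[t]/(1-t^p)$, I would simply note that $1-t^p$ is a nonzero non-unit in the integral domain $\Z[t]$, hence by itself a regular sequence of length~$1$. Proposition~\ref{prop:comm-Gore} with $n=2$ and $m=1$ then yields that $\Z[t]/(1-t^p)$ is $1$-Gorenstein.

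For $U := \Z[s,t]/(f,g)$ with $f = N(s)+N(t)-p$ and $g = (1-s)(1-t)$, the task reduces analogously, now with $n=3$ and $m=2$, to showing that $f,g$ is a regular sequence in $\Z[s,t]$. Since $\Z[s,t]$ is Cohen-Macaulay, a standard result in commutative algebra guarantees that a sequence of $k$ elements generating a proper ideal of height~$k$ is automatically regular, so it further suffices to verify that $\mathrm{ht}(f,g) = 2$, or equivalently that $\dim U = 1$.

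This dimension computation is the step I expect to be the main obstacle, and I would handle it by inspecting $\Spec U$ branch by branch: vanishing of $g$ forces $s = 1$ or $t = 1$. Setting $s = 1$, the identity $N(1) = p$ reduces the equation $f = 0$ to $N(t) = 0$; since $p$ is prime, $N(t) = \Phi_p(t)$ is the $p$-th cyclotomic polynomial, irreducible over $\Z$, so this branch is isomorphic to $\Spec \Z[\zeta_p]$, which is one-dimensional. The branch $t = 1$ is symmetric, and the two branches meet only at the maximal ideal $(p,\,1-s,\,1-t)$, since specializing both $s$ and $t$ to $1$ in $f$ yields $2p - p = p$. It follows that every irreducible component of $\Spec U$ is one-dimensional, so $\dim U = 1$ and the argument concludes.
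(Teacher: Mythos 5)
Your argument is correct, but it diverges from the paper's proof in the key step for the second ring. Both proofs reduce the matter to showing that $r_1 := N(s)+N(t)-p$ and $r_2 := (1-s)(1-t)$ form a regular sequence in $\Z[s,t]$, so that Proposition~\ref{prop:comm-Gore} applies with $n=3$, $m=2$. The paper verifies this directly and elementarily: $\Z[s,t]$ is a UFD, $r_1$ is a non-zero divisor, and $r_1$, $r_2$ are coprime (neither $1-s$ nor $1-t$ divides $r_1$, since $r_1(1,t)=N(t)\neq 0$ and symmetrically), so $r_2 a = r_1 b$ forces $a\in(r_1)$, showing $r_2$ is a non-zero divisor modulo $(r_1)$. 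Your route instead uses that $\Z[s,t]$ is Cohen-Macaulay and invokes the unmixedness-type fact that $k$ elements generating a proper ideal of height $k$ in a CM ring form a regular sequence, reducing to the geometric computation $\dim U = 1$, which you carry out correctly by decomposing $\Spec U$ along the branches $s=1$ and $t=1$, each giving $\Spec\Z[\zeta_p]$. The paper's argument is shorter and requires nothing beyond unique factorization. Yours is more robust, in that it does not depend on the particular factorization structure of $r_1, r_2$ and would transfer to CM rings that are not UFDs, but it relies on two heavier inputs worth making explicit: the CM/unmixedness machinery, and the dimension formula $\dim R_\mathfrak{p}+\dim R/\mathfrak{p}=\dim R$ underlying your equivalence $\mathrm{ht}(r_1,r_2)=2 \iff \dim U = 1$ (valid here because $\Z[s,t]$ is a finitely generated $\Z$-algebra domain, but not a formality for general Noetherian domains).
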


\begin{proof}
The global dimensions of $\Z[t]$ and $\Z[s,t]$ are $2$ and $3$, respectively, by~\cite{bruns-herzog}*{Theorem A.12}. Clearly $1-t^p$ is not a zero divisor, hence $1-t^p$ forms a regular sequence of length $1$. It remains to show that $r_1 := N(s)+N(t)-p, r_2 := (1-s)(1-t)$ is a regular sequence of length $2$ in $\Z[s,t]$. To this end, it is well known that $\Z[s,t]$ is a unique factorization domain, and clearly $r_1$ and $r_2$ are coprime. Now if $r_2 a + (r_1) = 0$ in $\Z[s,t]/(r_1)$ for some $a$, then $r_2 a = r_1 b$ in $\Z[s,t]$ for some $b$ and so $a \in (r_1)$.
\end{proof}

Criterion (2) of Theorem~\ref{thm:Gorenstein} is obvious since $\cC$ has finitely many objects. If $c \in \cC$, it is easy to check using relations ($\rho1$) above that both the neighbourhoods $\incoming(c)$ and $\outcoming(c)$ are given by $c$ itself and precisely one neighbour in each direction in the quiver~\eqref{eq:eq-KK-quiver}. Hence we can take $S = \id_\cC$ for Theorem~\ref{thm:Gorenstein}(3). In order to establish Theorem~\ref{thm:Gorenstein}(4) and~(5), we need some further preliminaries on finitely generated Gorenstein projective modules over commutative noetherian rings. Here we use a variant of a result due to Eisenbud~\cite{eisenbud:matrix-fact}. Recall that a \emph{matrix factorization of $w \in R$}, where $R$ is a commutative ring, is a pair $(A,B)$ of $n\times n$ matrices over $R$ for some $n \ge 1$ such that $AB = w\!\cdot\!I_n = BA$.

\begin{prop} \label{prop:MF}
Let $R$ be a commutative noetherian ring, $w \in R$ be a non-zero divisor, and $(A,B)$ be a matrix factorization of $w$. If we denote $U := R/(w)$, then
\begin{equation*} \label{eq:MF}
C\colon \quad
\xymatrix@1{
\cdots \ar[r] & U^n \ar[r]^-{B\cdot-} & U^n \ar[r]^-{A\cdot-} & U^n \ar[r]^-{B\cdot-} & U^n \ar[r]^-{A\cdot-} & U^n \ar[r] & \cdots
}
\end{equation*}
is a complete projective resolution of the $U$-module $M = \Img(U^n \overset{A\cdot-}\longrightarrow U^n)$. In particular, $M \in \Gproj U$.
If, moreover, $R$ is local and regular, then each $M \in \Gproj U$ arises in this way.
\end{prop}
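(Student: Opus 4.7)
The plan divides into two rather different arguments for the two parts of the statement: for the forward direction I will verify directly that $C$ is a complete projective resolution, while for the converse I will reverse-engineer the matrix factorization from a short free $R$-resolution of $M$.

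For part~(1), I would first observe that reducing $AB = wI_n = BA$ modulo $w$ makes $C$ into a complex. The acyclicity argument is uniform across positions: given $\mathbf{x} \in R^n$ with $A\mathbf{x} = w\mathbf{y}$, compute $w\mathbf{x} = BA\mathbf{x} = wB\mathbf{y}$, and since $w$ is a non-zero divisor, $\mathbf{x} = B\mathbf{y}$ in $R^n$, whence $\bar{\mathbf{x}} \in \Img(\bar B \cdot -)$. Applying $\Hom_U(-,U)$ identifies the resulting complex with the one built from the transposed pair $(A^t,B^t)$, which is again a matrix factorization of the same $w$, so the identical argument shows $\Hom_U(C,U)$ is acyclic. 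Thus $C$ is a complete projective resolution and $M = \Img(\bar A\cdot-)$ is Gorenstein projective as a syzygy of $C$.

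For part~(2) the crucial input is the computation $\pdim_R M = 1$. By Proposition~\ref{prop:comm-Gore} the ring $U$ is a Gorenstein local ring of dimension $d-1$, where $d = \dim R$. Over such a ring the finitely generated Gorenstein projective modules coincide with maximal Cohen--Macaulay modules, so $\mathrm{depth}_U M = d-1$; since $M$ is annihilated by $w$, this also equals $\mathrm{depth}_R M$. The Auslander--Buchsbaum formula over the regular local ring $R$ then gives $\pdim_R M = d - (d-1) = 1$, producing a short exact sequence $0 \to R^m \xrightarrow{A} R^n \to M \to 0$ of $R$-modules.

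It remains to manufacture the partner matrix. Since $wM = 0$ we have $wR^n \subseteq \Img A$, so the relations $w e_i = A v_i$ assemble into a matrix $B \colon R^n \to R^m$ with $AB = wI_n$. Multiplying by $A$ on the right yields $A(BA - wI_m) = 0$, and injectivity of $A$ forces $BA = wI_m$; comparing ranks over the fraction field of $R$ (where $wI$ is invertible) then forces $m = n$. Thus $(A,B)$ is a matrix factorization of $w$ with $M \cong \coker(\bar A\cdot-) \cong \Img(\bar B\cdot-)$, the last isomorphism coming from the acyclicity established in part~(1); swapping the roles of $A$ and $B$ exhibits $M$ in the stated form. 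The single hurdle of substance is the Gorenstein projective $=$ MCM identification over $U$ and the resulting depth/Auslander--Buchsbaum computation; everything else is routine matrix manipulation powered by $w$ being a non-zero divisor.
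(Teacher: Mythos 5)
Your proof is correct, and it takes a genuinely different (more self-contained) route than the paper: where the paper simply cites Eisenbud's Proposition~5.1 for the acyclicity of $C$ and Theorem~6.1 for the converse, you unpack both. Your acyclicity computation (lift $\bar{\mathbf x}\in\ker(\bar A\cdot -)$, write $A\mathbf x = w\mathbf y$, cancel the non-zero divisor $w$ in $w\mathbf x = wB\mathbf y$) is clean and exactly the right argument; for the dualization you correctly observe that $\Hom_U(-,U)$ produces the complex of $(A^t,B^t)$, which is again a matrix factorization of $w$. For the converse you replace the citation with the depth/Auslander--Buchsbaum argument plus an explicit construction of the partner matrix $B$ and the rank comparison over the fraction field (valid since a regular local ring is a domain). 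This buys you a proof that does not depend on an external reference, at the cost of some length and the mild technical inputs ``Gorenstein projective $=$ MCM over a Gorenstein local ring'' and ``$\mathrm{depth}_R M = \mathrm{depth}_U M$ for $M$ a $U$-module'', both standard. Two small points you leave implicit but that are worth a sentence: to conclude that $C$ is a \emph{complete} projective resolution you need $\Hom_U(C,P)$ acyclic for \emph{all} projectives $P$, not just $P=U$; this follows because the components of $C$ are finitely generated free, so $\Hom_U(C,-)$ commutes with coproducts and retracts. And in the converse, the degenerate case $M$ free (forcing $M=0$, since $wM=0$ and $w$ is a non-zero divisor on a free module) should be noted before invoking Auslander--Buchsbaum, although it is trivially realized by a matrix factorization.
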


\begin{proof}
Clearly $C$ is a complex which is acyclic by~\cite{eisenbud:matrix-fact}*{Proposition 5.1}. If we apply $\Hom_U(-,U)$ to $C$, we get a complex of the same type corresponding to matrix factorization $(A^t,B^t)$. Since all components of $C$ are finitely generated free modules, it follows that $\hom_U(C,P)$ is an acyclic complex for each $P \in \Proj R$. In particular, $C$ is a complete projective resolution of $M$. The last statement is a special case of \cite{eisenbud:matrix-fact}*{Theorem 6.1}.
\end{proof}

\begin{remark} \label{rem:MF-duality}
As noted above, if $M \in \Gproj U$ is obtained from a matrix factorization $(A,B)$, then $M^\vee = \Hom_U(M,U)$ is associated with the matrix factorization $(A^t,B^t)$. Since the roles of $M$ and $M^\vee$ are symmetric, there is an isomorphism $M \cong (M^\vee)^\vee$ given by the canonical pairing $\beta\colon M^\vee \otimes_U M \to U$, $f \otimes m \mapsto f(m)$.

In terms of presentations, $M \cong U^n / \Img (B\cdot-)$ and $M^\vee \cong U^n / \Img (B^t\cdot-)$, and the canonical basis of $U^n$ induces generating sets $m_1, \dots, m_n \in M$ and $f_1, \dots, f_n \in M^\vee$. If we denote by $a_{ij} \in U$ the images of the entries $A$ under the quotient map $\pi\colon R \to U$, the pairing above takes form
\begin{equation} \label{eq:MF-pairing-gen}
\beta\colon f_i \otimes m_j \longmapsto a_{ij} \in U. 
\end{equation}

In what comes, we will encounter $1\times 1$ matrix factorizations $(a,b)$, i.e.\ ordinary factorizations $w = ab$ in $R$. In such a case $M = mR$ and $M^\vee = fR$ are cyclic modules, $M \cong M^\vee$, and formula~\eqref{eq:MF-pairing-gen} for the pairing specializes to a form which is going to be very useful in the proof of Theorem~\ref{thm:eq-KK-Gore}:
\begin{equation} \label{eq:MF-pairing}
\beta\colon f \otimes m \longmapsto a \in U.
\end{equation}
\end{remark}

Now we can finish the proof that our subcategory $\cC \subseteq \KK^{C(p)}$ is $1$-Gorenstein.

\begin{proof}[Proof of Theorem~\ref{thm:eq-KK-Gore}]
Conditions (1)--(3) of Theorem~\ref{thm:Gorenstein} have been checked in the above discussion, along with fixing $S = \id_\cC$.

Let us verify condition (5). First note that the group of automorphisms of quiver~\eqref{eq:eq-KK-quiver} is the Klein group, the non-trivial symmetries being $\Sigma$ and the reflections along the vertical and horizontal axes. Since relations ($\rho1$)--($\rho5$) are also invariant under these symmetries, we get $\Aut(\cC) \cong \Z_2 \times \Z_2$. Taking into account that we always have ${_c\cC_c} \in \Gproj{_c\cC_c}$, the symmetries reduce the list of non-trivial Hom-bimodules of $\cC$ to check to three items:
\begin{itemize}
 \item[(i)]   ${_{A_1}\cC_{A_0}}$ must belong to $\Gproj{_{A_0}\cC_{A_0}}$,
 \item[(ii)]  ${_{A_2}\cC_{A_0}}$ must belong to $\Gproj{_{A_0}\cC_{A_0}}$,
 \item[(iii)] ${_{A_0}\cC_{A_2}}$ must fall in $\Gproj{_{A_2}\cC_{A_2}}$.
\end{itemize}
In case (i), the underlying abelian group of ${_{A_1}\cC_{A_0}}$ is by~\cite{koehler}*{Lemma 11.11} free of rank one, generated by ${_1\alpha_0}$. Now $\id_{A_0} - {_0t_0} \in {_{A_0}\cC_{A_0}}$ annihilates ${_{A_1}\cC_{A_0}}$ since
\[ 0 = {_1\alpha_0} \circ {_0\alpha_2} \circ {_2\alpha_0} = {_1\alpha_0} \circ (\id_{A_0} - {_0t_0}) \]
be relations ($\rho1$) and ($\rho3$). Hence up to isomorphism, we are asking whether $M = U/(1-t)$ is finitely generated Gorenstein projective over $U = \Z[t]/(1-t^p)$. This is clear, however, since $M$ arises as in Proposition~\ref{prop:MF} from the matrix factorization $(N(t), 1-t)$ of size $1\times1$ of the non-zero divisor $(1-t^p) \in R = \Z[t]$.

In case (ii), ${_{A_2}\cC_{A_0}}$ has free underlying group of rank $p-1$ by~\cite{koehler}*{Proposition 11.23} and it is annihilated by $N({_0t_0}) \in {_{A_0}\cC_{A_0}}$ because of relations ($\rho1$) and ($\rho2$). Hence the module in question is $M = U/\big( N(t) \big)$ over $U = \Z[t]/(1-t^p)$, and it comes from the matrix factorization $(1-t, N(t))$ of $(1-t^p) \in R = \Z[t]$.

Finally in case (iii) the module again has free underlying group of rank $p-1$ by~\cite{koehler}*{Proposition 11.23} and it is annihilated by $\id_{A_2} - {_2s_2} \in {_{A_2}\cC_{A_2}}$ because of relations ($\rho1$) and ($\rho4$). Thus, up to isomorphism, we have the module $M = U/(1-s)$ over $U = \Z[s,t]/\big(N(s)+N(t)-p,(1-s)(1-t)\big)$. It is Gorenstein projective since it comes from the matrix factorization $(1-t, 1-s)$ of the non-zero divisor $(1-s)(1-t) \in R = \Z[s,t]/\big(N(s)+N(t)-p\big)$.

Last but not least, we focus on condition (4) of Theorem~\ref{thm:Gorenstein}. Here we use the advantage of Remark~\ref{rem:trace-boundary} and specify the maps $\mu_d$ first. We simply put $\mu_d = \id_{_d\cC_d}$ for each $d \in \cC$, the resulting homomorphisms
\begin{align*}
\psi_{c,d}\colon {}_{d}\cC_{c} &\longrightarrow \Hom_{{}_{d}\cC_{d}}({}_{c}\cC_{d}, {}_{d}\cC_{d}) \\
f &\longmapsto f \circ -.
\end{align*}
satisfying all the required naturality properties. It remains to show that $\psi_{c,d}$ are isomorphisms for each $c,d \in \cC$. This follows from the easily verifiable fact that the associated pairing
\begin{align*}
\beta\colon {}_{d}\cC_{c} \otimes_{{}_{c}\cC_{c}} {}_{c}\cC_{d} &\longrightarrow {}_{d}\cC_{d} \\
f \otimes g &\longmapsto \psi_{c,d}(f)(g) = \mu_d(S(f)g) = f\circ g
\end{align*}
coincides with the pairing~\eqref{eq:MF-pairing} from Remark~\ref{rem:MF-duality}.
\end{proof}

\subsection{Brown-Adams representability in KK-theory}

We end our article by applying our countable variant of Brown-Adams representability, Theorem~\ref{thm:countable_brown-adams}, to the KK-theoretic setting.
Although not surprising, this result appears to be new. The representability of countable cohomological functors (corresponding below to the special case of the identity maps $\alpha= \id_H$) was first noticed by Meyer and Nest~\cite{meyer_nest_bc}.

Let $\cT$ be a Kasparov category of separable C*-algebras. Here we may equally consider the ordinary category $\KK$, or any of the (triangulated) variants in use: the equivariant Kasparov category $\KK^G$ for $G$ a locally compact group~\cite{kasparov:equivariant} or groupoid~\cite{legall:groupoids}; the Kasparov category $\KK(T)$ of C*-algebras over a topological space~$T$~\cite{meyer-nest:filtrated1}; etc.
Although these categories admit arbitrary countable coproducts, they do not seem to enjoy any nice generation property. It is therefore common practice to consider a suitable ``Bootstrap subcategory'', i.e., the localizing subcategory $\cB:=\Loc(\cG)=\Loc_{\aleph_1}(\cG)\subset \cT$ generated by some small set $\cG$ of compact objects, and typically $\mathcal G$ is a countable category. This covers for instance the following examples:
\begin{itemize}
\item The classical Bootstrap category $\mathcal B\subseteq \KK$ of Rosenberg-Schochet.
\item The Meyer-Nest bootstrap class $\mathcal B\subseteq \KK(T)$ of ~\cite{meyer-nest:filtrated1}.
\item The equivariant Bootstrap category $\mathcal B\subset \KK^G$ used in \cite{koehler} and~\cite{ivo:mackey}.
\item The larger equivariant Bootstrap category $\mathcal B\subset \KK^G$ considered in~\cite{dem:lefschetz}.
\end{itemize}
Since $\mathcal B$ lacks arbitrary coproducts, the usual version of Brown-Adams representability, Theorem~\ref{thm:brown-adams}, cannot be applied here -- indeed, the conclusion would be wrong. 
The correct result looks as follows:

\begin{thm} \label{thm:KK_brown_adams}
Let $\mathcal B =\Loc_{\aleph_1}(\mathcal G)$ be a bootstrap triangulated category of separable C*-algebras as above, where the full subcategory $\mathcal G \subseteq \mathcal B^c$ of generators has at most countably many objects and maps. 
Then every natural transformation $\alpha\colon H\to H'$ between two cohomological functors $H,H'\colon (\mathcal B^c)^\op\to \Mod \Z$ is represented by a  map $X\to X'$ of $\mathcal B$, provided $H(\Sigma^iC)$ and $H'(\Sigma^iC)$ are countable for all $C\in \mathcal G$ and $i\in \Z/2\Z$.
\end{thm}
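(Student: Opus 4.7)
The natural plan is to apply the countable Brown--Adams Theorem~\ref{thm:countable_brown-adams} directly to $\cT := \mathcal B$. Two of the hypotheses there are immediate: $\mathcal B$ carries all countable coproducts by construction of the Kasparov categories, and since $\mathcal G\subseteq \mathcal B^c$ we have $\mathcal B=\Loc_{\aleph_1}(\mathcal G)\subseteq \Loc_{\aleph_1}(\mathcal B^c)\subseteq \mathcal B$. It therefore remains to establish (a) that $\mathcal B^c$ is essentially small (in fact, has a countable skeleton), and (b) that $H$ and $H'$ are $\aleph_1$-generated as $\mathcal B^c$-modules.

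For (a), the key step is to identify $\mathcal B^c$ with the thick closure $\mathrm{thick}(\mathcal G)$ of $\mathcal G$ inside~$\mathcal B$, which is manifestly countable. The inclusion $\mathrm{thick}(\mathcal G)\subseteq \mathcal B^c$ is automatic. For the reverse inclusion I would invoke the Gorenstein UCT machinery with $\cC:=\mathrm{thick}(\mathcal G)$ and $\aleph := \aleph_1$: this subcategory is at most $1$-Gorenstein by Theorem~\ref{thm:triangulated_n_Gorenstein} and is trivially Gorenstein closed in $\mathcal B$ because it is itself triangulated (Example~\ref{ex:C_triangulated}). The global UCT of Corollary~\ref{cor:uct} then identifies isomorphism classes in $\mathcal B = \Loc_{\aleph_1}(\mathrm{thick}(\mathcal G))$ with countably presented $\mathrm{thick}(\mathcal G)$-modules of projective dimension $\leq 1$ via the restricted Yoneda module $h_{\mathrm{thick}(\mathcal G)}(-)$. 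Compactness of $Y\in\mathcal B^c$ forces $h_{\mathrm{thick}(\mathcal G)}(Y)$ to be finitely presented, and combining this with the characterization of compact objects in $\stGProj(\mathrm{thick}(\mathcal G))$ afforded by Theorem~\ref{thm:triangulated_small} one concludes that $Y$ already lies in $\mathrm{thick}(\mathcal G)$.

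For (b), I would first show that $H$ and $H'$ take only countable values throughout $\mathcal B^c=\mathrm{thick}(\mathcal G)$. By hypothesis this is true on $\mathcal G$ and its single suspension; an easy induction over the thick closure then propagates countability using the cohomological property, since countability of abelian groups is preserved under passage to summands and under extensions in the long exact sequences coming from distinguished triangles with all three vertices in $\mathrm{thick}(\mathcal G)$. Combined with the countability of a skeleton of $\mathcal B^c$ from (a), and using the description of cohomological functors as filtered colimits of representables (Lemma~\ref{lemma:equiv_modules_triangulated}), this lets us express each of $H$ and $H'$ as a countable filtered colimit of representables, i.e.\ as an $\aleph_1$-generated module. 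Theorem~\ref{thm:countable_brown-adams} then delivers objects $X,X'\in\mathcal B$ together with a morphism $X\to X'$ inducing the desired representations $H\cong h_{\mathcal B^c}(X)$ and $H'\cong h_{\mathcal B^c}(X')$ and the transformation~$\alpha$.

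The main obstacle is item (a), specifically the translation of compactness of $Y\in\mathcal B^c$, taken with respect to the ambient \emph{countable} coproducts in $\mathcal B$, into the finite presentation of $h_{\mathrm{thick}(\mathcal G)}(Y)$ and the subsequent deduction that $Y$ lies in $\mathrm{thick}(\mathcal G)$. Once this structural identification is settled, the verification of (b) and the final appeal to Theorem~\ref{thm:countable_brown-adams} are routine, and yield the representability of the natural transformation $\alpha$ as required.
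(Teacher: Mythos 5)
Your overall strategy matches the paper's: reduce to Theorem~\ref{thm:countable_brown-adams} by verifying that $\mathcal B^c$ is essentially small, that $\mathcal B = \Loc_{\aleph_1}(\mathcal B^c)$, and that $H, H'$ are $\aleph_1$-generated. For the last step, the paper directly constructs a countable generating family once it knows $\mathcal B^c$ has countable Hom-groups and a countable skeleton, using Yoneda and the hypothesis that $H, H'$ are cohomological; your route through Lemma~\ref{lemma:equiv_modules_triangulated} is a minor cosmetic variant.

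The substantive difference is in how you justify that $\mathcal B^c$ is essentially small (or, what you reformulate as $\mathcal B^c = \mathrm{thick}(\mathcal G)$). The paper treats this as a known fact, appealing to the same kind of ``induction on length'' argument as in Example~\ref{ex:countable_rings} -- which tacitly uses Neeman's identification of compacts with the thick closure of a compact generating set, a result that is well established for Kasparov categories (Meyer--Nest). You instead try to \emph{prove} $\mathcal B^c \subseteq \mathrm{thick}(\mathcal G)$ via the Gorenstein UCT machinery, which is a genuinely different strategy. However, the pivotal step in your argument -- that compactness of $Y$ in $\mathcal B$ forces $h_{\mathrm{thick}(\mathcal G)}(Y)$ to be finitely presented -- is asserted without proof, and you flag it yourself as ``the main obstacle''. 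This is a real gap: compactness of $Y$ is a statement about $\mathcal B(Y,-)$ preserving countable coproducts, whereas finite presentation of $h_\cC(Y)$ is about the contravariant module $\mathcal B(-,Y)|_\cC$, and passing between them via the UCT short exact sequence only gives, after a snake-lemma argument, that $\Hom_\cC(h_\cC Y, -)$ preserves \emph{countable} coproducts of modules in the essential image of~$h_\cC$. That is strictly weaker than finite presentation. Establishing finite presentation would in effect reprove Neeman's compactness criterion in the $\aleph_1$-setting, which is precisely the fact the paper is taking as given. (The invocation of Theorem~\ref{thm:triangulated_small}(iii) is also a red herring here: $h_\cC(Y)$ has projective dimension $\le 1$, so it is almost never Gorenstein projective; the correct observation is that once $h_\cC(Y)$ is finitely presented of projective dimension $\le 1$, its short projective resolution lifts to a triangle in the thick subcategory $\cC$, whence $Y$ is a cone in $\cC$ and so lies in $\cC$.)

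Modulo this acknowledged gap, the rest of your argument -- the countability of $H$ on $\mathcal B^c$ via the cohomological property and the thick-closure structure, and the final appeal to Theorem~\ref{thm:countable_brown-adams} -- is sound and parallels the paper's reasoning. The clean fix is simply to cite the known identification of $\mathcal B^c$ with $\mathrm{thick}(\mathcal G)$ (or, equivalently, to cite that $\mathcal B^c$ has a countable skeleton), as the paper does implicitly, rather than attempt to rederive it from the UCT.
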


\begin{proof}
This is an immediate consequence of Theorem~\ref{thm:countable_brown-adams}, once we have shown that all cohomological functors $H\in \Mod \mathcal B^c$ as in the theorem are countably generated. 

Assume that $H(C)$ is a countable abelian group for every suspension of a generator $C\in \mathcal G$.
As $H$ is cohomological, the class of objects $X$ for which $H(X)$ is countable is closed under the formation of cones and countable coproducts, so it contains all of~$\mathcal B$ and \emph{a fortiori} also~$\mathcal B^c$.
Similarly, since $\mathcal G$ is assumed to be countable, an induction on the length of the objects $C,D\in \mathcal B^c$ simultaneously shows that $\mathcal B(C,D)$ is countable for all $C,D\in \mathcal B^c$ and that there are only countably many isomorphism classes in $\mathcal B^c$ (cf.\ Example~\ref{ex:countable_rings}). 

Now let $\{C_i\}_{i\in I}$ be a countable set of representatives for the isomorphism classes of objects in~$\mathcal B^c$. Every element $x\in H(C_i)$ corresponds by Yoneda to a map $h_{\mathcal B^c}C_i \to H$, and the resulting map $\coprod_{i\in I, x\in H(C_i)} h_{\mathcal B^c}C_i \to H$ is an epimorphism onto $H$ from a countable coproduct of representable $\mathcal B^c$-modules.
\end{proof}

\begin{bibdiv}
  \begin{biblist}

    \bibselect{literature}

  \end{biblist}
\end{bibdiv}

\end{document}